\theoremstyle{plain}
\newtheorem{theorem}[equation]{Theorem}
\newtheorem{lemma}[equation]{Lemma}
\newtheorem{corollary}[equation]{Corollary}
\newtheorem{proposition}[equation]{Proposition}
\theoremstyle{definition}
\newtheorem{definition}[equation]{Definition}
\theoremstyle{remark}
\newtheorem{remark}[equation]{Remark}
\newtheorem{notation}[equation]{Notation}
\newtheorem{claim}[equation]{Claim}
\numberwithin{equation}{section}
\newcommand{\RR}{{\mathbb{R}}}
\newcommand{\NN}{{\mathbb{N}}}
\newcommand{\ZZ}{{\mathbb{Z}}}
\newcommand{\eps}{\varepsilon}
\newcommand{\dint}{\int\!\!\!\int}
\newcommand{\dist}{\operatorname{dist}}
\newcommand{\re}{\mathbb{R}}
\newcommand{\rn}{\mathbb{R}^n}
\newcommand{\ree}{\mathbb{R}^{n+1}}
\newcommand{\N}{\mathbb{N}}
\newcommand{\dd}{\mathbb{D}}
\newcommand{\C}{\mathrm{CME}}
\newcommand{\po}{\partial\Omega}
\newcommand{\A}{\mathcal{A}}
\newcommand{\F}{\mathcal{F}}
\newcommand{\Qt}{\widetilde{Q}}
\newcommand{\W}{\mathcal{W}}
\newcommand{\B}{\mathcal{B}}
\newcommand{\T}{\mathcal{T}}
\newcommand{\R}{\mathcal{R}}
\newcommand{\sbf}{{\bf S}}
\newcommand{\G}{\mathcal{G}}
\newcommand{\GS}{\Gamma_{\sbf}}
\newcommand{\pom}{\partial\Omega}
\newcommand{\hm}{\omega}
\newcommand{\vp}{\varphi}
\newcommand{\loc}{{\rm loc}}
\renewcommand{\emptyset}{{\textup{\O}}}
\DeclareMathOperator{\supp}{supp}
\DeclareMathOperator{\diam}{diam}
\DeclareMathOperator{\interior}{int}
\def\div{\mathop{\operatorname{div}}\nolimits}
\def\Xint#1{\mathchoice
	{\XXint\displaystyle\textstyle{#1}}%
	{\XXint\textstyle\scriptstyle{#1}}%
	{\XXint\scriptstyle\scriptscriptstyle{#1}}%
	{\XXint\scriptscriptstyle\scriptscriptstyle{#1}}%
	\!\int}
\def\XXint#1#2#3{{\setbox0=\hbox{$#1{#2#3}{\int}$}
		\vcenter{\hbox{$#2#3$}}\kern-.5\wd0}}
\def\fint{\Xint-}
\begin{document}
\allowdisplaybreaks

\title[Transference of estimates]{Transference of scale-invariant estimates from Lipschitz to Non-tangentially accessible to Uniformly rectifiable domains}

\author{Steve Hofmann}

\address{Steve Hofmann
\\
Department of Mathematics
\\
University of Missouri
\\
Columbia, MO 65211, USA} \email{hofmanns@missouri.edu}

\author{Jos\'e Mar{\'\i}a Martell}

\address{Jos\'e Mar{\'\i}a Martell\\
Instituto de Ciencias Matem\'aticas CSIC-UAM-UC3M-UCM\\
Consejo Superior de Investigaciones Cient{\'\i}ficas\\
C/ Nicol\'as Cabrera, 13-15\\
E-28049 Madrid, Spain} \email{chema.martell@icmat.es}

\author{Svitlana Mayboroda}

\address{Svitlana Mayboroda
\\
Department of Mathematics
\\
University of Minnesota
\\
Minneapolis, MN 55455, USA} \email{svitlana@math.umn.edu}

\thanks{The first author was supported by NSF grant DMS-1664047. The second author 
acknowledges financial support from the Spanish Ministry of Science and Innovation, through the ``Severo Ochoa Programme for Centres of Excellence in R\&D'' (SEV-2015-0554) and from the Spanish National Research Council, through the ``Ayuda extraordinaria a Centros de Excelencia Severo Ochoa'' (20205CEX001). He also acknowledges that 
the research leading to these results has received funding from the European Research
Council under the European Union's Seventh Framework Programme (FP7/2007-2013)/ ERC
agreement no. 615112 HAPDEGMT.	The third author was  supported in part by the NSF INSPIRE Award DMS-1344235, NSF CAREER
Award DMS-1220089,  NSF-DMS 1839077, Simons Fellowship, and the Simons Foundation grant 563916, SM}

\date{\today}
\subjclass[2010]{28A75, 28A78, 31B05, 
42B20, 42B25, 42B37} 

\keywords{Carleson measures, square functions, non-tangential maximal functions, 
$\eps$-approximability, 
uniform rectifiability, harmonic functions.}

\begin{abstract} In relatively nice geometric settings, in particular, on Lipschitz domains,  absolute continuity of elliptic measure with respect to the surface  measure is equivalent to Carleson measure estimates,  to square function estimates, and to $\varepsilon$-approximability,  for solutions to the second order divergence form elliptic partial differential equations $ Lu= -{\rm div\,} (A \nabla u)=0$.   In more general situations, notably, in an open set $\Omega$ with a uniformly rectifiable boundary, absolute continuity of elliptic measure with respect to the  surface  measure may fail, already for the Laplacian.  In the present paper, the authors demonstrate that  nonetheless,  Carleson measure estimates, square function estimates, and 
$\varepsilon$-approximability remain valid in such $\Omega$, for solutions of $Lu=0$, 
provided that such solutions enjoy these properties
in Lipschitz subdomains of $\Omega$.  

Moreover, we establish a general real-variable transference principle, from Lipschitz to chord-arc domains, and from chord-arc to open sets with uniformly rectifiable boundary, that is not restricted to harmonic functions or  even to solutions of elliptic equations. In particular, this allows one to deduce the first Carleson measure estimates and square function  bounds for higher order systems on open sets with uniformly rectifiable boundaries and to treat subsolutions and subharmonic functions.
\end{abstract}

\maketitle

\tableofcontents

\section{Introduction}

In the setting of a Lipschitz domain $\Omega\subset \ree,\, n\geq 1$,
for any divergence form elliptic operator $L=-\div (A\nabla)$ with bounded measurable
coefficients,
the following are equivalent:
\begin{list}{$(\theenumi)$}{\usecounter{enumi}\leftmargin=.8cm
\labelwidth=.8cm\itemsep=0.2cm\topsep=.1cm
\renewcommand{\theenumi}{\roman{enumi}}}

\item Every bounded solution $u$, of the equation $Lu=0$ in $\Omega$, satisfies the
{\it Carleson measure estimate} (see Definition~\ref{defCME} with $F=|\nabla u|/\|u\|_{L^\infty(\Omega)}$).

\item Every bounded solution $u$, of the equation $Lu=0$ in $\Omega$, is {\it $\eps$-approximable}, for every $\eps>0$
(see Definition \ref{def1.3}).

\item The elliptic measure associated to $L$, $\omega_L$, is (quantitatively) absolutely continuous with respect to the Lebesgue measure,  $\hm_L\in A_\infty(\sigma)$ on $\pom$.

\item  Uniform {\it Square function/Non-tangential maximal function} (``$S/N$")
estimates 
hold locally in  ``sawtooth" subdomains of $\Omega$ (see Definition~\ref{defsfntmax:traditional}).

\end{list}

Historically, Dahlberg \cite{D} obtained an
extension of Garnett's $\eps$-approximability result,
observing that
$(iv)$ implies $(ii)$ in the harmonic case\footnote{This implication holds more generally for null solutions of
divergence form elliptic equations, see \cite{KKPT} and \cite{HKMP}.}.
The explicit
connection of $\eps$-approximability
with the $A_\infty$ property of harmonic measure, i.e.,
that $(ii)\implies(iii)$, appears in \cite{KKPT}
(where this implication is established not only for the Laplacian,
but for general divergence form elliptic operators).
That $(iii)$ implies $(iv)$ is proved for harmonic functions in \cite{D2}\footnote{And thus all four properties
hold for harmonic functions in Lipschitz domains, by the result of \cite{D1}.}, and, for null solutions of
general divergence form elliptic operators,
in \cite{DJK}.
Finally, Kenig, Kirchheim, Pipher and Toro \cite{KKiPT} have recently shown  
that $(i)$ implies $(iii)$, whereas, on the other hand,  
$(i)$ may be seen, via good-lambda and John-Nirenberg arguments, to be equivalent to the local version of one 
direction of $(iv)$ (the ``$S<N$" direction)\footnote{We will prove this fact in much greater generality in this paper.}.

The main goal of the present paper is to show that while $(iii)$ may fail on general uniformly rectifiable domains even for harmonic functions \cite{BiJo} or might be not applicable in the absence of a suitable concept of elliptic measure (e.g., for systems), $(i), (ii)$ and $(iv)$ carry over from Lipschitz domains to uniformly rectifiable sets by a purely real variable mechanism.  In particular, this both extends and clarifies our previous work in \cite{HMM2}.
But let us start with more historical context.

In the past several decades,  
uniformly rectifiable sets have been identified as the most general geometric setting in which 
many standard harmonic-analytic properties continue to hold. In particular, it was shown in the early 
90's that uniform rectifiability of a set $E$ is equivalent to boundedness of all sufficiently nice
singular integral operators with odd kernels in $L^2(E)$ \cite{DS1}, and, much more recently, that 
uniform rectifiability is equivalent to boundedness of the Riesz transform in $L^2(E)$ (see \cite{MMV} 
for the case $n=1$, and 
\cite{NToV} in general). 

However, it seemed to be vital for many
standard boundary estimates for solutions of elliptic PDEs in a domain $\Omega$ that, in addition to 
uniform rectifiability of its boundary, $\Omega$ should possess 
some additional topological features, ensuring a reasonably nice approach to the boundary.   In some 
respects, this is indeed true.
In particular, it has been known that $(i)$\,--\,$(iv)$ hold for harmonic functions on chord-arc domains, that is, 
non-tangentially accessible domains with Ahlfors-David regular boundaries (see Definitions \ref{defadr} 
and \ref{def1.nta} below, and \cite{JK, DJK, DJe}).
Such domains satisfy an interior and exterior corkscrew condition (quantitative openness) and 
a Harnack chain 
condition (quantitative connectedness). 
At the same time, the 
 counterexample of Bishop and Jones \cite{BiJo} showed that absolute continuity of harmonic measure 
with respect to the Lebesgue measure $(iii)$ may fail on a general set with a uniformly rectifiable boundary:  
they construct a one dimensional
(uniformly) rectifiable set
$E$ in the complex plane, for which harmonic measure with respect to $\Omega= \mathbb{C}\setminus E$,
is singular with respect to Hausdorff $H^1$ measure on $E$.   
Much more recently, under the natural and rather minimal background 
assumptions that $\Omega$ satisfies an interior corkscrew condition, and has an
Ahlfors-David regular boundary, 
quantitative absolute continuity of harmonic 
measure with respect to surface measure 
(either property $(iii)$ above, or the weak-$A_\infty$ property, i.e., property $(iii)$ in the absence of doubling),
has now been characterized in the harmonic case, thus establishing the necessity of some 
connectivity assumption in this context: property 
$(iii)$ (respectively, its weaker non-doubling version)
is equivalent to uniform rectifiability of $\pom$, along with some version of accessibility to the boundary, 
either the semi-uniformity condition of \cite{AH} in the doubling case \cite{Az}, or respectively, 
the ``weak local John condition", which entails access to an ample
portion of the boundary, locally, from each interior point of $\Omega$  \cite{AHMMT}. 
Thus, while some connectivity
is indeed required to obtain property $(iii)$,
in \cite{HMM2} the authors proved that, nonetheless, Carleson measure estimates $(i)$ and 
$\eps$-approximability $(ii)$ for harmonic functions (and implicitly, for solutions of a certain more 
general class of elliptic equations) remain valid on all domains with a 
uniformly rectifiable boundary, in the absence of any connectivity assumption.  Shortly thereafter, 
it was shown that, at least in the presence of interior corkscrew points, each of the necessary
properties $(i)$ and $(ii)$ is also {\it sufficient} for uniform rectifiability \cite{GMT}. 

The present paper introduces a new transference mechanism, which illustrates that
for certain classes of
scale-invariant estimates (e.g., Carleson measure bounds, 
or square function/non-tangential maximal function estimates) the passage from such estimates on Lipschitz domains 
 to analogous results on chord-arc domains and further to the same bounds on all open sets with 
uniformly rectifiable boundaries is, in fact, a real variable phenomenon. That is, for a given function $F$
defined in the
complement of a co-dimension 1, uniformly rectifiable set $E\subset \ree$, if
one has suitable bounds for $F$ on Lipschitz domains, then these 
automatically carry over to $\ree\setminus E$. 
This immediately entails a series of new results in very general PDE settings (for solutions of second order elliptic PDEs with coefficients satisfying a Carleson measure condition, for solutions of higher order systems, for non-negative subsolutions), but clearly the power of having a general, purely real-variable scheme, goes beyond these applications. Let us now discuss the details.  
 
\begin{definition}[\textbf{ADR}]\label{defadr}
We say that a  set $E \subset \ree$ is $n$-dimensional \textbf{Ahlfors-David regular} (or simply \textbf{ADR})
if it is closed, and if there is some uniform constant $C\ge 1$ such that
\begin{equation} \label{eq1.ADR}
C^{-1}r^n \leq \sigma\big(\Delta(x,r)\big)
\leq C\, r^n,\quad\forall r\in(0,\diam (E)),\ x \in E,
\end{equation}
where $\diam(E)$ may be infinite.
Here, $\Delta(x,r):= E\cap B(x,r)$ is the surface ball of radius $r$,
and $\sigma:= H^n|_E$ 
is the surface measure on $E$, where $H^n$ denotes $n$-dimensional
Hausdorff measure.
\end{definition}

\begin{definition}[\textbf{UR} and \textbf{UR character}]\label{defur} 
An $n$-dimensional ADR (hence closed) set $E\subset \ree$
is $n$-dimensional \textbf{uniformly rectifiable} (or simply \textbf{UR}) if and only if it contains \textbf{big pieces of
Lipschitz images} of $\rn$ (\textbf{BPLI}). This means that there are positive constants $\theta, 
M_0>1$, such that for each
$x\in E$ and each $r\in (0,\diam (E))$, there is a
Lipschitz mapping $\rho= \rho_{x,r}: \rn\to \ree$, with Lipschitz constant
no larger than $M_0$,
such that 
$$
H^n\Big(E\cap B(x,r)\cap  \rho\left(\{z\in\rn:|z|<r\}\right)\Big)\,\geq\,\theta^{-1} r^n\,.
$$
Additionally, the \textbf{UR character} of $E$ is just the triple of constants $(\theta,M_0, C)$ where $C$ is the ADR constant; or equivalently,
the quantitative bounds involved in any particular characterization of uniform rectifiability.
\end{definition}

Note that, in particular, a UR set is closed by definition, so that $\ree\setminus E$ is open,
but need not be connected.

We recall that $n$-dimensional rectifiable sets are characterized by the
property that they can be
covered, up to a set of
$H^n$-measure 0, by a countable union of Lipschitz images of $\rn$;
we observe that BPLI  is a quantitative version
of this fact.

It is worth mentioning that there exist sets that are ADR (and that even form the boundary of an open set satisfying 
interior Corkscrew and Harnack Chain conditions),
but that are totally non-rectifiable (e.g., see the construction of Garnett's ``4-corners Cantor set"
in \cite[Chapter1]{DS2}).  

\begin{definition}[\textbf{Corkscrew condition}]  \label{def1.cork}
Following
\cite{JK}, we say that an open set $\Omega\subset \ree$
satisfies the \textbf{Corkscrew condition} if for some uniform constant $C>1$ and
for every surface ball $\Delta:=\Delta(x,r)=B(x, r)\cap\po,$ with $x\in \partial\Omega$ and
$0<r<\diam(\partial\Omega)$, there is a ball
$B(X_\Delta,C^{-1}r)\subset B(x,r)\cap\Omega$.  The point $X_\Delta\subset \Omega$ is called
a \textbf{Corkscrew point} relative to $\Delta.$  We note that  we may allow
$r<C'\diam(\pom)$ for any fixed $C'$, simply by adjusting the constant $C$.
\end{definition}

\begin{definition}[\textbf{Harnack Chain condition}]\label{def1.hc} 
Again following \cite{JK}, we say that an open set $\Omega$ satisfies the \textbf{Harnack Chain condition} if there is a uniform constant $C\ge 1$ such that
for every pair of points $X, X'\in \Omega$
there is a chain of balls $B_1, B_2, \dots, B_N\subset \Omega$ with
\[
N \leq  C\Big (2+\log_2^+ \frac{|X-X'|}{\min\{\dist(X,\pom), \dist(X',\pom)\}}\Big),
\]
$X\in B_1,\, X'\in B_N,$ $B_k\cap B_{k+1}\neq \emptyset$ for every $1\le k\le N-1$,
and $C^{-1}\diam (B_k) \leq \dist (B_k,\partial\Omega)\leq C\diam (B_k)$ for every $1\le k\le N$.  The chain of balls is called
a \textbf{Harnack Chain}.	 We remark that in general, the estimate for $N$ can be worse than logarithmic,
but as is well known, in the presence of an interior corkscrew condition, it is necessarily logarithmic if it holds at all.
\end{definition}

\begin{definition}[\textbf{NTA}, \textbf{1-sided NTA}, \textbf{CAD}, and  \textbf{1-sided CAD}]\label{def1.nta}
We say that an open set $\Omega\subset \ree$ is \textbf{1-sided non-tangentially accessible} (or simply \textbf{1-sided NTA}) if it satisfies the
Harnack Chain condition, and $\Omega$ satisfies the (interior) Corkscrew condition.	Additionally, the \textbf{1-sided NTA character} of $\Omega$ is just the collection of constants involved in the fact that $\Omega$ is 1-sided NTA, that is, the (interior) corkscrew constant, as well as the constant from the Harnack chain condition. 	

As in \cite{JK}, we say that an $\Omega\subset \ree$ is \textbf{non-tangentially accessible} (or simply \textbf{NTA}) if it satisfies the
Harnack Chain condition, and if both $\Omega$ and $\Omega_{\rm ext}:= \ree\setminus \overline{\Omega}$ satisfy the Corkscrew condition. The \textbf{NTA character} of $\Omega$ is the collection of constants involved in the fact that $\Omega$ is NTA, that is, the interior and exterior corkscrew constants, as well as the constant from the Harnack chain condition. 

We say that an open set $\Omega\subset \ree$ is a \textbf{1-sided chord-arc domain}, or simply \textbf{1-sided CAD}, (resp. \textbf{chord-arc domain}, or simply \textbf{CAD}) if it is 1-sided NTA (resp. NTA) and has ADR boundary. The \textbf{1-sided CAD character} (resp. \textbf{CAD character}) is the 1-sided NTA character (resp. NTA character) together with the ADR constant.
\end{definition}

\begin{definition}[\textbf{Lipschitz graph domain}]\label{dD} 
	We say that $\Omega\subset\RR^{n+1}$ is a \textbf{Lipschitz graph domain} if there is some Lipschitz function 
	$\psi:\RR^{n} \longrightarrow\RR$ 
	and some coordinate system such that
	\begin{equation*}
	\Omega 
	=\{(x',t):\ x'\in\RR^{n},\  t>\psi(x')\}. 
	\end{equation*}
	We refer to $ M =\|\nabla\psi\|_{L^\infty(\RR^n)}$ as the  \textbf{Lipschitz constant} of $\Omega$.
	
\end{definition}

\begin{definition}[\textbf{Bounded Lipschitz domain}]  	We say that and open connected set $\Omega\subset\RR^{n+1}$ is a \textbf{bounded Lipschitz domain} if there exist $r_\Omega>0$, $M, C_0, m\ge 1$, 
	$\{x_j\}_{j=1}^{m}\subset \partial\Omega$, $\{r_j\}_{j=1}^m$ with $C_0^{-1}\,r_\Omega<r_j<C_0\, r_\Omega$ for every $1\le j\le m$ such that 
	the following conditions hold. First, $	\partial \Omega\subset \bigcup_{j=1}^{ m } B(x_j,r_j)$. Second, for each $1\le j\le m$ there is some Lipschitz graph domain~$V_j$, with $x_j\in\partial V_j$ and with Lipschitz constant at most~$M$, such that $	U_j\cap\Omega = U_j\cap V_j$	where $U_j$ is a cylinder of height $8(M+1)r_j$, radius~$2r_j$, and with axis parallel to the $t$-axis (in the coordinates associated with~$V_j$). 	We refer to the triple  $(M,m,C_0)$  
	as the  \textbf{Lipschitz character} of $\Omega$. 
\end{definition}

As we pointed out above and as can be seen from the definitions, non-tangentially accessible domains possess certain quantitative topological features. One can show that a CAD satisfies a property analogous to Definition~\ref{defur}, but using Big Pieces of Lipschitz Subdomains, rather than Big Pieces of Lipschitz Images (see Proposition~\ref{IBPLS}), the crucial difference being that in some sense, a nice access to the boundary of a Lipschitz domain is  retained, contrary to the general UR case.

Finally, let us define the scale-invariant estimates at the center of this paper.
\begin{definition}[\textbf{CME}]\label{defCME} Let $\Omega\subset \ree$ be an open set and let  $F\in
L^2_{\rm loc}(\Omega)$.
We say that $F$ satisfies the \textbf{Carleson measure estimate} (or simply \textbf{CME}) on $\Omega$ if 
\begin{equation}\label{eqdefCME}
 \|F\|_{\C(\Omega)}:=\sup_{x\in \pom,\, 0<r<\infty} \,\frac1{r^n}\dint_{B(x,r)\cap \Omega} |F(Y)|^2 \dist(Y,\pom) \,dY<\infty.
\end{equation}
\end{definition}

\begin{definition}[\textbf{$\eps$-approximable}]\label{def1.3} 
Let $\Omega\subset \ree$ be an open set. Let $u\in L^\infty(\Omega)$,
with $\|u\|_{L^\infty(\Omega)} \leq 1$, and let $\eps \in (0,1)$.   We say that $u$
is \textbf{$\eps$-approximable} on $\Omega$, if there is a constant $C_\eps$, and
a function $\vp =\vp^\eps\in W^{1,1}_{\rm loc}(\Omega)$
satisfying
\begin{equation}\label{eq1.4}
\|u-\vp\|_{L^\infty(\Omega)}<\eps\,,
\end{equation}
and
\begin{equation}\label{eq1.5}
\sup_{x\in \pom,\, 0<r<\infty} \,\frac1{r^n}\dint_{B(x,r)\cap\Omega}|\nabla \vp(Y)| \,dY\leq C_\eps\,.
\end{equation}
\end{definition}

Let $\Omega$ be an open set. The cone with vertex at $x\in\pom$ and aperture $\kappa>0$ is defined as 
\begin{equation}\label{conetrad}
\Gamma_{\Omega} (x):= \Gamma_{\Omega,\kappa} (x):= \{Y\in \Omega\cap B(x,r):\, |Y-x|\leq (1+\kappa) \dist(Y, \po)\}, \quad x\in \po. 
\end{equation}
Given $r>0$, we write $\Gamma_{\Omega}^r (x):=\Gamma_{\Omega} (x)\cap B(x,r)$ for the truncated cone. With a slight abuse of notation if $\Omega$ is unbounded and $\pom$ bounded, our cones will be truncated. More precisely, in that scenario, we will write $\Gamma_{\Omega}(\cdot)$ to denote $\Gamma_{\Omega}^{C \diam(\pom)}(\cdot)$ where $C\ge 2$ is a fixed harmless constant. In this way,  when $\pom$ is bounded, so are the cones, all being contained in a $C'\diam(\pom)$-neighborhood of $\pom$. 
We will sometimes refer to these cones as ``traditional'' to distinguish them from some dyadic cones  which will be introduced later, see \eqref{defcone}.

\begin{definition}[\textbf{Non-tangential maximal function}, \textbf{Area integral}, and \textbf{Square function}]\label{defsfntmax:traditional} 
Let $\Omega$ be an open set. For $H\in C(\Omega)$ (i.e., $H$ is continuous function in $\Omega$) we define the \textbf{non-tangential
	maximal function} as
	\begin{equation}\label{defN*-regular}
	N_{*,\Omega}H(x):=N_{*,\Omega,\kappa}H(x):= \sup_{Y\in \Gamma_{\Omega,\kappa}(x)} |H(Y)|\,,\qquad x\in \pom;
	\end{equation}
	 for 
	$G\in L^2_{\rm loc}(\Omega)$, we define the \textbf{area integral}  as
	\begin{equation}\label{defA-regular}
	\A_{\Omega} G(x):=\A_{\Omega,\kappa} G(x):=\left(\dint_{\Gamma_{\Omega,\kappa}(x)}|G(Y)|^2 \dist(Y,\pom)^{1-n} dY\right)^{\frac12}\,,\qquad x\in \pom;
	\end{equation}
	and, for $u\in W^{1,2}_{\loc}(\Omega)$, we define the \textbf{square function}  as
	\begin{equation}\label{defS-regular}
	S_{\Omega} u(x):=S_{\Omega,\kappa} u(x):=\left(\dint_{\Gamma_{\Omega,\kappa}(x)}|\nabla u(Y)|^2 \dist(Y,\pom)^{1-n} dY\right)^{\frac12}\,,\qquad x\in \pom.
	\end{equation}
	For any $r>0$, we write $N_{*,\Omega}^r$, $\A_{\Omega}^r$, and $S_{\Omega}^r$ to denote the \textbf{truncated} non-tangential maximal function, area integral, and square function respectively, where $\Gamma_\Omega(\cdot)$ is replaced by the truncated cone $\Gamma_\Omega^r(\cdot)$.
		\end{definition}


Let us now list some highlights of main results of this paper (see Corollary~\ref{corol:CME:Lip->UR}, Theorem~\ref{theor:CME:CAD->UR} and Theorem~\ref{theor:CME:Lip->CAD} for the precise statements in the body of the paper and also Notation \ref{notation:constants}). First, the Carleson measure estimates on Lipschitz domains imply the Carleson measure estimates in CAD, which, in turn, imply the Carleson measure estimates on the sets with UR boundaries, via the following formalism.

\begin{theorem}[Transference of Carleson measure estimates]\label{t0.1}\footnote{In the statement we have omitted the dependence in the Carleson estimates on the various geometric parameters. The precise statements (see Theorem~\ref{theor:CME:CAD->UR} and Theorem~\ref{theor:CME:Lip->CAD}) given in the body of the paper impose that the Carleson measure estimates hold for any bounded Lipschitz (resp. chord-arc) subdomain with a bound depending on the Lipschitz (resp.~CAD) character. The latter means that for all subdomains with Lipschitz (resp. CAD) character 
controlled by some uniform quantity, say $M$,  the corresponding Carleson measure estimates hold with an associated 
uniform constant depending on $M$. The conclusions should also include that the resulting  Carleson estimates depend on the CAD character of $D$ (resp. UR character of $E$) as well as on the Carleson estimates of $F$ in the subdomains.}  

\begin{list}{$(\theenumi)$}{\usecounter{enumi}\leftmargin=.8cm
		\labelwidth=.8cm\itemsep=0.2cm\topsep=.01cm
		\renewcommand{\theenumi}{\roman{enumi}}}
 	
	\item Let $D\subset \ree$ be a chord-arc domain and $F\in L^2_{\loc}(D)$. If $F$ satisfies the Carleson measure estimate on all bounded Lipschitz subdomains of $D$ then $F$ satisfies the Carleson measure estimate on $D$ as well.

	\item Let $E\subset \ree$ be an $n$-dimensional uniformly rectifiable set and  let $F\in L^2_{\loc}(\ree\setminus E)$. If $F$ satisfies the Carleson measure estimate on all bounded chord-arc subdomains of $\ree\setminus E$,  then $F$ satisfies the Carleson measure estimate on $\ree\setminus E$ as well.
	
	\item Let $E\subset \ree$ be an $n$-dimensional uniformly rectifiable set and  let $F\in L^2_{\loc}(\ree\setminus E)$. If $F$ satisfies the Carleson measure estimate on all bounded Lipschitz subdomains of $\ree\setminus E$,  then $F$ satisfies the Carleson measure estimate on $\ree\setminus E$ as well.

\end{list}

\end{theorem}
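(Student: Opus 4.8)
The plan is to deduce part $(iii)$ by simply composing parts $(i)$ and $(ii)$. Given a UR set $E \subset \ree$ and $F \in L^2_{\loc}(\ree \setminus E)$ satisfying the Carleson measure estimate on all bounded Lipschitz subdomains of $\ree \setminus E$, the goal is to show $F$ satisfies CME on $\ree \setminus E$. The obstruction to applying $(ii)$ directly is that its hypothesis requires CME on all bounded \emph{chord-arc} subdomains of $\ree \setminus E$, which is formally stronger than the hypothesis we are handed (CME on bounded \emph{Lipschitz} subdomains only). So the crux is the reduction: I must verify that any bounded chord-arc subdomain $D' \subset \ree \setminus E$ is itself covered by part $(i)$.

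The key point is that part $(i)$ is \emph{internal} to the class of chord-arc domains: it says that for a CAD $D$, if $F$ satisfies CME on all bounded Lipschitz subdomains of $D$, then $F$ satisfies CME on $D$. Now fix a bounded chord-arc subdomain $D' \subset \ree \setminus E$. Any bounded Lipschitz subdomain of $D'$ is in particular a bounded Lipschitz subdomain of $\ree \setminus E$, so by hypothesis $F$ satisfies CME on it, with a bound controlled uniformly in terms of the Lipschitz character (this uniformity is exactly what the footnote to the theorem is spelling out). Hence the hypothesis of part $(i)$ is met for $D = D'$, and we conclude $F$ satisfies CME on $D'$, with a bound depending only on the CAD character of $D'$ and on the uniform Lipschitz-subdomain Carleson bound. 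Since $D'$ was an arbitrary bounded chord-arc subdomain of $\ree \setminus E$, and since these bounds are uniform across CAD subdomains with controlled character, the hypothesis of part $(ii)$ is now verified for $E$ and $F$. Applying $(ii)$ gives CME for $F$ on $\ree \setminus E$, completing the proof.

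The only genuinely delicate issue — and the step I expect to require the most care — is bookkeeping the dependence of constants, rather than any new geometric or analytic input. Concretely, one must check that the Carleson bound produced by part $(i)$ on a chord-arc subdomain $D'$ depends on $D'$ only through its CAD character (and the input Lipschitz bound), uniformly over all such $D'$; otherwise the family of CAD subdomains arising here would not satisfy the \emph{uniform} Carleson hypothesis demanded by part $(ii)$. This is handled by invoking the precise quantitative versions stated in the body (Theorem~\ref{theor:CME:CAD->UR} and Theorem~\ref{theor:CME:Lip->CAD}), where the constant-tracking is made explicit, together with the observation that a bounded chord-arc subdomain of $\ree \setminus E$ has CAD character that may be taken from a fixed controlled range depending only on the UR character of $E$ (this is essentially the content of the Big Pieces of Lipschitz Subdomains structure, cf.\ Proposition~\ref{IBPLS}, used in proving $(ii)$). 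Thus part $(iii)$ follows formally from $(i)$ and $(ii)$, and the real work of the paper is entirely in establishing those two transference steps.
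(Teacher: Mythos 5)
Your reduction of $(iii)$ to $(i)$ and $(ii)$ is correct and is exactly how the paper obtains that part (Corollary~\ref{corol:CME:Lip->UR}): one applies $(i)$ to each bounded chord-arc subdomain $D'\subset\ree\setminus E$, noting that bounded Lipschitz subdomains of $D'$ are bounded Lipschitz subdomains of $\ree\setminus E$, and your point about uniformity of constants is the right one — in the paper the chord-arc subdomains actually fed into $(ii)$ are the sawtooth domains $\Omega_{\sbf}^\pm$ of Lemma~\ref{lemma3.15}, whose CAD character is controlled solely by $n$ and the UR character of $E$ (together with the harmless parameters $\eta,K,\tau$), so the hypothesis of $(ii)$ is met with a uniform bound; incidentally, Proposition~\ref{IBPLS} is an ingredient in the proof of $(i)$, not of $(ii)$, so citing it as the source of this uniformity is slightly off target, though the conclusion you need is correct.

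The genuine gap is that the statement you were asked to prove consists of all three items, and your proposal proves neither $(i)$ nor $(ii)$; it only records the (easy) formal composition. These two parts are the substance of the theorem and each requires a real argument. For $(i)$ (Theorem~\ref{theor:CME:Lip->CAD}) the paper uses the David--Jerison interior big pieces of Lipschitz subdomains (Corollary~\ref{cIBPLS}) to produce, for each dyadic cube $Q$, a Lipschitz piece $\Omega_Q$ meeting $Q$ in ample measure, a stopping-time family $\F$ whose sawtooth $\Omega_{\F,Q}$ splits into Whitney regions either absorbed into $\Omega_Q$ or satisfying a packing condition, a Chebyshev/good-$\lambda$ level-set estimate for the local dyadic area integral, and then the John--Nirenberg-type Lemma~\ref{lemma:J-N} to convert that level-set bound into the dyadic CME, followed by the reduction \eqref{CME:dyadic}. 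For $(ii)$ (Theorem~\ref{theor:CME:CAD->UR}) the paper uses the bilateral corona decomposition of Lemma~\ref{lemma2.1}, the Carleson packing of bad cubes and maximal cubes, the fact that each $\Omega_{\sbf}^\pm$ is a CAD with uniformly controlled character, and the comparability $\delta\approx\delta_{\sbf}^\pm$ on the Whitney regions, again together with \eqref{CME:dyadic} and the control of the interior term $\|F\|_{\C_0}$ as in Remark~\ref{remark:CME0-Lip}. None of this machinery appears in your proposal, so as a proof of Theorem~\ref{t0.1} it is incomplete: it establishes only the implication $(i)\ \&\ (ii)\Longrightarrow(iii)$.
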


%

Secondly, in the class of open sets with UR or ADR boundary, 
or in the class of chord-arc domains or 1-sided chord-arc domains, the Carleson measure estimates are equivalent to local and global  area integral bounds (aka square function estimates). 

\begin{theorem}\label{t0.2}
Let $\Omega\subset\ree$ be an open set with ADR boundary and suppose that we have a collection $\{\Omega'\}_{\Omega'\in\Sigma}$ such that each $\Omega'\in\Sigma$ is an open subset of $\Omega$, $\pom'$ is ADR boundary, and also that all of its local sawtooth subdomains (see Section \ref{sPrelim}) belong to $\Sigma$. Let $G\in L^2_{\rm loc}(\Omega)$ and $H\in C(\Omega)$ and assume that 
\[
\left(\frac{1}{r^{n}}\dint_{B(X,r)}  |G(Y)|^2\, \delta(Y)\,dY\right)^{1/2} \leq C \|H\|_{L^\infty(B(X,2 r))}, \mbox{\ \ for all } B(X,2r)\subset\Omega.
\]
The following statements are equivalent:
	
	\begin{list}{$(\theenumi)$}{\usecounter{enumi}\leftmargin=.8cm
			\labelwidth=.8cm\itemsep=0.2cm\topsep=.01cm
			\renewcommand{\theenumi}{\roman{enumi}}}
		
		\item  $\|G\|_{\C(\Omega')}\lesssim \|H\|_{L^\infty(\Omega')}^2$ for all $\Omega'\in \Sigma$.
		
		\item $\|\A_{\Omega'} G\|_{L^q(\pom')}\leq C \|N_{*,\Omega'} H\|_{L^q(\pom')}$ for all $\Omega'\in \Sigma$ and for some $0<q<\infty$. 
		
		\item  $\|\A_{\Omega'} G\|_{L^q(\pom')}\leq C \|N_{*,\Omega'} H\|_{L^q(\pom')}$ for all $\Omega'\in \Sigma$  and for all $0<q<\infty$. 
	\end{list}
\end{theorem}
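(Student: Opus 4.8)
\textbf{Proof plan for Theorem~\ref{t0.2}.}
The plan is to establish the cycle $(i)\Rightarrow(iii)\Rightarrow(ii)\Rightarrow(i)$, with the implications $(iii)\Rightarrow(ii)$ being trivial. The heart of the matter lies in the two non-trivial implications, and both are proved by passing to sawtooth subdomains and exploiting the hypothesis that $\Sigma$ is closed under taking local sawtooths. I would first fix a dyadic grid $\mathbb{D}(\pom')$ on $\pom'$ (in the Christ--David sense adapted to the ADR setting) and recall the standard dyadic decomposition of $\Omega'$ into Whitney regions $U_Q$ associated to cubes $Q\in\mathbb{D}(\pom')$, together with the associated dyadic cones $\Gamma^{\mathbb{D}}_{\Omega'}(x)=\bigcup_{Q\ni x}U_Q$; by a routine comparison these are equivalent (with controlled aperture changes) to the traditional cones in \eqref{conetrad}, so it suffices to work with the dyadic versions.

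\textbf{$(i)\Rightarrow(iii)$.} For this direction I would fix $\Omega'\in\Sigma$ and $0<q<\infty$, and argue by a good-$\lambda$ inequality relating $\A_{\Omega'}G$ to $N_{*,\Omega'}H$. The standard mechanism (going back to the square-function/non-tangential-maximal-function estimates of Dahlberg--Jerison--Kenig and their descendants) is: for $\lambda>0$ consider the open set $O_\lambda=\{x\in\pom':N_{*,\Omega'}H(x)>\lambda\}$, perform a Whitney/Calder\'on--Zygmund decomposition of $O_\lambda$ into dyadic cubes $\{Q_j\}$, and for each $Q_j$ build the local sawtooth domain $\Omega'_{\mathcal{F}_j,Q_j}$ (the ``sawtooth'' over $Q_j$ with respect to the stopping-time family $\mathcal{F}_j$ that freezes where $N_{*}H$ drops below $2\lambda$, say). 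On this sawtooth one has $\|H\|_{L^\infty(\Omega'_{\mathcal{F}_j,Q_j})}\lesssim \lambda$ on the part of the cone that stays inside, so by hypothesis $(i)$ applied to $\Omega'_{\mathcal{F}_j,Q_j}\in\Sigma$ we get $\|G\|_{\C(\Omega'_{\mathcal{F}_j,Q_j})}\lesssim\lambda^2$; Chebyshev then bounds $\sigma\{x\in Q_j:\A^{\rm loc}_{\Omega'_{\mathcal{F}_j,Q_j}}G(x)>\gamma\lambda\}\lesssim\gamma^{-2}\sigma(Q_j)$. Summing over $j$, controlling the truncated-cone error terms by the pointwise bound from the displayed hypothesis on $G$ versus $H$, and using that $\A_{\Omega'}G(x)\le \A^{\rm loc}_{\text{sawtooth}}G(x)+(\text{tail controlled by }N_{*}H)$ on $Q_j$, one obtains the good-$\lambda$ inequality $\sigma\{\A_{\Omega'}G>2\gamma\lambda,\ N_{*,\Omega'}H\le\beta\lambda\}\le C\gamma^{-2}\sigma\{N_{*,\Omega'}H>\lambda\}$, which upon integration in $\lambda$ yields $(iii)$ for every $q$. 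I would need $\sigma(\pom')<\infty$ or a truncation argument in the unbounded case, handled as usual by first proving the estimate for truncated operators with constants independent of the truncation.

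\textbf{$(ii)\Rightarrow(i)$.} Conversely, fix $\Omega'\in\Sigma$, a surface ball $\Delta=\Delta(x_0,r)$ on $\pom'$, and the associated local sawtooth $\Omega''=\Omega'_{\mathcal{F},Q_0}$ over the dyadic cube $Q_0\approx\Delta$ (with $\mathcal{F}=\emptyset$, i.e.\ the ``full'' local sawtooth), which again lies in $\Sigma$. The point is the pointwise lower bound: for a suitable choice of aperture, every Whitney region $U_Q$ with $Q\subset Q_0$ is captured by the cone $\Gamma_{\Omega''}(x)$ for all $x$ in a fixed fraction of $Q$, hence
\[
\frac{1}{r^n}\dint_{B(x_0,r)\cap\Omega'}|G(Y)|^2\delta(Y)\,dY \lesssim \frac{1}{\sigma(Q_0)}\int_{Q_0}\big(\A_{\Omega''}G(x)\big)^2\,d\sigma(x)
\]
by Fubini together with the ADR property (this is the standard equivalence between Carleson norms and $L^2$ area-integral averages). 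Apply $(ii)$ with $q=2$ on $\Omega''$: $\|\A_{\Omega''}G\|_{L^2(\pom'')}\lesssim\|N_{*,\Omega''}H\|_{L^2(\pom'')}$. Finally, on the sawtooth $\Omega''$ one controls $N_{*,\Omega''}H$ pointwise by $\|H\|_{L^\infty(B(x_0,Cr)\cap\Omega')}$ up to the interior estimate (the displayed hypothesis lets one absorb the innermost Whitney ball), so $\|N_{*,\Omega''}H\|_{L^2(\pom'')}^2\lesssim\sigma(\Delta)\,\|H\|_{L^\infty(\Omega')}^2$; combining gives $\|G\|_{\C(\Omega')}\lesssim\|H\|_{L^\infty(\Omega')}^2$ after taking the supremum over $\Delta$.

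\textbf{Main obstacle.} I expect the genuinely delicate point to be the bookkeeping in the good-$\lambda$ argument for $(i)\Rightarrow(iii)$: namely, cleanly decomposing an arbitrary cone $\Gamma_{\Omega'}(x)$, for $x\in Q_j$, into the portion lying inside the sawtooth $\Omega'_{\mathcal{F}_j,Q_j}$ (where hypothesis $(i)$ applies) plus an exterior/tail portion, and showing the tail contributes $\lesssim N_{*,\Omega'}H(x)^2$ or $\lesssim\lambda^2$ uniformly — this uses both the localization properties of sawtooth domains (that a Whitney cube not in the sawtooth must be ``large,'' hence near the stopping generation where $N_{*}H\lesssim\lambda$) and the displayed compatibility hypothesis between $G$ and $H$ to convert Carleson-type interior bounds into the pointwise control needed. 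The second subtlety, common to both directions, is the aperture-change: cones in the sawtooth $\Omega''$ of a given aperture must be compared to cones in $\Omega'$ of a possibly different aperture, which is handled by the (by now standard, but technically nontrivial) fact that in ADR/$1$-sided CAD settings the $L^q$ norms of $\A_{\Omega,\kappa}$ are comparable for different $\kappa$, proved via the dyadic-cone reformulation and a covering argument; I would isolate this as a lemma and invoke it freely.
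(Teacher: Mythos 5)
Your overall architecture (pass to sawtooth subdomains of $\Omega'$, which stay in $\Sigma$; good-$\lambda$ for $(i)\Rightarrow(iii)$; Carleson box plus Fubini for $(ii)\Rightarrow(i)$) is the same as the paper's, which runs through Theorem \ref{theor:good-lambda} and Remark \ref{Good-lambda:classes}. However, your good-$\lambda$ step is set up on the wrong level set and the inequality you state is false. You decompose $O_\lambda=\{N_{*,\Omega'}H>\lambda\}$ and aim for $\sigma\{\A_{\Omega'}G>2\gamma\lambda,\ N_{*,\Omega'}H\le\beta\lambda\}\le C\gamma^{-2}\sigma\{N_{*,\Omega'}H>\lambda\}$. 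Taking $H$ with $N_{*,\Omega'}H\le\beta\lambda$ everywhere makes the right-hand side vanish, so your inequality would force $\A_{\Omega'}G\le 2\gamma\lambda$ a.e.; but hypothesis $(i)$ only yields $L^2$ averages of $\A G$ of size $\lambda$ (hence exponential integrability), never an $L^\infty$ bound — already for a bounded harmonic function in a half-space the square function is unbounded. Moreover your stopping rule is inverted: stopping ``where $N_*H$ drops below $2\lambda$'' inside a cube of $\{N_*H>\lambda\}$ produces a sawtooth on which $H$ is precisely \emph{not} controlled by $\lambda$. The correct scheme (the paper's) decomposes the level set $\{\A^{Q_0}G>\alpha\}$ of the quantity being dominated into maximal cubes, and on each of them uses the stopping time of Lemma \ref{lemma:Aloc-L2}: stop at cubes missing the good set $F=\{\widehat{N}_*^{Q_0}H\le\gamma\alpha\}$, so that every Whitney region of $\widehat{\Omega}_{\F,Q_0}$ lies in a dyadic cone with vertex in $F$ and hence $\|H\|_{L^\infty(\widehat{\Omega}_{\F,Q_0})}\le\gamma\alpha$; then $(i)$ on that sawtooth plus Chebyshev gives the good-$\lambda$ with $\sigma\{\A^{Q_0}G>\alpha\}$ on the right. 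That form does require an absorption step when integrating, and the paper makes it legitimate by truncating $G$ to $G\,1_{\Omega_k}$; your one-line remark about truncation is where this has to be carried out.

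In $(ii)\Rightarrow(i)$ you ``apply $(ii)$ with $q=2$,'' but $(ii)$ only furnishes \emph{some} $q$, possibly $q<2$, and then $\|\A_{\Omega''}G\|_{L^2}$ is not controlled by $\|\A_{\Omega''}G\|_{L^q}$. This is exactly where the paper needs the John--Nirenberg self-improvement (Lemma \ref{lemma:J-N}): the $L^q$ bound on the Carleson box, with $H$ replaced by $\|H\|_{L^\infty(\Omega')}$ via the upper ADR property of sawtooth boundaries (Proposition \ref{prop:Sawtooths-ADR}, Remark \ref{remark:ADR-sawtooth}), gives the uniform level-set estimate \eqref{eqjn10.1} for the normalized function $F=G/(C\|H\|_{L^\infty(\Omega')})$, and Lemma \ref{lemma:J-N} upgrades it to uniform $L^2$ averages of $\A^QF$, which Fubini turns into the dyadic Carleson bound \eqref{def:CME:dyadic}. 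Finally, your Fubini comparison only controls the Carleson integral over Carleson boxes; when $\pom'$ is bounded and $r\gg\diam(\pom')$, the region $B(x_0,r)\cap\Omega'$ contains Whitney cubes lying in no cone or Carleson box, and it is there — not in ``absorbing the innermost Whitney ball'' — that the displayed compatibility hypothesis \eqref{locCacc-alt} enters, through \eqref{CME:dyadic} and the $\C_0$ term, to pass from the dyadic to the full Carleson estimate.
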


This result is a particular case of Theorem~\ref{theor:good-lambda} (and Remarks \ref{Good-lambda:classes}, \ref{remark:rcones}, and \ref{remark:COA}), which actually contains considerably more detailed statements, as well as equivalence to local area integral bounds. 

Finally, we discuss the transference for the converse bounds on non-tangential maximal function in terms of the square function and their connection with $\eps$-approximability. In this context, one has to tie up explicitly the arguments of $\A$ and $N_*$. Our first result is a reduced version of the combination of Theorems \ref{theor:N<S:Lip->CAD} and \ref{theor:N<S:CAD->CAD:all-p} stated in Corollary \ref{corol: N<S:Lip->CAD}.  
We do not explain in detail conditions \eqref{locbdd} and \eqref{revHol} now, but let us mention that, generally, they are harmless bounds on interior cubes, which, in the context of solutions of elliptic PDE follow from well-known interior estimates.

\begin{theorem}
Let $D\subset \ree$ be a chord-arc domain. Let $u\in W^{1,2}_{\rm loc}(D)\cap C(D)$ so that \eqref{locbdd} and \eqref{revHol} hold for some $p>2$. 
Assume that for every bounded Lipschitz subdomain $\Omega\subset \ree\setminus E$ 
\begin{equation}\label{eqn5.9-bis**}
\left\|N_{*,\Omega} (u-u(X_{\Omega}^+))\right\|_{L^2(\pom)}\leq C \left\|S_\Omega u\right\|_{L^2(\pom)}, 
\end{equation}
holds with a constant depending on $n$ and the Lipschitz character of $\Omega$, 
and where $X_{\Omega}^+$ is any interior corkscrew point of $\Omega$ at the scale of $\diam (\pom)$. Then, for every $\kappa>0$, if $\partial D$ is bounded 
\[
\|N_{*,D,\kappa}(u-u(X_D^+))\|_{L^q(\partial D)}\leq C' \|S_{D,\kappa} u\|_{L^q(\partial D)}, \quad \mbox{for all}\quad 0<q<\infty,
\]
and if $\partial D$ is unbounded and $u(X)\to 0$ as $|X|\to\infty$ then 
\[
\|N_{*,D,\kappa}u\|_{L^q(\partial D)}\leq C' \|S_{D,\kappa} u\|_{L^q(\partial D)}, \quad \mbox{for all}\quad 0<q<\infty,
\]
where $C'$ depends on $q$, $n$, the CAD character of $D$, the implicit constants in \eqref{locbdd} and \eqref{revHol}, the constant $C$ in \eqref{eqn5.9-bis**}, and  $\kappa$; and where $X_{D}^+$ is any interior corkscrew point of $D$ at the scale of $\diam (\partial D)$.	
\end{theorem}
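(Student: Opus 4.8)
The plan is to reduce everything to a single \emph{local} estimate, obtained by transferring the hypothesis~\eqref{eqn5.9-bis**} from bounded Lipschitz subdomains of $D$ by a corona decomposition, and then to deduce the stated conclusions from it. Two preliminary normalizations are convenient: changing the aperture $\kappa$ of $N_*$ or $S$ on a set with the corkscrew condition costs only constants (and can be implemented through the dyadic cones of \eqref{defcone}), so it suffices to work with one large fixed aperture $\kappa_0$ and recover the target $\kappa$ at the end; and I fix a dyadic grid $\dd$ on $\pom$, writing $T_Q$ for the Carleson box over $Q\in\dd$, $X_Q$ for an interior corkscrew point for $Q$ at scale $\ell(Q)$, and $N^{\ell(Q)}_{*,D,\kappa_0}$ for $N_{*,D,\kappa_0}$ with the cone truncated at height $\approx \ell(Q)$. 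The core claim is the local estimate
\begin{equation}\label{localNS}
\int_Q\big(N^{\ell(Q)}_{*,D,\kappa_0}(u-u(X_Q))\big)^2\,d\sigma\;\lesssim\;\int_{CQ}\big(S_{D,\kappa_0}u\big)^2\,d\sigma,\qquad Q\in\dd,
\end{equation}
with constant depending only on $n$, the CAD character of $D$, the constants in \eqref{locbdd}–\eqref{revHol}, and $C$ in \eqref{eqn5.9-bis**}.

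To prove \eqref{localNS}: since $D$ is a CAD, Proposition~\ref{IBPLS} supplies, at the scale of $Q$, a bounded \emph{Lipschitz} subdomain of $D$ with uniformly controlled Lipschitz character whose boundary contains a proportion $\ge\eta$ of $Q$ (with $\eta\in(0,1)$ depending only on the CAD character), which engulfs the cones $\Gamma^{\ell(Q)}_{D,\kappa_0}(x)$ over that good portion, and whose corkscrew point at scale $\approx\ell(Q)$ may be taken Harnack‑connected, \emph{inside that subdomain}, to $X_Q$. Iterating this selection downward from $Q$ produces a coherent corona regime with maximal stopping cubes $\{Q_j\}$, $\sum_j\sigma(Q_j)\le(1-\eta)\sigma(Q)$, and an associated Lipschitz sawtooth subdomain $\Omega\subset T_Q$ of controlled character; setting $w:=u-u(X_Q)$, over $Q\setminus\bigcup_jQ_j$ the whole truncated cone lies in $\Omega$, while over each $Q_j$ only the subcone of height $\lesssim\ell(Q_j)$ is discarded, its leftover ``annular'' part consisting of points over the cap of $Q_j$, which sits on $\partial\Omega$; hence
\[
\int_Q\big(N^{\ell(Q)}_{*,D,\kappa_0}w\big)^2 d\sigma\;\lesssim\;\int_{\partial\Omega}\big(N_{*,\Omega,\kappa_0'}w\big)^2 d\mathcal H^n+\sum_j\int_{Q_j}\big(N^{\ell(Q_j)}_{*,D,\kappa_0}w\big)^2 d\sigma+(\text{deep part}),
\]
where the ``deep'' contribution of points of $\Gamma^{\ell(Q)}_{D,\kappa_0}(x)$ at distance $\gtrsim c\,\ell(Q)$ from $\pom$ is handled directly, estimating the oscillation of $w$ along a Harnack chain to $X_Q$ by $\sum_I\fint_I|\nabla w|$ over Whitney cubes $I$ and then, via Cauchy–Schwarz and the reverse Hölder inequality \eqref{revHol} (using \eqref{locbdd}), by $S_{D,\kappa_0}u$. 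The decisive point is that, with $X_\Omega^+=X_Q$, one has $w(X_\Omega^+)=0$, so \eqref{eqn5.9-bis**} applied to the Lipschitz domain $\Omega$ gives $\|N_{*,\Omega}w\|_{L^2(\partial\Omega)}\lesssim\|S_\Omega w\|_{L^2(\partial\Omega)}$ with \emph{no leftover constant}, and by the standard Fubini identity together with $\dist(\cdot,\partial\Omega)\le\dist(\cdot,\pom)$ on $\Omega\subset D$,
\[
\|S_\Omega w\|_{L^2(\partial\Omega)}^2\approx\dint_\Omega|\nabla u|^2\dist(Y,\partial\Omega)\,dY\le\dint_{T_Q}|\nabla u|^2\dist(Y,\pom)\,dY\lesssim\int_{CQ}(S_{D,\kappa_0}u)^2\,d\sigma.
\]
Feeding this back and iterating the previous display down the corona tree, the sum over stopping cubes is a geometric series that converges because of the uniform gain $\sum_j\sigma(Q_j)\le(1-\eta)\sigma(Q)$, while the square‑function pieces $\int_{CQ'}(S_{D,\kappa_0}u)^2$ produced along the tree have bounded overlap and are all dominated by $\int_{CQ}(S_{D,\kappa_0}u)^2$; this proves \eqref{localNS}.

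From \eqref{localNS} the global statements follow. If $\pom$ is bounded, take $Q$ a top cube, so $X_Q=X_D^+$, and sum over the boundedly many such cubes to get $\|N_{*,D,\kappa_0}(u-u(X_D^+))\|_{L^2(\pom)}\lesssim\|S_{D,\kappa_0}u\|_{L^2(\pom)}$. If $\pom$ is unbounded and $u(X)\to0$, decompose $\Gamma_{D,\kappa_0}(x)$ into dyadic annuli, telescope the corkscrew values $u(X_{Q_k(x)})$ along the cubes $Q_k(x)\ni x$ with $\ell(Q_k)\approx2^k$ — these tend to $0$ — and sum \eqref{localNS} over scales to obtain $\|N_{*,D,\kappa_0}u\|_{L^2(\pom)}\lesssim\|S_{D,\kappa_0}u\|_{L^2(\pom)}$. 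Since the whole argument applies verbatim to every sawtooth subdomain of $D$ (again CAD with controlled character), a good‑$\lambda$ inequality between $N_*$ and $S$ on chord‑arc domains, in the spirit of Theorem~\ref{theor:good-lambda}, then self‑improves the $L^2$ bound to all $0<q<\infty$ with the reference value unchanged; together with the aperture normalization, this is the assertion.

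The main obstacle is precisely the mechanism behind \eqref{localNS}: one must produce, uniformly at every scale of the corona, bounded \emph{Lipschitz} (not merely chord‑arc) subdomains of $D$ with uniformly controlled Lipschitz character that (a) absorb the truncated cones of $D$ over a fixed proportion of each surface cube and (b) share the corkscrew reference point with $D$, so that the subtracted constants in \eqref{eqn5.9-bis**} are genuinely zero at every stage — together with the careful bookkeeping of the ``annular'' transition regions and of the ``deep'' part of the cones that makes the iteration summable. Property (a)–(b) is exactly the big‑pieces‑of‑Lipschitz‑\emph{subdomains} property of chord‑arc domains (Proposition~\ref{IBPLS}); its failure for general uniformly rectifiable sets — where only big pieces of Lipschitz \emph{images}, not subdomains, are available — is what forces the transference to proceed in the two stages Lipschitz~$\to$~CAD and CAD~$\to$~UR.
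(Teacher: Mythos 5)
Your overall strategy for the first stage (use the David--Jerison big pieces of Lipschitz subdomains together with the Fubini identity $\|S_{\Omega}u\|_{L^2(\partial\Omega)}^2\lesssim\iint_{T_Q}|\nabla u|^2\,\delta\,dY$) is indeed the right flavor, but the mechanism you build around it does not close. First, Proposition~\ref{IBPLS} provides, at each scale, a bounded Lipschitz subdomain $\Omega_Q\subset D\cap B_Q$ whose boundary meets $Q$ in a set of proportional measure; it does \emph{not} provide that the truncated cones $\Gamma^{\ell(Q)}_{D,\kappa_0}(x)$ of $D$ over the contact set are contained in $\Omega_Q$, and for a fixed large aperture $\kappa_0$ this containment is false in general (a wide cone over a contact point exits any Lipschitz subdomain of uniformly controlled character, since $\Omega_Q$ is unconstrained near the ``bad'' part of $Q$). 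The paper never uses such a containment: Corollary~\ref{cIBPLS}$(ii)$ only gives, for each $Q'$ meeting $\partial\Omega_Q$, a corkscrew point $Y_{Q'}\in U_{Q'}\cap\Omega_Q$, and the oscillation between an arbitrary cone point $Y\in U_{Q'}$ and $Y_{Q'}$ is paid for by the interior estimate \eqref{locbdd} plus Poincar\'e (estimate \eqref{locbdd:SFE}), i.e.\ by the square function; this substitute for your containment is an essential step your sketch skips. Second, your corona iteration $A(Q)\le B(Q)+\sum_j A(Q_j)$ with $B(Q')=\int_{CQ'}(S_{D}u)^2\,d\sigma$ does not telescope: the energies $B(Q')$ along the tree are nested across generations (a Whitney box near the boundary is recounted once for every tree ancestor above it), so they do not have bounded overlap and their sum is not dominated by $B(Q)$; the measure decay $\sum_j\sigma(Q_j)\le(1-\eta)\sigma(Q)$ does not help because $B(Q')$ is not proportional to $\sigma(Q')$. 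Iterating such an inequality also needs a priori finiteness at small scales, and descending to a stopping cube forces you to switch the subtracted constant from $u(X_Q)$ to $u(X_{Q_j})$, an error you must again absorb into the square function. The paper's Theorem~\ref{theor:N<S:Lip->CAD} avoids all of this by running a genuine good-$\lambda$ (see \eqref{eqn5.1:local}): the stopping cubes are the maximal cubes of the level set of $N_*^{Q_0}v$ (so the dyadic parent supplies the reference value up to $\alpha$), only one generation is used, and the good set is $\{M^{\dd}_{Q_0,2}(\widehat S^{Q_0}v)\le\gamma\alpha\}$, which is exactly what controls $\iint_{T_Q}|\nabla v|^2\delta\,dY$ at the relevant cube without any tree summation.

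More seriously, even granting your local $L^2$ estimate, the passage to all $0<q<\infty$ is not a routine good-$\lambda$ ``in the spirit of Theorem~\ref{theor:good-lambda}'' (that theorem treats $\A<N$, the opposite direction). Any good-$\lambda$ manufactured from the Lipschitz big pieces necessarily carries the $L^2$-based maximal function $M^{\dd}_{2}(\widehat S u)$ on its small side, because the local information is an $L^2$ average of $S$ over an enlarged cube; since $M_2$ is bounded on $L^q$ only for $q>2$, this route gives exactly the range $q>2$ of Theorem~\ref{theor:N<S:Lip->CAD} and nothing more. Reaching $0<q\le 2$ is the whole point of Theorem~\ref{theor:N<S:CAD->CAD:all-p}: one first upgrades the hypothesis to every bounded chord-arc subdomain at the exponent $p>2$ of \eqref{revHol} (using the first theorem and Lemma~\ref{lemma:CAD-geom}), and then runs a second good-$\lambda$ over the CAD sawtooths $\widehat\Omega_{\widetilde\F,Q}$, where the square function on $\partial\Omega_\star\cap D$ is bounded pointwise, at good boundary points, by the dyadic square function $S^{\prime,Q_0}v$ itself -- and it is precisely there (the term $g_5$), not in your ``deep part'' bookkeeping, that the reverse H\"older inequality \eqref{revHol} is indispensable. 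Your proposal never confronts this obstruction, so as written it can only yield the $q>2$ range. (A minor further point: in the unbounded case the paper treats the limit via $u-u(X^+_{\Delta_R})$ and Fatou rather than telescoping corkscrew values, which sidesteps the summability of the telescoped differences.)
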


Our last result is stated in Theorem \ref{theor:eps-approx}. The interior bound \eqref{oscbdd} is, again, a fairly harmless prerequisite which follows from known interior estimates in the context of solutions of elliptic PDEs. We remark that the estimate \eqref{eqn5.9-bis} itself (see below) would 
not make much sense for general  uniformly rectifiable sets, because of topological obstructions (there is no preferred component for a corkscrew point in such a general context), and for that reason we pass directly to $\eps$-approximability.

\begin{theorem}\label{t0.4} 
	Let $E\subset \ree$ be an $n$-dimensional uniformly rectifiable, and suppose that $u\in W^{1,2}_{\rm loc}(\ree\setminus E) \cap C(\ree\setminus E)\cap L^\infty(\ree\setminus E)$ satisfies \eqref{oscbdd}. Assume, in addition, that 
	\[\|\nabla u\|_{\C(\ree\setminus E)}\le C_0' \|u\|_{L^\infty(\ree\setminus E)}\] 
	and that for every bounded chord-arc subdomain $\Omega\subset \ree\setminus E$ 
\begin{equation}\label{eqn5.9-bis}
\left\|N_{*,\Omega} (u-u(X_{\Omega}^+))\right\|_{L^2(\pom)}\leq C \left\|S_\Omega u\right\|_{L^2(\pom)}, 
\end{equation}
holds with a constant depending on $n$ and the CAD character of $\Omega$, and where $X_{\Omega}^+$ is any interior corkscrew point of $\Omega$ at the scale of $\diam (\pom)$.
Then $u$ is $\eps$-approximable on $\ree\setminus E$, with the implicit constants depending on $n$, the UR character of $E$, the constant in \eqref{oscbdd} and in $C_0'$. 
\end{theorem}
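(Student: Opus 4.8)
The plan is to transfer the hypothesis \eqref{eqn5.9-bis}, which holds on chord-arc subdomains, first into a Carleson measure estimate for $\nabla u$ localized on sawtooth subdomains of $\ree\setminus E$, and then to build the $\eps$-approximant by hand using the standard machinery of $\eps$-approximability on the resulting good sawtooth regions, much as in \cite{HMM2}. More precisely, fix $\eps\in(0,1)$ and $\|u\|_{L^\infty(\ree\setminus E)}\le 1$. Working in a dyadic Whitney–Christ cube structure on $\ree\setminus E$, the first step is to use the UR hypothesis on $E$ together with Proposition~\ref{IBPLS} (big pieces of Lipschitz subdomains) to produce, at every surface cube $Q$, a chord-arc (indeed one-sided chord-arc) subdomain $\Omega_{\sbf}\subset\ree\setminus E$ whose boundary captures a big piece of $\Delta(x_Q,\ell(Q))$ and whose CAD character is uniformly controlled. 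This is where the ``bounded chord-arc subdomain'' hypothesis \eqref{eqn5.9-bis} gets applied.

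Next I would run the good-$\lambda$/John–Nirenberg transference of Theorem~\ref{t0.2} (i.e.\ Theorem~\ref{theor:good-lambda}) on the family $\Sigma$ of sawtooth subdomains: the interior hypothesis \eqref{oscbdd} plays the role of the local $L^2$-bound relating $G=\nabla u$ to $H=u$ (after subtracting a corkscrew value), and the assumed global Carleson bound $\|\nabla u\|_{\C(\ree\setminus E)}\le C_0'$ supplies the $N_*$-control needed to close the comparison. Combining this with \eqref{eqn5.9-bis} on each $\Omega_{\sbf}$ yields, for each surface cube $Q$, the bound $\|S_{\Omega_{\sbf}}u\|_{L^2(\partial\Omega_{\sbf})}\lesssim \ell(Q)^{n/2}$ on the sawtooth over $Q$, equivalently a Carleson packing estimate $\sum_{Q'\subset Q}\int\!\!\int_{W(Q')}|\nabla u|^2\,\delta\,dY\lesssim \ell(Q)^n$ on a big piece of $Q$ at each scale. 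A standard stopping-time argument (iterating on the complement of the big pieces, using ADR to sum the error terms) upgrades this to a genuine Carleson measure estimate $\|\nabla u\|_{\C(\ree\setminus E)}\lesssim 1$ with constant independent of $C_0'$ — but really what I want is the refined, localized version together with the $\eps$-approximant simultaneously.

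For the construction of $\vp$: on each good sawtooth region (where the Carleson norm of $|\nabla u|^2\delta$ is small, after a stopping-time decomposition tuned to $\eps$) one has, via \eqref{eqn5.9-bis} and Theorem~\ref{t0.2}, that $u$ is within $\eps$ in $L^\infty$ of a function with an $L^1$-type gradient Carleson bound — this is exactly the classical $\eps$-approximability statement on chord-arc (indeed Lipschitz) domains. One then patches these local approximants across the Whitney decomposition using a partition of unity subordinate to the sawtooth regions; the overlap/commutator terms $|\nabla(\eta_i)(\vp_i-\vp_j)|$ are controlled because neighboring approximants differ by $O(\eps)$ and the partition-of-unity gradients are $O(1/\delta)$, and the number of stopping-time generations contributes the Carleson packing. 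This yields \eqref{eq1.4}–\eqref{eq1.5}.

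The main obstacle I expect is the patching/bookkeeping step: one must show that the errors incurred when gluing the local $\eps$-approximants from different sawtooth subdomains — which a priori live on domains that only overlap in Whitney regions and whose boundary cubes are matched only up to the big-piece construction — still assemble into a single global function with the $L^1$ gradient Carleson bound \eqref{eq1.5}, uniformly in the UR character and independently of the auxiliary constant $C_0'$. Controlling these boundary-layer/overlap contributions requires the full strength of the corona/stopping-time decomposition adapted to the UR set together with the ADR summability of the packing measures, and is precisely the place where the passage from chord-arc to general UR boundary (absent any connectivity) is delicate. The global-versus-truncated issue when $\partial$ (i.e.\ $E$) is unbounded, handled via the $L^\infty$ normalization and the convention on truncated cones, is a comparatively minor point.
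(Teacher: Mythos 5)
Your first step rests on the wrong geometric lemma. Proposition~\ref{IBPLS} is a statement \emph{about chord-arc domains}: it produces interior big pieces of Lipschitz subdomains inside a domain that already has corkscrews and Harnack chains, and it is not applicable to $\ree\setminus E$ for a general UR set $E$, which has no connectivity whatsoever --- the absence of a preferred component is precisely the obstruction this theorem is designed to circumvent (and the reason \eqref{eqn5.9-bis} is only assumed on chord-arc \emph{sub}domains). The mechanism actually used is the bilateral corona decomposition of Lemma~\ref{lemma2.1} together with Lemma~\ref{lemma3.15}: for each stopping-time regime $\sbf$ (and each coherent subregime) one forms the sawtooth domains $\Omega_{\sbf'}^{\pm}$ of \eqref{eq3.2}, which are bounded chord-arc domains with character controlled by the UR character of $E$, and it is exactly on these that hypothesis \eqref{eqn5.9-bis} gets applied. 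Your appeal to Theorem~\ref{theor:good-lambda} to "supply the $N_*$-control" is also not how the pieces fit together: what the construction needs is a Carleson packing condition for an $\eps$-tuned stopping time on the oscillation of $u$ over Whitney regions, and that packing comes from \eqref{eqn5.9-bis} on the CAD sawtooths combined with the assumed Carleson bound for $\nabla u$ and \eqref{oscbdd}; no new global CME is derived (and the claim that one could get it "independent of $C_0'$" is unsupported and unnecessary).

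More seriously, the heart of the argument is missing. You reduce to "the classical $\eps$-approximability statement on chord-arc (indeed Lipschitz) domains" and then propose to patch, but no such statement is available as a black box here: $u$ is not a solution of any elliptic equation, and establishing $\eps$-approximability on a chord-arc domain from CME, \eqref{oscbdd} and $N<S$ is essentially the same task as the theorem itself. The paper does not reprove this; it observes that the construction of \cite[Section 5]{HMM2} --- corona decomposition, $\eps$-dependent stopping time on the oscillation of $u$, definition of $\vp$ as smoothly glued values of $u$ at corkscrew points of the stopping sawtooths, and verification of \eqref{eq1.5} via the packing of the stopping cubes --- carries over verbatim under the present hypotheses, with \eqref{eqn5.9-bis} used in an essential way on the $\Omega_{\sbf}^{\pm}$ (see the footnote to Theorem~\ref{theor:eps-approx}). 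You yourself flag the patching/bookkeeping as "the main obstacle"; that is an accurate self-assessment, because that step together with the packing estimate it depends on \emph{is} the proof, and it is not supplied. As written, the proposal is an outline with an incorrect geometric foundation and the decisive estimates deferred, so it does not constitute a proof.
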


We remark that Theorem \ref{t0.1} (ii) and Theorem \ref{t0.4} were already implicit in our previous work 
\cite{HMM2}, although in the present paper our approach to the former result is simpler than the corresponding 
arguments in \cite{HMM2}.

To conclude, let us reiterate that the fact that our results provide a ``black box" real-variable transference principles allows one to use them considerably beyond the traditional scope. We can treat, for instance, subsolutions and supersolutions of elliptic equations. Another example is higher-order elliptic systems. The best available results to date in this context are restricted to Lipschitz domains \cite{DKPV}. Here we establish, for instance, the following estimates. 

Let $K, m\in\NN$. Let $E$ be an $n$-dimensional uniformly rectifiable set and let $u$ be a weak solution to the system 
\[
Lu
=
\sum_{k=1}^K \sum_{|\alpha|=|\beta|=m} a_{\alpha \beta}^{jk} \partial^\alpha \partial^\beta u^k
=
0, \qquad j=1,\dots,K,
\]
on $\ree\setminus E$. Here, $a_{\alpha \beta}^{jk}$, $1\le \alpha,\beta\le n+1$, $1\le j,k\le K$,  $\alpha=(\alpha_1, \dots, \alpha_{n+1})\in \NN_0^{n+1}$, are real constant symmetric coefficients satisfying the Legendre-Hadamard ellipticity condition (see \eqref{LH-higher}). Then $u$ satisfies the $S<N$ estimates in $\ree\setminus E$, that is,  
\begin{equation*}
\|S_{\ree\setminus E}(\nabla^{m-1}u)\|_{L^p(E)} \leq C \|N_{*,{\ree\setminus E}} (|\nabla^{m-1}u|)\|_{L^p(E)}, \quad 0<p<\infty.
\end{equation*} 
Furthermore, if $D\subset \ree$ is a chord-arc domain with an unbounded boundary and $\nabla^{m-1} u$ vanishes at infinity, we also have the converse estimate 
\begin{equation*}
		\|N_{*,D}\big(\nabla^{m-1} u\big)\|_{L^q(\partial D)}\leq C\big\|S_{D} \big(\nabla^{m-1}u\big)\big\|_{L^q(\partial D)}, \quad \mbox{for all}\quad 0<q<\infty.
		\end{equation*} 

Similar results are valid locally and on bounded domains. We also obtain a version of $\eps$-approximability and Carleson measure estimates in this general context. The reader can consult Section~\ref{appl} for detailed discussion of these results and other applications.

\section{Preliminaries}\label{sPrelim}

We start with some further notation and definitions. 

\begin{list}{$\bullet$}{\leftmargin=0.4cm  \itemsep=0.2cm}
	
	\item We use the letters $c,C$ to denote harmless positive constants, not necessarily
	the same at each occurrence, which depend only on dimension and the
	constants appearing in the hypotheses of the theorems (which we refer to as the
	``allowable parameters'').  We shall also
	sometimes write $a\lesssim b$ and $a \approx b$ to mean, respectively,
	that $a \leq C b$ and $0< c \leq a/b\leq C$, where the constants $c$ and $C$ are as above, unless
	explicitly noted to the contrary.  At times, we shall designate by $M$ a particular constant whose value will remain unchanged throughout the proof of a given lemma or proposition, but
	which may have a different value during the proof of a different lemma or proposition.
	
	\item Given a closed set $E \subset \ree$, we shall
	use lower case letters $x,y,z$, etc., to denote points on $E$, and capital letters
	$X,Y,Z$, etc., to denote generic points in $\ree$ (especially those in $\ree\setminus E$).
	
	\item The open $(n+1)$-dimensional Euclidean ball of radius $r$ will be denoted
	$B(x,r)$ when the center $x$ lies on $E$, or $B(X,r)$ when the center
	$X \in \ree\setminus E$.  A surface ball is denoted
	$\Delta(x,r):= B(x,r) \cap E$ where unless otherwise specified we implicitly assume that $x\in E$.
	
	\item Given a Euclidean ball $B$ or surface ball $\Delta$, its radius will be denoted
	$r_B$ or $r_\Delta$, respectively.
	
	\item Given a Euclidean or surface ball $B= B(X,r)$ or $\Delta = \Delta(x,r)$, its concentric
	dilate by a factor of $\kappa >0$ will be denoted
	$\kappa B := B(X,\kappa r)$ or $\kappa \Delta := \Delta(x,\kappa r).$
	
	\item Given a (fixed) closed set $E \subset \ree$, for $X \in \ree$, we set $\delta(X):= \dist(X,E)$.

	\item We let $H^n$ denote $n$-dimensional Hausdorff measure, and let
	$\sigma := H^n\big|_{E}$ denote the ``surface measure'' on $E$.
	
	\item We will also work with open sets $\Omega\subset\ree$ in which case the previous notations and definitions easily adapt by letting $E:=\partial\Omega$.
	
	\item For a Borel set $A\subset \ree$, we let $1_A$ denote the usual
	indicator function of $A$, i.e. $1_A(x) = 1$ if $x\in A$, and $1_A(x)= 0$ if $x\notin A$.
	
	\item For a Borel set $A\subset \ree$,  we let $\interior(A)$ denote the interior of $A$.
	

	\item Given a Borel measure $\mu$, and a Borel set $A$, with positive and finite $\mu$ measure, we
	set $\fint_A f d\mu := \mu(A)^{-1} \int_A f d\mu$.

	\item We shall use the letter $I$ (and sometimes $J$)
	to denote a closed $(n+1)$-dimensional Euclidean dyadic cube with sides
	parallel to the co-ordinate axes, and we let $\ell(I)$ denote the side length of $I$.
	If $\ell(I) =2^{-k}$, then we set $k_I:= k$.
	Given an ADR set $E\subset \ree$, we use $Q$ to denote a dyadic ``cube''
	on $E$.  The
	latter exist (cf. \cite{DS1}, \cite{Ch}), and enjoy certain properties
	which we enumerate in Lemma \ref{lemmaCh} below.
	
\end{list}

\begin{lemma}[Existence and properties of the ``dyadic grid'', \cite{DS1, DS2, Ch}]\label{lemmaCh}
Suppose that $E\subset \ree$ is an $n$-dimensional ADR set.  Then there exist
constants $ a_0>0,\, \gamma>0$ and $C_1<\infty$, depending only on dimension and the
ADR constant, such that for each $k \in \mathbb{Z},$
there is a collection of Borel sets (``cubes'')
$$
\mathbb{D}_k:=\{Q_{j}^k\subset E: j\in \mathfrak{I}_k\},$$ where
$\mathfrak{I}_k$ denotes some (possibly finite) index set depending on $k$, satisfying

\begin{list}{$(\theenumi)$}{\usecounter{enumi}\leftmargin=.8cm
\labelwidth=.8cm\itemsep=0.2cm\topsep=.1cm
\renewcommand{\theenumi}{\roman{enumi}}}

\item $E=\cup_{j}Q_{j}^k\,\,$ for each
$k\in{\mathbb Z}$.

\item If $m\geq k$ then either $Q_{i}^{m}\subset Q_{j}^{k}$ or
$Q_{i}^{m}\cap Q_{j}^{k}=\emptyset$.

\item For each $(j,k)$ and each $m<k$, there is a unique
$i$ such that $Q_{j}^k\subset Q_{i}^m$.

\item $\diam\big(Q_{j}^k\big)\leq C_1 2^{-k}$.

\item Each $Q_{j}^k$ contains some ``surface ball'' $\Delta \big(x^k_{j},a_02^{-k}\big):=
B\big(x^k_{j},a_02^{-k}\big)\cap E$.

\item $H^n\big(\big\{x\in Q^k_j:{\rm dist}(x,E\setminus Q^k_j)\leq \varrho \,2^{-k}\big\}\big)\leq
C_1\,\varrho^\gamma\,H^n\big(Q^k_j\big),$ for all $k,j$ and for all $\varrho\in (0,a_0)$.
\end{list}
\end{lemma}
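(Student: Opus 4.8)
The plan is to establish this as the Christ--David dyadic cube construction (see \cite{DS1, DS2, Ch}). Since only the ADR property \eqref{eq1.ADR} of $E$ is invoked, and ADR makes $(E,|\cdot|,\sigma)$ a doubling metric measure space, the whole argument can be carried out intrinsically on $E$, never using the ambient Euclidean structure of $\ree$ beyond the restricted metric. The construction builds, at each dyadic scale, a net of points together with a ``Voronoi'' partition subordinate to it, organizes the nets across scales into a forest, and then defines each cube as the union of the small-scale Voronoi cells lying below it in the forest. Properties (i)--(v) will fall out essentially by inspection; property (vi), the ``thin boundary'' estimate, is the one genuinely delicate point, and is where I would concentrate the effort.

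First I would fix a small parameter $a_0\in(0,1)$ (to be adjusted at the end) and, for each $k\in\ZZ$, select a maximal $a_0 2^{-k}$-separated subset $X_k=\{x_j^k\}_{j\in\mathfrak{I}_k}$ of $E$. Maximality yields two things at once: the balls $\{B(x_j^k, a_0 2^{-k})\}_j$ cover $E$, while the balls $\{B(x_j^k, a_0 2^{-k}/2)\}_j$ are pairwise disjoint; combined with ADR, the latter shows $\mathfrak{I}_k$ is (locally) finite, hence at most countable. Using the covering property at the coarser scale, I would then assign to each $x\in X_{k+1}$ a unique ``parent'' $\pi(x)\in X_k$ with $|x-\pi(x)|<a_0 2^{-k}$, choosing, say, the nearest such net point and breaking ties by a fixed enumeration of $\bigsqcup_k X_k$. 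Iterating $\pi$ turns $\bigsqcup_k X_k$ into a forest; write $x_i^m\preceq x_j^k$ (for $m\ge k$) to mean that $x_j^k$ is an ancestor of $x_i^m$.

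Next, for each fixed $k$ I would take the Voronoi partition $\{V_j^k\}_j$ of $E$ associated with $X_k$ (each point assigned to a closest net point, ties broken by the same fixed enumeration), so that $E=\bigsqcup_j V_j^k$ and $\Delta(x_j^k, a_0 2^{-k}/2)\subset V_j^k\subset \Delta(x_j^k, a_0 2^{-k})$, and then set
\[
Q_j^k\;:=\;\bigcup\bigl\{\,V_i^m:\ m\ge k,\ x_i^m\preceq x_j^k\,\bigr\}.
\]
With this definition (i) holds because the $\{V_j^k\}_j$ partition $E$ at each scale; (ii) and (iii) hold because ancestry is a tree order, so two cubes are either nested or disjoint and each has exactly one ancestor at every coarser scale; (iv) follows by telescoping along a chain of parents and summing the geometric series $\sum_{\ell\ge k}a_0 2^{-\ell}\lesssim a_0 2^{-k}$, giving $Q_j^k\subset B(x_j^k, C_1 2^{-k})$; and (v) holds because $x_j^k\preceq x_j^k$ forces $V_j^k\subset Q_j^k$, while $V_j^k\supset\Delta(x_j^k,a_0 2^{-k}/2)$ since any point within $a_0 2^{-k}/2$ of $x_j^k$ is strictly closer to $x_j^k$ than to any other net point (all of which lie at distance $\ge a_0 2^{-k}$). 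Relabeling $a_0$ by $a_0/2$ gives (v) as stated.

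The remaining property (vi) is the main obstacle, and here I would follow Christ \cite{Ch} (see also \cite[Ch.~3]{DS2}). The boundary of $Q_j^k$ relative to $E$ is assembled, scale by scale, from boundaries of the Voronoi cells $V_i^m$, $m\ge k$, the portion coming from scale $m$ living in a layer of width $\sim a_0 2^{-m}$. The heart of the matter is a \emph{one-step} estimate: for some $c=c(n,\mathrm{ADR})<1$, a fraction at least $1-c$ of the $\sigma$-mass of each cube $Q_j^k$ lies at distance $\gtrsim a_0 2^{-k-1}$ from $\partial Q_j^k$. To prove this one uses ADR to bound the $\sigma$-measure of the annular region near $\partial V_i^k$ by a small, scale-independent proportion of $\sigma(V_i^k)$; a convenient way to secure the needed uniformity is to replace $X_k$ by a randomly translated copy and estimate the \emph{expected} mass of the bad layer, after which some admissible choice of grid works. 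Iterating the one-step estimate over the scales $k,k+1,\dots,k+N$, and noting that a point within $\varrho 2^{-k}$ of $\partial Q_j^k$ (with $\varrho\approx 2^{-N}$) must fall in the bad fraction at every one of these $N$ scales, one obtains
\[
\sigma\bigl(\{x\in Q_j^k:\dist(x,E\setminus Q_j^k)\le\varrho\, 2^{-k}\}\bigr)\lesssim(1-c)^{N}\,\sigma(Q_j^k)\lesssim\varrho^{\gamma}\,\sigma(Q_j^k),
\]
with $\gamma:=\log_2\tfrac{1}{1-c}>0$, which is exactly (vi). I expect the bookkeeping in this iteration --- tracking how the bad layers at different scales interact, and verifying the one-step estimate uniformly in $i,k$ via ADR --- to be the only part requiring real care; everything else is soft.
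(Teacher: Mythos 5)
The paper does not actually prove Lemma \ref{lemmaCh}: it is imported from \cite{DS1,DS2,Ch} (with the remark that the dyadic parameter may be taken to be $1/2$), so the relevant comparison is with the Christ--David construction, whose skeleton you reproduce correctly (maximal nets, parent map, telescoping for (iv), inner ball for (v), iterated one-step estimate for (vi)). However, your definition of the cubes contains a genuine gap: setting $Q_j^k:=\bigcup\{V_i^m:\ m\ge k,\ x_i^m\preceq x_j^k\}$, with the $V_i^m$ taken from \emph{independent} Voronoi partitions at each scale, does not yield property (ii) (nor (iii)). The Voronoi assignment of a point $x\in E$ is not coherent across scales: the nearest scale-$m$ net point to $x$ and the nearest scale-$m'$ net point to $x$ may have different ancestors at scale $k$, so $x$ lands in two distinct scale-$k$ ``cubes''. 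For instance, take $E=\RR\times\{0\}\subset\RR^2$ and (after rescaling) $X_k=2^{-k}\ZZ$; whichever parent in $X_0=\ZZ$ is assigned to the net point $1/2\in X_1$, say $0$, every $x=1/2+\eps$ with small $\eps>0$ lies in the scale-$1$ cell of $1/2$, hence in the cube attached to $0$, and simultaneously in the scale-$0$ cell of $1$, hence in the cube attached to $1$: two scale-$0$ cubes overlap without either containing the other. Thus the tree order on net points does not transfer to unions of independently built Voronoi partitions, and this coherence problem is precisely the nontrivial content of the construction. Christ resolves it by defining closed cubes as closures of the sets of descendant net points and using the small-boundary estimate to control the overlaps, which as stated in \cite{Ch} produces open cubes covering $E$ only up to an $H^n$-null set, so the exact partition in $(i)$ requires the further ``half-open'' modification (as in David, or Hyt\"onen--Kairema), in which each point is assigned a single, scale-coherent chain of ancestors and each cube is literally the disjoint union of its children.

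Your outline of (vi) has the right shape: a one-step estimate, made uniform via ADR (randomizing the net is one admissible device), iterated over $N\approx\log_2(1/\varrho)$ generations to give $(1-c)^N\lesssim\varrho^\gamma$. But as written it presupposes a coherent nested family, since the iteration uses that the ancestors at every intermediate scale of a point lying within $\varrho\,2^{-k}$ of $E\setminus Q_j^k$ are themselves boundary-adjacent; this is meaningless until (ii)--(iii) are in place. So the proposal is not correct as it stands: the definition of the cubes must first be repaired along the lines above (Christ's closure-of-descendants construction with the measure-zero correction, or a consistent child-to-parent assignment yielding genuinely half-open cubes) before the diameter, inner-ball, and thin-boundary arguments can be run.
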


A few remarks are in order concerning this lemma.

\begin{list}{$\bullet$}{\leftmargin=0.4cm  \itemsep=0.2cm}

\item In the setting of a general space of homogeneous type, this lemma has been proved by Christ
\cite{Ch}, with the
dyadic parameter $1/2$ replaced by some constant $\delta \in (0,1)$.
In fact, one may always take $\delta = 1/2$ (cf.  \cite[Proof of Proposition 2.12]{HMMM}).
In the presence of the Ahlfors-David
property (\ref{eq1.ADR}), the result already appears in \cite{DS1,DS2}.

\item  For our purposes, we may ignore those
$k\in \mathbb{Z}$ such that $2^{-k} \gtrsim {\rm diam}(E)$, in the case that the latter is finite.

\item  We shall denote by  $\mathbb{D}=\mathbb{D}(E)$ the collection of all relevant
$Q^k_j$, i.e., $$\mathbb{D} := \cup_{k} \mathbb{D}_k,$$
where, if $\diam (E)$ is finite, the union runs
over those $k$ such that $2^{-k} \lesssim  {\rm diam}(E)$. When $E$ is bounded there exists a cube $Q_0\in \dd(\pom)$ such that $Q_0=\pom$ and  $Q\in \dd_{Q_0}$ for any $Q\in\dd(\pom)$.


\item For a dyadic cube $Q\in \mathbb{D}_k$, we shall
set $\ell(Q) = 2^{-k}$, and we shall refer to this quantity as the ``length''
of $Q$.  Evidently, $\ell(Q)\approx \diam(Q).$

\item For a dyadic cube $Q \in \mathbb{D}$, we let $k(Q)$ denote the ``dyadic generation''
to which $Q$ belongs, i.e., we set  $k = k(Q)$ if
$Q\in \mathbb{D}_k$; thus, $\ell(Q) =2^{-k(Q)}$.

\item Given $Q\in\dd$ we write $\widetilde{Q}$ to denote the dyadic parent of $Q$, that is, the unique dyadic cube $\widetilde{Q}$ with $Q\subset\widetilde{Q}$ and $\ell(\widetilde{Q})=2\ell(Q)$. Also, the children of $Q$ are the dyadic cubes $Q'\subset Q$ with $\ell(Q')=\ell(Q)/2$.

\item Properties $(iv)$ and $(v)$ imply that for each cube $Q\in\mathbb{D}$,
there is a point $x_Q\in E$, a Euclidean ball $B(x_Q,r)$ and a surface ball
$\Delta(x_Q,r):= B(x_Q,r)\cap E$ such that
$c\ell(Q)\le r\le\ell(Q)$ for some uniform constant $0<c<1$
and
\begin{equation}\label{cube-ball}
\Delta(x_Q,2r)\subset Q \subset \Delta(x_Q,Cr),\end{equation}
for some uniform constant $C$.
We shall denote this ball and surface ball by
\begin{equation}\label{cube-ball2}
B_Q:= B(x_Q,r) \,,\qquad\Delta_Q:= \Delta(x_Q,r),\end{equation}
and we shall refer to the point $x_Q$ as the ``center'' of $Q$.

\end{list}

\begin{definition}\label{defHLmax} Let $E\subset \ree$ be an $n$-dimensional ADR set. By $M^{\dd}=M^{\dd(E)}$ we denote the dyadic Hardy-Littlewood maximal function on $E$, that is, for $f\in L^1_{\rm loc}(E)$
\[M^{\dd}f(x)=\sup_{x\in Q\in\dd(E)} \fint_Q |f(y)|\,d\sigma(y), 
\]
and, for $0<p<\infty$, we also write $M^{\dd}_{p} f=M^{\dd} (|f|^p)^\frac1p$.  Analogously, if $Q_0\in\dd(E)$, we write $M_{Q_0}^{\dd}$ for the dyadic Hardy-Littlewood maximal function localized to $Q_0$,
\[M_{Q_0}^{\dd}f(x)=\sup_{x\in Q\in\dd_{Q_0}} \fint_Q |f(y)|\,d\sigma(y), 
\]
where $\dd_{Q_0}(E)=\{Q\in\dd(E): Q\subset Q_0\}$, and, for $0<p<\infty$, we also write $M^{\dd}_{Q_0,p} f=M_{Q_0}^{\dd} (|f|^p)^\frac1p$.
\end{definition}

Let $\Omega\subset\ree$ be an open set so that $\pom$ is ADR. Let $\mathcal{W}=\W(\Omega)$ denote a collection
of (closed) dyadic Whitney cubes of $\Omega$, so that the cubes in $\mathcal{W}$
form a pairwise non-overlapping covering of $\Omega$, which satisfy
\begin{equation}\label{Whintey-4I}
4 \diam(I)\leq
\dist(4I,E)\leq \dist(I,\pom) \leq 40\diam(I)\,,\qquad \forall\, I\in \mathcal{W}\,\end{equation}
(just dyadically divide the standard Whitney cubes, as constructed in  \cite[Chapter VI]{St},
into cubes with side length 1/8 as large)
and also
$$(1/4)\diam(I_1)\leq\diam(I_2)\leq 4\diam(I_1)\,,$$
whenever $I_1$ and $I_2$ touch.

Next, we choose a small parameter $0<\tau_0< 2^{-4}$ (depending only on dimension), so that
for any $I\in \W$, and any $\tau \in (0,\tau_0]$,
the concentric dilate
$I^*(\tau):= (1+\tau) I$ still satisfies the Whitney property
\begin{equation}\label{whitney}
\diam I\approx \diam I^*(\tau) \approx \dist\left(I^*(\tau), \pom\right) \approx \dist(I,\pom)\,, \quad 0<\tau\leq \tau_0\,.
\end{equation}
Moreover,
for $\tau\leq\tau_0$ small enough, and for any $I,J\in \W$,
we have that $I^*(\tau)$ meets $J^*(\tau)$ if and only if
$I$ and $J$ have a boundary point in common, and that, if $I\neq J$,
then $I^*(\tau)$ misses $(3/4)J$.

\begin{definition}[\textbf{Whitney-dyadic structure}]\label{def:WD-struct}
Let $\Omega\subset\ree$ be an open set so that $\pom$ is ADR. Let $\mathcal{W}=\W(\Omega)$ denote a collection
of (closed) dyadic Whitney cubes of $\Omega$ as in \eqref{Whintey-4I}. Let $\dd=\dd(\pom)$ be the collection of dyadic cubes from Lemma \ref{lemmaCh} and given the parameters $\eta<1$ and $K>1$, set 
\begin{equation}\label{eq3.1}
\W^0_Q:= \left\{I\in \W:\,\eta^{1/4} \ell(Q)\leq \ell(I)
\leq K^{1/2}\ell(Q),\ \dist(I,Q)\leq K^{1/2} \ell(Q)\right\},
\end{equation}
A \textbf{Whitney-dyadic structure} for $\Omega$ with parameters $\eta$ and $K$ is a family $\{\W_Q\}_{Q\in\dd}\subset\W$ satisfying the following conditions:  
\begin{list}{$(\theenumi)$}{\usecounter{enumi}\leftmargin=.8cm
		\labelwidth=.8cm\itemsep=0.2cm\topsep=.1cm
		\renewcommand{\theenumi}{\roman{enumi}}}
\item $\W_Q^0\neq\emptyset$ for every $Q\in\dd$.

\item $\W^0_Q\subset \W_Q$ for every $Q\in\dd$.

\item 	There exists $C\ge 1$  such that, for every $Q\in\dd$,

\begin{equation}\label{eq2.whitney2}
\begin{gathered}
C^{-1}\eta^{1/2} \ell(Q)\leq \ell(I) \leq CK^{1/2}\ell(Q)\,, \quad \forall I\in \mathcal{W}_Q,
\\
\dist(I,Q)\leq CK^{1/2} \ell(Q)\,,\quad\forall I\in \mathcal{W}_Q.
\end{gathered}
\end{equation}
\end{list}	
\end{definition}

In principle, for the previous definition, $\eta$ and $K$ are arbitrary, but we will typically need to assume that $\eta$ is sufficiently small and $K$ is sufficiently large. We will do so and as a consequence the constant $C$ will be independent of $\eta$ and $K$ and will depend on dimension, ADR, and some other intrinsic constants depending on the different scenarios on which we work. In particular, it is convenient to assume, and we will do so, that $K\ge 40^2 n$ so that given any $I\in\W$ such that $\ell(I) \lesssim \diam(E)$, if we write $Q_I^*$ for (one) nearest dyadic cube to $I$ with $\ell(I) = \ell(Q_I^*)$ then $I\in \W_{Q_I^*}^0\subset \W_{Q_I^*}$. Note that there can be more than one choice of $Q_I^*$, but at this point we fix one so that in what follows $Q_I^*$ is unambiguously defined.

Below we will discuss a few special cases depending on
whether we have some extra information about $\Omega$ or $\pom$. The main idea consists in constructing some kind of ``Whitney regions'' which will allow us to introduce some ``Carleson boxes'' and ``sawtooth subdomains''. The construction of the Whitney regions depends very much on the background assumptions, having extra information about $\Omega$ or $\pom$ will allow us to augment the collections $\W_Q^0$ to define $\W_Q$ so that we gain some connectivity on the corresponding Whitney regions and hence the resulting subdomains would have better properties. We consider four cases. In the first one, treated in Section \ref{sections:ADR}, we assume only that $\Omega=\ree\setminus E$ where $E$ is ADR (but is not necessarily UR)
and we set $\W_Q=\W_Q^0$ (here we do not gain any connectivity). The second case is considered in Section \ref{sections:UR} and deals with $\Omega=\ree\setminus E$ where $E$ is UR, in which case we can invoke Lemma \ref{lemma2.1} below  and use the Lipschitz graphs associated to the 
good regimes so that the augmented collection $\W_Q$ creates two nice Whitney regions, 
one each lying respectively above and below the Lipschitz graph. Third, when $\Omega$ is a 1-sided CAD we can augment $\W_Q^0$ using that $D$ is Harnack chain connected so that the resulting collections $\W_Q$ give some Whitney regions which produce Carleson boxes and sawtooth subdomains which are 1-sided CAD, see Section \ref{sections:1-sided:CAD}. We repeat the same construction in our last case in Section \ref{sections:CAD},  where $\Omega$ is a CAD. The fact that $\Omega$ satisfies the exterior corkscrew condition allows us to conclude that Carleson boxes and sawtooth subdomains are as well.

To continue with our discussion let $\Omega\subset\ree$ be an open set so that $\pom$ is ADR. Let $\mathcal{W}=\W(\Omega)$ and $\dd=\dd(\pom)$ be as above and let $\{\W_Q\}_{Q\in\dd}$ be a Whitney-dyadic structure for $\Omega$ with some parameters $\eta$ and $K$ (we will assume that $\eta$ is sufficiently small and $K$ is sufficiently large). Fix $0<\tau\le \tau_0/4$ as above. Given an  arbitrary $Q\in\dd$, we may define an associated
\textbf{Whitney region} $U_Q$ (not necessarily connected), as follows:
\begin{equation}\label{eq3.3aa}
U_Q=U_{Q,\tau}:= \bigcup_{I\in \W_Q} I^*(\tau)
\end{equation}
For later use, it is also convenient to introduce some fattened version of $U_Q$
\begin{equation}\label{eq3.3aa-fat}
\widehat{U}_Q=U_{Q,2\,\tau}:= \bigcup_{I\in \W_Q} I^*(2\,\tau). 
\end{equation}
When the particular choice of $\tau\in (0,\tau_0]$ is not important,
for the sake of notational convenience, we may
simply write $I^*$ and $U_Q$  in place of $I^*(\tau)$ and $U_{Q,\tau}$.

We may also define the \textbf{Carleson box} relative to $Q\in\dd$, by
\begin{equation}\label{eq3.3a}
T_Q=T_{Q,\tau}:=\interior\left(\bigcup_{Q'\in\dd_Q} U_{Q,\tau}\right)\,,
\end{equation}
where
\begin{equation}\label{eq3.4a}
\dd_Q:=\left\{Q'\in\dd:Q'\subset Q\right\}\,.
\end{equation}
Let us note that we may choose $K$ large enough so that, for every $Q$,
\begin{equation}\label{eq3.3aab}
T_{Q,\tau}\subset T_{Q,\tau_0} \subset B_Q^*:= B\left(x_Q,K\ell(Q)\right).
\end{equation}
We also observe that for any $N\ge 1$ we have 
\begin{equation}\label{eq3.3aab:inte}
B_Q\cap \Omega \subset T_{Q,\tau/N}.
\end{equation}
To see this, let $Y\in B_Q\cap \Omega  =B(x_Q,r)\cap\Omega$ (cf. \eqref{cube-ball}, \eqref{cube-ball2}) and pick $I\in\W$ with $I\ni Y$. Note that $\ell(I)\le\dist(I,\pom)/4\le |Y-x_Q|/4<r/4\le \ell(Q)/4$.
Take $\widehat{y}\in Q$ so that $\dist(Y,Q)=|Y-\widehat{y}|$ and select $Q_Y\ni\widehat{y}$ with $\ell(Q_Y)=\ell(I)\le \ell(Q)/4$.  Thus, $Q_Y\in\dd_{Q}$ and 
\[
\dist(I,Q_Y)\le |Y-\widehat{y}|=\dist(Y,Q)\le |Y-x_Q|<r\le\ell(Q).
\]
All these show that $I\in \W_Q^0\subset \W_Q$ and consequently $Y\in \interior(I^*(\tau/N))\subset T_{Q,\tau/N}$ as  desired. 

It is convenient to introduce the \textbf{Carleson box} $T_\Delta$ relative to $\Delta=\Delta(x,r)$ with $x\in \pom$ and $0<r<\diam(\pom)$. Let $k(\Delta)$ denote the unique $k\in\ZZ$ such that $2^{-k-1}<200 r\le 2^{-k}$ and set
\[
\dd^\Delta:=\{Q\in\dd_{k(\Delta)}: Q\cap 2\Delta\neq\emptyset\}.
\]
We then define 
\begin{equation}\label{eq3.3a:Delta}
T_\Delta=T_{\Delta,\tau}:=\interior\left(\bigcup_{Q\in\dd^\Delta} \overline{T_Q}\right).
\end{equation}
Much as in \cite[(3.60)]{HM-I} if we write $B_\Delta=B(x,r)$ so that $\Delta=B_\Delta\cap E$, we have by taking $K$ possibly larger
\begin{equation}\label{eq3.3aab:Delta}
\frac54 B_\Delta\cap\Omega \subset T_{\Delta}\subset B(x, Kr)\cap\Omega.
\end{equation}

For future reference, we also introduce dyadic sawtooth regions as follows.
Given a family $\mathcal{F}$ of disjoint cubes $\{Q_j\}\subset \mathbb{D}$, we define
the {\bf global discretized sawtooth} relative to $\F$ by
\begin{equation}\label{eq2.discretesawtooth1}
\dd_{\F}:=\dd\setminus \bigcup_{\F} \dd_{Q_j}\,,
\end{equation}
i.e., $\dd_{\F}$ is the collection of all $Q\in\dd$ that are not contained in any $Q_j\in\F$.
Given some fixed cube $Q$,
the {\bf local discretized sawtooth} relative to $\F$ by
\begin{equation}\label{eq2.discretesawtooth2}
\dd_{\F,Q}:=\dd_Q\setminus \bigcup_{\F} \dd_{Q_j}=\dd_\F\cap\dd_Q.
\end{equation}
Note that we can also allow $\F$ to be empty in which case $\dd_{\emptyset}=\dd$ and $\dd_{\emptyset,Q}=\dd_Q$.

Similarly, we may define geometric sawtooth regions as follows.
Given a family $\mathcal{F}\subset\dd$ of disjoint cubes as before
we define the {\bf global sawtooth} and the {\bf local sawtooth} relative to $\mathcal{F}$ by respectively
\begin{equation}\label{eq2.sawtooth1}
\Omega_{\mathcal{F}}:= {\rm int } \bigg( \bigcup_{Q'\in\dd_\F} U_{Q'}\bigg)\,,
\qquad
\Omega_{\mathcal{F},Q}:=  {\rm int } \bigg( \bigcup_{Q'\in\dd_{\F,Q}} U_{Q'}\bigg)\,.
\end{equation}
Note that $\Omega_{\emptyset,Q}=T_Q $.
For the sake of notational convenience, we set
\begin{equation}\label{Def-WF}
\W_{\F}:=\bigcup_{Q'\in\dd_{\F}}\W_{Q'}\,,\qquad
\W_{\F,Q}:=\bigcup_{Q'\in\dd_{\F,Q}}\W_{Q'}\,,\end{equation}
so that in particular, we may write
\begin{equation}\label{eq3.saw}
\Omega_{\mathcal{F},Q}={\rm int }\,\bigg(\bigcup_{I\in\,\W_{\F,Q}} I^*\bigg)\,.
\end{equation}

Finally, for every $x\in \pom$, we define non-tangential approach regions, \textbf{dyadic cones}, as 
\begin{equation}\label{defcone}
\Gamma(x) = \bigcup_{Q\in\dd:\,Q\ni x} U_Q.
\end{equation}
Their local (or truncated) versions are given by 
\begin{equation}\label{defconetrunc}
\Gamma^Q(x) = \bigcup_{Q'\in\dd_Q:\,Q'\ni x} U_{Q'},
\qquad x\in Q.
\end{equation}
When $\pom$ is bounded, there exists a cube $Q_0\in \dd(\pom)$ such that $Q_0=\pom$ and  $Q\in \dd_{Q_0}$ for any  $Q\in\dd(\pom)$. In particular, $\Gamma_{Q}(\cdot)\subset \Gamma_{Q_0}(\cdot)\subset \{X\in\Omega:\dist(X,\pom)\lesssim \diam(\pom)\}$ and all the cones are bounded.

Note that all the previous objects have been defined using the Whitney regions $U_Q$ (made out of dilated Whitney cubes $I^*(\tau)$). One can analogously use the fattened Whitney regions $\widehat{U}_Q$ (composed
of the union of dilated Whitney cubes $I^*(2\tau)$). In that case 
we will use the notation $\widehat{T}_Q$, $\widehat{T}_\Delta$, $\widehat{\Omega}_\F$, $\widehat{\Omega}_{\F,Q}$, $\widehat{\Gamma}(\cdot)$, $\widehat{\Gamma}^Q(\cdot)$.

We will always assume that $K$ is large enough (say $K\ge 10^4n$) so that $\widehat\Gamma_{\Omega,1}(x)\subset \Gamma(x)$ (cf. \eqref{conetrad}) for every $x\in \pom$. Indeed, let $Y\in \Gamma_{\Omega,1}(x)$ and pick $I\in\W$ with $Y\in I$. Take $Q\in\dd$ with $Q\ni x$ and $\ell(Q)=\ell(I)$. Then,
\[
\dist(I,Q)\le |Y-x|\le 2\dist(Y,\pom)\le 2(\diam(I)+\dist(I,\pom))\le 82\,\diam(I)<100\sqrt{n}\ell(Q).
\]
Hence, $I\in \W^0_Q\subset \W_Q$ provided $100\sqrt{n}\le \sqrt{K}$ and thus $I\subset U_Q\subset\Gamma(x)$ as desired.

\begin{remark}\label{remark:CMO0} 
	It is convenient to introduce 
a condition on interior Whitney balls, that is much weaker than CME itself.
Let $\Omega\subset \ree$ be an open set. For every $F\in L^2_{\rm loc}(\Omega)$ we set
	\begin{equation}\label{eqdefCME-interior:def}
	\|F\|_{\C_0(\Omega)} := 
	\sup_{X\in \Omega} \,\frac1{\delta(X)^{n-1}}\dint_{B(X,\delta(X)/2)} |F(Y)|^2 \,dY,
	\end{equation}
	where $\delta(\cdot)=\dist(\cdot,\pom)$.
	
	Note that for any $X\in \Omega$ we have that $B(X,\delta(X)/2)\subset B(\hat{x},3\delta(X)/2)\cap \Omega$ with $\hat{x}\in \pom$ so that $\delta(X)=|X-\hat{x}|$, and $\delta(Y)\ge \delta(X)/2$ for every $Y\in B(X,\delta(X)/2)$. Hence,
	\begin{equation}\label{eqdefCME-interior}
	\|F\|_{\C_0(\Omega)} 
	\le
	2\,\Big(\frac32\Big)^n\|F\|_{\C(\Omega)},
	\end{equation}
	and $\|F\|_{\C_0(\Omega)}<\infty$ is necessary  for \eqref{eqdefCME} to hold. 
	
	We note that  in all applications to the CME for 
	solutions of elliptic PDEs, $\|F\|_{\C_0(\Omega)}$ will be bounded
	automatically, by Caccioppoli's inequality (since $F$ will be of the form  $\nabla u$ or $\nabla^m u$ with $u$ being a bounded solution). We shall discuss this in more detail together with the corresponding applications. 
\end{remark}

We introduce a dyadic version of Definition \ref{defCME}. Given $\Omega\subset \ree$, an open set with $\pom$ being ADR, let $\{\W_Q\}_{Q\in\dd(\pom)}$ be a Whitney-dyadic structure for $\Omega$ with some parameters $\eta$ and $K$. We define, for every $F\in L^2_\loc(\Omega)$,
\begin{align}\label{def:CME:dyadic}
\|F\|_{\C^{\rm dyad}(\Omega)}:= \sup_{Q\in\dd(\pom)}\frac1{\sigma(Q)}\dint_{T_{Q} } |F(X)|^2 \dist(X,\pom) \,dX
\end{align}
We are going to show that 
\begin{equation}\label{CME:dyadic}
\|F\|_{\C(\Omega)}\lesssim \|F\|_{\C^{\rm dyad}(\Omega)}+ \|F\|_{\C_0(\Omega)},
\end{equation}
To obtain this, fix $x\in \pom$ and $0<r<\infty$. Set $\mathcal{W}_{x,r}=\{I\in\mathcal W(\Omega): I\cap B(x,r)\neq \emptyset\}$
and note that given $I\in\mathcal{W}_{x,r}$, if we pick $Z_I\in I\cap B(x,r)$, then \eqref{Whintey-4I} implies
\begin{equation}\label{25tw3}
\diam(I)\le\dist(I,\pom)\le |Z_I-x|<r.
\end{equation}
Set 
\begin{equation*}\label{CME-dyad-regular:2}
\mathcal{W}_{x,r}^{\rm small}=\{I\in\mathcal{W}_{x,r}: \ell(I)<\diam(\pom)/4\},
\qquad 
\mathcal{W}_{x,r}^{\rm big}=\{I\in\mathcal{W}_{x,r} : \ell(I)\ge \diam(\pom)/4\},
\end{equation*}
with the understanding that $\mathcal{W}_{x,r}^{\rm big}=\emptyset$ if $\diam(\pom)=\infty$.
Using this notation and writing $\delta=\dist(\cdot,\pom)$ we have
\begin{align}\label{CME-dyad-regular:3}
\dint_{B(x,r)\cap\Omega }|F|^2\delta\,dX
\le
\sum_{I\in \mathcal{W}_{x,r}^{\rm small}}\dint_{I}|F|^2\delta\,dX
+
\sum_{I\in \mathcal{W}_{x,r}^{\rm big}}\dint_{I}|F|^2\delta\,dX
=
\mathrm{I}+\mathrm{II},
\end{align}
here we understand that $\mathrm{II}=0$ if $\mathcal{W}_{x,r}^{\rm big}=\emptyset$.

To estimate $\mathrm{I}$ we set $r_0=\min\{r, \diam(\pom)/4\}$ and pick $k_2\in\mathbb{Z}$ so that $2^{k_2-1}\le r_0<2^{k_2}$. Set
\[
\mathcal{D}_1=\{Q\in\mathbb{D}(\pom): \ell(Q)=2^{k_2},\: Q\cap B(x,3r)\neq\emptyset\}.
\]
Given $I\in  \mathcal{W}_{x,r}^{\rm small}$ we pick $y\in \pom$ so that $\dist(I,\pom)=\dist(I,y)$. Hence there exists a unique $Q_I\in\mathbb{D}(\pom)$ so that
$y\in Q_I$ and $\ell(Q_I)=\ell(I)<r_0\le \diam(\pom)/4$ by \eqref{25tw3}. Also, 
\[
\dist(I,Q_I)\le \dist(I,y)=\dist(I,\pom)\le 40\diam(I)=40\sqrt{n}\ell(Q).
\]
This implies that  $I\in \mathcal{W}_{Q_I}^0\subset \W_{Q_I}$, provided $0<\eta\le 1$ and $K\ge 40\sqrt{n}$. On the other hand, by \eqref{25tw3}
$$
|y-x|
\le
\dist(y, I)+\diam(I)+|Z_I-x|
<3r,
$$
hence there exists a unique $Q\in \mathcal{D}_1$ so that $y\in Q$. Since $\ell(Q_I)<r_0<2^{k_2}=\ell(Q)$  we conclude that $Q_I\subset Q$
and consequently $I\subset \interior(U_{Q_I})\subset T_{Q}$. In short we have shown that  if $I\in  \mathcal{W}_{x,r}^{\rm small}$ there exists $Q\in\mathcal{D}_1$ so that  $I\subset T_{Q}$. Thus,
\begin{align*}
\mathrm{I}
\lesssim
\sum_{Q\in\mathcal{D}_1} \dint_{T_{Q}}|F|^2\delta\,dX
\le
\|F\|_{\C^{\rm dyad}(\Omega )} \sum_{Q\in\mathcal{D}_1} \sigma(Q)
\lesssim
\|F\|_{\C^{\rm dyad}(\Omega)}  r^n,
\end{align*}
where we have used the fact that $\mathcal{D}_1$ is a pairwise disjoint family, that $\bigcup_{Q\in\mathcal{D}_1} Q\subset B(x,Cr)\cap\pom$ (with $C$ depending on dimension and ADR), and that $\pom$ is ADR.

We now estimate $\mathrm{II}$ when non-empty, in which case $\diam(\pom)<\infty$. Using the properties of the Whitney cubes and recalling \eqref{eqdefCME-interior:def} we arrive at
\begin{multline*}
\mathrm{II}
\lesssim
\sum_{I\in \mathcal{W}_{x,r}^{\rm big}}\ell(I)\dint_{I}|F|^2\,dX
\lesssim
\|F\|_{\C_0(\Omega)}\sum_{I\in \mathcal{W}_{x,r}^{\rm big}}\ell(I)^n
\\
\leq
\|F\|_{\C_0(\Omega)}\sum_{{\diam(\pom)}/4\le 2^k<r } 2^{kn} \#\{I\in \mathcal{W}_{x,r}^{\rm big}:\ell(I)=2^{k}\}.
\end{multline*}
To estimate the last term we observe that if $Y\in I\in  \mathcal{W}_{x,r}^{\rm big}$ we have by \eqref{Whintey-4I}
\[
|Y-x|
\le
\diam(I)+\dist(I,\pom)+\diam(\pom)\lesssim \ell(I).
\]
This and the fact that Whitney cubes have non-overlapping interiors imply
\begin{multline}\label{pior4fgagv}
\#\{I\in \mathcal{W}_{x,r}^{\rm big}:\ell(I)=2^{k}\}
=
2^{-k(n+1)}\sum_{I\in \mathcal{W}_{x,r}^{\rm big}:\ell(I)=2^{k}} |I|
\\
=
2^{-k(n+1)}\Big|\bigcup_{I\in \mathcal{W}_{x,r}^{\rm big}:\ell(I)=2^{k}} I\Big|
\le
2^{-k(n+1)}
|B(x, C2^k)|
\lesssim
1.
\end{multline}
Therefore,
\[
\mathrm{II}
\lesssim
\|F\|_{\C_0(\Omega)}\sum_{\diam(\pom)/4\le 2^k<r } 2^{kn}
\lesssim
\|F\|_{\C_0(\Omega)}r^n.
\]
Collecting the estimates for $\mathrm{I}$ and $\mathrm{II}$ we obtain \eqref{CME:dyadic}.

\begin{definition}[\textbf{Dyadic} \textbf{Non-tangential maximal function}, \textbf{Area integral}, and \textbf{Square function}]\label{defsfntmax:dyadic} 
	Let $\Omega\subset \ree$ be an open set with $\pom$ being ADR and let $\{\W_Q\}_{Q\in\dd(\pom)}$ be a Whitney-dyadic structure for $\Omega$ with some parameters $\eta$ and $K$. For $H\in C(\Omega)$ (i.e., $H$ is continuous function in $\Omega$), we define the \textbf{dyadic non-tangential
		maximal function} as
	\begin{equation}\label{defN*}
	N_*H(x):= \sup_{Y\in \Gamma(x)} |H(Y)|\,,\qquad x\in \pom;
	\end{equation}
	for $G\in L^2_{\rm loc}(\Omega)$, we define the \textbf{dyadic area integral}  as
	\begin{equation}\label{defA}
	\A G(x):=\left(\dint_{\Gamma(x)}|G(Y)|^2 \dist(Y,E)^{1-n} dY\right)^{\frac12}\,,\qquad x\in \pom;
	\end{equation}
	and, for $u\in W^{1,2}_{\loc}(\Omega)$, we define the \textbf{dyadic square function}  as
	\begin{equation}\label{defS}
	S u(x):=\left(\dint_{\Gamma(x)}|\nabla u(Y)|^2 \dist(Y,\pom)^{1-n} dY\right)^{\frac12}\,,\qquad x\in \pom.
	\end{equation}
	For any $Q\in\dd(\pom)$, we write $N_*^Q$, $\A^Q$, and $S^Q$ to denote the \textbf{local} (or \textbf{truncated}) dyadic non-tangential maximal function, area integral, and square function respectively, where $\Gamma(\cdot)$ is replaced by the local cone $\Gamma^Q(\cdot)$. Finally, $\widehat{N}_*$, $\widehat{\A}$, $\widehat{S}$ or
	$\widehat{N}_*^Q$, $\widehat{\A}^Q$, $\widehat{S}^Q$ stand for the corresponding objects associated to the fattened cones $\widehat{\Gamma}(\cdot)$ or their local versions $\widehat{\Gamma}^Q(\cdot)$.	
\end{definition}

%

\begin{remark}\label{remark:rcones}  It is convenient to compare the two types 
of cones, the ``traditional'' and the dyadic (cf. \eqref{conetrad} and \eqref{defcone}). Fix a Whitney-dyadic structure $\{\W_Q\}_{Q\in\dd(\pom)}$  for $\Omega$ with parameters $\eta$ and $K$. 	It is straightforward to see that there exists $\kappa$ such that the dyadic cones $\Gamma(x)$ are contained in $\Gamma_\Omega(x)$ for all $x\in\po$. 
	Indeed, if $Y\in I^*(2\tau)$ with $I\in \W_Q$ and $Q\ni x$ then by \eqref{eq2.whitney2}
	\begin{multline*}
	|Y-x|\le \diam(I^*(2\tau))+\dist(I,Q)+\diam(Q)
	\lesssim 
	K^{1/2}\ell(Q)
	\lesssim 
	K^{1/2}\,\eta^{-1/2}\ell(I)
	\\
	\lesssim
	K^{1/2}\,\eta^{-1/2}\dist(I,\pom)
	\le
	K^{1/2}\,\eta^{-1/2}\dist(Y,\pom),
	\end{multline*}
	hence $Y\in\Gamma_{\Omega, \,K^{1/2}\,\eta^{-1/2}}(x)$. And we have shown that $\Gamma(x)\subset \widehat{\Gamma}(x)\subset \Gamma_{\Omega, \,K^{1/2}\,\eta^{-1/2}}$. 
	Conversely, given $\kappa>0$, there exist $\eta$ and $K$ (depending on $\kappa$) such that if $\{\W_Q\}_{Q\in\dd(\pom)}$ is a Whitney-dyadic structure for $\Omega$ with parameters $\eta$ and $K$ then $\Gamma_{\Omega,\kappa}(x)\subset \Gamma(x)$ for all $x\in\po$. As a matter of fact, given $Y\in \Gamma_{\Omega,\kappa}(x)$, let $I\in\W$ with $I\ni Y$ and pick $Q\in\dd(\pom)$ with $Q\ni x$ and $\ell(I)=\ell(Q)$ (recall that if $\pom$ is bounded we have assumed that $\delta(Y)\lesssim \diam(\pom)$, hence such a cube $Q$ always exists). Then,
	\[
	\dist(I,Q)
	\le
	|Y-x|
	\le
	(1+\kappa)\dist(Y,\pom)
	\le
	(1+\kappa)(\diam(I)+\dist(I,\pom))
	\lesssim
	(1+\kappa)\ell(I)
	=
	(1+\kappa)\ell(Q).
	\]
	Thus, if $K^{1/2}\gg 1+\kappa$, then $I\in \W_Q^0\subset \W_Q$ and $Y\in I\subset U_Q\subset\Gamma(x)$ as desired.

\end{remark}

\begin{remark}\label{remark:COA} 
	In the previous remark we have been able to compare the dyadic and the traditional cones and this give comparisons 	between the associated non-tangential maximal functions, area integrals, or square functions by adjusting the different parameters. It is also convenient to see how to incorporate  the ``change on the aperture'' on the traditional cones via or on the dyadic cones. In the case of traditional cones this amounts to considering 
different values of the aperture $\kappa$. For the dyadic cones one can ``change the aperture'' using $U_Q=U_{Q,\tau}$ versus
$\widehat{U}_Q=U_{Q,2\tau}$, or even by considering Whitney-dyadic structures with different parameters.

	In the case of the traditional cones, one has for every $0<p<\infty$ and $\kappa, \kappa'$ and for every $F\in C(\Omega)$ and  $G\in W^{1,2}_\loc(\Omega)$,
	\begin{equation}\label{COA-traditional}
	\|N_{*,\Omega,\kappa} F\|_{L^p(\pom)}
	\approx_{\kappa,\kappa'}
	\|N_{*,\Omega,\kappa'} F\|_{L^p(\pom)}
	\qquad
	\|\A_{\Omega,\kappa} G\|_{L^p(\pom)}
	\lesssim_{\kappa,\kappa'}
	\|\A_{\Omega,\kappa'} G\|_{L^p(\pom)}
	\end{equation}
	The first estimate can be found in \cite[Proposition 2.2]{HoMiTa}. For the second estimate we refer to  \cite[Proposition 4.5]{MPT} in the case $\Omega$ being a CAD, a simpler argument (valid also in the former case) can be carried out by adapting \cite[Proposition 3.2, part $(i)$]{MP}. Further details are left to the interested reader.

	For the dyadic cones,  Remark \ref{remark:rcones} says that if  $\{\W_Q\}_{Q\in\dd(\pom)}$ is a Whitney-dyadic structure for $\Omega$ with parameters $\eta\ll 1$ and $K\gg 1$ then  $\Gamma(x)\subset  \widehat{\Gamma}(x)\subset \Gamma_{\Omega, \kappa}(x)$ for some large $\kappa>0$ and for every $x\in\pom$. On the other hand, let  $\{\W_Q'\}_{Q\in\dd(\pom)}$ is a Whitney-dyadic structure for $\Omega$ with parameters $\eta'\ll 1$ and $K'\gg 1$ and we write $\Gamma'(x)$ for the associated dyadic cone. As observed before we have that $\Gamma_{\Omega, 1}(x)\subset \Gamma'(x)$.  Write $N_*$ and $\A$ (resp. $N_*'$ and $\A'$) as in \eqref{defN*} and \eqref{defA} for the cones $\Gamma$ (resp.$ \Gamma'$). These and \eqref{COA-traditional} allow us to obtain that for every $0<p<\infty$ and for every $F\in C(\Omega)$
	\[
	\|N_* F\|_{L^p(\pom)}
	\le
	\|\widehat{N}_* F\|_{L^p(\pom)}
	\le
	\|N_{*,\Omega,\kappa} F\|_{L^p(\pom)}
	\lesssim_\kappa
	\|N_{*,\Omega,1} F\|_{L^p(\pom)}
	\le
	\|N_*' F\|_{L^p(\pom)}
	\]
	and, for every $G\in W^{1,2}_\loc(\Omega)$,
	\[
	\|\A G\|_{L^p(\pom)}
	\le
	\|\widehat{\A} G\|_{L^p(\pom)}
	\le
	\|\A_{\Omega,\kappa} G\|_{L^p(\pom)}
	\lesssim_\kappa
	\|\A_{\Omega,1} G\|_{L^p(\pom)}
	\le
	\|\A' G\|_{L^p(\pom)}.
	\]
\end{remark}

\subsection{Case ADR}\label{sections:ADR} 
Here we assume that $\Omega=\ree\setminus E$ where $E$ is merely ADR, but possibly not UR. Let us set 
$\W_Q=\W^0_Q$ (cf. \eqref{eq3.1}) and we clearly have $(ii)$ and $(iii)$ with $C=1$ in Definition \ref{def:WD-struct}. For $(i)$, we see that $\W^0_Q$ is non-empty,
	provided that we choose $\eta$ small enough, and $K$ large enough, depending only on dimension
	and the ADR constant of $E$. Indeed, given $Q\in\dd(E)$, consider the ball $B_Q=B(x_Q,r)$, as defined in \eqref{cube-ball}-\eqref{cube-ball2},
	with $r\approx\ell(Q)$, so that $\Delta_Q=B_Q\cap E\subset Q$.
	By
	\cite[Lemma 5.3]{HM-I}, we have that for some $C=C(n,ADR)$,
	\[
	\big|\{Y\in\ree\setminus E: \,\dist(Y,E)<\epsilon r\}\cap B_Q\big|\le C\,\epsilon\,r^{n+1}\,,
	\]
	for every $0<\epsilon<1$. Consequently, fixing $0<\epsilon_0<1$ small enough, 
	there exists $X_Q\in \frac12\,B_Q$, with $\dist(X_Q,E)\ge \epsilon_0\,r$. 
	Thus, $B(X_Q,\epsilon_0\,r/2)\subset B_Q\setminus E$.  We shall refer to this point $X_Q$
	as a ``Corkscrew point" relative to $Q$, that is, relative to the surface ball $\Delta_Q$ (cf. \eqref{cube-ball} and \eqref{cube-ball2}).  Now observe that $X_Q$ belongs to some
	Whitney cube $I\in\W$, which will belong to $\W^0_Q$, for $\eta$ small enough and $K$ large enough. Hence, $\{\W_Q\}_{Q\in\dd(E)}$ is a Whitney-dyadic structure for $\ree\setminus E$.

In  \cite{HMM2} it was shown that the ADR property is inherited by all dyadic local sawtooths and all Carleson boxes:

\begin{proposition}[{\cite[Proposition A.2]{HMM2}}]
Let $E\subset\ree$ be a closed $n$-dimensional ADR set and let $\{\W_Q\}_{Q\in\dd(E)}$ be a Whitney-dyadic structure for $\ree\setminus E$ with parameters $\eta\ll 1$ and $K\gg 1$. Then all dyadic local sawtooths $\Omega_{\mathcal{F},Q}$ and all Carleson boxes $T_Q$ have $n$-dimensional ADR boundaries. In all cases, the implicit constants are uniform and depend only on dimension, the ADR constant of $E$ and the parameters $\eta$, $K$, and $\tau$.
\end{proposition}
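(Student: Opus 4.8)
\emph{Plan.} The strategy is to reduce everything to dyadic sawtooths, isolate the one combinatorial fact that drives the argument, and then verify the two Ahlfors--David inequalities separately, using only the ADR property \eqref{eq1.ADR} of $E$, the Whitney properties \eqref{Whintey-4I}--\eqref{whitney}, and the thin--boundary estimate $(vi)$ of Lemma~\ref{lemmaCh}. Since $T_Q=\Omega_{\F,Q}$ for $\F=\emptyset$, it suffices to treat $\Omega_\star:=\Omega_{\F,Q}$ for an arbitrary family $\F\subset\dd_Q$ of pairwise disjoint cubes; recall that here $\W_Q=\W_Q^0$, that $\Omega_\star$ is bounded with $\diam(\partial\Omega_\star)\approx\ell(Q)$ (it always contains $U_Q$ and is contained in $B_Q^*$, cf.\ \eqref{eq3.3aab}), and that by scale invariance we may normalize $\ell(Q)\approx 1$. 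Write $\delta=\dist(\cdot,E)$ and $\sigma_\star=H^n|_{\partial\Omega_\star}$. The key structural fact is the \emph{heredity} of the index set: if $Q'\in\dd_{\F,Q}$ and $Q'\subset Q''\subset Q$, then $Q''\in\dd_{\F,Q}$ --- immediate from \eqref{eq2.discretesawtooth2}, since $Q''\subset Q_j\in\F$ would force $Q'\subset Q_j$. Combined with \eqref{eq2.sawtooth1} and $U_{Q'}=\bigcup_{I\in\W_{Q'}^0}I^*(\tau)$, this yields the description of the boundary we shall use: a point $y\in E$ lies in $\overline{\Omega_\star}$ precisely when there are cubes $Q'\in\dd_{\F,Q}$ of arbitrarily small side length with $\dist(y,Q')\lesssim\ell(Q')$, in which case (by heredity) there are such cubes of \emph{every} side length $\le\ell(Q)$; and a point $X\in\partial\Omega_\star\setminus E$ lies on a face of some $I^*(\tau)$ with $I\in\W_{\F,Q}:=\bigcup_{Q'\in\dd_{\F,Q}}\W_{Q'}^0$, with $\dist(X,E)\approx\ell(I)$ by \eqref{whitney}. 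In particular $E\cap\overline{\Omega_\star}\subset\partial\Omega_\star$ (as $\Omega_\star$ is open and disjoint from $E$), and this part of $\partial\Omega_\star$ lies, up to a $\sigma$--null set, in $Q\setminus\bigcup_{Q_j\in\F}\interior(Q_j)$.

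\emph{Upper bound.} Fix $x\in\partial\Omega_\star$ and $0<r<\diam(\partial\Omega_\star)$, and split $\partial\Omega_\star\cap B(x,r)$ into its portion on $E$, of $H^n$--measure $\lesssim r^n$ by \eqref{eq1.ADR}, and the union of faces of the cubes $I^*(\tau)$, $I\in\W_{\F,Q}$, meeting $B(x,r)$. Each such $I$ is \emph{exposed}: a Whitney neighbour $J$ of $I$ (so $\ell(J)\approx\ell(I)$ and $\dist(J,E)\approx\ell(I)$) does not lie in $\W_{\F,Q}$, which --- examining the dyadic cube of side $\ell(J)$ nearest to $J$ --- forces exactly one of: $\dist(I,E)\gtrsim r$ (only boundedly many such $I$ meet $B(x,r)$, each of face area $\lesssim r^n$); $\dist(I,Q_j)\lesssim\ell(I)$ for some $Q_j\in\F$ with $\ell(Q_j)\gtrsim\ell(I)$; or $\dist(I,E\setminus Q)\lesssim\ell(I)$. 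In the second alternative, the faces of all cubes $I$ attached this way to a fixed $Q_j$ project onto a bounded dilate of $Q_j$, so their total area is $\lesssim\ell(Q_j)^n\approx\sigma(Q_j)$; since the $Q_j$ meeting $B(x,Cr)$ are pairwise disjoint, $\sum_j\sigma(Q_j)\le\sigma(B(x,Cr)\cap E)\lesssim r^n$. In the third alternative, at a fixed side length $2^{-k}\lesssim r$ the cubes $I$ lie in the $\approx 2^{-k}$--neighbourhood of $E\setminus Q$ inside $B(x,Cr)$; covering the trace of that neighbourhood on $E$ by surface balls of radius $\approx 2^{-k}$ and invoking property $(vi)$ of Lemma~\ref{lemmaCh}, the number of such $I$ is $\lesssim (2^{-k}/r)^{\gamma}(r\,2^{k})^{n}$, so their total face area is $\lesssim (2^{-k}/r)^{\gamma}r^{n}$; summing the geometric series over $2^{-k}\le r$ (here $\gamma>0$ is crucial) gives $\lesssim r^n$. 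Hence $\sigma_\star(B(x,r))\lesssim r^n$; in particular $\Omega_\star$ has locally finite perimeter.

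\emph{Lower bound.} Fix $x\in\partial\Omega_\star$ and $0<r<\diam(\partial\Omega_\star)$. If $x\notin E$ and $r\lesssim\delta(x)$, then $x$ lies on an exposed face of a Whitney cube $I^*(\tau)\subset\Omega_\star$ of side $\approx\delta(x)\gtrsim r$, and that face meets $B(x,r)$ in a flat $n$--disk of area $\approx r^n$. Otherwise $\delta(x)\lesssim r$; let $\widehat x\in E$ with $|\widehat x-x|=\delta(x)$, so $\Delta:=\Delta(\widehat x,c r)\subset B(x,Cr)$. Using heredity as above, there is $Q'\in\dd_{\F,Q}$ with $\ell(Q')\approx r$ and $\dist(x,Q')\lesssim r$, so $U_{Q'}\subset\Omega_\star$ contains a ball of radius $\approx r$ in $B(x,Cr)$ and $|\Omega_\star\cap B(x,Cr)|\gtrsim r^{n+1}$. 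Dichotomize on whether a definite $\sigma$--fraction of $\Delta$ meets $\overline{\Omega_\star}$. If it does, then $\Delta\cap\overline{\Omega_\star}\subset E\cap\overline{\Omega_\star}\subset\partial\Omega_\star$ gives $\sigma_\star(B(x,Cr))\ge\sigma(\Delta\cap\overline{\Omega_\star})\gtrsim r^n$ by \eqref{eq1.ADR}. If it does not, then a definite $\sigma$--fraction of $\Delta$ lies in $\bigcup_{Q_j\in\F}\interior(Q_j)\cup(E\setminus Q)$; separating the cubes $Q_j\in\F$ of size $\lesssim r$ --- each carrying over it, at height $\approx\ell(Q_j)$, a ``tent'' of exposed faces of total area $\gtrsim\sigma(Q_j)$ inside $B(x,Cr)$, these tents having bounded overlap --- from the part near $E\setminus Q$ or near a cube $Q_j\in\F$ of size $\gg r$, we get in the first sub--case $\sigma_\star(B(x,Cr))\gtrsim\sum_j\sigma(Q_j)\gtrsim\sigma(\Delta)\approx r^n$, while in the second $\ree\setminus\overline{\Omega_\star}$ contains a ball of radius $\approx r$ in $B(x,Cr)$ (a corkscrew point outside $Q$, or inside the large $Q_j$), so $\min\big(|\Omega_\star\cap B(x,Cr)|,\,|B(x,Cr)\setminus\overline{\Omega_\star}|\big)\gtrsim r^{n+1}$ and the relative isoperimetric inequality --- legitimate by the finite--perimeter conclusion of the previous step --- yields $\sigma_\star(B(x,Cr))\gtrsim r^n$. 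Adjusting constants gives $\sigma_\star(B(x,r))\gtrsim r^n$ for all admissible $r$. All constants produced depend only on $n$, the ADR constant of $E$, and $\eta,K,\tau$.

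\emph{Main obstacle.} I expect the hard part to be the lower bound, and within it the ``fringe'' of the sawtooth --- boundary points near $\bigcup_{Q_j\in\F}\partial Q_j$ or near $E\setminus Q$, where $\partial\Omega_\star$ is neither a clean piece of $E$ nor a single large Whitney face. The three devices that tame this are: the heredity of $\dd_{\F,Q}$ (which supplies interior thickness at every scale, uniformly), the bounded overlap of the tents over the stopping cubes (so that many small tent contributions sum against the ADR mass of one surface ball), and the relative isoperimetric inequality (available once the upper bound has shown $\Omega_\star$ has locally finite perimeter). By contrast, the upper bound is bookkeeping powered by \eqref{eq1.ADR} and the geometrically summable thin--boundary estimate $(vi)$.
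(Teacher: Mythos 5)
The paper does not actually prove this statement; it cites \cite[Proposition A.2]{HMM2}, and the toolkit of that proof is precisely what the present paper imports in its appendix as Lemmas~\ref{lemma:decomp-bdt-sawtooth} and \ref{lemma:I-QI}. Your architecture (classify exposed Whitney faces by the reason they are exposed --- attached to a stopping cube, near $E\setminus Q$, or at top scale --- then pack; dichotomy for the lower bound) is the same in spirit, but several crux steps do not hold as written. The clearest one is the stopping-cube case of your upper bound: the inference ``the faces attached to a fixed $Q_j$ project onto a bounded dilate of $Q_j$, so their total area is $\lesssim \ell(Q_j)^n$'' is invalid, because the projection has unbounded multiplicity: above a point of $Q_j$ at distance $d$ from $E\setminus Q_j$ there are exposed faces at every dyadic scale between (roughly) $d$ and $\ell(Q_j)$, so the per-scale projection bound $\lesssim\ell(Q_j)^n$, summed over the infinitely many scales, diverges. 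The sum is finite only because, at scale $2^{-k}$, the relevant faces sit over the $C2^{-k}$-collar of $E\setminus Q_j$ inside $Q_j$; you must invoke property $(vi)$ of Lemma~\ref{lemmaCh} applied to $Q_j$ and sum the resulting geometric series, exactly as you did in your third alternative (or use the bounded-overlap statement \eqref{Sigma:bdd-overlap:I} of Lemma~\ref{lemma:I-QI}). A smaller point: the proposition is stated for an arbitrary Whitney--dyadic structure, not for $\W_Q=\W_Q^0$; your argument survives because it only uses the comparability \eqref{eq2.whitney2} from Definition~\ref{def:WD-struct}, but you should not restrict to $\W_Q^0$.

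The lower bound has three under-justified steps. First, when $r\lesssim\delta(x)$, the face of $I^*(\tau)$ through $x$ need not lie in $\partial\Omega_{\F,Q}$ near $x$: portions of it can be swallowed by the interiors of dilates $J^*(\tau)$ of neighbouring cubes of the family, so ``a flat $n$-disk of area $\approx r^n$'' requires an argument (one must either track which faces are genuinely uncovered, or combine interior/exterior volume with the finite-perimeter information more carefully; note the exterior volume near such an $x$ can be a thin slab, so isoperimetry alone does not suffice either). Second, in your final sub-case, an exterior ball for $\Omega_{\F,Q}$ must avoid every Whitney region $U_{Q'}$, $Q'\in\dd_{\F,Q}$, and these reach out to distance $\approx K^{1/2}\eta^{-1/2}$ times their height from $Q'\subset Q$; hence your ``corkscrew point outside $Q$, or inside the large $Q_j$'' must lie at depth $\gtrsim C(\eta,K)\,r$ from $Q$ (resp.\ from $E\setminus Q_j$), and the dichotomy as set up does not produce such a point --- the mass of $\Delta$ off $\overline{\Omega_{\F,Q}}$ may lie entirely in a shallow collar --- so an extra step is needed (e.g.\ property $(vi)$ applied to the scale-$r$ dyadic subcubes to locate a deep surface ball, with the shallow case funnelled back into the other alternatives). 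Third, the ``tent of exposed faces of total area $\gtrsim\sigma(Q_j)$, with bounded overlap'' is itself one of the substantive facts to be proved, not a parenthesis: it needs the same kind of reach/depth estimate as above to show the low region over the deep part of $Q_j$ is exterior while the region at height $\approx\ell(Q_j)$ is interior, and then a localization in height to get bounded overlap across the $Q_j$. All of these are repairable with tools you already use, but as written the proof does not go through.
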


\subsection{Case UR}\label{sections:UR} 

Here we assume that $\Omega=\ree\setminus E$ where we further assume that $E$ is UR. Much as before, since $E$ is in particular ADR, if we take $\eta\ll 1$ and $K\gg 1$ (depending on $n$ and the ADR constant of $E$) we can guarantee that $\W_Q^0\neq\emptyset$. In this case we will exploit the additional fact that $E$ is UR to construct some  Whitney-dyadic structure with better properties. To do so, we would like to recall some results from \cite{HMM2} but we first give a definition to then continue with the main geometric lemma from \cite{HMM2}.

\begin{definition}\cite{DS2}.\label{d3.11}   
	Let $\sbf\subset \dd(E)$. We say that $\sbf$ is
	``coherent" if the following conditions hold:
	\begin{itemize}\itemsep=0.1cm
		
		\item[$(a)$] $\sbf$ contains a unique maximal element denoted by $Q(\sbf)$ which contains all other elements of $\sbf$ as subsets.
		
		\item[$(b)$] If $Q$  belongs to $\sbf$, and if $Q\subset \widetilde{Q}\subset Q(\sbf)$, then $\widetilde{Q}\in {\bf S}$.
		
		\item[$(c)$] Given a cube $Q\in \sbf$, either all of its children belong to $\sbf$, or none of them do.
		
	\end{itemize}
	We say that $\sbf$ is ``semi-coherent'' if only conditions $(a)$ and $(b)$ hold. 
\end{definition}

\smallskip

\begin{lemma}[{The bilateral corona decomposition, \cite[Lemma 2.2]{HMM2}}]\label{lemma2.1}  Suppose that $E\subset \ree$ is $n$-dimen\-sional UR.  Then given any positive constants
	$\eta\ll 1$
	and $K\gg 1$, there is a disjoint decomposition
	$\dd(E) = \G\cup\B$, satisfying the following properties.
	\begin{list}{$(\theenumi)$}{\usecounter{enumi}\leftmargin=.8cm
			\labelwidth=.8cm\itemsep=0.2cm\topsep=.1cm
			\renewcommand{\theenumi}{\roman{enumi}}}
		
		\item  The ``Good" collection $\G$ is further subdivided into
		disjoint stopping time regimes, such that each such regime {\bf S} is coherent (cf. Definition \ref{d3.11}).

		\item The ``Bad" cubes, as well as the maximal cubes $Q(\sbf)$ satisfy a Carleson
		packing condition:
		$$\sum_{Q'\subset Q, \,Q'\in\B} \sigma(Q')
		\,\,+\,\sum_{\sbf: Q(\sbf)\subset Q}\sigma\big(Q(\sbf)\big)\,\leq\, C_{\eta,K}\, \sigma(Q)\,,
		\quad \forall Q\in \dd(E)\,.$$
		\item For each $\sbf$, there is a Lipschitz graph $\Gamma_{\sbf}$, with Lipschitz constant
		at most $\eta$, such that, for every $Q\in \sbf$,
		\begin{equation}\label{eq2.2a}
		\sup_{x\in \Delta_Q^*} \dist(x,\Gamma_{\sbf} )\,
		+\,\sup_{y\in B_Q^*\cap\Gamma_{\sbf}}\dist(y,E) < \eta\,\ell(Q)\,,
		\end{equation}
		where $B_Q^*:= B(x_Q,K\ell(Q))$ and $\Delta_Q^*:= B_Q^*\cap E$.
	\end{list}
\end{lemma}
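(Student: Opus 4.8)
The plan is to recognize the statement as the \emph{bilateral corona decomposition} of \cite{HMM2} and to obtain it by running the David--Semmes corona construction \cite{DS1, DS2} while carrying along a two-sided approximation estimate. The key external input is the characterization of uniform rectifiability by the \emph{bilateral weak geometric lemma}: for every $\eps>0$ the set of cubes $Q\in\dd(E)$ with
\[
b\beta_E(Q):=\inf_{P}\ \Big(\sup_{x\in E\cap B_Q^*}\frac{\dist(x,P)}{\ell(Q)}+\sup_{y\in P\cap B_Q^*}\frac{\dist(y,E)}{\ell(Q)}\Big)\ \ge\ \eps,
\]
the infimum over $n$-planes $P$ and $B_Q^*=B(x_Q,K\ell(Q))$, satisfies a Carleson packing condition with constant depending on $\eps,K,n$ and the UR character of $E$ (see \cite{DS2}, and also the work of Tolsa). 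I would also record the companion consequence of UR that, apart from a Carleson-sparse family of cubes, the minimizing plane at a cube makes a small angle with the minimizing planes at its parent and at nearby cubes of comparable size; equivalently, the angle increments of $Q\mapsto P_Q$ along dyadic towers form a Carleson measure.

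\textbf{Stopping time and coherence.} Given the target $\eta\ll1$, $K\gg1$, first I would fix auxiliary small parameters $\eps$ and $\theta_0$, chosen in terms of $\eta,K,n$ and the UR character, small enough to absorb the powers of $K$ entering when one passes from $B_Q$ to $B_Q^*$. Put into $\B$ every cube at which $b\beta_E\ge\eps$ (equivalently, at which $b\beta_E$ is $\ge\eps$ on a fixed dilate of $B_Q^*$), together with every cube whose minimizing plane tilts by more than $\theta_0$ relative to its parent's; by the input above this $\B$ obeys the required packing bound. On $\G:=\dd(E)\setminus\B$ form the regimes $\sbf$ by the standard top-down algorithm: let $Q(\sbf)$ be a maximal good cube, put $Q\in\sbf$, and descend to children, keeping them in $\sbf$ until the first generation at which either the cube lands in $\B$ or the minimizing plane has tilted by more than $\theta_0$ relative to $P_{Q(\sbf)}$; the stopped cube then either lies in $\B$ or starts a new regime. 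Phrasing every stopping rule as a condition on the cube itself (never on an individual child) yields properties $(a)$--$(c)$ of Definition~\ref{d3.11}, so each $\sbf$ is coherent; and the total tilt budget along any dyadic tower being Carleson-bounded yields the packing estimate for the tops $Q(\sbf)$.

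\textbf{The Lipschitz graphs.} For a fixed $\sbf$ set $P:=P_{Q(\sbf)}$. Every $Q\in\sbf$ has $b\beta_E(Q)<\eps$ and minimizing plane within angle $\theta_0$ of $P$, so I would define $\Gamma_{\sbf}$ as the graph over $P$ of the function obtained by patching the planes $\{P_Q\}_{Q\in\sbf}$ with a partition of unity at the scale of the stopping cubes, exactly as in the David--Semmes graph construction; the smallness of $\theta_0$ and $\eps$ gives Lipschitz constant $\lesssim\theta_0+\eps<\eta$. The one-sided bound $\sup_{x\in\Delta_Q^*}\dist(x,\Gamma_{\sbf})<\eta\ell(Q)$ follows from the usual argument: such an $x$ is approximated, scale by scale along its tower of ancestors inside $\sbf$, by nearly parallel planes $P_{Q'}$, each $O(\eps)$-close to $E$ and hence to $x$. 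The reverse bound $\sup_{y\in B_Q^*\cap\Gamma_{\sbf}}\dist(y,E)<\eta\ell(Q)$ is exactly where the bilateral part of $b\beta_E$ is used: each $P_{Q'}$ is, on its own dilated ball, within $\eps\ell(Q')$ of $E$, while $\Gamma_{\sbf}$ stays within $O(\theta_0+\eps)\ell(Q)$ of the relevant planes over $B_Q^*$, so the choice $\eps,\theta_0\ll\eta/K$ forces $\Gamma_{\sbf}\cap B_Q^*$ into the $\eta\ell(Q)$-neighborhood of $E$. Assembling the two steps produces the disjoint decomposition $\dd(E)=\G\cup\B$ with properties $(i)$--$(iii)$.

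\textbf{Main obstacle.} The stopping-time bookkeeping (coherence and both packing estimates) is routine once the bilateral weak geometric lemma is available. The delicate point, and the only genuinely ``bilateral'' one, is the reverse estimate in \eqref{eq2.2a}: the graph must be controlled over the full $K$-fold dilate $B_Q^*$, which both dictates the hierarchy $\eps,\theta_0\ll\eta/K$ among the parameters and requires matching the David--Semmes graph construction to the $b\beta$-control along towers of cubes of varying sizes. A cleaner organization of the bookkeeping would be to quote the classical one-sided David--Semmes corona decomposition verbatim and then \emph{refine} each of its regimes by the single extra stopping rule ``stop when $b\beta_E(Q)\ge\eps$'': the new regime tops remain Carleson-sparse by the bilateral weak geometric lemma, and on the refined regimes \eqref{eq2.2a} is read off from the one-sided David--Semmes graph together with the smallness of $b\beta_E$.
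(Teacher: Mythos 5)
First, note that the paper does not actually prove Lemma~\ref{lemma2.1}: it is quoted verbatim from \cite[Lemma 2.2]{HMM2}, and the proof given there is precisely the ``cleaner organization'' you relegate to your final paragraph --- one takes the unilateral corona decomposition of \cite{DS1,DS2} and the bilateral weak geometric lemma (both valid for UR sets) as black boxes, and refines each David--Semmes stopping time regime by the single extra rule ``stop when $b\beta_E(Q)\ge\eps$''; coherence, the packing of the new tops, and \eqref{eq2.2a} then follow by routine bookkeeping. So your fallback is the intended argument, and it is complete modulo the two cited inputs.

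Your primary construction, by contrast, has a genuine gap at the packing estimate for the maximal cubes $Q(\sbf)$ produced by the angle-accumulation stopping rule. You assert that ``the total tilt budget along any dyadic tower being Carleson-bounded yields the packing estimate for the tops.'' This does not follow: a Carleson bound controls, per tower, the sum of \emph{squared} angle increments, while your stopping rule fires when the unsquared accumulated tilt relative to $P_{Q(\sbf)}$ exceeds $\theta_0$; since a regime may persist for arbitrarily many generations, a tilt of size $\theta_0$ can be built from many increments each far smaller than $\theta_0$, contributing negligibly to the square function along that tower, so no per-tower Chebyshev argument bounds the mass of such tops. In the David--Semmes construction this packing is exactly the delicate step, and it is proved not tower-by-tower but via a square-function/BMO-type argument on the approximating Lipschitz function over the whole stopped region; supplying it amounts to reproving the corona decomposition rather than deducing it from the bilateral weak geometric lemma. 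Two smaller points: the ``companion consequence'' you invoke (Carleson control of angle increments) is itself a consequence of the strong geometric lemma for $\beta_2$, i.e.\ a further nontrivial citation; and once $b\beta_E<\eps$ holds at a cube and at its parent with $\eps\ll\theta_0$, Ahlfors regularity already forces the two optimal planes to be within angle $O(\eps)$, so your separate ``large tilt relative to the parent'' bad class is redundant. The clean fix is to adopt your final paragraph as the proof, exactly as in \cite{HMM2}.
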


As we have assumed that $E$ is UR we make the corresponding bilateral corona decomposition of Lemma \ref{lemma2.1} with $\eta\ll 1$ and $K\gg 1$. Our goal is to 
construct, for each stopping time regime $\sbf$
in Lemma \ref{lemma2.1}, a pair of CAD domains $\Omega_{\sbf}^\pm$, which provide a good approximation to $E$, at the scales within $\sbf$, in some appropriate sense.  To be a bit more precise, $\Omega_{\sbf}:= \Omega_{\sbf}^+\cup \Omega_{\sbf}^-$ will be constructed as a sawtooth region
relative to some family of dyadic cubes, and the nature of this construction will be essential to the dyadic analysis that we will use below.

Given $Q\in \dd(E)$, for this choice of $\eta$ and $K$, we set as above $B_Q^*:= B(x_Q, K\ell(Q))$, where we recall that $x_Q$ is the center of $Q$ (see \eqref{cube-ball}-\eqref{cube-ball2}). For a fixed stopping time regime $\sbf$, we choose a co-ordinate system
so that $\Gamma_{\sbf} =\{(z,\vp_{\sbf}(z)):\, z\in \rn\}$, where $\vp_{\sbf}:\rn\longrightarrow \re$ is a Lipschitz function with
$\|\vp\|_{\rm Lip} \leq\eta$.

\begin{claim}[{\cite[Claim 3.4]{HMM2}}]\label{c3.1} If $Q\in \sbf$, and $I\in \W^0_Q$, then $I$ lies either above
	or below $\Gamma_{\sbf}$.  Moreover,
	$\dist(I,\Gamma_{\sbf})\geq \eta^{1/2} \ell(Q)$ (and therefore, by \eqref{eq2.2a},  
	$\dist(I,\Gamma_{\sbf}) \approx \dist(I,E)$, with  implicit constants that may depend on $\eta$
	and $K$).
\end{claim}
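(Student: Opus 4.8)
The plan is to exploit the two pieces of quantitative information we have: on one hand, the defining Whitney property \eqref{Whintey-4I}–\eqref{whitney}, which forces any $I\in\W^0_Q$ to be comparable in size and distance to $\ell(Q)$; on the other hand, the bilateral approximation \eqref{eq2.2a}, which says that within the ball $B_Q^*$ the Lipschitz graph $\Gamma_{\sbf}$ and the set $E$ are $\eta\ell(Q)$-close to one another in the two-sided Hausdorff sense. First I would record from the definition \eqref{eq3.1} of $\W^0_Q$ that any $I\in\W^0_Q$ satisfies $\eta^{1/4}\ell(Q)\le\ell(I)\le K^{1/2}\ell(Q)$ and $\dist(I,Q)\le K^{1/2}\ell(Q)$, so that $I\subset B(x_Q, CK^{1/2}\ell(Q))\subset B_Q^*$ (using $K\gg1$, and absorbing the harmless dimensional constant into $K$); moreover by \eqref{Whintey-4I} every point $Y\in I$ has $\dist(Y,E)\ge\dist(I,E)\ge 4\diam(I)\gtrsim \eta^{1/4}\ell(Q)$, while $\dist(Y,E)\le \dist(I,\pom)+\diam(I)\le 41\diam(I)\lesssim K^{1/2}\ell(Q)$.

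The key step is the lower bound $\dist(I,\Gamma_{\sbf})\ge \eta^{1/2}\ell(Q)$. Suppose for contradiction that some $Y\in I$ satisfies $\dist(Y,\Gamma_{\sbf})<\eta^{1/2}\ell(Q)$, and pick $y\in\Gamma_{\sbf}$ realizing (nearly) this distance. Since $I\subset B_Q^*$ we have $y\in B(x_Q, CK^{1/2}\ell(Q))\cap\Gamma_{\sbf}$; enlarging $K$ by the fixed factor $C$ at the outset so that \eqref{eq2.2a} is applied with the larger ball, we get $\dist(y,E)<\eta\ell(Q)$ from the second term in \eqref{eq2.2a}. Then $\dist(Y,E)\le|Y-y|+\dist(y,E)<\eta^{1/2}\ell(Q)+\eta\ell(Q)\le 2\eta^{1/2}\ell(Q)$. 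On the other hand, from the Whitney property above, $\dist(Y,E)\ge 4\diam(I)\ge 4\cdot(\text{side of }I)\gtrsim \eta^{1/4}\ell(Q)$ — more precisely $\dist(Y,E)\ge\dist(I,E)\ge 4\diam(I)$ and $\diam(I)=\sqrt{n+1}\,\ell(I)\ge \sqrt{n+1}\,\eta^{1/4}\ell(Q)$, so $\dist(Y,E)\ge c\,\eta^{1/4}\ell(Q)$ with $c$ dimensional. For $\eta$ sufficiently small (so that $2\eta^{1/2}<c\,\eta^{1/4}$, i.e. $\eta^{1/4}<c/2$) this is a contradiction. Hence $\dist(I,\Gamma_{\sbf})\ge\eta^{1/2}\ell(Q)$.

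Granting this lower bound, the dichotomy ``above or below'' is then immediate: $\Gamma_{\sbf}$ is a Lipschitz graph, so $\ree\setminus\Gamma_{\sbf}$ has exactly two connected components (the open epigraph and the open subgraph), and $I$ is a connected set disjoint from $\Gamma_{\sbf}$ (indeed at distance $\ge\eta^{1/2}\ell(Q)>0$ from it), hence $I$ lies entirely in one of the two components. Finally, the comparison $\dist(I,\Gamma_{\sbf})\approx\dist(I,E)$ follows by combining the just-proved lower bound $\dist(I,\Gamma_{\sbf})\ge\eta^{1/2}\ell(Q)\gtrsim \ell(I)\gtrsim\dist(I,E)$ (the last two steps by \eqref{eq3.1} and \eqref{Whintey-4I}) with the trivial upper bound: for $Y\in I$, choosing $x\in E$ with $\dist(I,E)=\dist(Y,E)$ wait—rather, taking $x\in\Delta_Q^*$ near $Y$'s nearest point of $E$ and applying the first term of \eqref{eq2.2a} to get $\dist(I,\Gamma_{\sbf})\le\dist(Y,E)+\eta\ell(Q)\lesssim\dist(I,E)+\ell(I)\lesssim\dist(I,E)$, using again \eqref{eq3.1}. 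The implicit constants depend on $\eta$ and $K$ exactly as asserted.

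The main obstacle is purely bookkeeping: one must choose $K$ (replaced by $CK$ for the dimensional constant arising in $\dist(I,Q)+\diam(I)+\diam(Q)\le CK^{1/2}\ell(Q)$) so that the relevant points genuinely lie in the ball $B_Q^*$ appearing in \eqref{eq2.2a}, and choose $\eta$ small enough that $\eta^{1/2}\ll\eta^{1/4}$ wins the contradiction; since $\eta\ll1$ and $K\gg1$ are already standing assumptions, there is no real difficulty, only care with the order of quantifiers.
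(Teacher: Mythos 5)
Your proof is correct and follows essentially the same route as the cited argument (\cite[Claim 3.4]{HMM2}): the Whitney separation $\dist(I,E)\ge 4\diam(I)\gtrsim\eta^{1/4}\ell(Q)$ together with the bilateral approximation \eqref{eq2.2a} on $B_Q^*$ rules out $\dist(I,\Gamma_{\sbf})<\eta^{1/2}\ell(Q)$ (indeed $2\eta^{1/2}<4\eta^{1/4}$ already holds for all $\eta<1$, so no extra smallness is even needed), and connectedness of $I$ then forces it to lie on one side of the graph, with the comparability $\dist(I,\Gamma_{\sbf})\approx\dist(I,E)$ obtained exactly as you indicate. Only tidy up the final paragraph (the ``wait---rather'' sentence): the clean statement is that the point of $E$ nearest to $Y\in I$ lies in $\Delta_Q^*$ because $\dist(Y,E)\lesssim K^{1/2}\ell(Q)\ll K\ell(Q)$, after which the first term of \eqref{eq2.2a} gives the upper bound.
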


Next, given $Q\in\sbf$,  we augment $\W^0_Q$.  We split $\W^0_Q=\W_Q^{0,+} \cup \W_Q^{0,-}$,
where $I\in\W_Q^{0,+}$ if $I$ lies above
$\GS$, and  $I\in\W_Q^{0,-}$ if $I$ lies below
$\GS$.   Choosing $K$ large and $\eta$ small enough, by \eqref{eq2.2a}, 
we may assume that both $\W_Q^{0,\pm}$ are non-empty.
We focus on $\W_Q^{0,+}$, as the construction for $\W_Q^{0,-}$ is the same.
For each $I\in \W_Q^{0,+}$, let $X_I$ denote the center of $I$.
Fix one particular $I_0\in \W_Q^{0,+}$, with center $X^+_Q:= X_{I_0}$.
Let $\widetilde{Q}$ denote the dyadic parent of $Q$ (that is, the unique dyadic cube $\widetilde{Q}$ with $Q\subset\widetilde{Q}$ and $\ell(\widetilde{Q})=2\ell(Q)$), unless $Q=Q(\sbf)$; in the latter case
we simply set $\Qt=Q$.  Note that
$\widetilde{Q}\in\sbf$,
by the coherency of $\sbf$.
By Claim \ref{c3.1}, for each $I$ in $\W_Q^{0,+}$, or in   $\W_{\widetilde{Q}}^{0,+}$,
we have
$$\dist(I,E)\approx\dist(I,Q)\approx\dist(I,\GS)\,,$$
where the implicit constants may depend on $\eta$ and $K$.  Thus, for each
such $I$, we may fix a Harnack chain, call it
$\mathcal{H}_I$, relative to the Lipschitz domain
$$
\Omega_{\GS}^+:=\left\{(x,t) \in \ree: t>\vp_{\sbf}(x)\right\}\,,
$$
connecting $X_I$ to $X_Q^+$.  By the bilateral approximation condition
\eqref{eq2.2a}, the definition of $\W^0_Q$, and the fact that $K^{1/2}\ll K$,
we may construct this Harnack Chain so that it consists of a bounded
number of balls (depending on $\eta$ and $K$), and stays a distance at least
$c\eta^{1/2}\ell(Q)$ away from $\GS$ and from $E$.
We let $\W^{*,+}_Q$ denote the set of all $J\in\W$ which meet at least one of the Harnack
chains $\mathcal{H}_I$, with $I\in \W_Q^{0,+}\cup\W_{\widetilde{Q}}^{0,+}$
(or simply $I\in \W_Q^{0,+}$, if $Q=Q(\sbf)$), i.e.,
$$\W^{*,+}_Q:=\left\{J\in\W: \,\exists \, I\in \W_Q^{0,+}\cup\W_{\Qt}^{0,+}
\,\,{\rm for\, which}\,\, \mathcal{H}_I\cap J\neq \emptyset \right\}\,,$$
where as above, $\widetilde{Q}$ is the dyadic parent of $Q$, unless
$Q= Q(\sbf)$, in which case we simply set $\Qt=Q$ (so the union is redundant).
We observe that, in particular, each $I\in \W^{0,+}_Q\cup \W^{0,+}_{\Qt}$
meets $\mathcal{H}_I$, by definition, and therefore
\begin{equation}\label{eqW}
\W_Q^{0,+}\cup \W^{0,+}_{\Qt} \subset \W_Q^{*,+}\,.
\end{equation}
Of course, we may construct $\W^{*,-}_Q$ analogously.
We then set
$$\W^*_Q:=\W^{*,+}_Q\cup \W^{*,-}_Q\,.$$
It follows from the construction of the augmented collections $\W_Q^{*,\pm}$ 
that there are uniform constants $c$ and $C$ such that
\begin{equation}\label{eq2.whitney2:*}
\begin{gathered}
c\eta^{1/2} \ell(Q)\leq \ell(I) \leq CK^{1/2}\ell(Q)\,, \quad \forall I\in \mathcal{W}^*_Q,
\\
\dist(I,Q)\leq CK^{1/2} \ell(Q)\,,\quad\forall I\in \mathcal{W}^*_Q.
\end{gathered}
\end{equation}

It is convenient at this point to introduce some additional terminology.
\begin{definition}\label{d3.2}  Given $Q\in \G$, and hence in  some $\sbf$, we shall refer to the point
	$X_Q^+$ specified above, as the ``center" of $U^{+}_Q$ (similarly, the analogous
	point $X_Q^-$, lying below
	$\GS$, is the ``center" of $U^{-}_Q$).  We also set $Y_Q^\pm := X^\pm_{\Qt}$,
	and we call this point the ``modified center" of $U_Q^\pm$, where as above $\Qt$ is the dyadic parent of
	$Q$, unless $Q=Q(\sbf)$, in which case $Q=\Qt$, and $Y_Q^\pm=X_Q^\pm$.
\end{definition}

Observe that $\W_Q^{*,\pm}$ and hence also $\W^*_Q$
have been defined for any $Q$ that belongs to some stopping time regime $\sbf$,
that is, for any $Q$ belonging to the ``good" collection $\G$ of Lemma \ref{lemma2.1}.  We now set
\begin{equation}\label{Wdef}
\W_Q:=\left\{
\begin{array}{l}
\W_Q^*\,,
\quad Q\in\G,
\\[6pt]
\W_Q^0\,,
\quad Q\in\B,
\end{array}
\right.\,
\end{equation}
and for $Q \in\G$ we shall henceforth simply write $\W_Q^\pm$ in place of $\W_Q^{*,\pm}$. Note that by \eqref{eq3.1} when $Q\in\B$ and by \eqref{eq2.whitney2:*} when $Q\in\G$ we clearly obtain \eqref{eq2.whitney2} with $C$ depending on $n$ and the UR character of $E$. By construction $\W_Q^0\subset \W_Q$. All these show that, provided $\eta\ll 1$ and $K\gg 1$ (depending on $n$ and the UR character of $E$), $\{\W_Q\}_{Q\in\dd(E)}$ is a Whitney-dyadic structure for $\ree\setminus E$ with parameter $\eta$ and $K$ and with $C$ depending on $n$ and the UR character of $E$.

Given an  arbitrary $Q\in\dd(E)$ and $0<\tau\le\tau_0/4$, we may define an associated Whitney region $U_Q$ (not necessarily connected) as in \eqref{eq3.3aa}
or the fattened version of $\widehat{U}_Q$ as in \eqref{eq3.3aa-fat}. In the present situation, if $Q\in\G$, then $U_Q$ splits into exactly two connected components
\begin{equation}\label{eq3.3b}
U_Q^\pm= U_{Q,\tau}^{\pm}:= \bigcup_{I\in \W^{\pm}_Q} I^*(\tau)\,.
\end{equation}
We note that for $Q\in\G$, each $U_Q^{\pm}$ is Harnack chain connected, by construction
(with constants depending on the implicit parameters $\tau, \eta$ and $K$);
moreover, for a fixed stopping time regime $\sbf$,
if $Q'$ is a child of $Q$, with both $Q',\,Q\in \sbf$, then
$U_{Q'}^{+}\cup U_Q^{+}$
is Harnack Chain connected, and similarly for
$U_{Q'}^{-}\cup U_Q^{-}$.

We may also define the Carleson boxes $T_Q$, global and local sawtooth regions  $\Omega_\F$, $\Omega_{\F,Q}$, cones $\Gamma$, and local cones $\Gamma^Q$  as in \eqref{eq3.3a} \eqref{eq2.sawtooth1}, \eqref{defcone}, and \eqref{defconetrunc}.

\smallskip

\begin{remark}\label{r3.11a}
	We recall that, by construction
	(cf. \eqref{eqW}, \eqref{Wdef}),  given $Q\in\G$, one has $\W_{\Qt}^{0,\pm}\subset \W_Q$, where $\widetilde{Q}$ is the dyadic parent of $Q$. Therefore,
	$Y_Q^\pm \in U_Q^\pm\cap U^\pm_{\Qt}$.  Moreover, since $Y_Q^\pm$ is the center of some $I\in \W_{\Qt}^{0,\pm}$,
	we have that $\dist(Y_Q^\pm, \partial U_Q^\pm)\approx \dist(Y_Q^\pm, \partial U_{\Qt}^\pm) \approx \ell(Q)$
	(with implicit constants possibly depending on
	$\eta$ and/or $K$)
\end{remark}

\smallskip

\begin{remark}\label{r3.11}
	Given a stopping time regime $\sbf$ as in Lemma \ref{lemma2.1}, for any semi-coherent
	subregime (cf. Definition \ref{d3.11}) $\sbf'\subset \sbf$ (including, of course, $\sbf$ itself), we now set
	\begin{equation}\label{eq3.2}
	\Omega_{\sbf'}^\pm = {\rm int}\left(\bigcup_{Q\in\sbf'} U_Q^{\pm}\right)\,,
	\end{equation}
	and let $\Omega_{\sbf'}:= \Omega_{\sbf'}^+\cup\Omega^-_{\sbf'}$.
	Note that implicitly, $\Omega_{\sbf'}$ depends upon $\tau$ (since $U_Q^\pm$ has such dependence).
	When it is necessary to consider the value of $\tau$
	explicitly, we shall write $\Omega_{\sbf'}(\tau)$.
\end{remark}

The main geometric lemma for the associated sawtooth regions is the following.

\begin{lemma}[{\cite[Lemma 3.24]{HMM2}}]\label{lemma3.15} Let $\sbf$ be a given
	stopping time regime as in Lemma \ref{lemma2.1},
	and let $\sbf'$ be any nonempty, semi-coherent  subregime of $\sbf$.
	Then for $0<\tau\leq\tau_0$, with $\tau_0$ small enough,
	each of $\Omega^\pm_{\sbf'}$ is a CAD with character depending only on $n,\tau,\eta, K$, and the
	UR character of $E$.
\end{lemma}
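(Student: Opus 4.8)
The plan is to verify the three defining properties of a chord-arc domain for each $\Omega^\pm_{\sbf'}$: (a) the Ahlfors–David regularity of the boundary, (b) the interior Corkscrew condition together with the Harnack Chain condition (i.e., the 1-sided NTA property), and (c) the exterior Corkscrew condition. The key structural input is the bilateral corona decomposition (Lemma~\ref{lemma2.1}), which gives, for each stopping-time regime $\sbf$, a Lipschitz graph $\Gamma_\sbf$ (Lipschitz constant $\leq\eta$) that well-approximates $E$ at all scales in $\sbf$, together with Claim~\ref{c3.1}, which tells us that each Whitney cube $I\in\W_Q^0$ with $Q\in\sbf$ lies cleanly on one side of $\Gamma_\sbf$, at distance $\gtrsim\eta^{1/2}\ell(Q)$ from it. The whole point of the augmentation $\W_Q^0\rightsquigarrow\W_Q^\pm=\W_Q^{*,\pm}$ carried out above, via Harnack chains inside the Lipschitz domain $\Omega^+_{\Gamma_\sbf}=\{(x,t):t>\vp_\sbf(x)\}$, is to force each $U_Q^\pm$ to be Harnack-chain connected and to make $U_{Q'}^\pm\cup U_Q^\pm$ connected when $Q'$ is a child of $Q$ in $\sbf'$; this is exactly what will be leveraged for the Harnack Chain condition on the sawtooth.

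First I would establish the ADR property of $\partial\Omega^\pm_{\sbf'}$. This is essentially formal once one knows the construction: $\Omega^\pm_{\sbf'}$ is a sawtooth region built from the Whitney-dyadic structure, so one argues as in \cite[Appendix~A]{HMM2} (cf. the ADR statement for sawtooths in the case ADR already quoted in Section~\ref{sections:ADR}), using the dyadic packing structure of $\sbf'$ and the bounded overlap of the Whitney regions $U_Q^\pm$. Second, I would prove that $\Omega^\pm_{\sbf'}$ is 1-sided NTA. The interior Corkscrew condition follows because every surface ball on $\partial\Omega^\pm_{\sbf'}$ of radius $r$ contains a cube $Q\in\sbf'$ with $\ell(Q)\approx r$, and the modified center $Y_Q^\pm$ (or the center $X_Q^\pm$) is a corkscrew point, since $\dist(Y_Q^\pm,\partial\Omega^\pm_{\sbf'})\approx\ell(Q)$ by Remark~\ref{r3.11a}. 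The Harnack Chain condition is the crux: given $X,X'\in\Omega^\pm_{\sbf'}$, pick cubes $Q,Q'\in\sbf'$ with $X\in U_Q^\pm$, $X'\in U_{Q'}^\pm$; connect $X$ to the modified center $Y_Q^\pm$ within $U_Q^\pm$ (finitely many balls), then chain upward through the dyadic ancestors along $\sbf'$ using the connectivity of $U_{Q''}^\pm\cup U_{\widetilde{Q''}}^\pm$, and symmetrically for $X'$, meeting at a common ancestor; the key point is that the approximating Lipschitz graph $\Gamma_\sbf$ guarantees these chains stay in $\Omega^\pm_{\sbf'}$ at the correct distance from the boundary, and the logarithmic bound on the number of balls comes from the dyadic generation gap, exactly as in the proof of \cite[Lemma~3.24]{HMM2}.

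Third, I would verify the exterior Corkscrew condition. Here the Lipschitz approximation is used in the opposite direction: near a boundary point of $\Omega^\pm_{\sbf'}$ at scale $r$ associated to a cube $Q\in\sbf'$, the region $\Omega^+_\sbf$ agrees, up to error $\eta\ell(Q)$, with the region above the graph $\Gamma_\sbf$, so the complement contains a point at definite distance $\gtrsim r$ from $\partial\Omega^+_{\sbf'}$ — reflect the interior corkscrew point of the Lipschitz graph domain across $\Gamma_\sbf$, or simply take a point well below the graph (for $\Omega^+_{\sbf'}$), and check using \eqref{eq2.2a} and \eqref{eq2.whitney2} that it lies outside $\overline{\Omega^+_{\sbf'}}$. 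One must also handle boundary points that are "on the sides" of the sawtooth, i.e., near the boundaries of the Whitney regions $U_Q^\pm$ themselves rather than near $E$; there the exterior corkscrew point is found in a neighboring Whitney cube that was deliberately excluded from $\W_Q^\pm$, using the separation property that $I^*(\tau)$ misses $(3/4)J$ for distinct Whitney cubes.

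The main obstacle will be the exterior Corkscrew condition and, more precisely, the careful bookkeeping that distinguishes the two types of boundary points of $\Omega^\pm_{\sbf'}$ — the "$E$-like" part of the boundary (close to $E$ and to $\Gamma_\sbf$) versus the "internal" part (interfaces between included and excluded Whitney cubes, and the tops/bottoms of the sawtooth) — and the verification that in both regimes a complementary corkscrew ball of the right size exists. The Harnack Chain verification is conceptually the heart of the matter but is largely bounded by the already-constructed augmented chains; the technical burden there is only to assemble the local pieces with the right logarithmic count. Since essentially all of this is \cite[Lemma~3.24]{HMM2}, the proof reduces to recalling that result and noting the constants depend only on $n,\tau,\eta,K$ and the UR character of $E$ through the parameters of Lemma~\ref{lemma2.1} and Claim~\ref{c3.1}.
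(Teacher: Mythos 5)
Your outline is correct and follows essentially the same route as the source: the paper itself gives no proof of Lemma \ref{lemma3.15} but simply imports it from \cite[Lemma 3.24]{HMM2}, and your sketch (ADR for sawtooths, interior corkscrews at the centers $X_Q^\pm$/$Y_Q^\pm$, Harnack chains assembled from the augmented collections $\W_Q^{*,\pm}$ built inside the Lipschitz graph domain, and exterior corkscrews obtained from the bilateral approximation \eqref{eq2.2a} on the $E$-side and from excluded Whitney cubes on the internal side) is precisely the argument of that cited lemma. Nothing further is needed beyond the citation you already give.
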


\subsection{Case 1-sided CAD}\label{sections:1-sided:CAD} 
Here we assume that $\Omega$ is a 1-sided CAD. In this case, we are basically in the situation which is similar to being within one regimen ${\bf S}$, at least as far as the construction of $\W_Q$  is concerned.  

With $\W=\W(\Omega)$ and $\dd=\dd(\pom)$ as above, and for some give parameters $\eta<1$, $K> 1$, we consider $\W_Q^0$ (cf. \eqref{eq3.1}). For any $Q\in\dd$ we let $X_Q$ be a corkscrew point relative to $Q$, more specifically, relative to $\Delta_Q$ (cf. \eqref{cube-ball}-\eqref{cube-ball2}). We note that in this scenario the existence of such point comes from the fact that $\Omega$ satisfies the (interior) Corkscrew condition). For $\eta\ll 1$ and $K\gg 1$ depending on the CAD character of $\Omega$ we can guarantee that for every $Q\in\dd$, if $I\in\W$ is so that $I\ni X_Q$ then $I\in \W_Q^0$. We then augment $\W_Q^0$ to $\W_Q^*$ as done in \cite[Section 3]{HM-I}. More precisely, use the fact that one can construct a Harnack chain to connect $X_Q$ with any of the centers of the Whitney cubes in $\W_Q^0\cup\W_{\widetilde{Q}}^0$ where 
$\widetilde{Q}$ is  the dyadic parent of $Q$. Then $\W_Q^*$ is the  family of all Whitney cubes which meet at least one ball in all those Harnack chains. 
Note that in the case when $E$ is UR and $Q\in {\bf S}$ we have used a similar idea, the main difference is that the Harnack chain in that case comes from the fact that $\Omega_{\GS}^+$ is a Lipschitz domain, whereas here such property comes from the assumption that $\Omega$ is a 1-sided CAD and hence the Harnack chain condition holds.  Set then $\W_Q=\W_Q^*$ and one can see that (with the appropriate choice of a sufficiently small $\eta$ and a sufficiently large $K$ depending on $n$ and the CAD character of $D$), that \eqref{eq2.whitney2} hold. Moreover, the construction guarantees that $\W_Q^{0}\cup \W^{0}_{\Qt}\subset \W_Q$, that we can cover with the Whitney cubes in $\W_Q$ all the Harnack chains connecting $X_Q$ with any center of $I\in \W_Q^{0}\cup \W^{0}_{\Qt}\subset \W_Q$, and also that if $I$, $J$ are such that $I\ni X_Q$ and $J\ni X_{\Qt}$ then $I,J\in \W_Q$. We note that by construction the Harnack chain condition holds in each Whitney region $U_Q$ and so it does in $U_Q\cup U_{\widetilde{Q}}$. In either case the corresponding constant depends on the CAD character of $D$ and the parameters $\eta$, $K$, $\tau$. 

In the present situation we have the following geometric result:

\begin{lemma}[{\cite[Lemma 3.61]{HM-I}}]\label{lemma:1-sidedCAD-geom} 
Let $\Omega\subset\ree$ be a 1-sided CAD and let $\{\W_Q\}_{Q\in\dd(\pom)}$ be a Whitney-dyadic structure for $\Omega$ with parameters $\eta\ll 1$ and $K\gg 1$ as just constructed. Then all of its dyadic sawtooths regions $\Omega_\F$ and $\Omega_{\mathcal{F},Q}$, and all Carleson boxes $T_Q$ and $T_\Delta$ are also 1-sided CAD with character depending only on dimension, the 1-sided CAD character of $\Omega$,  and the parameters $\eta$, $K$, and $\tau$.
\end{lemma}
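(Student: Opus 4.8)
The plan is to verify, for each subdomain in the statement, the three defining properties of a 1-sided CAD --- the interior Corkscrew condition, the Harnack Chain condition, and the ADR property of the boundary --- with all constants depending only on the allowable parameters, that is, on $n$, the 1-sided CAD character of $\Omega$, and $\eta,K,\tau$. A preliminary reduction cuts down the work: since $T_Q=\Omega_{\emptyset,Q}$, since $T_\Delta$ is the interior of a union of a uniformly bounded number of closed Carleson boxes $\overline{T_{Q'}}$ with $\ell(Q')\approx r_\Delta$, and since the global sawtooth $\Omega_\F$ is an increasing union of the local sawtooths $\Omega_{\F,Q}$ (letting $Q$ exhaust $\dd(\pom)$, or taking $Q=Q_0=\pom$ when $\pom$ is bounded), it is enough to establish the three properties for $\Omega^\star:=\Omega_{\F,Q}$ with constants independent of $\F$ and $Q$; the assertions for $T_\Delta$ and $\Omega_\F$ then follow by routine gluing and limiting arguments. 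Throughout we exploit the structural facts recorded in Section~\ref{sections:1-sided:CAD}: the augmented families $\W_Q$ obey the Whitney-dyadic bounds \eqref{eq2.whitney2}; $\dd_{\F,Q}=\dd_Q\setminus\bigcup_\F\dd_{Q_j}$ is closed under passing to dyadic ancestors within $\dd_Q$; each Whitney region $U_{Q'}$ is Harnack chain connected; and $U_{Q'}\cup U_{\widetilde{Q'}}$ is Harnack chain connected whenever $Q'$ and its parent $\widetilde{Q'}$ both lie in $\dd_{\F,Q}$.

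For the ADR property of $\partial\Omega^\star$ we would follow the scheme used for the ADR case (compare the analogue of \cite[Proposition A.2]{HMM2}). One splits $\partial\Omega^\star\cap B(x,r)$ into its portion lying on $\pom$ and its portion consisting of faces of dilated Whitney cubes $I^*(\tau)$ with $I\in\W_{\F,Q}$. The upper ADR bound follows by counting the relevant Whitney cubes via \eqref{eq2.whitney2}, the ADR property of $\pom$, and the bounded overlap of $\W(\Omega)$; the passage from $\W_Q^0$ to $\W_Q$ only inserts boundedly many extra cubes of comparable size and so does not affect these estimates. For the lower bound one checks whether $B(x,r)$ contains a surface ball of $\pom$ of radius $\approx r$ --- in which case ADR of $\pom$ gives the bound directly --- or else $x$ is far from $\pom$ at scale $r$, and then some single cube $I\in\W_{\F,Q}$ with $\ell(I)\approx r$ has a face contributing surface measure $\gtrsim r^n$ to $\partial\Omega^\star$.

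For the Corkscrew condition, fix a surface ball $\Delta^\star=B(x,r)\cap\partial\Omega^\star$. If $x$ is close to $\pom$ at scale $r$, pick $\widehat x\in\pom$ with $|x-\widehat x|\ll r$ and a cube $Q'\ni\widehat x$ in $\dd_{\F,Q}$ with $\ell(Q')\approx r$ (should the ancestry within $\dd_Q$ or the stopping cubes of $\F$ force a smaller admissible choice, take the largest admissible one; a short bookkeeping argument shows $\ell(Q')\approx r$ up to harmless constants). Then the corkscrew point $X_{Q'}$ of $\Omega$ relative to $Q'$ lies in $U_{Q'}\subset\Omega^\star$, and, using that the cubes of $\W_{\F,Q}$ of size $\approx\ell(Q')$ clustered near $Q'$ fill out a ball about $X_{Q'}$, one obtains $\dist(X_{Q'},\partial\Omega^\star)\gtrsim r$. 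If instead $x$ is far from $\pom$ at scale $r$, then $x$ sits on the face of some dilated cube $I^*(\tau)$ with $I\in\W_{\F,Q}$ and $\ell(I)\gtrsim r$, and the center of $I$ --- or of a suitable neighbor reached within the connected region $U_{Q'}$ --- is a Corkscrew point, because $\ell(I)\approx\dist(I,\pom)$ and the Whitney property \eqref{whitney} of $I^*(\tau)$ provides a definite clearance inside $\Omega^\star$.

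The Harnack Chain condition will be the main obstacle: one must build chains that stay inside $\Omega^\star$ and have length logarithmic in the correct ratio, uniformly in $\F$ and $Q$. Given $X\in I^*\subset U_{Q_1}$ and $X'\in J^*\subset U_{Q_2}$ with $Q_1,Q_2\in\dd_{\F,Q}$, first connect $X$ to the corkscrew point $X_{Q_1}$ inside the Harnack chain connected region $U_{Q_1}$, and similarly $X'$ to $X_{Q_2}$ inside $U_{Q_2}$. Let $R$ be the smallest common dyadic ancestor of $Q_1$ and $Q_2$ within $\dd_Q$; since $\dd_{\F,Q}$ is closed under ancestors, every cube on the vertical segment from $Q_1$ up to $R$ and from $Q_2$ up to $R$ lies in $\dd_{\F,Q}$, so the overlap of consecutive parent--child Whitney regions lets us splice the corresponding $U_{Q'}$'s into a single Harnack chain inside $\Omega^\star$. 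The number of Whitney cubes used is $\lesssim 1+\big(k(Q_1)-k(R)\big)+\big(k(Q_2)-k(R)\big)$, and one must verify that this is $\lesssim 2+\log_2^+\!\big(|X-X'|/\min\{\dist(X,\partial\Omega^\star),\dist(X',\partial\Omega^\star)\}\big)$. This follows from $\dist(X,\partial\Omega^\star)\approx\ell(Q_1)$, $\dist(X',\partial\Omega^\star)\approx\ell(Q_2)$, and $|X-X'|\gtrsim\ell(R)$ when $R$ is a proper ancestor, together with the elementary observation that when $X$ and $X'$ are both far from $\partial\Omega^\star$ one stops the ascent at the first common ancestor of size $\approx|X-X'|$ rather than going up to $R$. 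The two delicate points are precisely that one must ascend only as high as necessary (never all the way to $Q$), and that one must control the clearance of the chain balls not merely from $\pom$ but also from the interior faces of $\Omega^\star$; both are handled by \eqref{eq2.whitney2} and the Whitney property \eqref{whitney} of the dilates $I^*(\tau)$. This is exactly where the augmented construction of $\W_Q$ in Section~\ref{sections:1-sided:CAD} enters essentially; alternatively, one may simply invoke \cite[Lemma 3.61]{HM-I}.
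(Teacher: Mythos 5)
The paper does not actually prove this lemma: it is quoted verbatim from \cite[Lemma 3.61]{HM-I}, so your closing remark (``one may simply invoke \cite[Lemma 3.61]{HM-I}'') is precisely what the paper does. Since you offer an independent sketch, however, it should be judged on its own terms, and its Harnack chain step has a genuine gap. Your chain ascends from $Q_1$ and $Q_2$ to their least common dyadic ancestor $R$ inside $\dd_Q$ and you assert $|X-X'|\gtrsim \ell(R)$ when $R$ is a proper ancestor. This is false: two cubes can be geometrically adjacent yet dyadically far apart (the usual dyadic adjacency phenomenon), so $\ell(R)$ can be arbitrarily large compared with $|X-X'|+\ell(Q_1)+\ell(Q_2)$, and the spliced vertical chain then has length $\approx \log\big(\ell(R)/\ell(Q_1)\big)+\log\big(\ell(R)/\ell(Q_2)\big)$, far exceeding the allowed $C\big(2+\log_2^+\Lambda\big)$. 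Your proposed patch --- ``stop the ascent at the first common ancestor of size $\approx|X-X'|$'' --- fails in exactly this situation, because no common ancestor of that size exists; moreover the problematic case has nothing to do with $X,X'$ being far from $\partial\Omega_{\F,Q}$. What is actually needed is to ascend from $Q_1$ and $Q_2$ separately to cubes $R_1,R_2\in\dd_{\F,Q}$ (not related in the tree) of side length $\approx|X-X'|$, capped at $\ell(Q)$, and then to connect $U_{R_1}$ to $U_{R_2}$ laterally by a chain of bounded length that stays inside the sawtooth. That lateral connection is the real content of the proof: one runs a Harnack chain of $\Omega$ at scale $\approx|X-X'|$ between the two Whitney regions and checks that every Whitney cube it meets already belongs to $\W^0_{R_1}$ (hence to $\W_{\F,Q}$), which is precisely where the largeness of $K$ and the augmentation of $\W_Q$ by $\Omega$-Harnack chains enter; it does not follow from the parent--child splicing you describe.

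Two smaller points in the same spirit. First, $\dist(X,\partial\Omega_{\F,Q})\approx\ell(Q_1)$ is not correct as an equivalence: a point of $U_{Q_1}$ can lie arbitrarily close to a face of $\partial\Omega_{\F,Q}$, so only $\dist(X,\partial\Omega_{\F,Q})\lesssim\ell(Q_1)$ holds. The direction you need for the length budget survives, but then the initial leg joining $X$ to the corkscrew point of $Q_1$ inside $U_{Q_1}$ costs $\approx\log\big(\ell(Q_1)/\dist(X,\partial\Omega_{\F,Q})\big)$ balls and must itself be charged to $\Lambda$; your sketch treats it as a single bounded step. Second, in the corkscrew argument the cube $Q'\in\dd_{\F,Q}$ with $\widehat{x}\in Q'$ and $\ell(Q')\approx r$ need not exist: if $\widehat{x}$ lies well inside a stopping cube $Q_j\in\F$ with $\ell(Q_j)\gg r$, every cube of side $\approx r$ containing $\widehat{x}$ is discarded. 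One must instead show that either $\ell(Q_j)\lesssim r$ or $\widehat{x}$ is within $Cr$ of the complement of $Q_j$ (using that otherwise $\partial\Omega_{\F,Q}$ sits at height $\gtrsim_{\eta,K}\ell(Q_j)$ above $\widehat{x}$), and then work with a nearby admissible cube rather than one containing $\widehat{x}$; this is the case analysis hidden in your phrase ``short bookkeeping argument''. These gaps are repairable, but as written the argument does not establish the Harnack chain condition, which is the heart of \cite[Lemma 3.61]{HM-I}.
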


\subsection{Case CAD}\label{sections:CAD} 
Here we assume that $\Omega$ is a CAD.  This is, strictly speaking, a sub-case of 
the Case 1-sided CAD above, but the extra assumption that $\Omega$  has 
exterior corkscrews can be inferred to the associated sawtooth regions and Carleson boxes. 

With $\W=\W(\Omega)$ and $\dd=\dd(\pom)$ as above, and for some give parameters $\eta<1$, $K> 1$, we consider $\W_Q^0$ (cf. \eqref{eq3.1}) and construct $\W_Q$ exactly as in the 1-sided CAD case since a CAD is in particular 1-side CAD. Hence, we have the very same properties, in particular, Lemma \ref{lemma:1-sidedCAD-geom} applies. But we can additionally obtain the exterior corkscrew condition: 

\begin{lemma}\label{lemma:CAD-geom} 
	Let $\Omega\subset\ree$ be a CAD and let $\{\W_Q\}_{Q\in\dd(\pom)}$ be a Whitney-dyadic structure for $\Omega$ with parameters $\eta\ll 1$ and $K\gg 1$ as just constructed. Then all of its dyadic sawtooths regions $\Omega_\F$ and $\Omega_{\mathcal{F},Q}$, and all Carleson boxes $T_Q$ and $T_\Delta$ are also CAD with character depending only on dimension, the CAD character of $\Omega$,  and the parameters $\eta$, $K$, and $\tau$.
\end{lemma}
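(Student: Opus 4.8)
The plan is to leverage the 1-sided case. Since a CAD is in particular a 1-sided CAD, Lemma~\ref{lemma:1-sidedCAD-geom} already gives that each of $T_Q$, $T_\Delta$, $\Omega_\F$, $\Omega_{\F,Q}$ is a 1-sided CAD (interior Corkscrew points at all scales, the Harnack Chain condition, and ADR boundary) with character controlled by $n$, the 1-sided CAD character of $\Omega$, and $\eta,K,\tau$. Since ``1-sided NTA $+$ exterior Corkscrew $=$ NTA'', it therefore suffices to verify that each such domain $\Omega_\star$ satisfies the \emph{exterior} Corkscrew condition, using the extra hypothesis that $\Omega$ itself has exterior corkscrews. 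The one structural fact I would record first is that every one of these domains has the form $\Omega_\star=\interior\big(\bigcup_{I\in\mathcal A}I^*(\tau)\big)$ for a suitable subfamily $\mathcal A=\mathcal A(\Omega_\star)\subset\W(\Omega)$ (for instance $\mathcal A=\W_{\F,Q}$ for $\Omega_{\F,Q}$), and that $\Omega_\star\subset\Omega$, so $\overline{\Omega_\star}\subset\overline\Omega$ and any ball disjoint from $\overline\Omega$ is automatically disjoint from $\overline{\Omega_\star}$.

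Then I would fix $x\in\partial\Omega_\star$ and $0<r<\diam(\partial\Omega_\star)$ and split according to $\delta(x)=\dist(x,\pom)$. If $\delta(x)<r/4$, pick $z\in\pom$ with $|x-z|=\delta(x)$; the exterior Corkscrew condition for the CAD $\Omega$ applied to the surface ball $\Delta(z,r/4)$ produces a ball $B(X^-,c\,r)\subset B(z,r/4)\setminus\overline\Omega\subset B(x,r)\setminus\overline{\Omega_\star}$, which is the required exterior Corkscrew ball for $\Omega_\star$. (If $r$ happens to exceed $\diam(\pom)$ — which can only occur within a fixed factor, since $\diam(\partial\Omega_\star)\lesssim\diam(\pom)$ — one runs the same argument at the truncated scale $\min\{r/4,\diam(\pom)/4\}$, or, when $\Omega$ is bounded, simply places a ball in the nonempty annulus $B(x,r)\setminus\overline\Omega$.)

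The substantive case is $\delta(x)\ge r/4$, where $B(x,r/25)\subset\Omega$, so the exterior corkscrew ball we seek must lie inside $\Omega$ but away from $\overline{\Omega_\star}$. Here I would argue from the Whitney geometry. Since $x\in\partial\Omega_\star$, the point $x$ lies on the boundary of some fattened cube $(1+\tau)I_1$ with $I_1\in\mathcal A$ and, by \eqref{Whintey-4I} and \eqref{whitney}, $\ell(I_1)\approx\delta(x)\gtrsim r$; let $J\in\W(\Omega)$ be the Whitney cube lying on the far side of the face of $(1+\tau)I_1$ through $x$. If $J\in\mathcal A$, then $(1+\tau)J\supset J$ together with $(1+\tau)I_1$ would cover a full neighborhood of $x$, contradicting $x\in\partial\Omega_\star$; hence $J\notin\mathcal A$. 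Because $\tau\le\tau_0/4$ is small and using the size/distance relations for Whitney cubes, the only cubes of $\mathcal A$ whose fattened dilates can reach into $J$ are the Whitney neighbors of $J$ (of comparable side length), and each such dilate intrudes into $J$ only within an $O(\tau\ell(J))$ collar of $\partial J$. Consequently, stepping from $x$ a distance $\sim r$ into the interior of $J$ in a coordinate direction pointing away from the face(s) of $J$ adjacent to cubes of $\mathcal A$ yields a point $\xi$ with $|\xi-x|\le r/25$ and $\dist(\xi,\overline{\Omega_\star})\gtrsim r$ (with implicit constant depending on $n$ and $\tau$), so that $B(\xi,c\,r)\subset B(x,r)\cap(\Omega\setminus\overline{\Omega_\star})$ is the desired ball.

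Keeping track of constants, in both cases everything depends only on $n$, the (1-sided) CAD character of $\Omega$, and $\eta,K,\tau$, and is independent of $Q$, $\Delta$, $\F$ because the whole argument takes place inside the single ball $B(x,r)$; together with Lemma~\ref{lemma:1-sidedCAD-geom} this gives that $T_Q$, $T_\Delta$, $\Omega_\F$, $\Omega_{\F,Q}$ are CAD with the stated control on their characters. I expect the main obstacle to be the second case: one must ensure that all fattened Whitney cubes meeting $B(x,r)$ have side $\gtrsim r$ (so that $\partial\Omega_\star$ is effectively flat at scale $r$ near $x$), rule out — via the smallness of $\tau$ — a ``slit-like'' configuration of cubes of $\mathcal A$ around $x$ that would leave no room for an exterior ball, and choose the direction of the step into $J$ so as to avoid all the $O(\tau\ell(J))$ intrusions at once; these points are elementary given the axis-parallel structure of $\W(\Omega)$, but do require some care with the dimensional bookkeeping.
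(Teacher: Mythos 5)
Your proposal is correct and follows essentially the same route as the paper: invoke Lemma~\ref{lemma:1-sidedCAD-geom} to get the 1-sided CAD properties, then verify the exterior Corkscrew condition by splitting into the case $\dist(x,\pom)\lesssim r$ (where the exterior corkscrew of $\Omega$ itself is used, since $\Omega_\star\subset\Omega$) and the interior case (where one exploits a Whitney cube $J$ adjacent to the fattened cube through $x$ that does not belong to the defining family, using that fattened dilates of other cubes only graze a thin collar of $J$). The only cosmetic difference is that the paper moves from $x$ along the segment toward the center of $J$ rather than stepping in a chosen coordinate direction, which is the same idea.
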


\begin{proof}
As mentioned above we can apply Lemma \ref{lemma:1-sidedCAD-geom}, hence all the $\Omega_\F$, $\Omega_{\mathcal{F},Q}$, $T_Q$, and $T_\Delta$ are 1-sided CAD domains. It remains to see that any of them satisfy the exterior corkscrew condition. Let $\Omega_\star$ be one of these subdomains and take $x_\star\in\partial \Omega_\star$ and $0<r<\diam(\partial \Omega_\star)$. By construction $\partial \Omega_\star\subset\overline{\Omega}$ and we consider two cases $0\le \dist(x_\star,\pom)\le r/2$ and $\dist(x_\star,\pom)> r/2$. In the first scenario we pick $x\in\pom$ so that $|x_\star-x|=\dist(x_\star,\pom)\le r/2$ (notice that $x=x_\star$ if $x_\star\in\pom\cap\pom_\star$). Since $\Omega$ is a CAD it satisfies the exterior Corkscrew condition, hence we can find $X\in \Omega_{\rm ext}=\ree\setminus \overline{\Omega}$ so that
$B(X,c_0 r/2)\subset B(x,r/2)\cap \Omega_{\rm ext}$ where $c_0$ is the exterior corkscrew constant. Note that $\Omega_\star\subset \Omega$, hence $B(X,c_0 r/2)\subset (\Omega_\star)_{\rm ext}$. Also,
$B(X,c_0 r/2)\subset B(x,r/2)\subset B(x_\star,r)$. This shows that $X$ is an exterior corkscrew point relative to the surface ball $B(x_\star,r)\cap\partial \Omega_\star$ for the domain $\Omega_\star$ with constant $c_0/2$. Consider next the case on which $\dist(x_\star,\pom)> r/2$. Note that in particular $x_\star\in \Omega$ and therefore we can find two Whitney cubes $I,J\in \W$ so that $x\in \partial I^*\cap J$, $\partial I\cap\partial J\neq\emptyset$, $\interior(I^*) \subset \Omega_\star$ and $J$ is a Whitney cube which does not belong to any of the $\W_Q$ that define $\Omega_\star$. Note that $\ell(J)\ge \dist(x_\star,\pom)/C>r/(2C)$ for some uniform constant $C\ge 1$, that $I^*$ misses $\frac34 J$ as observed before and that the center of $J$ satisfies $X(J)\in (\Omega_\star)_{\rm ext}$. It is then clear that the open segment joining  $x_\star$ with $X(J)$ is contained in $(\Omega_\star)_{\rm ext}$ and we pick $X$ in that segment so that $|X-x_\star|=r/(8C)$ and hence $B(X,r/(16C))\subset B(x_\star,r)\cap \Omega_\star$. This shows that $X$ is an exterior corkscrew point relative to the surface ball $B(x_\star,r)\cap\partial \Omega_\star$ for the domain $\Omega_\star$ with constant $1/(16C)$. Therefore, we have shown that $\Omega$ satisfies  the exterior Corkscrew condition with implicit constant uniformly controlled by the CAD character of $\Omega$. 
\end{proof}

\subsection{Some important notation}

To complete this section we introduce the following notation which will be used in our main statements:

\begin{notation}\label{notation:constants} 
	In the statements of our main results, we will assume that some estimates (e.g., Carleson estimates, ``$\mathcal{A}<N$'', ``$N<S$'', etc.) hold for a given family of subsets with constants depending on the character of those subsets and our goal is to transfer those estimates to the original set. It is crucial to explain how this dependence on the character is understood. To set the stage suppose that we are given some set $\mathbb{X}\subset\ree$ 
and a  family $\mathbb{S}_\mathbb{X}:=\{\mathbb{Y}\}_{\mathbb{Y}\in\mathbb{S}_\mathbb{X}}$, $\mathbb{Y}\subset \mathbb{X}$.
 We assume that associated with $\mathbb{X}$ there is some collection of non-negative parameters $M_{\mathbb{X}}\in [1,\infty)^{N_1}$ called its character and also that each $\mathbb{Y}\in\mathbb{S}_\mathbb{X}$ has some associated character $M_{\mathbb{Y}}\in [1,\infty)^{N_2}$, a collection of non-negative parameters. Using this notation when we say that certain estimate holds for all $\mathbb{Y}\in \mathbb{S}_\mathbb{X}$ with constant $C_\mathbb{Y}$ depending on the character of $\mathbb{Y}$, we mean that $C_\mathbb{Y}=\Theta(M_\mathbb{Y})$ with $\Theta:[1,\infty)^{N_2}\to (0,\infty)$ being a non-decreasing function in each variable. Implicit in the arguments to transfer the desired estimate to $\mathbb{X}$, 
we will use only those sets  $\mathbb{Y}\in \mathbb{S}_\mathbb{X}$ whose parameters in the 
character are all uniformly controlled by some constant $M_0$ (which will  depend on the 
character of $\mathbb{X}$), and 
then all the corresponding constants in the assumed estimates for 
those sets will be controlled by $\Theta(M_0,\dots, M_0)<\infty$, 
and eventually the desired 
estimate on $\mathbb{X}$  will depend on $\Theta(M_0,\dots, M_0)$.

	It is illustrative to present some examples explaining the previous abstract notation in some particular cases. Suppose that the goal is to show that some function $F$ satisfies the Carleson measure estimate \eqref{eqdefCME} in $\mathbb{X}=\ree\setminus E$, with $E$ being UR (see  the second part of 
Theorem \ref{theor:CME:CAD->UR}). In this case $M_{\mathbb{X}}\in [1,\infty)^3$ is the 
UR character of $E$, and we let $\mathbb{S}_\mathbb{X}$ be the collection of 
bounded chord-arc subdomains of $\mathbb{X}$, in which case  $M_{\mathbb{Y}}\in [1,\infty)^4$ is 
the CAD character of $\mathbb{Y}$. With this in hand we show that there is a 
constant $M_0$ (depending 
only on $M_\mathbb{X}$, dimension, and the harmless discretionary
parameters $\tau,\eta$ and $K$, 
and thus independent of $F$; see Lemma \ref{lemma3.15})
so that the resulting estimate can be transferred from the 
collection of CAD with parameters in the character at most $M_0$, and hence the 
Carleson estimate \eqref{eqdefCME} holds with a constant depending only on 
$\Theta(M_0,M_0, M_0, M_0)$, and other harmless parameters.
Similarly, another example is the case that 
$\mathbb{X}=D$ is a CAD, hence $M_{\mathbb{X}}\in [1,\infty)^4$ is its CAD character, and $\mathbb{S}_\mathbb{X}$ is some collection of bounded Lipschitz chord-subdomains of $\mathbb{X}$, then $M_{\mathbb{Y}}\in [1,\infty)^3$ is the Lipschitz CAD character of $\mathbb{Y}$. 
\end{notation}

\section{Transference of Carleson measure estimates}

In this section we show how to transfer CME estimates from Lipschitz to CAD (see Theorem \ref{theor:CME:Lip->CAD}) and then from CAD to the complement of a UR set (see Theorem \ref{theor:CME:CAD->UR}). These two 
independent results, each interesting in its own right,  can be combined to give immediately 
the following:

\begin{corollary}\label{corol:CME:Lip->UR}  	
	Let $E\subset \ree$ be an $n$-dimensional  UR set and let $F\in L^2_{\loc}(\ree\setminus E)$. If $F$ satisfies the Carleson measure estimate \eqref{eqdefCME} for every bounded Lipschitz subdomain of $\ree\setminus E$ with constant depending on the Lipschitz character (see Notation \ref{notation:constants}), then $F$ satisfies the Carleson measure estimate \eqref{eqdefCME} in $\ree\setminus E$ as well. More precisely, there exists a large constant $M_0$ 
(depending only $n$ and the UR character of $E$\footnote{Our estimates depend also on
the discretionary parameters 
$\tau, \eta$ and $K$
introduced above, but in turn each of these may be chosen to depend
at most on $n$ and the $UR$ character of $E$.}) so that using the notation in \eqref{eqdefCME} there holds
	\begin{equation}\label{eqn2.2-general*:generalthm}
	\|F\|_{\C(\ree\setminus E)} 	\le 
	C\,\sup_{\Omega\subset \ree\setminus E}  \|F\|_{\C(\Omega)},
	\end{equation}
	where the sup runs over all bounded Lipschitz subdomains $\Omega\subset  \ree\setminus E$ with parameters in the Lipschitz character at most $M_0$, and $C$ depends as before only on $n$, and the UR character of $E$. 
\end{corollary}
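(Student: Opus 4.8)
The plan is to derive Corollary~\ref{corol:CME:Lip->UR} by composing the two transference results of this section: Theorem~\ref{theor:CME:Lip->CAD}, which passes CME from bounded Lipschitz subdomains of a chord-arc domain to the chord-arc domain itself, and Theorem~\ref{theor:CME:CAD->UR}, which passes CME from bounded chord-arc subdomains of $\ree\setminus E$ to $\ree\setminus E$. Concretely: fix the discretionary parameters $\tau,\eta,K$ as dictated by $n$ and the UR character of $E$, and let $M_\star$ be the threshold furnished by Theorem~\ref{theor:CME:CAD->UR}, so that the passage to $\ree\setminus E$ only invokes the hypothesis on those bounded chord-arc subdomains $D\subset\ree\setminus E$ whose CAD character is at most $M_\star$ in each coordinate. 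For any such $D$, Theorem~\ref{theor:CME:Lip->CAD} produces, by monotonicity of its threshold function in the CAD character, a constant $M_1=M_1(M_\star,n)$ with $\|F\|_{\C(D)}\lesssim\sup_{\Omega}\|F\|_{\C(\Omega)}$, the supremum over bounded Lipschitz subdomains $\Omega\subset D$ of Lipschitz character at most $M_1$. The elementary point is that such an $\Omega$ is itself a bounded Lipschitz subdomain of $\ree\setminus E$, so the hypothesis of the Corollary applies to it; moreover the auxiliary $\|F\|_{\C_0(\ree\setminus E)}$ term needed to run \eqref{CME:dyadic} is finite because Euclidean balls $B(X,3\delta(X)/4)\subset\ree\setminus E$ are bounded Lipschitz (indeed chord-arc) subdomains with universal character. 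Setting $M_0:=\max\{M_\star,M_1\}$ — which depends only on $n$ and the UR character of $E$ — and chaining the two estimates yields \eqref{eqn2.2-general*:generalthm}.

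It remains to describe the mechanism of the two component theorems, where the work lies. For Theorem~\ref{theor:CME:CAD->UR}: by \eqref{CME:dyadic} (the $\C_0$ contribution being harmless, as noted) it suffices to bound $\sigma(Q)^{-1}\iint_{T_Q}|F|^2\delta\,dY$ for a fixed $Q\in\dd(E)$. One applies the bilateral corona decomposition of Lemma~\ref{lemma2.1} with the chosen $\eta,K$, so that $\dd_Q$ is exhausted by the stopping-time regimes $\sbf$ of the good collection $\G$ together with the bad cubes $\B$. On each regime the Whitney regions $U_{Q'}^{\pm}$, $Q'\in\sbf$, lie inside the chord-arc domains $\Omega_{\sbf}^{\pm}$ of Lemma~\ref{lemma3.15}, whose CAD character is controlled purely by $n,\tau,\eta,K$ and the UR character of $E$ (hence uniformly, and below the threshold $M_\star$ above); by Claim~\ref{c3.1} and \eqref{eq2.2a} one has $\delta(\cdot)\approx\dist(\cdot,\partial\Omega_{\sbf}^{\pm})$ on these regions. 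Thus the contribution of a regime to the integral is dominated by $\|F\|_{\C(\Omega_{\sbf}^{\pm})}\,\sigma\big(Q(\sbf)\big)$ (or $\|F\|_{\C(\Omega_{\sbf}^{\pm})}\sigma(Q)$ for the unique regime containing $Q$), with only bounded overlap of Whitney cubes. Summing over regimes and bad cubes inside $Q$, the Carleson packing estimate of Lemma~\ref{lemma2.1}(ii), $\sum_{\sbf:\,Q(\sbf)\subset Q}\sigma(Q(\sbf))+\sum_{Q'\subset Q,\,Q'\in\B}\sigma(Q')\lesssim\sigma(Q)$, closes the estimate.

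For Theorem~\ref{theor:CME:Lip->CAD}: a chord-arc domain $D$ admits interior big pieces of Lipschitz subdomains (Proposition~\ref{IBPLS}) with uniformly controlled Lipschitz character. One runs a stopping-time/corona argument on $\dd(\partial D)$: given a surface ball, at each maximal stopping cube a fixed fraction of the associated Carleson box is captured by such a Lipschitz subdomain $\Omega\subset D$ (on which $\delta(\cdot)$ is comparable to the distance to $\partial\Omega$), and the stopping cubes obey a Carleson packing bound; a geometric sum over the generations of stopping cubes, using the hypothesized CME on each $\Omega$, produces $\|F\|_{\C(D)}\lesssim\sup_\Omega\|F\|_{\C(\Omega)}$ with constant depending only on $n$ and the CAD character of $D$.

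The main obstacle, in both halves, is not a single delicate inequality but the \emph{uniformity bookkeeping}: one must check that the auxiliary subdomains actually used — the $\Omega_{\sbf}^{\pm}$ in the UR step and the Lipschitz big pieces in the CAD step — range over a family whose character is bounded by one constant depending only on $n$ and the ambient UR/CAD character (never on $F$), so that invoking the hypothesis costs only the finite $\Theta(M_0,\dots,M_0)$ allowed by Notation~\ref{notation:constants}; and, secondarily, that replacing the ambient distance $\delta(\cdot)=\dist(\cdot,E)$ by the distance to the boundary of the auxiliary subdomain loses at most a bounded factor — which is precisely what \eqref{eq2.2a}, Claim~\ref{c3.1}, and the Whitney-cube comparabilities \eqref{whitney}–\eqref{eq2.whitney2} are engineered to give.
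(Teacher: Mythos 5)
Your proposal is correct and follows essentially the same route as the paper: Corollary~\ref{corol:CME:Lip->UR} is obtained precisely by chaining Theorem~\ref{theor:CME:CAD->UR} (whose chord-arc threshold $M_\star$ depends only on $n$ and the UR character) with Theorem~\ref{theor:CME:Lip->CAD} applied to each admissible chord-arc subdomain, together with the observation of Remark~\ref{remark:CME0-Lip} that the $\C_0$ term is controlled by Lipschitz subdomains of universal character. Your supplementary sketches of the two component theorems are consistent with the paper's arguments and are not needed for the corollary itself.
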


\begin{remark}\label{remark:CME-E-OMmga}
	The previous result (and also Theorem \ref{theor:CME:CAD->UR}) easily yields a version of itself where everything is localized to some open subset with UR boundary. More precisely, let $\Omega\subset\ree$ be an open set with $\pom$ being UR and let $F\in L^2_{\loc}(\Omega)$. Then
	\begin{equation}\label{eqn2.2-general:Omega}
	\|F\|_{\C(\Omega)} 	\le 
	C\,\sup_{D\subset\Omega}  \|F\|_{\C(D)},
	\end{equation}
	where the sup runs over all bounded Lipschitz subdomains $D\subset  \Omega$ with parameters in the Lipschitz character at most $M_0$, and $C$ depends only on $n$ and the UR character of $\pom$. 
	
	To see this, write $F_\Omega:=F$ in $\Omega$ and $F_\Omega=0$ in $\ree\setminus\overline\Omega$ so that $F\in L^2_{\loc}(\ree\setminus \pom)$. Since $\pom$ is UR we can apply Corollary \ref{corol:CME:Lip->UR} to $E=\pom$ and \eqref{eqn2.2-general*:generalthm} easily yields
	\[
	\|F\|_{\C(\Omega)} 	
	=
	\|F_\Omega\|_{\C(\ree\setminus \pom)} 	
	\le 
	C\,
	\sup_{D\subset\ree\setminus \pom}  \|F_\Omega\|_{\C(D)}
	=
	C\,
	\sup_{D\subset\Omega}  \|F_\Omega\|_{\C(D)}
	.
	\]
\end{remark}

\subsection{Transference of Carleson measure estimates: 
from Lipschitz to chord-arc domains}\label{sjn}
In this section we present a method to transfer the CME estimates from Lipschitz domains to CAD. Our main result is as follows:
	
	\begin{theorem}\label{tn3.1******} 
		Let $D\subset \ree$ be a given CAD and assume that $F\in L^2_{\rm loc}(D)$ satisfies \eqref{eqdefCME-interior:def}. If $F$ satisfies the Carleson measure estimate \eqref{eqdefCME} on all bounded Lipschitz subdomains of $D$ with the constant $C=C_0$ depending on the Lipschitz constants of the underlying domains only, then $F$ satisfies the Carleson measure estimate \eqref{eqdefCME} in $D$ as well, with the bound depending on $C_0$, the constant in \eqref{eqdefCME-interior:def}, the NTA constants of $D$ and the ADR constants of $\partial D$ only.
	\end{theorem}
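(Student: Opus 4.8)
The plan is to reduce the Carleson measure estimate on $D$ to a \emph{dyadic} Carleson measure estimate, via the reduction $\|F\|_{\C(D)}\lesssim \|F\|_{\C^{\rm dyad}(D)}+\|F\|_{\C_0(D)}$ established in \eqref{CME:dyadic} (the second term is controlled by hypothesis \eqref{eqdefCME-interior:def}). So it suffices to bound, uniformly in $Q_0\in\dd(\partial D)$, the quantity $\sigma(Q_0)^{-1}\dint_{T_{Q_0}}|F|^2\,\delta\,dX$. First I would fix $Q_0$ and recall that, since $D$ is CAD, the Carleson box $T_{Q_0}$ is itself a $1$-sided CAD (indeed a CAD) with character controlled by the CAD character of $D$ and the parameters $\eta,K,\tau$, by Lemma~\ref{lemma:CAD-geom}. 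However $T_{Q_0}$ is not a Lipschitz domain, so we cannot apply the hypothesis directly to it; the point is to cover $T_{Q_0}$, in a Carleson-controlled way, by bounded Lipschitz subdomains of $D$.

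The key mechanism is the \textbf{big pieces of Lipschitz subdomains} property of CAD (Proposition~\ref{IBPLS} in the paper): inside any CAD, from each surface ball one can find a Lipschitz subdomain occupying a big piece of it, with Lipschitz character controlled by the CAD character. The plan is to run a stopping-time / corona argument: starting from $Q_0$, select a maximal family of stopping cubes $\{Q_j\}$ such that the region "between" $Q_0$ and the $Q_j$'s, namely the sawtooth $\Omega_{\mathcal F,Q_0}$ with $\mathcal F=\{Q_j\}$, is contained in (the dilate of) a single bounded Lipschitz subdomain $\Omega^\star$ of $D$ furnished by the big-pieces property. On $\Omega^\star$ the hypothesis gives $\|F\|_{\C(\Omega^\star)}\le C_0$, hence $\dint_{\Omega_{\mathcal F,Q_0}}|F|^2\delta\lesssim C_0\,\ell(Q_0)^n\approx C_0\,\sigma(Q_0)$; and the stopping construction should be arranged so that the selected cubes $\{Q_j\}$ satisfy a Carleson packing bound $\sum_j\sigma(Q_j)\le (1-c)\sigma(Q_0)$ with $c>0$ absolute, or at worst $\le C\sigma(Q_0)$. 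One then iterates: over each stopping cube $Q_j$ repeat the construction, obtaining a tree of cubes whose associated sawtooths tile $T_{Q_0}$ (up to the Whitney-cube overlap built into the $U_Q$'s). Summing the local bounds and using the geometric decay (or the packing bound together with a standard self-improvement / John--Nirenberg-type absorption argument) yields $\dint_{T_{Q_0}}|F|^2\delta\,dX\lesssim \sigma(Q_0)$ with constant depending only on $C_0$, the constant in \eqref{eqdefCME-interior:def}, and the CAD/ADR character of $D$.

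To make the covering step precise I would proceed as follows. Apply the big-pieces-of-Lipschitz-subdomains proposition to the surface ball $\Delta_{Q_0}$ to get a Lipschitz subdomain $\Omega_{Q_0}\subset D$ with $\sigma(\partial\Omega_{Q_0}\cap Q_0)\ge \theta^{-1}\sigma(Q_0)$ and Lipschitz character bounded in terms of the CAD character. Let $\{Q_j\}\subset\dd_{Q_0}$ be the maximal cubes $Q$ for which $U_Q$ is \emph{not} contained in (a fixed dilate of) $\Omega_{Q_0}$; a Whitney-region/geometric argument shows the corresponding sawtooth $\Omega_{\mathcal F,Q_0}$ (and even its fattened version) sits inside $C\Omega_{Q_0}$ intersected with $D$, which is still Lipschitz with controlled character, so the hypothesis applies there. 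The non-selected part is handled by that single estimate; the selected cubes inherit, via the big-piece lower bound $\sum_j\sigma(Q_j)\le(1-\theta^{-1})\sigma(Q_0)$, a uniform packing with constant strictly less than one. Recursing and summing a geometric series finishes the bound on $\|F\|_{\C^{\rm dyad}(D)}$, and with \eqref{CME:dyadic} this completes the proof.

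The main obstacle, and the step requiring the most care, is the covering/stopping construction: one must verify that the sawtooth regions $\Omega_{\mathcal F,Q}$ built from the Whitney-dyadic structure of $D$ genuinely embed into bounded \emph{Lipschitz} subdomains (not merely CAD subdomains) with Lipschitz character controlled uniformly — this is exactly where Proposition~\ref{IBPLS} and the precise geometry of the $U_Q$'s are needed — and that the error cubes pack with constant bounded away from $1$, so that the iteration converges. A secondary technical point is ensuring that the fattened Whitney regions $\widehat U_Q$ (needed so that $F$ is integrated over a genuinely open set on which interior estimates and the Lipschitz-domain hypothesis both apply) are still captured by the same Lipschitz subdomains; this is handled by choosing the dilation parameter $\tau$ small relative to the fixed dilate used in the big-pieces step.
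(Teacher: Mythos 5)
Your overall skeleton (reduce to the dyadic Carleson norm via \eqref{CME:dyadic}, then exploit big pieces of Lipschitz subdomains from Proposition~\ref{IBPLS}/Corollary~\ref{cIBPLS}) is the same as the paper's, but the core step is not carried out correctly and has two genuine gaps. First, when you claim $\dint_{\Omega_{\F,Q_0}}|F|^2\delta\,dX\lesssim C_0\,\ell(Q_0)^n$ from $\|F\|_{\C(\Omega^\star)}\le C_0$, you are silently replacing $\dist(\cdot,\partial\Omega^\star)$ by $\delta=\dist(\cdot,\partial D)$. For $Y\in\Omega^\star\subset D$ one has $\dist(Y,\partial\Omega^\star)\le\delta(Y)$, so the inequality goes the wrong way: the hypothesis controls the \emph{smaller} weight. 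The comparability $\delta(Y)\lesssim\dist(Y,\partial\Omega^\star)$ holds only on the part of the sawtooth that stays proportionally far from the piece of $\partial\Omega^\star$ interior to $D$; the Whitney regions that hug that interior boundary cannot be treated this way at all, and this is exactly where the hypothesis \eqref{eqdefCME-interior:def} must be used: in the paper's proof the sawtooth cubes are split into $\dd^1_{\F,Q}$ (where $\dist(U_{Q'},\partial D)\le\eps^{-1}\dist(U_{Q'},\partial\Omega_Q)$, so $U_{Q'}\subset\Omega_Q$ by connectivity through the corkscrew point $Y_{Q'}$ of Corollary~\ref{cIBPLS}(ii), and the weights are comparable) and $\dd^2_{\F,Q}$ (whose fattened regions meet $\partial\Omega_Q$, hence pack, and are estimated by $\|F\|_{\C_0(D)}$, cf.\ \eqref{eqn3.13}--\eqref{eqn3.14}). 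Your proposal only invokes $\C_0$ in the trivial reduction step, never where it is actually indispensable.

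Second, your stopping rule and iteration are not justified. Stopping at the maximal $Q$ with $U_Q\not\subset C\Omega_{Q_0}$ does not give $\sum_j\sigma(Q_j)\le(1-c)\sigma(Q_0)$: a cube meeting the contact set $\partial\Omega_{Q_0}\cap Q_0$ only guarantees a corkscrew point of $U_Q$ inside $\Omega_{Q_0}$, not containment of the whole Whitney region, so stopping cubes can cover a large portion of the big piece and no measure decay follows from $\sigma(\partial\Omega_{Q_0}\cap Q_0)\ge\theta\,\sigma(Q_0)$. Moreover ``$C\Omega_{Q_0}\cap D$ is still Lipschitz'' is false in general (intersecting with a chord-arc domain destroys the Lipschitz character, and the hypothesis applies only to Lipschitz \emph{subdomains} of $D$), so the single-domain estimate you want on each generation is unavailable. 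The paper sidesteps both problems: it stops at the maximal cubes \emph{missing} $\partial\Omega_Q$, so that $\sum_j\sigma(Q_j)=\sigma(Q\setminus\partial\Omega_Q)\le(1-\theta)\sigma(Q)$ is automatic, uses Chebyshev plus \eqref{eqjn10.1-bis2} to get the distributional bound $\sigma\{x\in Q:\A^QF>N\}\le(1-\theta/2)\sigma(Q)$ for a suitable $N\approx(\sup_\Omega\|F\|_{\C(\Omega)})^{1/2}$, and then self-improves with the John--Nirenberg Lemma~\ref{lemma:J-N} (rather than a geometric-series corona iteration) before converting $\int_Q(\A^QF)^2\,d\sigma$ into the dyadic Carleson bound by Fubini. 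As written, your iteration scheme would not close without repairing both the packing estimate and the weight comparison.
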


\begin{theorem}\label{theor:CME:Lip->CAD}  	
	Let $D\subset \ree$ be a given CAD and let $F\in L^2_{\rm loc}(D)$. If $F$ satisfies the Carleson measure estimate \eqref{eqdefCME} for every bounded Lipschitz subdomain of $D$ with constant depending on the Lipschitz character (see Notation \ref{notation:constants}), then $F$ satisfies the Carleson measure estimate \eqref{eqdefCME} in $D$ as well. More precisely, there exists a large constant $M_0$ (depending only $n$ and the CAD character of $D$) so that using the notation in \eqref{eqdefCME} there holds
	\begin{equation}\label{eqn2.2-general*}
	\|F\|_{\C(D)} 	\le 
	C\,\sup_{\Omega\subset D}  \|F\|_{\C(\Omega)},
	\end{equation}
	where the sup runs over all bounded Lipschitz subdomains $\Omega\subset  D$ with parameters in the Lipschitz character at most $M_0$, and $C$ depends as before only on $n$, and the CAD character of $D$. 
\end{theorem}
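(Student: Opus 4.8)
The plan is to reduce the Carleson measure estimate on $D$ to its dyadic version and then to run a corona/self-improvement scheme fed by the fact that a chord-arc domain contains interior big pieces of Lipschitz subdomains. Fix a Whitney--dyadic structure for $D$ as in Section~\ref{sections:CAD}; let $M_0$, depending only on $n$ and the CAD character of $D$, be the Lipschitz-character bound produced by the interior-big-pieces construction (Proposition~\ref{IBPLS}), enlarged if necessary to majorize the character of a Euclidean ball; and write $\mathfrak{C}:=\sup\|F\|_{\C(\Omega)}$, the supremum over all bounded Lipschitz subdomains $\Omega\subset D$ with Lipschitz character at most $M_0$. By \eqref{CME:dyadic} it suffices to control $\|F\|_{\C_0(D)}$ and $\|F\|_{\C^{\rm dyad}(D)}$. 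For the first, given $X\in D$ the ball $B:=B(X,\tfrac34\delta(X))\subset D$ is a bounded Lipschitz subdomain with absolutely bounded character on which $\dist(\cdot,\partial B)\approx\delta(X)$ on $B(X,\delta(X)/2)$; testing the hypothesis on $B$ gives $\delta(X)^{-(n-1)}\dint_{B(X,\delta(X)/2)}|F|^2\lesssim\|F\|_{\C(B)}\le\mathfrak{C}$, so $\|F\|_{\C_0(D)}\lesssim\mathfrak{C}$. Everything thus reduces to proving $\sigma(Q_0)^{-1}\dint_{T_{Q_0}}|F|^2\delta\lesssim\mathfrak{C}$ for all $Q_0\in\dd(\partial D)$, with constant depending only on $n$ and the CAD character of $D$; taking the supremum over $Q_0$ then yields $\|F\|_{\C^{\rm dyad}(D)}\lesssim\mathfrak{C}$ and hence \eqref{eqn2.2-general*}.

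For the main estimate I would fix $Q_0$ and invoke the interior big pieces property of the CAD $D$, in the quantitative, corona-compatible form supplied by Proposition~\ref{IBPLS} and the constructions of Section~\ref{sections:CAD}: this produces a bounded Lipschitz subdomain $\Omega_{Q_0}\subset D$ with Lipschitz character at most $M_0$, together with a pairwise disjoint family $\mathcal{F}\subset\dd_{Q_0}$ obeying a packing bound with a genuine gain, $\sum_{Q_j\in\mathcal{F}}\sigma(Q_j)\le(1-\eta_0)\sigma(Q_0)$ with $\eta_0>0$ depending only on $n$ and the CAD character of $D$, arranged so that (passing to fattened Whitney regions if needed) the dyadic sawtooth $\Omega_{\mathcal{F},Q_0}$ lies inside $\Omega_{Q_0}$ and $\delta(Y)=\dist(Y,\partial D)\approx\dist(Y,\partial\Omega_{Q_0})$ for $Y\in\Omega_{\mathcal{F},Q_0}$. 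Splitting $\dd_{Q_0}=\dd_{\mathcal{F},Q_0}\cup\bigcup_j\dd_{Q_j}$ gives $T_{Q_0}\subset\overline{\Omega_{\mathcal{F},Q_0}}\cup\bigcup_j\overline{T_{Q_j}}$ (cf. \eqref{eq2.sawtooth1}--\eqref{eq3.saw}), hence
\[
\dint_{T_{Q_0}}|F|^2\delta\;\le\;\dint_{\Omega_{Q_0}}|F|^2\,\dist(\cdot,\partial\Omega_{Q_0})\;+\;\sum_j\dint_{T_{Q_j}}|F|^2\delta\;\lesssim\;\mathfrak{C}\,\sigma(Q_0)\;+\;\sum_j\dint_{T_{Q_j}}|F|^2\delta,
\]
where the first term is bounded by testing $\|F\|_{\C(\Omega_{Q_0})}\le\mathfrak{C}$ against a ball of radius $\approx\diam(\partial\Omega_{Q_0})\approx\ell(Q_0)$ and using ADR of $\partial D$. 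Writing $\Lambda:=\sup_{Q\in\dd(\partial D)}\sigma(Q)^{-1}\dint_{T_Q}|F|^2\delta$, the packing gain turns this into $\dint_{T_{Q_0}}|F|^2\delta\le C\mathfrak{C}\,\sigma(Q_0)+(1-\eta_0)\Lambda\,\sigma(Q_0)$ for every $Q_0$, so $\Lambda\le C\mathfrak{C}+(1-\eta_0)\Lambda$ and therefore $\Lambda\le C\eta_0^{-1}\mathfrak{C}$, which is exactly the bound sought.

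Two points need care. First, the self-improvement inequality requires $\Lambda<\infty$ a priori, which is not obvious: the weight $\delta$ is precisely what makes $\dint_{T_Q}|F|^2\delta$ finite in the presence of infinitely many small Whitney cubes. I would circumvent this by running the entire argument with $F$ replaced by $F1_K$ for an arbitrary compact $K\subset D$; then $\|F1_K\|_{\C(\Omega)}\le\|F\|_{\C(\Omega)}$ for every $\Omega$, while $T_Q\cap K\neq\emptyset$ forces $\ell(Q)\gtrsim\dist(K,\partial D)$ and confines $x_Q$ to a bounded set, so the truncated $\Lambda$ is finite; the resulting bound is uniform in $K$ and one concludes by monotone convergence as $K\uparrow D$. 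Second --- and this is the real obstacle --- everything hinges on the geometric input: for each $Q_0$ one must produce a \emph{single} bounded Lipschitz subdomain with controlled character whose boundary recaptures a fixed proportion of $\partial D$ near $Q_0$ (so that the discarded cubes pack with a genuine gain $\eta_0$), and along whose associated sawtooth the distances to $\partial D$ and to the subdomain's boundary are comparable. This is the content of Proposition~\ref{IBPLS} and the structure of its proof, where the Harnack chain and exterior corkscrew conditions of $D$ and the ADR property of $\partial D$ all enter; granting it, the remaining steps --- the reduction, the geometric series, the truncation --- are routine. (This is, in streamlined form, the mechanism already implicit in \cite{HMM2}; Theorem~\ref{tn3.1******} is the same statement with the character bookkeeping made explicit via Notation~\ref{notation:constants}.)
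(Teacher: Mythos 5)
Your reductions are fine (the passage to $\|F\|_{\C^{\rm dyad}(D)}$ via \eqref{CME:dyadic}, the control of $\|F\|_{\C_0(D)}$ by testing on interior balls, and the truncation $F1_K$ to justify hiding a finite supremum), and your $\Lambda$-iteration would indeed close \emph{if} the geometric input you posit were available. But that input is the gap, and it is not ``the content of Proposition~\ref{IBPLS}''. What Proposition~\ref{IBPLS}/Corollary~\ref{cIBPLS} give is a single Lipschitz subdomain $\Omega_{Q_0}\subset D$ whose boundary captures a proportion $\theta$ of $\sigma(Q_0)$; they do \emph{not} give a disjoint family $\F\subset\dd_{Q_0}$ satisfying simultaneously (a) $\sum_j\sigma(Q_j)\le(1-\eta_0)\sigma(Q_0)$, (b) $\Omega_{\F,Q_0}\subset\Omega_{Q_0}$, and (c) $\delta(Y)\approx\dist(Y,\partial\Omega_{Q_0})$ on the sawtooth. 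The natural stopping family (maximal cubes missing $\partial\Omega_{Q_0}$) yields the gain (a) with $\eta_0=\theta$, but then the surviving cubes are exactly those meeting $\partial\Omega_{Q_0}$, and their Whitney regions (which are Whitney regions for $D$, of size $\approx\ell(Q')\approx\delta$) can protrude out of $\Omega_{Q_0}$ and sit at distance from $\partial\Omega_{Q_0}$ which is arbitrarily small compared with $\ell(Q')$, since $\partial\Omega_{Q_0}\cap D$ may pass right through them; so (b) and (c) fail. If instead you enlarge $\F$ by also stopping at cubes whose Whitney regions come too close to $\partial\Omega_{Q_0}$, you lose (a): the total measure of such cubes is only bounded by $C\sigma(Q_0)$ (this is precisely the packing estimate \eqref{eqn3.13} in the paper), with no smallness, so the geometric-series gain evaporates. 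In short, (a) and (b)--(c) pull in opposite directions and cannot both be ``arranged''.

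The paper's proof shows how to avoid needing (b)--(c). Keep $\F$ as the maximal cubes missing $\partial\Omega_{Q_0}$ (retaining the $\theta$-gain in measure), and split the sawtooth cubes $\dd_{\F,Q_0}$ into $\dd^1_{\F,Q_0}$, those with $\dist(U_{Q'},\partial D)\le\eps^{-1}\dist(U_{Q'},\partial\Omega_{Q_0})$, and the complementary family $\dd^2_{\F,Q_0}$. For $\dd^1$ one does have $U_{Q'}\subset\Omega_{Q_0}$ (using Corollary~\ref{cIBPLS}$(ii)$ and connectivity of $U_{Q'}$) and $\delta\lesssim\eps^{-1}\dist(\cdot,\partial\Omega_{Q_0})$, so this part is controlled by $\|F\|_{\C(\Omega_{Q_0})}\,\sigma(Q_0)$, i.e.\ by the hypothesis; for $\dd^2$ the fattened Whitney regions must meet $\partial\Omega_{Q_0}$ and hence pack against $H^n(\partial\Omega_{Q_0})\approx\sigma(Q_0)$, and each term is absorbed by the interior bound $\|F\|_{\C_0(D)}$ (which, as you observed, is itself $\lesssim\mathfrak{C}$ by \eqref{eqdefCME-D-interior:Lip}). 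This yields \eqref{eqjn10.1-bis2}, whence a distributional (good-$\lambda$ type) estimate for $\A^{Q}F$ on the big piece $\partial\Omega_{Q}\cap Q$, and the self-improvement is then done not by your direct Carleson-box iteration but by the dyadic John--Nirenberg Lemma~\ref{lemma:J-N}, which converts the uniform level-set bound into $\sup_Q\fint_Q(\A^QF)^2\,d\sigma\lesssim\mathfrak{C}$ and hence into the dyadic Carleson bound. So the missing ingredient in your write-up is precisely the $\dd^1/\dd^2$ dichotomy together with the use of the interior $\C_0$ control (or some substitute for it), without which the comparison of $\delta$ with $\dist(\cdot,\partial\Omega_{Q_0})$ on the whole sawtooth is false.
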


Let us remark that in the course of the proof we ensure a suitable choice of a (sufficiently small) $\eta$ and a (sufficiently large) $K$ is \eqref{eq3.1} which strictly speaking affect the constant in \eqref{eqn2.2-general*}. However, as all choices depend on dimension and the CAD character only, this does not affect the result as stated above. 

In preparation to prove the previous result we start with the following version of the John-Nirenberg inequality. It is a suitable modification of  \cite[Lemma~10.1]{HM08} which, in turn, was inspired by \cite[Lemma~2.14 ]{AHLT}.  Here we present an alternative proof along the lines in \cite[Lemma A.1]{MMM}. Given $\Omega$ an open set with an ADR boundary, let $Q_0$ will be either $\pom$ in which case $\dd_{Q_0}=\dd(\pom)$ or some fixed dyadic cube in $\dd(\pom)$ in which case $\dd_{Q_0}$ is defined in \eqref{eq3.4a}.

\begin{lemma}\label{lemma:J-N} 
	Let $\Omega$ be an open set with an ADR boundary, let $Q_0$ be either $\pom$ or a fixed cube in $\dd(\pom)$, and for some given $\eta\ll 1$ and $K\gg 1$, consider a  Whitney-dyadic structure $\{\W_Q\}_{Q\in\dd(\pom)}$ for $\Omega$ with parameters $\eta$ and $K$ as in Definition \ref{def:WD-struct}. 
	Let $F\in
	L^2_{\loc}(\Omega)$ and suppose that there exist $0<\alpha<1$ and $0<N<\infty$ such that 
	\begin{equation}\label{eqjn10.1}
	\sigma\left\{x\in Q:\,\A^Q F(x)> N\right\}\leq 
	\alpha\, \sigma(Q), \qquad\forall\, Q\in \dd_{Q_0}.
	\end{equation}
	Then, for every $0<p<\infty$ there exists $C_{\alpha,p}$ depending only on $p$ and $\alpha$ such that 
	\begin{equation}\label{eqjn10.2}
	\sup_{Q\in\dd_{Q_0}}\,\fint_Q \A^Q F(x)^p\,d\sigma(x)
	\leq 
	C_{\alpha,p} N^p.
	\end{equation}
\end{lemma}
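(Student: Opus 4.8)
The statement is a John–Nirenberg-type self-improvement: a weak-type distributional bound for the local dyadic area integral $\A^Q F$ at a single height $N$ upgrades to $L^p$-bounds at every $p$. The natural mechanism is a good-$\lambda$ / stopping-time argument exploiting the additivity of the quadratic functional $Y\mapsto \dint_{\Gamma^Q(y)} |F|^2\delta^{1-n}$. The key structural fact I would use is that for a fixed $Q$ and a subcube $Q'\in\dd_Q$, the local cone $\Gamma^{Q}(x)$ for $x\in Q'$ splits as $\Gamma^{Q'}(x)$ together with the pieces $U_{Q''}$ over cubes $Q''$ with $Q'\subsetneq Q''\subseteq Q$ and $Q''\ni x$; since there are boundedly many such $Q''$ at each generation and each $U_{Q''}$ lies at distance $\approx\ell(Q'')$ from $\pom$, the ``far'' part of the cone contributes, for $x\in Q'$, at most a quantity comparable to $\sum_{Q'\subseteq Q''\subseteq Q} \ell(Q'')^{-n}\dint_{U_{Q''}}|F|^2\delta\,dX$, which is a fixed (independent of $x\in Q'$) number. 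This lets one write, for any $Q'\in\dd_Q$ and $x\in Q'$,
\[
\A^Q F(x)^2 \;\le\; \A^{Q'} F(x)^2 + R_{Q,Q'}^2,
\]
where $R_{Q,Q'}$ depends only on the pair $(Q,Q')$, not on $x\in Q'$.

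First I would reduce \eqref{eqjn10.2} to a uniform bound on $\fint_Q \A^Q F^p\,d\sigma$ and, by the layer-cake formula, to showing a geometric decay $\sigma\{x\in Q:\A^QF(x)>\lambda N\}\le C\beta^{\,\lambda}\sigma(Q)$ for all $Q\in\dd_{Q_0}$ and some $\beta<1$ once $\lambda$ is large — equivalently, iterating the single-scale hypothesis \eqref{eqjn10.1}. The engine is a stopping-time decomposition: fix $Q\in\dd_{Q_0}$ and $\Lambda>1$ to be chosen; let $\{Q_j\}$ be the maximal subcubes of $Q$ for which the ``truncated-from-above'' area integral — i.e., the contribution to $\A^Q F(x)$ coming only from $U_{Q''}$ with $Q_j\subseteq Q''\subseteq Q$ — first exceeds $\Lambda N$. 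On $Q\setminus\bigcup_j Q_j$ the full $\A^Q F$ is controlled by $\A^{Q_j\text{-parent}}$-type quantities plus $\Lambda N$; and on each $Q_j$ one peels off the bounded contribution of the ``collar'' $T_Q\setminus T_{Q_j}$ (which is $\lesssim \Lambda N$ by maximality of $Q_j$) so that $\A^QF(x)\le \A^{Q_j}F(x)+C\Lambda N$ for $x\in Q_j$. Applying hypothesis \eqref{eqjn10.1} to each $Q_j$ and summing gives
\[
\sigma\{x\in Q:\A^QF(x)>(C\Lambda+1)N\}\le \alpha\sum_j\sigma(Q_j)+\sigma\{x\in Q\setminus{\textstyle\bigcup}Q_j:\cdots\},
\]
and a Carleson/packing estimate for $\sum_j\sigma(Q_j)$ (obtained by integrating $\A^QF^2$, i.e. a Chebyshev bound $\sum_j\sigma(Q_j)\le (\Lambda N)^{-2}\dint_{T_Q}|F|^2\delta$, which in turn is controlled via \eqref{eqjn10.1} through the $L^2$ case) closes the induction with a gain factor strictly less than $1$ after choosing $\Lambda$ large and then noting the ``good'' set contributes nothing new.

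To make this rigorous I would first establish the $p=2$ case of \eqref{eqjn10.2} directly: $\dint_{T_Q}|F|^2\delta\,dX = \int_Q \A^Q F(x)^2\,d\sigma(x)$ up to ADR constants (Fubini, using that each $X\in T_Q$ lies in boundedly many cones over points of a surface ball of radius $\approx\delta(X)$), and then bound the right side by splitting at height $N$: the part where $\A^QF\le N$ gives $\le N^2\sigma(Q)$, while the part above $N$ is handled by a further stopping-time / self-improvement exactly as above but now only needing $L^2$. Once $p=2$ (hence the packing bound $\sum_j\sigma(Q_j)\lesssim \Lambda^{-2}\sigma(Q)$) is in hand, the good-$\lambda$ iteration yields exponential decay of the distribution function, which gives \eqref{eqjn10.2} for all $0<p<\infty$ with $C_{\alpha,p}$ depending only on $\alpha$ and $p$ (the geometric/ADR constants being absorbed, and crucially \emph{not} $N$).

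The main obstacle I anticipate is bookkeeping the ``collar'' term cleanly — i.e., verifying that for $x\in Q_j$ the difference $\A^QF(x)^2-\A^{Q_j}F(x)^2$ is pointwise $\lesssim (\Lambda N)^2$ as a consequence of the maximality defining $Q_j$, uniformly in $x\in Q_j$. This requires a careful choice of which ``partial'' area integral to run the stopping time on (it should be the monotone, cumulative-from-the-top quantity so that maximality genuinely controls the collar) and a check that the finitely-many Whitney regions $U_{Q''}$ straddling the chain from $Q_j$ up to $Q$ are counted with controlled multiplicity — this is where \eqref{eq2.whitney2} and the bounded overlap of the $U_Q$'s enter. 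Everything else is a standard layer-cake plus geometric-sum argument; the geometry of the Whitney-dyadic structure is doing all the real work and is already available from Definition \ref{def:WD-struct}.
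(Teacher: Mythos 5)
Your overall architecture (the cone-splitting inequality, a stopping-time decomposition, peeling off the collar, then iterating to exponential decay and concluding by the layer-cake formula) is in the same spirit as the paper's proof, and your key structural observation that for $x\in Q'\subset Q$ one has $\A^QF(x)^2\le \A^{Q'}F(x)^2+R_{Q,Q'}^2$ with $R_{Q,Q'}$ independent of $x\in Q'$ is correct. However, as written there is a genuine circularity in the packing step. You control $\sum_j\sigma(Q_j)$ by Chebyshev against $\dint_{T_Q}|F|^2\delta\,dY\approx\int_Q \A^QF^2\,d\sigma$, and you propose to bound this by $N^2\sigma(Q)$ by ``first establishing the $p=2$ case'' of \eqref{eqjn10.2} via ``a further stopping-time/self-improvement exactly as above''. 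But the hypothesis \eqref{eqjn10.1} is a single distributional bound at height $N$; it gives no $L^2$ control of $\A^QF$ whatsoever without the very iteration you are building, and the iteration you describe needs the packing bound, which needs the $L^2$ bound. So the $p=2$ case is not ``direct'': it is the conclusion of the lemma, and your proof of it refers back to the argument under construction.

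The gap is easily repaired, and the repair essentially lands you on the paper's proof. Your stopping quantity $A_{Q,Q'}^2=\sum_{Q'\subseteq Q''\subseteq Q}\dint_{U_{Q''}}|F|^2\delta^{1-n}\,dY$ is a pointwise lower bound for $\A^QF(x)^2$ for every $x\in Q'$, since $\Gamma^Q(x)$ contains all those $U_{Q''}$; hence each stopping cube satisfies $Q_j\subset\{x\in Q:\A^QF(x)>\Lambda N\}\subset\{x\in Q:\A^QF(x)>N\}$, and \eqref{eqjn10.1} alone gives $\sum_j\sigma(Q_j)\le\alpha\,\sigma(Q)$ --- no Carleson/$L^2$ input and no largeness of $\Lambda$ are needed (one may take $\Lambda=1$; note also $A_{Q,Q}\le N$, else $\{\A^QF>N\}=Q$ would contradict $\alpha<1$, so the stopping cubes are proper). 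Setting $\Xi(t)=\sup_{Q\in\dd_{Q_0}}\sigma\{x\in Q:\A^QF(x)>t\}/\sigma(Q)$, your collar estimate $\A^QF\le\A^{Q_j}F+\Lambda N$ on $Q_j$ together with $\A^QF\le\Lambda N$ on $Q\setminus\bigcup_jQ_j$ then yields the recursion $\Xi(t+\Lambda N)\le\alpha\,\Xi(t)$, whence exponential decay and \eqref{eqjn10.2} for every $0<p<\infty$ with $C_{\alpha,p}$ depending only on $\alpha,p$. The paper achieves the same recursion, $\Xi(t)\le(\alpha/\beta)\Xi(t-N)$ with $\beta\to1^-$, by running the stopping time on $\{M^{\dd}_Q(1_{\{\A^QF>N\}})>\beta\}$ and bounding the collar by $N$ through a point of the dyadic parent lying outside the level set, using the inequality $\A^QF(x)\le\A^{Q'}F(x)+\inf_{y\in\widetilde{Q}'}\A^QF(y)$; your cumulative-from-the-top quantity is an acceptable substitute once the packing is obtained from the hypothesis rather than from the unproved $L^2$ bound.
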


\begin{proof} 
We first claim that for all $Q\in\dd_{Q_0}$
	\begin{equation}\label{GQ}
	\A^Q F(x) \le \A^{Q'} F(x)+\inf_{y\in \widetilde{Q}'} \A^Q F(y),
	\qquad \forall x\in Q'\in\dd_Q \setminus \{Q\},
	\end{equation}
	where $ \widetilde{Q}'$ is the dyadic parent of $Q'$. 
	This follows easily from the fact that if $x\in Q'\in\dd_Q \setminus \{Q\}$  and $y\in \widetilde{Q}'$ then 
	\[
	\Gamma^Q(x)\setminus\Gamma^{Q'}(x)
	\subset
	\bigcup_{x\in P \in\dd_Q\setminus\dd_{Q'}} U_{P}
	=
	\bigcup_{\widetilde{Q}'\subset P\subset Q}U_{P}
	\subset 
	\Gamma^Q(y).
	\]
		Next, let us set
	\begin{equation}
	\Xi(t):=\sup_{Q\in\mathbb{D}_{Q_0}}\frac{\sigma(E_Q(t))}{\sigma(Q)}
	:=
	\sup_{Q\in\mathbb{D}_{Q_0}} \frac{\sigma\{x\in Q: \A^Q F(x)>t\}}{\sigma(Q)},\qquad 0<t<\infty.
	\end{equation}
	From \eqref{eqjn10.1} it follows that
	\begin{equation}\label{eq:esti-unif-level-sets}
	\sigma(E_Q(N))
	:=
	\sigma\{x\in Q:\,\A^Q F(x)>N\}	\leq\alpha\sigma(Q),
	\qquad\forall\,Q\in\mathbb{D}_{Q_0}.
	\end{equation}
	Fix now $Q\in\mathbb{D}_{Q_0}$, $\beta\in(\alpha,1)$ (we will eventually let $\beta \to 1^{+})$ and, recalling the notation introduced in Definition \ref{defHLmax} with $E=\pom$,
	set
	\begin{equation}\label{643rUHG}
	F_Q(N) :=\{x\in Q:\,M_Q^{\dd}\big(1_{E_Q(N) }\big)(x)>\beta\}.
	\end{equation}
	Note that \eqref{eq:esti-unif-level-sets} ensures that 
	\begin{equation}\label{6f4rdDD}
	\fint_Q 1_{E_Q(N)}(y)\,d\sigma(y)=\frac{\sigma(E_Q(N))}{\sigma(Q)}\leq\alpha<\beta,
	\end{equation}
	hence we can extract a family of pairwise disjoint stopping-time cubes 
	$\{Q_j\}_j\subset\mathbb{D}_{Q}\setminus\{Q\}$ so that $F_Q(N)=\cup_j Q_j$ and for every $j$
	\begin{equation}\label{eq:stopping-time}
	\frac{\sigma(E_Q(N) \cap Q_j)}{\sigma(Q_j)}>\beta;\quad
	\qquad\frac{\sigma(E_Q(N) \cap Q')}{\sigma(Q')}\leq\beta,
	\quad Q_j\subsetneq Q'\in\mathbb{D}_Q.
	\end{equation}
	
	Fix $t>N$. Observe that $E_Q(t)\subset E_Q(N)$ and 
	\begin{equation}\label{tdr-aafDD}
	\beta<1=1_{E_Q(N)}(x)\leq M_Q^{\dd}\big(1_{E_Q(N)}\big)(x),
	\qquad \text{for $\sigma$- a.e.~}x\in E_{Q}(t).
	\end{equation}
	Hence,
	\[
	\sigma(E_Q(t))=\sigma(E_Q(t)\cap F_Q(N))=\sum_j \sigma(E_Q(t)\cap Q_j).
	\]
	For every $j$, by the second estimate in \eqref{eq:stopping-time} 
	applied to $\widetilde{Q}_j$, the dyadic parent of $Q_j$, we have  
	$\sigma(E_Q(N)\cap\widetilde{Q}_j)/\sigma(\widetilde{Q}_j)\leq\beta<1$, therefore 
	$\sigma(\widetilde{Q}_j\setminus E_Q(N))/\sigma(\widetilde{Q}_j)\ge 1-\beta>0$. 
	In particular, we can pick $x_j\in \widetilde{Q}_j\setminus E_Q(N)$. This and \eqref{GQ} imply that for all $x\in Q_j$
	\[
	\A^Q F(x)
	\leq 
	\A^{Q_j} F(x)+ 
	\inf_{y\in \widetilde{Q}_j} \A^Q F(y)
	\le
	\A^{Q_j} F(x)+ \A^{Q_j} F(x_j)
	\le
	\A^{Q_j} F(x)+ N.
	\]
	Consequently, $\A^{Q_j} F(x)>t-N$ for every $x\in E_Q(t)\cap Q_j$ which further implies
	\[
	\sigma(E_Q(t)\cap Q_j)
	\leq
	\sigma\{x\in Q_j:\,\A^{Q_j} F(x)>t-N\}
	\leq
	\Xi(t-N)\,\sigma(Q_j).
	\]
	All these give
	\begin{multline}
	\sigma(E_Q(t))
	=
	\sum_j\sigma(E_Q(t)\cap Q_j)
	\leq
	\Xi(t-N)
	\sum_j \sigma(Q_j)
	\\
	\leq
	\Xi(t-N)
	\frac1{\beta}\sum_j \sigma(E_Q(N)\cap Q_j)
	\leq
	\Xi(t-N)\frac1\beta \sigma(E_Q(N))
	\leq
	\Xi(t-N)\frac\alpha\beta\sigma(Q),
	\end{multline}
	where we have used the first estimate in \eqref{eq:stopping-time}, that the cubes $\{Q_j\}_j$ are pairwise 
	disjoint and, finally, \eqref{eq:esti-unif-level-sets}. Dividing by $\sigma(Q)$ and taking the 
	supremum over all $Q\in\mathbb{D}_{Q_0}$ we obtain
	\begin{equation}\label{6tgEDac}
	\Xi(t)\leq\frac\alpha\beta\Xi(t-N),\qquad t>N. 
	\end{equation}
	Since this estimate is valid for all $\beta \in (\alpha,1)$, we can now let $\beta\to 1^{+}$, iterate the previous expression, and use 
	the fact that $\Xi(t)\leq 1$ to conclude that
	\[
	\Xi(t)\leq \alpha^{-1}e^{-\frac{\log(\alpha^{-1})}{N}t},\qquad t>0. 
	\]
	We finally see how the just obtained estimate implies \eqref{eqjn10.2}: for any $0<p<\infty$,
	\begin{multline*}
	\fint_{Q} \A^{Q} F(x)^p\,d\sigma(x)
	=
	p\int_0^\infty\frac{\sigma\{x\in Q:\,\A^{Q} F(x)>t\}}{\sigma(Q)}\,t^p\,\frac{dt}{t}
	\\
	\leq
	p\int_0^\infty\Xi(t)\, t^p\,\frac{dt}{t}
	\leq
	p\alpha^{-1}\int_0^\infty\, e^{-\frac{\log(\alpha^{-1})}{N}t}\,t^p\,\frac{dt}{t}
	\\
	=
	p \alpha^{-1} \left(\frac{N}{\log(\alpha^{-1})}\right)^p
	\int_0^\infty e^{-t}\,t^p\,\frac{dt}{t}
	=
	C_{\alpha,p} N^p. 
	\end{multline*}
	This completes the proof.
\end{proof}

To address the transference of the Carleson measure condition from Lipschitz to chord-arc domains we shall use the fact that chord-arc domains contain interior big pieces of Lipschitz subdomains due to \cite{DJe}.

\begin{proposition}[\cite{DJe}]\label{IBPLS}
	Given $\Omega\subset \ree$, a CAD, there exist constants $C\ge 2$ and $0<\theta<1$ such that for every surface ball $\Delta(x,r)=B(x,r)\cap \partial \Omega$, $x\in \pom $, $0<r<\diam (\pom)$, 
	there exists a bounded Lipschitz domain $\Omega'$ for which we have the following conditions: 
\begin{list}{$(\theenumi)$}{\usecounter{enumi}\leftmargin=.8cm
		\labelwidth=.8cm\itemsep=0.2cm\topsep=.1cm
		\renewcommand{\theenumi}{\roman{enumi}}}

		\item $H^n(\pom\cap \po'\cap B(x,r))\geq \theta H^n(\Delta(x,r))\approx \theta r^n$.
		\item There exists $X_{\Delta}$ so that $B(X_\Delta,r/C)\subset B(x,r)\cap \Omega\cap \Omega'$. 
		\item $\Omega'\subset \Omega\cap B(x, r)$.
	\end{list}
	The Lipschitz character of $\Omega'$ as well as $0<\theta<1$ and $C\ge 2$ depend on $n$, the CAD character of $D$ only (and are independent of $x,r$). 
\end{proposition}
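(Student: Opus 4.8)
The plan is to follow the construction of David and Jerison \cite{DJe}, phrased in the dyadic language set up above. First I would reduce to a dyadic cube: by Lemma~\ref{lemmaCh} and \eqref{cube-ball}--\eqref{cube-ball2} one can pick $Q_0\in\dd(\pom)$ with $x_{Q_0}$ close to $x$ and $c\,r\le\ell(Q_0)\le r$, so that $\Delta_{Q_0}\subset\Delta(x,r)$ and an interior corkscrew point $X_{Q_0}$ relative to $Q_0$ (which exists because $\Omega$ is a CAD, hence satisfies Definition~\ref{def1.cork}) will play the role of $X_\Delta$; everything below is carried out inside $Q_0$. The target domain $\Omega'$ is to be produced as a geometric sawtooth region $\Omega_{\mathcal F,Q_0}$ (cf. \eqref{eq2.sawtooth1}) associated with a Whitney--dyadic structure for $\Omega$ of the 1-sided CAD type built in Section~\ref{sections:1-sided:CAD}, for a stopping-time family $\mathcal F\subset\dd_{Q_0}$ designed so that the part of $\partial\Omega'$ lying in the interior of $\Omega$ is, in suitable local coordinates, a Lipschitz graph with small constant.

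Next I would set up the stopping time. Starting from $Q_0$, subdivide and select the maximal subcubes $Q\in\dd_{Q_0}$ at which the local geometry of $\Omega$, read off from the corkscrew points $\{X_{Q'}\}$ and the associated Whitney regions at the scales between $\ell(Q_0)$ and $\ell(Q)$, first fails to be graph-like relative to a fixed reference hyperplane through $X_{Q_0}$: quantitatively, where the unit vector from $X_{\widetilde Q}$ to $X_Q$ (consecutive corkscrew points, joined by the short Harnack chains built in Section~\ref{sections:1-sided:CAD}) leaves a narrow cone, or where $\dist(X_Q,\pom)$ ceases to be comparable to $\ell(Q)$. The collection $\mathcal F$ of these maximal stopping cubes is the key object, and the main technical estimate to prove is that $\mathcal F$ obeys a Carleson packing condition, $\sum_{Q_j\subset Q,\,Q_j\in\mathcal F}\sigma(Q_j)\le(1-\theta)\,\sigma(Q)$ for all $Q\in\dd_{Q_0}$, equivalently that the ``good'' part $Q_0\setminus\bigcup_{\mathcal F}Q_j$ has measure $\gtrsim\sigma(Q_0)\approx r^n$. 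This is where the hypotheses are spent: the two-sided corkscrew condition forces the boundary to be ``two-sided flat'' on average (no interior or exterior cusps), the Harnack chain condition bounds how fast $X_Q$ can move as $Q$ shrinks, and ADR converts the resulting oscillation/square-function bound into a packing statement by summation over scales. One then sets $\Omega':=\Omega_{\mathcal F,Q_0}$, and property $(i)$ of the proposition is exactly this good-piece estimate, since by the Whitney structure $\pom\cap\partial\Omega'\supset Q_0\setminus\bigcup_{\mathcal F}Q_j$.

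Properties $(ii)$ and $(iii)$ should then be comparatively routine. Property $(iii)$, $\Omega'\subset\Omega\cap B(x,Cr)$, follows from \eqref{eq2.whitney2} and \eqref{eq3.3aab}: the Whitney region over any $Q'\in\dd_{Q_0}$ sits in $B_{Q_0}^*=B(x_{Q_0},K\ell(Q_0))\subset B(x,Cr)$ after adjusting constants, with $C$ absorbed as in Definition~\ref{def1.cork}. Property $(ii)$ follows because $Q_0$ itself is never a stopping cube, so the top Whitney region $U_{Q_0}$ belongs to $\Omega'$ and contains a ball $B(X_{Q_0},c\ell(Q_0))$; since $X_{Q_0}$ is also an interior corkscrew point of $\Omega$ at scale $r$ and $\Omega'\subset\Omega$, after adjusting $C$ this ball serves as $B(X_\Delta,r/C)\subset B(x,r)\cap\Omega\cap\Omega'$.

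The genuinely hard step --- the one I expect to be the main obstacle --- is verifying that $\Omega'$ is a \emph{Lipschitz} domain, with character controlled only by $n$ and the CAD character of $\Omega$, rather than merely a chord-arc domain (the latter being what Lemma~\ref{lemma:1-sidedCAD-geom} already yields for arbitrary sawtooths). The stopping condition must be strong enough that, between stopping times, the ``tent'' built over the good cubes has boundary equal to an honest graph of bounded slope over the reference hyperplane: near each $x'\in Q_0\setminus\bigcup_{\mathcal F}Q_j$ the boundary of $\Omega'$ coincides with $\pom$, while over each maximal $Q_j\in\mathcal F$ it is a ``cap'' of diameter $\approx\ell(Q_j)$ at controlled height, and one must check that these caps and the good part patch together into a single Lipschitz graph. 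This forces the stopping condition to control not a single scale but the cumulative tilt, and requires a careful orientation/compatibility argument --- precisely the delicate part of \cite{DJe}. Connectivity via the Harnack chains is essential here to preclude near-vertical portions of $\partial\Omega'$, and the exterior corkscrew condition is what keeps the caps from pinching the domain. I would organize this last step by first establishing the graph property at a single scale and then summing, tracking throughout that all constants depend only on $n$ and the allowable parameters.
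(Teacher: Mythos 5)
A preliminary remark: the paper itself contains no proof of Proposition \ref{IBPLS}. It is imported verbatim from \cite{DJe} (with the remark that it even holds under weaker hypotheses than CAD), and the only thing proved internally is Corollary \ref{cIBPLS}, which is deduced from it. So your proposal must be measured against the argument of \cite{DJe}, not against anything in this paper.

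Measured that way, there is a genuine gap, and it is precisely the step you flag as the main obstacle: you take $\Omega'$ to be a dyadic Whitney sawtooth $\Omega_{\F,Q_0}$ (cf. \eqref{eq2.sawtooth1}) and then hope to prove it is a Lipschitz domain with character controlled by the CAD character of $\Omega$. This is not merely delicate; for this class of regions it cannot work as stated. The boundary of $\Omega_{\F,Q_0}$ contains on one hand the ``good set'' $Q_0\setminus\bigcup_{\F}Q_j\subset\pom$, which is only an ADR set with no local graph structure whatsoever, and on the other hand a union of faces of (dilated) Whitney cubes of all scales accumulating on that set. A stopping condition phrased in terms of the directions between consecutive corkscrew points $X_{\widetilde Q}$, $X_Q$ and the lengths of Harnack chains controls interior points of $\Omega$, not the infinitesimal geometry of $\pom$ itself, so it cannot force the boundary of the sawtooth to be a Lipschitz graph near the good points; for that one would need the good piece of $\pom$ to satisfy a two-sided cone condition in a fixed direction, which is a statement about the boundary. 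This is exactly why the machinery of this paper (Lemma \ref{lemma:1-sidedCAD-geom}, Lemma \ref{lemma:CAD-geom}, Lemma \ref{lemma3.15}) only ever yields that sawtooths and Carleson boxes are chord-arc, never Lipschitz.

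The construction in \cite{DJe} differs in exactly this respect, and the Lipschitz structure is built in from the start rather than extracted a posteriori. Roughly: the interior and exterior corkscrew points at scale $r$ fix a direction $e$, and a covering/projection argument using the ADR property together with the two-sided corkscrew condition produces a closed set $G\subset\Delta(x,r)$ with $H^n(G)\gtrsim r^n$ each of whose points sees a fixed-aperture open cone in the direction $e$ contained in $\Omega$ (and a symmetric cone in the exterior), with the Harnack chain condition used to connect these cones to the corkscrew point $X_\Delta$. One then defines $\Omega'$ as a suitably truncated union of the interior cones with vertices on $G$: the lateral boundary of such a union is by construction the graph, over the hyperplane orthogonal to $e$, of a function comparable to the distance to the projection of $G$, hence Lipschitz with constant depending only on the aperture; $G\subset\pom\cap\partial\Omega'$ gives $(i)$, and $(ii)$, $(iii)$ follow as in your sketch. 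The quantitative hypotheses are spent on the measure estimate for $G$ (where the exterior corkscrew prevents the boundary from folding back over itself), not on a Carleson packing condition for a stopping family. If you wish to keep a dyadic stopping-time bookkeeping, you would still have to replace the Whitney sawtooth by such a cone construction over the good set in order to obtain a genuinely Lipschitz $\Omega'$.
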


We remark that in \cite{DJe}, Proposition~\ref{IBPLS} is proved under  
 weaker assumptions, namely, ADR and an interior corkscrew condition, and
 a ``weak exterior corkscrew condition" which entails  exterior disks 
 rather than exterior balls,
and with no hypothesis of Harnack chains  ---but if the Harnack chain condition is assumed, \cite{AHMNT} yields the exterior corkscrew condition, hence exterior disks implies exterior balls.  Later on, in \cite{Ba}, existence of 
big pieces of Lipschitz subdomains was also proved for usual 
NTA domains, with no upper ADR assumption on $\pom$ (the lower ADR  bound holds automatically
in the presence of a two-sided corkscrew condition, by virtue of the relative isoperimetric inequality). For the applications that we have in mind here, neither amelioration is significant, and we will simply work with CAD domains  in the sense of Definition~\ref{def1.nta}.

For future reference we also would like to provide the following corollary.  
\begin{corollary}\label{cIBPLS} 
Let $\Omega\subset \ree$ be a CAD. There exist constants $C\ge 2$ and $0<\theta<1$ such that, for every $Q\in \dd(\pom)$,
there exists a bounded Lipschitz domain $\Omega_Q\subset \Omega$ for which, using the notation $B_Q=B(x_Q,r)$, $\Delta_Q=B_Q\cap\pom$, with $c\ell(Q)\le r\le \ell(Q)$ in \eqref{cube-ball}, \eqref{cube-ball2}, we have the following:
\begin{list}{$(\theenumi)$}{\usecounter{enumi}\leftmargin=.8cm
		\labelwidth=.8cm\itemsep=0.2cm\topsep=.1cm
		\renewcommand{\theenumi}{\roman{enumi}}}
\item $\sigma(\po_Q\cap Q)\geq \theta\, \sigma (Q)\approx \theta \ell(Q)^n$.

\item For every $Q'\in \dd(Q)$ such that there exists a point $y_{Q'}\in Q'\cap \partial \Omega_Q$, there exists $Y_{Q'}$ so that $B(Y_{Q'}, \ell(Q')/C)\subset B(y_{Q'}, \ell(Q'))\cap\Omega\cap\Omega_{Q}$, that is, $Y_{Q'}$ is a corkscrew relative to $B(y_{Q'}, \ell(Q'))\cap\Omega$ and $\pom$, and $B(y_{Q'}, \ell(Q'))\cap\partial\Omega_Q$ and $\Omega_Q$. Furthermore, with the appropriate choice of $\eta$ and $K$ in \eqref{eq3.1}, we have $B(Y_{Q'}, \ell(Q')/C)\subset U_{Q'}$.

\item $\Omega_Q\subset \Omega\cap B_Q$.
\end{list}
The Lipschitz character of $\Omega_Q$ as well as $0<\theta<1$, $C\ge 2$, depend on $n$, and the CAD character of $\Omega$ only (and are uniform in $Q$, $Q'$).
\end{corollary}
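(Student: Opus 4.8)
The plan is to deduce this from Proposition~\ref{IBPLS} applied to the surface ball $\Delta_Q = B(x_Q, r) \cap \pom$ with $c\ell(Q) \le r \le \ell(Q)$ associated to $Q$ via \eqref{cube-ball}, \eqref{cube-ball2}. Indeed, since $0 < r \le \ell(Q) \le \diam(\pom)$ (and we may adjust constants if $r$ is comparable to $\diam(\pom)$, as allowed in Definition~\ref{def1.cork}), Proposition~\ref{IBPLS} produces a bounded Lipschitz domain $\Omega' =: \Omega_Q$, with Lipschitz character, $\theta$, and $C$ depending only on $n$ and the CAD character of $\Omega$, such that $\Omega_Q \subset \Omega \cap B(x_Q, r) = \Omega \cap B_Q$ (this is item $(iii)$ of the corollary, which is immediate from item $(iii)$ of the proposition), and such that $H^n(\pom \cap \partial \Omega_Q \cap B_Q) \ge \theta H^n(\Delta_Q) \approx \theta r^n \approx \theta \ell(Q)^n$.

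For item $(i)$, I first note that by \eqref{cube-ball} we have $\Delta(x_Q, 2r) \subset Q$, hence $\Delta_Q = \Delta(x_Q,r) \subset Q$, and so $\pom \cap \partial\Omega_Q \cap B_Q \subset Q$ as well (since any point of $\pom$ in $B_Q = B(x_Q,r)$ lies in $\Delta(x_Q,r) \subset Q$). Therefore $\sigma(\partial\Omega_Q \cap Q) \ge \sigma(\pom \cap \partial\Omega_Q \cap B_Q) \ge \theta\, \sigma(\Delta_Q) \gtrsim \theta\, \ell(Q)^n \approx \theta\, \sigma(Q)$, using ADR of $\pom$ together with \eqref{cube-ball} in the last comparison; adjusting $\theta$ by a harmless dimensional and ADR factor yields the stated bound $\sigma(\partial\Omega_Q \cap Q) \ge \theta\, \sigma(Q)$.

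For item $(ii)$, fix $Q' \in \dd(Q)$ and suppose $y_{Q'} \in Q' \cap \partial\Omega_Q$. Since $Q' \subset Q$ and $\ell(Q') \le \ell(Q) \approx r \lesssim \diam(\pom)$, and since $y_{Q'} \in \pom$, we may apply Proposition~\ref{IBPLS} once more, this time to the surface ball $\Delta(y_{Q'}, \ell(Q'))$ — but it is cleaner to invoke directly the corkscrew-type conclusion that $\Omega_Q$, being a bounded Lipschitz domain with controlled character, satisfies the interior corkscrew condition, so that relative to the surface ball $B(y_{Q'},\ell(Q')) \cap \partial\Omega_Q$ there is a point $Y_{Q'}$ with $B(Y_{Q'}, \ell(Q')/C) \subset B(y_{Q'},\ell(Q')) \cap \Omega_Q \subset B(y_{Q'},\ell(Q')) \cap \Omega$; note $B(Y_{Q'},\ell(Q')/C) \subset \Omega_Q \subset \Omega$ gives the asserted containment in $B(y_{Q'},\ell(Q')) \cap \Omega \cap \Omega_Q$. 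It remains to show $B(Y_{Q'}, \ell(Q')/C) \subset U_{Q'}$ for a suitable choice of $\eta, K$ in \eqref{eq3.1}. For this, observe that any point $Z \in B(Y_{Q'},\ell(Q')/C)$ satisfies $\delta(Z) = \dist(Z,\pom) \approx \ell(Q')$ (bounded below because the ball avoids $\pom$, bounded above since $|Z - y_{Q'}| < \ell(Q')$ and $y_{Q'} \in \pom$), and $\dist(Z, Q') \le |Z - y_{Q'}| < \ell(Q')$; hence the Whitney cube $I \in \W$ containing $Z$ has $\ell(I) \approx \ell(Q')$ and $\dist(I, Q') \lesssim \ell(Q')$, so $I \in \W_{Q'}^0 \subset \W_{Q'}$ provided $\eta$ is small enough and $K$ large enough (depending only on $n$, ADR, and $C$), and then $Z \in I \subset U_{Q'}$. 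Taking the constant $C$ in the statement to be the maximum of the corkscrew constant and the implicit constants in this last argument completes the proof; all constants depend only on $n$ and the CAD character of $\Omega$, and are uniform in $Q, Q'$.

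The main obstacle is the last part of item $(ii)$: one must make sure the corkscrew point $Y_{Q'}$ produced inside $\Omega_Q$ at scale $\ell(Q')$ actually lands in the Whitney region $U_{Q'}$ associated to the ambient domain $\Omega$, which forces one to track how the discretionary parameters $\eta$ and $K$ in \eqref{eq3.1} must be calibrated against the Lipschitz character of $\Omega_Q$ (equivalently, against the CAD character of $\Omega$) — but since that character is itself controlled purely in terms of $n$ and the CAD character of $\Omega$, the choice of $\eta$ and $K$ is legitimate and uniform in $Q$.
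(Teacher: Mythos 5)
Your proposal is correct and follows essentially the same route as the paper: apply Proposition~\ref{IBPLS} to $\Delta_Q$, read off $(i)$ and $(iii)$ directly, obtain $Y_{Q'}$ from the interior corkscrew property of the bounded Lipschitz domain $\Omega_Q$ (where you should make explicit that $\ell(Q')\le \ell(Q)\approx \diam(\partial\Omega_Q)$, by $(ii)$ and $(iii)$ of the proposition, so that the corkscrew condition is applicable at scale $\ell(Q')$), and then calibrate $\eta,K$ in \eqref{eq3.1} so that the corkscrew ball lands in $U_{Q'}$. The only cosmetic repair is in the step $B(Y_{Q'},\ell(Q')/C)\subset U_{Q'}$: the quantitative lower bound $\delta(Z)\gtrsim \ell(Q')$ fails at the edge of the full corkscrew ball and requires shrinking the radius, i.e.\ enlarging $C$ beyond the corkscrew constant, which your final sentence already permits.
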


\begin{proof} 
The corollary follows directly from Proposition~\ref{IBPLS}. Indeed, for any $Q\in \dd(\pom)$ there exists $\Delta_Q\subset Q$ as in \eqref{cube-ball}, \eqref{cube-ball2}. 
One can then build a Lipschitz domain $\Omega_Q$ from Proposition~\ref{IBPLS} corresponding to $\Delta_Q$, and then the conditions $(i)$, $(iii)$ in Proposition~\ref{IBPLS} entail $(i)$ and $(iii)$ in Corollary~\ref{cIBPLS}, respectively. The condition $(ii)$ in Corollary~\ref{cIBPLS} follows from the fact that a Lipschitz domain $\Omega_Q$ is, in particular, a CAD, and hence, it has a corkscrew point relative to $B(y_{Q'}, r')\cap \po_Q$ since $r'\le \ell(Q')\le \ell(Q)\approx\diam (\po_Q)$ (the $\approx$  follows from $(ii)$ and $(iii)$ in Proposition~\ref{IBPLS}). Using the fact that $\Omega_Q\subset \Omega$, one can easily see that $Y_Q$ is also a corkscrew point in $\Omega$ relative to $B(y_{Q'}, r')\cap \pom$. It remains to observe that a suitable choice of $\eta$ and $K$ (uniform in $Q'$) ensures that such a corkscrew point always belongs to $U_{Q'}$ and moreover, $B(Y_{Q'}, C^{-1}\ell(Q'))\subset U_{Q'}$. 
\end{proof}

We are now ready to prove Theorem \ref{theor:CME:Lip->CAD}:

\begin{proof}[Proof of Theorem \ref{theor:CME:Lip->CAD}]  
By \eqref{CME:dyadic} and Remark \ref{remark:CME0-Lip} we can reduce matters to estimate $\|F\|_{\C^{\rm dyad}(D)}$. Fix some $Q\in \dd$. According to Corollary~\ref{cIBPLS}  (along with the inner regularity property of the measure) 
there exists a bounded Lipschitz domain $\Omega_Q$  such that $\sigma(\partial\Omega_Q \cap Q)\geq \theta\,\sigma (Q)$,  and the Lipschitz character of $\Omega_Q$ as well as $0<\theta<1$ depend only on $n$ and the CAD character of $D$ (and are uniformly in $Q$). The domain $\Omega_Q$ further satisfies properties $(i)$--$(iii)$ in Corollary~\ref{cIBPLS}. Given $x\in Q\setminus \partial\Omega_Q$, since $\partial\Omega_Q$ is closed, there exists $r_x>0$ such that $B(x,r_x)\cap\partial\Omega_Q=\emptyset$. Pick then $Q_x\in\dd$ with $\ell(Q_x)\ll \min\{\ell(Q),r_x\}$ so that $x\in Q_x$. Then, $x\in Q\cap Q_x$ and necessarily $Q_x\subset Q$. Also, $Q_x\subset B(x,r_x)$ since $x\in Q_x$ and $\diam(Q_x)\approx \ell(Q_x)\ll r_x$. Thus, $Q_x\subset Q\setminus \partial\Omega_Q$ and there exists a cube with maximal size $Q_x^{\rm max}\in \dd_{Q}$ so that $Q_x^{\rm max}\subset Q\setminus \partial\Omega_Q$. Note that $Q_x^{\rm max}\subsetneq Q$ since  $\sigma(\partial\Omega_Q \cap Q)>0$. Thus, by maximality, $\partial\Omega_Q\cap Q'\neq\emptyset$ for every $Q'$ with $Q_x^{\rm max}\subsetneq Q'\subset Q$. Consider then $\F=\{Q_j\}_j\subset \dd_{Q}\setminus\{Q\}$ the collection of such maximal cubes. By construction, the cubes in $\F$ are pairwise disjoint and also $Q\setminus \partial\Omega_Q=\cup_j Q_j$. Associated with $\F$ we build the corresponding local sawtooth $\Omega_{\mathcal{F},Q}$ (cf. \eqref{eq2.sawtooth1}).

Note that if $Q'\subset Q_j \in\F$, then $Q'\subset Q_j \subset Q\setminus \partial\Omega_Q$, hence $\partial\Omega_Q\cap Q'=\emptyset$. Conversely, if $Q'\in\dd_Q$ is so that $\partial\Omega_Q\cap Q'=\emptyset$, then $Q'\subset Q\setminus\partial\Omega_Q=\cup_j Q_j$ and there is $Q_j\in \F$ such that $Q'\cap Q_j\neq\emptyset$. If $Q_j\subsetneq Q'$ then by the maximality of $Q_j$ we have $\partial \Omega_Q\cap Q'\neq\emptyset $ which is  a contradiction. As a result, necessarily $Q'\subset Q_j$. All in one, for every $Q'\in\dd_Q$, we have that $Q'\subset Q_j\in\F$ if and only if $\partial\Omega_Q\cap Q'=\emptyset$. Equivalently, given $Q'\in\dd_Q$, one has that $Q'\in\dd_{\F,Q} $ if and only if $Q'\cap \partial\Omega_Q\neq\emptyset$.

Let $N\ge 1$ to be chosen and by Chebyshev's inequality
\begin{align*}
&\sigma\left\{x\in \partial\Omega_Q \cap Q:\,\A^Q F(x)> N\right\} 
\\
&\qquad\quad\leq \frac{1}{N^2} \int_{\partial\Omega_Q \cap Q}\dint_{{\Gamma}^Q(x)}
|F(Y)|^2\,\delta(Y)^{1-n}\,dY
\\
&
\qquad\quad\le 
\frac{1}{N^2} \sum_{Q'\in\dd_Q} \sigma(\partial\Omega_Q \cap Q')\dint_{U_{Q'}} |F(Y)|^2\,\delta(Y)^{1-n}\,dY 
\\
&\qquad\quad\approx
\frac{1}{N^2} \sum_{Q'\in\dd_{\F,Q}} \frac{\sigma(\partial\Omega_Q \cap Q')}{\sigma(Q')}\dint_{U_{Q'}} |F(Y)|^2\,\delta(Y)\,dY 
\\
&
\qquad\quad\lesssim \frac{1}{N^2} \dint_{\Omega_{\F,Q}} |F(Y)|^2\,\delta(Y)\,dY,
\end{align*}
where we have used that that $\delta(Y)\approx\ell(Q')$ for every $Y\in U_Q'$ and also that the family $\{U_Q'\}_{Q'\in\dd}$ has bounded overlap. We claim that 
\begin{equation}\label{eqjn10.1-bis2}
\frac1{\sigma(Q)}\dint_{\Omega_{\mathcal{F},Q}} |F(X)|^2 \delta(X) \,dX\,
\leq 
C\,\big(\sup_{\Omega\subset D}  \|F\|_{\C(\Omega)}+ \|F\|_{\C_0(D)}\big)
\end{equation}
where the sup runs over all bounded Lipschitz subdomains $\Omega\subset  D$ with parameters in the Lipschitz character at most $M_0$, and $C$ depends as before only on $n$, and the CAD character of $D$. Assuming this momentarily, and invoking \eqref{eqdefCME-D-interior:Lip},  we conclude that 
\begin{align*}
&\sigma\left\{x\in Q:\,\A^Q F(x)> N\right\} 
\le
\sigma(Q\setminus \partial\Omega_Q)+ \frac{C}{N^2} \dint_{\Omega_{\F,Q}} |F(Y)|^2\,\delta(Y)\,dY
\\
&
\qquad \le
(1-\theta)\,\sigma(Q) + \frac{C\,}{N^2}\sup_{\Omega\subset D}  \|F\|_{\C(\Omega)}\,\sigma(Q)
\le
(1-\theta/2)\,\sigma(Q),
\end{align*}
provided $N^2=\frac{2C}{\theta}\sup_{\Omega\subset D}  \|F\|_{\C(\Omega)}$. Applying then the John-Nirenberg inequality, Lemma~\ref{lemma:J-N} with $Q_0=E=\partial D$ which is ADR by assumption, extending $F$ as $0$ in $\ree\setminus\overline{D}$, and with 
$p=2$ we then conclude that
\[
\sup_{Q\in\dd_{Q_0}}\,\fint_Q \A^QF(x)^2 \sigma(x)
\lesssim 
\sup_{\Omega\subset D}  \|F\|_{\C(\Omega)}.
\]
In turn, this yields
\begin{multline*}
\dint_{T_{Q}}|F|^2\delta\,dX
\le
\sum_{Q'\in\dd_Q} \dint_{U_{Q'}}|F|^2\delta\,dX
\approx
\sum_{Q'\in\dd_Q} \sigma(Q')\dint_{U_{Q'}}|F|^2\delta^{1-n}\,dX
\\
=
\sum_{Q'\in\dd_Q} \int_{Q'}\Big(\dint_{U_{Q'}}|F|^2\delta^{1-n}\,dX\Big) \,d\sigma
\lesssim
\int_Q\Big(\dint_{\Gamma^Q(x)}
|F|^2\,\delta^{1-n}\,dY\Big)\, d \sigma(x)
\\
=
\int_Q \A^QF(x)^2 \sigma(x)
\lesssim
\sigma(Q)\sup_{\Omega\subset D}  \|F\|_{\C(\Omega)}.
\end{multline*}
Here we have used that $\delta(\cdot)\approx \ell(Q')$ in $U_{Q'}$ and the fact that the family $\{U_Q\}_{Q\in\dd}$ has bounded overlap.

We are then left with showing \eqref{eqjn10.1-bis2}. To this end, let us write 
\begin{equation}\label{eqn3.4}
\dint_{\Omega_{\mathcal{F},Q}} |F(X)|^2 \delta(X) \,dX\,
\le \sum_{Q'\in \dd_{\F, Q}} \dint_{U_{Q'}}
|F(Y)|^2\,\delta(Y)\,dY
=
\sum_{Q'\in \dd_{\F, Q}^1} \dots  
+\sum_{Q'\in \dd_{\F, Q}^2} \dots
\end{equation}
where, for some $\eps>0$ to be chosen,
\[
\dd_{\F, Q}^1:=\Big\{Q'\in \dd_{\F, Q}: \dist (U_{Q'}, \partial D)\leq \frac 1 \eps \dist (U_{Q'}, \po_Q)\Big\}, \qquad \dd_{\F, Q}^2:=\dd_{\F, Q}\setminus\dd_{\F, Q}^1.
\]
Note that, in principle, $U_{Q'}$ can intersect $\po_Q$. For later use it is convenient to record that  $\ell(Q')\approx \dist (U_{Q'}, \partial D)\approx\dist(U_{Q'},Q')$ by  \eqref{eq3.1}, \eqref{whitney}, \eqref{eq2.whitney2}, \eqref{eq3.3aa}.

Let $Q'\in \dd_{\F, Q}^1$, the fact that $Q'\in \dd_{\F, Q}$ implies that there exists $y\in Q'\cap \partial\Omega_Q$, hence
\begin{multline}\label{eqn3.5}
\epsilon\ell(Q')
\approx 
\epsilon\dist(U_{Q'}, \partial D)
\le 
\dist (U_{Q'}, \po_Q)
\\
\le
\dist(U_{Q'}, y) \le \dist(U_{Q'}, Q')+\diam(Q')\lesssim \ell(Q').
\end{multline}
In particular, for every $Y\in U_{Q'}$ with $Q'\in\dd_{\F,Q}^1$  we have 
\begin{equation}\label{nnhtfwafwr}
\delta(Y)=\dist (Y, \partial D)\lesssim \ell(Q')+ \dist(U_{Q'},\partial D)
\lesssim
\epsilon^{-1}
\dist (U_{Q'}, \po_Q)
\lesssim 
\epsilon^{-1}
\dist (Y, \po_Q).
\end{equation}
Note also that since $y'\in Q'\cap \partial\Omega_Q\neq \varnothing$, according to Corollary~\ref{cIBPLS} part $(ii)$, we can find $Y_{Q'}$ so that $B(Y_{Q'}, \ell(Q')/C)\subset B(y_{Q'}, \ell(Q'))\cap\Omega\cap\Omega_{Q}\cap U_{Q'}$. Hence, $\Omega_Q\cap U_{Q'}\neq \varnothing$, and then due to \eqref{eqn3.5} and the fact that $U_{Q'}$ is connected by construction, we conclude that $U_{Q'}\subset \Omega_Q$. As a result, 
\begin{multline}\label{eqn3.11a}
\sum_{Q'\in \dd_{\F, Q}^1}\dint_{U_{Q'}}
|F(Y)|^2\,\delta(Y)\,dY
\lesssim 
\epsilon^{-1}
\sum_{Q'\in \dd_{\F, Q}^1} \dint_{U_{Q'}}
|F(Y)|^2\,\dist(Y, \partial\Omega_Q)\,dY\\[4pt]
\lesssim  \dint_{\Omega_{Q}}
|F(Y)|^2\,\dist(Y, \partial\Omega_Q)\,dY \leq \sigma(Q)\sup_{\Omega\subset D}  \|F\|_{\C(\Omega)},
\end{multline} 
where we used \eqref{nnhtfwafwr}, the finite overlap property of the family $\{U_{Q'}\}_{Q'\in\dd}$, and the fact that $\Omega_Q$ is bounded Lipschitz subdomains of $D$ with character controlled by the CAD parameters
in the last one. Note that $\Omega_Q\subset B(x_Q, C\ell(Q))$ for some uniform constant $C$, which justifies the bound by $\sigma(Q)$.

Consider next the family $\dd_{\F, Q}^2$ and we shall demonstrate that they satisfy a packing condition. Indeed, recall from above that $\ell(Q')\approx \dist (U_{Q'}, \partial D)$, so that in particular, if $Q'\in \dd_{\F, Q}^2$, then
\begin{equation}
\label{eqn3.12}
\dist (U_{Q'}, \po_Q)\lesssim \eps \ell(Q').
\end{equation} 
It follows that for a suitably small $\eps$ depending on the implicit constant in \eqref{eqn3.12} and $\tau$, we can ensure that fattened regions $\widehat U_{Q'}$  corresponding to $U_{Q'}$ (cf. \eqref{eq3.3aa-fat}) necessarily intersect $\po_Q$ and, moreover, $H^n(\widehat U_{Q'}\cap \po_Q) \approx \ell(Q')^n$, while  the family $\{\widehat U_{Q'}\}_{Q'}$ still have finite overlap. Since the Lipschitz character of $\po_Q$ is depends on the CAD character of $D$, we have that $H^n(\po_Q)\approx \diam(\partial\Omega_Q)^n\approx \diam(\Omega_Q)^n\approx \ell(Q)\approx \sigma(Q)$ with implicit constants which are uniform in $Q$. Thus, all in all, 
\begin{equation}\label{eqn3.13}
\sum_{Q'\in \dd_{\F, Q}^2}\sigma (Q') 
\approx
\sum_{Q'\in \dd_{\F, Q}^2} \ell(Q')^n
\approx 
\sum_{Q'\in \dd_{\F, Q}^2} H^n(\widehat U_{Q'}\cap \po_Q)
\lesssim
H^n(\po_Q)
\approx
\sigma(Q).
\end{equation} 
Consequently, using that one can cover $U_{Q'}$ by a uniform number of balls of the form $B(X,\delta(X)/2)$ with $X\in U_{Q'}$ (and hence $\delta(X)\approx\ell(Q'))$ we arrive at
\begin{align}\label{eqn3.14}
\sum_{Q'\in \dd_{\F, Q}^2} \dint_{U_{Q'}}
|F(Y)|^2\,\delta(Y)\,dY
\lesssim 
\|F\|_{\C_0(D)}
\sum_{Q'\in \dd_{\F, Q}^2}\sigma (Q')
\lesssim
\sigma(Q)
\|F\|_{\C_0(D)}
, 
\end{align} 
simply recalling the notation introduced in \eqref{eqdefCME-interior:def}. 

Collecting \eqref{eqn3.4}, \eqref{eqn3.11a}, and \eqref{eqn3.14} we conclude as desired \eqref{eqjn10.1-bis2} completing the proof. 
\end{proof}


\subsection{Transference of Carleson measure estimates:  from chord-arc domains to the complement of a UR set}\label{sCME-NTA-UR}

Let us now discuss the ``transference" mechanism allowing one to pass from the Carleson measure estimates on CAD to those open sets with UR boundaries. The main idea consists in showing that if for some given $F$ one can prove \eqref{eqdefCME} on $D\subset \ree\setminus E$, any bounded CAD,  then \eqref{eqdefCME} holds for $\ree\setminus E$. This was proved in \cite[Theorem 1.1]{HMM2} for $F=|\nabla u|/\|u\|_{L^\infty(\ree\setminus E)}$ with $u$ being a bounded harmonic function in $\ree\setminus E$. On the other hand, it was already observed in \cite[Remark 4.28]{HMM2} that harmonicity is not really needed and that one could take for instance $F=|\nabla u|/\|u\|_{L^\infty(\ree\setminus E)}$ 
with $u$ being a bounded solution of a second order elliptic PDE or, more generally, $F=|\nabla^m u|/\|\nabla^{m-1}u\|_{L^\infty(\ree\setminus E)}$ with $u$ being a bounded solution of a $2m$-th order elliptic PDE, $m\in \NN$. We shall come back to this point with more details in Section~\ref{appl}, 
and for now try to keep the discussion general for as long as possible.

\begin{remark}\label{rglitch} 
	There is a slightly glitchy point of notation point. For reasons of homogeneity, 
one might prefer to normalize so that $F=\dist(\cdot,E) |\nabla u|/\|u\|_{L^\infty(\ree\setminus E)}$. However, making the function $F$ and later on $G$ and $H$ in Section~\ref{section:sgl} depend on the open set (via its distance to the boundary) has its own dangers and kills the beauty of the generality here.  
\end{remark}

The following result is stated in \cite[Theorem 1.1]{HMM2} exclusively for harmonic functions,
but as noted in \cite[Remark 4.28]{HMM2}, the same proof applies verbatim to any bounded function satisfying Caccioppoli's inequality along with CME in chord-arc subdomains. The argument further extends
to the following formulation with a few changes. For the sake of self-containment we present  below a somewhat different and more direct argument.

\begin{theorem}\label{theor:CME:CAD->UR}  
	Let $E\subset \ree$ be an $n$-dimensional  UR set and let $F\in L^2_{\loc}(\ree\setminus E)$.  Given $\eta\ll 1$ and $K\gg 1$, consider the decomposition $\dd(E)=\G\cup\B$ from Lemma \ref{lemma2.1} as well as a Whitney-dyadic structure $\{\W_Q\}_{Q\in\dd(E)}$ for $\ree\setminus E$ with parameters $\eta$ and $K$, see  Section \ref{sections:UR}. Then using the notation in  \eqref{eqdefCME} and \eqref{eqdefCME-interior:def} there holds
	\begin{equation}\label{eqn2.2}
	\|F\|_{\C(\ree\setminus E)} 	\le 
	C\,
	\max\Big\{\|F\|_{\C_0(\ree\setminus E)}, 	\sup_{\sbf\subset \G} \|F\|_{\C(\Omega_{\sbf}^\pm)}\Big\},
	\end{equation}
	where $\Omega_{\sbf}^\pm$ is defined by \eqref{eq3.2} (with $\sbf'=\sbf$) and where $C$ depends only on $n$, the UR character of $E$, and the choice of $\eta, K, \tau$.

	In particular, if $F$ satisfies the Carleson measure estimate \eqref{eqdefCME} for every bounded chord-arc subdomain $D\subset\ree\setminus E$ with constants depending on the CAD character (see Notation \ref{notation:constants}) then $F$ satisfies the Carleson measure estimate  \eqref{eqdefCME} on $\ree\setminus E$. More precisely, there exists a large constant $M_0$ (depending only $n$ and the UR character of $E$) so that using the notation in \eqref{eqdefCME}  there holds
	\begin{equation}\label{eqn2.2-general}
	\|F\|_{\C(\ree\setminus E)} 	\le 
	C\,\sup_{D\subset\ree\setminus E}  \|F\|_{\C(D)},
	\end{equation}
	where the sup runs over all bounded chord-arc subdomains $D\subset  \ree\setminus E$ with parameters in the CAD character at most $M_0$, and $C$ depends as before only on $n$ and the UR character of $E$. 
\end{theorem}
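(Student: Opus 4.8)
The plan is to establish \eqref{eqn2.2} first and then read off \eqref{eqn2.2-general}. Since \eqref{CME:dyadic} already reduces the full Carleson norm to the dyadic one plus the harmless interior quantity $\|F\|_{\C_0(\ree\setminus E)}$, it suffices to bound $\sigma(Q_0)^{-1}\dint_{T_{Q_0}}|F|^2\delta\,dX$ uniformly over $Q_0\in\dd(E)$, where $\delta=\dist(\cdot,E)$. I would invoke the bilateral corona decomposition $\dd(E)=\G\cup\B$ of Lemma \ref{lemma2.1} for the given $\eta,K$, use $T_{Q_0}\subset\bigcup_{Q\in\dd_{Q_0}}U_Q$ together with the bounded overlap of the Whitney regions, and note that each $Q\in\dd_{Q_0}\cap\G$ belongs to exactly one coherent regime $\sbf$, to write
\begin{equation*}
\dint_{T_{Q_0}}|F|^2\delta\,dX\ \lesssim\ \sum_{Q\in\dd_{Q_0}\cap\B}\dint_{U_Q}|F|^2\delta\,dX\ +\ \sum_{\sbf}\ \sum_{\substack{Q\in\sbf,\,Q\subset Q_0}}\Big(\dint_{U_Q^+}+\dint_{U_Q^-}\Big)|F|^2\delta\,dX,
\end{equation*}
the outer sum running over the stopping time regimes $\sbf$ of Lemma \ref{lemma2.1} and $U_Q^\pm$ as in \eqref{eq3.3b}.

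For the bad part, on $U_Q$ with $Q\in\B$ one has $\delta\approx\ell(Q)$ by \eqref{eq2.whitney2} and the Whitney property, so covering $U_Q$ by a bounded number of balls $B(X,\delta(X)/2)$ with $\delta(X)\approx\ell(Q)$ and recalling \eqref{eqdefCME-interior:def} gives $\dint_{U_Q}|F|^2\delta\,dX\lesssim\ell(Q)^n\|F\|_{\C_0(\ree\setminus E)}\approx\sigma(Q)\|F\|_{\C_0(\ree\setminus E)}$; summing and using the packing bound for $\B$ in Lemma \ref{lemma2.1}(ii) yields $\sum_{Q\in\dd_{Q_0}\cap\B}\dint_{U_Q}|F|^2\delta\,dX\lesssim\sigma(Q_0)\|F\|_{\C_0(\ree\setminus E)}$.

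The good part carries the main difficulty, which is the weight mismatch: on the genuine $\tau$-sawtooth $\Omega_\sbf^\pm$ of \eqref{eq3.2} the weight $\dist(\cdot,\partial\Omega_\sbf^\pm)$ degenerates along the ``artificial'' part of the boundary, so CME on $\Omega_\sbf^\pm$ alone does not dominate $\dint_{U_Q^\pm}|F|^2\delta$. The device I would use is to fatten once more: writing $\Omega_\sbf^\pm(2\tau):=\interior\big(\bigcup_{Q\in\sbf}U_{Q,2\tau}^\pm\big)$ (built from the $2\tau$-fattened Whitney cubes, cf. \eqref{eq3.3aa-fat}), for $Q\in\sbf$ and $X\in U_Q^\pm=U_{Q,\tau}^\pm$ the point $X$ sits well inside $U_{Q,2\tau}^\pm$, whence $\dist\big(X,\partial\Omega_\sbf^\pm(2\tau)\big)\gtrsim\tau\,\ell(Q)\approx\delta(X)$ (the last comparison via Claim \ref{c3.1} and \eqref{eq2.whitney2}); moreover $\Omega_\sbf^\pm(2\tau)$ is a bounded CAD with character controlled by $n,\tau,\eta,K$ and the UR character of $E$, by Lemma \ref{lemma3.15} (applicable since $2\tau\le\tau_0$). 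Hence $\dint_{U_{Q,\tau}^\pm}|F|^2\delta\,dX\lesssim\dint_{U_{Q,2\tau}^\pm}|F|^2\dist(\cdot,\partial\Omega_\sbf^\pm(2\tau))\,dX$. I would then split the regimes $\sbf$ that meet $\dd_{Q_0}$ into two types. If $Q(\sbf)\subset Q_0$, then $\sbf\cap\dd_{Q_0}=\sbf$ and, summing over $Q\in\sbf$ (bounded overlap of the $U_{Q,2\tau}^\pm$), the contribution is $\lesssim\dint_{\Omega_\sbf^\pm(2\tau)}|F|^2\dist(\cdot,\partial\Omega_\sbf^\pm(2\tau))\,dX\lesssim\ell(Q(\sbf))^n\|F\|_{\C(\Omega_\sbf^\pm(2\tau))}\approx\sigma(Q(\sbf))\|F\|_{\C(\Omega_\sbf^\pm(2\tau))}$ (using $\diam\partial\Omega_\sbf^\pm(2\tau)\approx\ell(Q(\sbf))$ and ADR); summing over such regimes and invoking the packing of $\{Q(\sbf)\}$ in Lemma \ref{lemma2.1}(ii) gives $\lesssim\sigma(Q_0)\sup_\sbf\|F\|_{\C(\Omega_\sbf^\pm(2\tau))}$. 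If instead $Q(\sbf)\not\subset Q_0$ while some $Q\in\sbf$ has $Q\subset Q_0$, then dyadic nestedness forces $Q_0\subsetneq Q(\sbf)$, and coherence (condition $(b)$ of Definition \ref{d3.11}) forces $Q_0\in\sbf$; since the regimes are pairwise disjoint this happens for \emph{at most one} $\sbf$, and for it all the relevant $U_{Q,2\tau}^\pm$ with $Q\subset Q_0$ lie in a single ball $B(x^*,C\ell(Q_0))$ centered on $\partial\Omega_\sbf^\pm(2\tau)$, so by the scale invariance and locality of \eqref{eqdefCME} this contribution is $\lesssim\ell(Q_0)^n\|F\|_{\C(\Omega_\sbf^\pm(2\tau))}\approx\sigma(Q_0)\sup_\sbf\|F\|_{\C(\Omega_\sbf^\pm(2\tau))}$.

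Collecting the bad and good estimates, dividing by $\sigma(Q_0)$ and taking the supremum in $Q_0$, one gets $\|F\|_{\C^{\rm dyad}(\ree\setminus E)}\lesssim\max\{\|F\|_{\C_0(\ree\setminus E)},\,\sup_\sbf\|F\|_{\C(\Omega_\sbf^\pm(2\tau))}\}$, so \eqref{eqn2.2} follows from \eqref{CME:dyadic} (with $\Omega_\sbf^\pm$ read at the parameter $2\tau$, which is harmless since $C$ may depend on $\tau,\eta,K$). Then \eqref{eqn2.2-general} is immediate: under the hypothesis each bounded CAD $\Omega_\sbf^\pm(2\tau)$ has character bounded by some $M_0$ depending only on $n$ and the UR character of $E$, so $\sup_\sbf\|F\|_{\C(\Omega_\sbf^\pm(2\tau))}\le\sup_{D}\|F\|_{\C(D)}$; and $\|F\|_{\C_0(\ree\setminus E)}\lesssim\sup_D\|F\|_{\C(D)}$ as well, because for each $X\in\ree\setminus E$ the Euclidean ball $B(X,2\delta(X)/3)$ is a bounded CAD contained in $\ree\setminus E$ on which the weight $\dist(\cdot,\partial B(X,2\delta(X)/3))$ is $\gtrsim\delta(X)$ throughout $B(X,\delta(X)/2)$. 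I expect the main obstacle to be exactly the weight swap $\delta\leftrightarrow\dist(\cdot,\partial\Omega_\sbf^\pm)$ on the good part — resolved by the $2\tau$-fattening, which manufactures a collar of width $\approx\tau\ell(Q)$ around each $U_{Q,\tau}^\pm$ — together with the bookkeeping of which regimes meet $\dd_{Q_0}$, for which the ``at most one regime from above'' observation is what makes the packing bounds of Lemma \ref{lemma2.1}(ii) applicable.
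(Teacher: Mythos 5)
Your proposal is correct and follows essentially the same route as the paper's proof: reduction to the dyadic Carleson norm via \eqref{CME:dyadic}, the bilateral corona decomposition with the packing condition of Lemma \ref{lemma2.1}$(ii)$ handling both $\B$ and the maximal cubes $Q(\sbf)$, the weight comparison $\delta\approx\dist(\cdot,\partial(\mbox{sawtooth}))$ on Whitney regions, and the ``at most one regime from above'' observation via coherence and disjointness of the regimes. The only deviation is in how you manufacture the collar: the paper shrinks on the left (working with $U_{Q,\tau/2}$ and $T_{Q_0,\tau/2}$ so that the comparison holds against $\Omega_\sbf^\pm=\Omega_\sbf^\pm(\tau)$ as written in \eqref{eqn2.2}), whereas you keep $U_{Q,\tau}$ and fatten the sawtooth to $\Omega_\sbf^\pm(2\tau)$, so your version of \eqref{eqn2.2} carries $\|F\|_{\C(\Omega_\sbf^\pm(2\tau))}$ in place of $\|F\|_{\C(\Omega_\sbf^\pm)}$ --- a harmless relabeling of $\tau$ (legitimate since $\tau\le\tau_0/4$, so Lemma \ref{lemma3.15} applies at $2\tau$) which makes no difference for the main conclusion \eqref{eqn2.2-general}.
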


We note that much as in Remark \ref{remark:CME-E-OMmga} one can easily get a version of this result valid where everything is localized to some open subset with UR boundary. The precise statement and the details are left to the interested reader.

\begin{remark}\label{remark:CME0-Lip} 
	As already mentioned in Remark \ref{remark:CMO0} and for PDE applications, the quantities $\|F\|_{\C_0(\ree\setminus E)}$ or $\|F\|_{\C_0(D)}$ are harmless terms since they are typically finite, whether or not $F$ satisfies Carleson measure estimates on some family of nice subdomains. However, one can also see that these terms are under controlled when one imposes Carleson measure estimates on bounded Lipschitz subdomains. Let $E\subset \ree$ be an $n$-dimensional ADR set, write $\delta(\cdot)=\dist(\cdot,E)$, and let  $F\in
	L^2_{\rm loc}(\ree\setminus E)$. 	Note that $\Omega_X=B(X,\delta(X))$ is a bounded Lipschitz subdomain of $\ree\setminus E$ with all the parameters in the Lipschitz character bounded by $M_n\ge 1$ which depends just on $n$. Also if $Y\in B(X,\delta(X)/2)$ then 
	$\dist(Y,\partial\Omega_X)\ge \delta(X)/2$ and $Y\in B(z,2\delta(X))$ for any $z\in\partial\Omega_X$. Thus, for any $z\in\partial\Omega_X$
	\begin{align*}
	\frac1{\delta(X)^{n-1}}\dint_{B(X,\delta(X)/2)} |F(Y)|^2 \,dY
	\le
	\frac2{\delta(X)^{n}}\dint_{B(z,2\delta(X))} |F(Y)|^2 \dist(Y,\partial\Omega_X)\,dY
	\end{align*}
	and, consequently,  
	\begin{equation}\label{eqdefCME-Ec-interior:Lip}
	\|F\|_{\C_0(\ree\setminus E)} 
	\le
	2^{n+1} \sup_{D\subset \ree\setminus E}  \|F\|_{\C(D)},
	\end{equation}
	where the sup runs over all bounded Lipschitz subdomains of  $\ree\setminus E$ with all the parameters in the Lipschitz character at most $M_n\ge 1$.  Analogously, if $F\in L^2_{\rm loc}(\Omega)$ where $\Omega\subset \ree$ is an open set with $\partial \Omega$ being $n$-dimensional ADR, then 
	\begin{equation}\label{eqdefCME-D-interior:Lip}
	\|F\|_{\C_0(\Omega)} 
	\le
	2^{n+1} \sup_{D\subset\Omega}  \|F\|_{\C(D)},
	\end{equation}
	where the sup runs over all bounded Lipschitz subdomains of  $\Omega$ with all the parameters in the Lipschitz character at most $M_n\ge 1$.
	
\end{remark}

%
%
%


\begin{proof}
	
	We write $\delta(\cdot)=\dist(\cdot,E)$ and Define $\beta_Q=\dint_{U_{Q,\tau/2}} |F|^2\delta\,dX$ for every $Q\in\dd=\dd(E)$. Fix $Q_0\in\dd$. Using the decomposition $\dd(E)=\G\cup\B$ from Lemma \ref{lemma2.1}
	\begin{multline*}
	\dint_{T_{Q_0,\tau/2} } |F(X)|^2 \delta(X) \,dX
	\le
	\sum_{Q\in\dd_{Q_0}} \beta_Q
	=
	\sum_{Q\in\dd_{Q_0}\cap\B} \beta_Q
	+
	\sum_{Q\in\dd_{Q_0}\cap\G} \beta_Q
	\\
	=
	\sum_{Q\in\dd_{Q_0}\cap\B} \beta_Q
	+
	\sum_{\sbf:\dd_{Q_0}\cap \sbf\neq\emptyset}  \sum_{Q\in\dd_{Q_0}\cap\sbf} \beta_Q 
	=:\Sigma_1+\Sigma_2,
	\end{multline*}
	and we estimate each term in turn. For $\Sigma_1$ we observe that by construction the $U_{Q,\tau/2}$'s are uniformly bounded unions of Whitney cubes of size of the order of $\ell(Q)$ and with distance to $E$ of the order of $\ell(Q)$ and it follows easily that $\beta_Q\lesssim C_0\sigma(Q)$ where the implicit constants depend only on $n$, the UR character of $E$, and the choice of $\eta, K, \tau$. Hence,
	\begin{equation}\label{Sigma1}
	\Sigma_1
	\lesssim
	\|F\|_{\C_0(\ree\setminus E)}\sum_{Q\in\dd_{Q_0}\cap\B} \sigma(Q)
	\lesssim
	\|F\|_{\C_0(\ree\setminus E)}\sigma(Q_0)
	\end{equation}
	where in the last estimate we have used Lemma \ref{lemma2.1} part $(ii)$.
	
	Let us estimate $\Sigma_2$. Fix $\sbf$ so that $\dd_{Q_0}\cap \sbf\neq\emptyset$ and write $Q_1=Q_1(\sbf)=Q_0\cap Q(\sbf)$. Note that if $Q\in\dd_{Q_0}\cap\sbf$ then $Q\subset Q_1\subset Q(\sbf)$ and by the coherency of $\sbf$ we conclude that $Q_1\in \sbf$. 
	Set $\delta_{\sbf}^{\pm}(\cdot)=\dist(\cdot,\partial\Omega_{\sbf}^{\pm})$ (see \eqref{eq3.2} with $\sbf'=\sbf$). Note that $\Omega_\sbf^{\pm}$ is comprised of Whitney regions of the form $U_Q^{\pm}=U_{Q,\tau}^{\pm}$. thus for $X\in U_{Q,\tau/2}^{\pm}$ with $Q\in\sbf$, we have that $\delta(X)\approx \delta_{\sbf}^{\pm}(X)$ where the implicit constants depend on $\tau$. This, the fact that the family  $\{U_Q^\pm\}_{Q\in\dd}$ has bounded overlap and \eqref{eq3.3aab} easily give 
	\[
	\sum_{Q\in\dd_{Q_0}\cap\sbf} \beta_Q
	=
	\sum_{Q\in\dd_{Q_1}\cap\sbf} \beta_Q 
	\approx
	\sum_{Q\in\dd_{Q_1}\cap\sbf} \dint_{U_Q^{\pm}} |F|^2\delta_{\sbf}^{\pm}\,dX
	\lesssim
	\dint_{B_{Q_1}^*\cap\Omega_\sbf^{\pm}} |F|^2\delta_{\sbf}^{\pm}\,dX
	\]
	where $B^{*}_{Q_1}:= B(x_{Q_1}, K\ell(Q_1))$.  Pick now $X_1^{\pm}\in U_{Q_1,\tau/2}^{\pm}$ and choose $x_1^{\pm}\in\partial \Omega_\sbf^{\pm}$ so that $|X_1^{\pm}-x_1^{\pm}|=\delta_\sbf^{\pm}(X_1^{\pm})\approx \delta(X_1^{\pm})\approx\ell(Q_1)$. Therefore, $B_{Q_1}^*\subset B_{Q_1}^{**}=B_{Q_1}(x_1^{\pm}, C\ell(Q_1))$ where $C$ depends on $n$, the UR character of $E$ and $\eta$, $K$ and $\tau$. Thus,
	\begin{equation}\label{est-sigma:aux}
	\sum_{Q\in\dd_{Q_0}\cap\sbf} \beta_Q 
	\lesssim
	\dint_{B_{Q_1}^{**}\cap\Omega_{\sbf}^{\pm}} |F|^2\delta_{\sbf}^{\pm}\,dX
	\lesssim
	\|F\|_{\C(\Omega_{\sbf}^{\pm})}\,\ell(Q_1)^n
	\approx
	\|F\|_{\C(\Omega_{\sbf}^{\pm})} \sigma(Q_1).
	\end{equation}
	Using this and recalling that  $Q_1=Q_1(\sbf)=Q_0\cap Q(\sbf)$ we can  bound $\Sigma_2$ as follows:
	\begin{multline*}
	\Sigma_2
	=
	\sum_{\sbf:\dd_{Q_0}\cap \sbf\neq\emptyset}  \sum_{Q\in\dd_{Q_0}\cap\sbf} \beta_Q 
	\lesssim
	\sup_{\sbf\subset \G} \|F\|_{\C(\Omega_{\sbf}^{\pm})} 
	\sum_{\sbf:\dd_{Q_0}\cap \sbf\neq\emptyset}  
	\sigma(Q_0\cap Q(\sbf))
	\\
	=
	\sup_{\sbf\subset \G} \|F\|_{\C(\Omega_{\sbf}^{\pm})} 
	\Big(
	\sum_{\sbf:Q(\sbf)\subset Q_0}  
	\sigma(Q(\sbf))
	+
	\sum_{\substack{\sbf:\dd_{Q_0}\cap \sbf\neq\emptyset\\ Q_0\subsetneq Q(\sbf)}} 
	\sigma(Q_0)
	\Big)
	\end{multline*}
	Using Lemma \ref{lemma2.1} part $(ii)$ we easily obtain 
	\[
	\sum_{\sbf:Q(\sbf)\subset Q_0}  
	\sigma(Q(\sbf))
	\lesssim
	\sigma(Q_0)
	\]
	where the implicit constant depends only on $n$, the UR character of $E$, and the choice of $\eta, K, \tau$. 
	For the other term we note that the facts $\dd_{Q_0}\cap \sbf\neq\emptyset$ and $Q_0\subsetneq Q(\sbf)$ imply that $Q_0\in\sbf$ by the coherency of $\sbf$,
	hence $\Sigma_{22}=0$ if $Q_0\in\B$. On the other hand, if $Q_0\in\G$ there is a unique $\sbf_0\subset\G$ so that $Q_0\in \sbf_0$ and $\dd_{Q_0}\cap\sbf=\emptyset$ for every $\sbf\neq \sbf_0$ with $Q_0\subsetneq Q(\sbf)$. This clearly implies that in this case
	\[
	\sum_{\substack{\sbf:\dd_{Q_0}\cap \sbf\neq\emptyset\\ Q_0\subsetneq Q(\sbf)}} 
	\sigma(Q_0)
	=
	\sigma(Q_0)
	\]
	If we finally collect all the obtained estimates we conclude that
	\begin{multline}\label{CME:dyadic-est}
	\|F\|_{\C^{\rm dyad}(\ree\setminus E)}=	\sup_{Q\in\dd(E)}\frac1{\sigma(Q)}\dint_{T_{Q,\tau/2} } |F(X)|^2 \delta(X) \,dX
	\\
	\le
	C
	\max\Big\{\|F\|_{\C_0(\ree\setminus E)}, 	\sup_{\sbf\subset \G} \|F\|_{\C(\Omega_{\sbf}^\pm)}\Big\},
	\end{multline}
	where $C$ depends only on $n$, the UR character of $E$, and the choice of $\eta, K, \tau$. Thus, the desired estimates follows from \eqref{CME:dyadic}. 
	
	To complete the proof we look at the second part of the statement. By \eqref{eqdefCME-Ec-interior:Lip} and the fact that bounded Lipschitz domains are CAD with all the parameters in the CAD character by the Lipschitz character we have $\|F\|_{\C_0(\ree\setminus E)}\lesssim \sup_{D\subset\ree\setminus E}  \|F\|_{\C(D)}$ where the sup runs over all bounded CAD subdomains with character at most $M_n$. On the other hand, Lemma \ref{lemma3.15} establishes that all the $\Omega_{\sbf}^{\pm}$'s are CAD with parameters in the CAD character all controlled by $M_0'\ge 1$ (depending on the allowable parameters). They are also bounded since every $\sbf$ has a maximal cube $Q(\sbf)$ and hence $\Omega_{\sbf}^{\pm}\subset B_{Q(\sbf)}^*$ (cf. \eqref{eq3.3aab}). Consequently,
	\[
	\sup_{\sbf\subset \G} \|F\|_{\C(\Omega_{\sbf}^\pm)}
	\le
	\sup_D  \|F\|_{\C(D)}
	\]
	where the second sup runs over all bounded CAD with character at most $M_0'$. Taking $M_0=\max\{M_n, M_0'\}$ we easily see that \eqref{eqn2.2} along with the above observations readily yield \eqref{eqn2.2-general}. This completes the proof. 
\end{proof}

\section{Carleson estimates, $\A<N$ estimates and good-$\lambda$ arguments}\label{section:sgl}

 Given an open set $\Omega\subset\ree$ with ADR boundary we recall the definitions of the area integral $\A$ and the non-tangential maximal function $N_*$ from Definition~\ref{defsfntmax:dyadic} or the corresponding fattened versions $\widehat{\A}$ and  $\widehat{N}_*$ or the corresponding local versions. These are defined with respect to a $\{\W_Q\}_{Q\in\dd}$, some Whitney-dyadic structure for $\Omega$ with some implicit parameters $\eta$ and $K$. Note that according to these definitions, the cones are unbounded when $\pom$ is unbounded. On the other hand, when $\pom$ is bounded, so are the cones, all being contained in a $C\diam(\pom)$-neighborhood of $\pom$. We note also that when $\pom$ is bounded, there exists a cube $Q_0\in \dd(\pom)$ such that $Q_0=\pom$ and for any $Q\in\dd(\pom)$ we have $Q\in \dd_{Q_0}$. It is, however, particularly useful to work with local versions $\A^Q$ and $\widehat{N}^Q_*$ or  $\widehat{\A}^Q$ and $\widehat{N}^Q_*$.  


\begin{definition}[$\A<N$  {\bf estimates}]\label{defsn}   
Let $\Omega\subset \ree$ be an open set with $\pom$ being ADR and let $\{\W_Q\}_{Q\in\dd(\pom)}$ be a Whitney-dyadic structure for $\Omega$ with some parameters $\eta$ and $K$. Consider also $G\in L^2_{\rm loc}(\Omega)$, $H\in C(\Omega)$, and $0<q<\infty$. 
We say that ``$\A <N$" estimates hold for $G, H$ on $L^q(\pom)$ if 
\begin{equation}\label{eq.sn}
\|\A G\|_{L^q(\pom)}\leq C \|\widehat N_* H\|_{L^q(\pom)}\,,
\end{equation}
where the $L^q$ norms are taken with respect to surface measure $\sigma:=H^n|_{\pom}$. 
Similarly, we will say that ``$\A^\dd <N^\dd$" estimates hold for $G, H$ on $L^q(\pom)$ if 
\begin{equation}\label{eq.sn.loc}
\|\A^Q G\|_{L^q(Q)}\leq C \|\widehat N^Q_* H\|_{L^q(Q)}\,, \mbox{ for all } Q\in \dd(\pom),
\end{equation}
with $C$ independent of $Q$.
\end{definition}

\begin{remark}\label{remark:N<S:cubes->balls}
	We observe that by Remarks \ref{remark:rcones} and \ref{remark:COA}, $\A<N$ estimates imply an analogous estimate for traditional cones, that is, for every $\kappa>0$
	\[
	\|\A_{\Omega, \kappa}  G\|_{L^q(\pom)}\leq C \|N_{*,\Omega, \kappa}  H\|_{L^q(\pom)}\,,
	\]
	and the implicit constant depend on $q$, $n$, the ADR constant of $\pom$, the choice of $\eta, K,\tau$, the constant in  $\A <N$, and $\kappa$.		
	On the other hand $\A^\dd <N^\dd$ estimates imply also some local $\A<N$ estimates with traditional cones. More precisely, 	
	for any $x\in\pom$ and $0<r<2\diam(\pom)$, using the notation in Definition \ref{defsfntmax:traditional}, there exists $K'$ depending on $n$, the ADR constant of $\pom$, the choice of $\eta, K,\tau$, and the constant in Definition \ref{def:WD-struct} part $(iii)$, such that for every $\kappa>0$
\begin{equation}\label{eq.sn.loc-balls}
\|\A_{\Omega,\kappa}^r G\|_{L^q(\Delta(x,r))}\lesssim \|N^{K'r}_{*,\Omega,\kappa} H\|_{L^q(\Delta(x,K' r))},
\end{equation}
where $\Delta(x,r)=B(x,r)\cap\pom$, and the implicit constant depends on $q$, $n$, the ADR constant of $\pom$, the choice of $\eta, K,\tau$, the constant in  $\A^\dd <N^\dd$, and $\kappa$.	

Fix then $\{\W_Q\}_{Q\in\dd(\pom)}$ a Whitney-dyadic structure for $\Omega$ with some parameters $\eta$ and $K$. Given $x\in\pom$ and $0<r<2\diam(\pom)$, write $\Delta=\Delta(x,r)$ and $B=B(x,r)$. We first consider the case $r\ll\diam(\pom)$. 
Note that for every $y\in\Delta$ we have $\Gamma^r(y)\subset 2B$. Also, if $\Gamma_{\Omega,1}(z)\cap 2B\neq\emptyset$ then $z\in 6\,\Delta$. Recall that we have always assumed that $K$ is large enough (say $K\ge 10^4n$) so that $\Gamma_{\Omega,1}(y)\subset\Gamma(y)$ for every $y\in\pom$. All these, together with Remark \ref{remark:COA}, give
\[
\|\A^r_{\Omega, \kappa} G\|_{L^q(\Delta)}
\le
\|\A_{\Omega, \kappa} (G 1_{2B})\|_{L^q(\pom)}
\lesssim
\|\A_{\Omega, 1} (G 1_{2B})\|_{L^q(\pom)}
\le
\|\A (G 1_{2B})\|_{L^q(6\Delta)}.
\]
Let 
\begin{align}\label{def:D-Delta}
\mathcal{D}_\Delta=\{Q\in\dd(\pom): Q\cap 6\Delta\neq\emptyset, C(\eta\,n)^{-1/2}\,r/4\le \ell(Q)<C(\eta\,n)^{-1/2}\,r/2\},
\end{align}
where $C$ is the constant in \eqref{eq2.whitney2} (it is here we use that $r\ll\diam(\pom)$ so that $C(\eta\,n)^{-1/2}\,r/2<\diam(\pom)$, thus $\mathcal{D}_\Delta\neq\emptyset$). Suppose that $Q\subsetneq Q'$ with $Q\in\mathcal{D}_\Delta$ and let $Y\in U_{Q'}$. Then there is $I'\in\W_{Q'}$ with $Y\in\partial I^*(\tau)$ and by \eqref{Whintey-4I}
\begin{multline*}
C\,(\eta\,n)^{-1/2}2^{-1}r
\le 
2\,\ell(Q)
\le 
\ell(Q')
\le
C\,\eta^{-1/2}\ell(I')
\\
\le
C\,(\eta\,n) 4^{-1}\dist(4I',\pom)
\le
C\,(\eta\,n) 4^{-1}\dist(Y,\pom).
\end{multline*}
Hence, $\dist(Y,\pom)\ge 2r$ and $\Gamma(y)\cap2B\subset \Gamma^Q(y)$ for every $y\in Q\in\mathcal{D}_\Delta$. Thus the $\A^\dd <N^\dd$ estimates give
\begin{align*}
\|\A^r_{\Omega, \kappa} G\|_{L^q(\Delta)}^q
\lesssim 
\sum_{Q\in\mathcal{D}_\Delta} \| \A(G 1_{2B})\|_{L^q(Q)}^q
\le
\sum_{Q\in\mathcal{D}_\Delta} \|\A^Q G\|_{L^q(Q)}^q
\lesssim
\sum_{Q\in\mathcal{D}_\Delta} \|N_*^Q H\|_{L^q(Q)}^q.
\end{align*}
Note next that for every $y\in Q\in\mathcal{D}_\Delta$ we have by \eqref{eq3.3aab} that $\Gamma^Q(y)\subset  B(x_Q, K\ell(Q))\cap\Omega\subset K'B\cap  \Omega$. Hence, using again Remark \ref{remark:COA} we have
\[
\|\A^r_{\Omega, \kappa} G\|_{L^q(\Delta)}
\lesssim 
\|N_* (H1_{K'B})\|_{L^q(\pom)}
\lesssim
\|N_{*,\Omega,\min\{1,\kappa\}} (H1_{K'B})\|_{L^q(\pom)}
\le
\|N_{*,\Omega,\kappa}^{3K'r} H\|_{L^q(3K'\Delta)}
,
\]
where we have used that $\Gamma_{\Omega,1}(z)\cap K'B\neq\emptyset$ then $z\in 3K'\,\Delta$.

To conclude we consider the case $r\approx\diam(\pom)$. Hence $\pom$ is bounded and $\pom$ is itself a dyadic cube $Q_0$ and $\dd(\pom)=\dd_{Q_0}$. Then we easily obtain using some of the previous observations
\begin{multline}
\|\A^r_{\Omega, \kappa} G\|_{L^q(\Delta)}
\le
\|\A_{\Omega, \kappa} G)\|_{L^q(\pom)}
\lesssim
\|\A_{\Omega, 1} G\|_{L^q(\pom)}
\\
\le
\|\A^{Q_0} G\|_{L^q(\pom)}
\lesssim
\|N_*^{Q_0} H\|_{L^q(\pom)}
\lesssim
\|N_{*,\Omega,\kappa} H\|_{L^q(\pom)}
=
\|N^{K'r}_{*,\Omega,\kappa} H\|_{L^q(\Delta(x,K' r))},
\end{multline}
where the last estimate uses our convention that in the case $\Omega$ unbounded and $\pom$ bounded  $\Gamma_{\Omega}(\cdot)$ is indeed $\Gamma_{\Omega}^{C \diam(\pom)}(\cdot)$.
\end{remark}

\begin{theorem}\label{theor:good-lambda}	 	
Let $\Omega\subset \ree$ be an open set with $\pom$ being ADR and let $\{\W_Q\}_{Q\in\dd(\pom)}$ 
be a Whitney-dyadic structure for $\Omega$ with some parameters $\eta$ and $K$. Given 
$G\in L^2_{\rm loc}(\Omega)$, $H\in C(\Omega)$, and 
$0<q<\infty$, consider the following statements:

\begin{list}{}{\usecounter{enumi}\leftmargin=1.3cm
		\labelwidth=1.3cm\itemsep=0.3cm\topsep=.1cm
		}

\item[$(A)_{\phantom{q}}$] Carleson measure estimate holds for $F=G/\|H\|_{L^\infty(\Omega)}$ on $\Omega$, that is, $\|G\|_{\C(\Omega)}\lesssim \|H\|_{L^\infty(\Omega)}^2$ (cf. \eqref{eqdefCME}). 

\item[$(A)^\dd$] Dyadic Carleson measure estimate holds for $F=G/\|H\|_{L^\infty(\Omega)}$ on $\Omega$, that is, $\|G\|_{\C^{\rm dyad}(\Omega)}\lesssim \|H\|_{L^\infty(\Omega)}^2$ (cf. \eqref{def:CME:dyadic}).

\item[$(A_{\rm loc})_{\phantom{q}}$]  Carleson measure estimate holds on any (bounded) local sawtooth subdomain of $\Omega$, 
in the sense that for any $Q\in \dd(\pom)$ and any pairwise disjoint family of cubes $\F\subset\dd_Q$, one has that $F=G/\|H\|_{L^\infty(\widehat{\Omega}_{\F,Q})}$ satisfies the Carleson measure estimate  on $\widehat{\Omega}_{\F,Q}$, that is,
\[
\sup_{Q,\F} \|G\|_{\C(\widehat{\Omega}_{\F,Q})}/\|H\|_{L^\infty(\widehat{\Omega}_{\F,Q})}^2<\infty,
\]
where the sup runs over all $Q\in \dd(\pom)$ and all pairwise disjoint family of cubes $\F\subset\dd_Q$. 

\item[$(B)_q$] $\A<N$ on $L^q(\pom)$ holds for $G$ and $H$, in the sense of Definition~\ref{defsn}, i.e., \eqref{eq.sn} is valid.

\item[$(B_\loc)_q$] $\A<N$ on $L^q(\partial \widehat{\Omega}_{\F,Q})$ holds for $G$ and $H$ in the sense of Definition~\ref{defsn} for any $Q\in \dd(\pom)$ and any pairwise disjoint family of cubes $\F\subset\dd_Q$, i.e., \eqref{eq.sn} is valid in $\widehat{\Omega}_{\F,Q}$.

\item[$(B)_q^\dd$] $\A^\dd<N^\dd$ on $L^q(\pom)$  holds for $G$ and $H$, in the sense of Definition~\ref{defsn}, i.e., \eqref{eq.sn.loc} is valid.

\item[$(G\lambda)_{\phantom{q}}$]  There exists $\theta>0$ such that for every $\eps, \gamma>0$ and for all $\alpha>0$
\begin{align}\label{eqgl1}
\sigma\{x\in \pom:\, \A G(x)>(1+\eps)\, \alpha,\,\widehat{N}_* H(x)\leq \gamma \alpha\} 
\leq  
 C\,(\gamma/\eps)^\theta
\,\sigma\{x\in \pom:\, \A G(x)>\alpha\}.
\end{align}
\item[$(G\lambda)^\dd$] There exists $\theta>0$ such that for every $\eps, \gamma>0$ and for all $\alpha>0$
\begin{multline}\label{eqgl1-loc}
\sigma\{x\in Q:\, \A^Q G(x)>(1+\eps)\, \alpha,\,\widehat{N}^Q_* H(x)\leq \gamma \alpha\} \\[4pt]
\leq  
 C\,(\gamma/\eps)^\theta
\,\sigma\{x\in Q:\,\A^Q G(x)>\alpha\}, \quad \mbox{ for any } Q\in\dd(\pom).
\end{multline}
\end{list}
Consider, in addition, the condition
\begin{equation}\label{locCacc}
\left(\frac{1}{\delta(X)^{n}}\dint_{B(X,\delta(X)/2)}  |G(Y)|^2\, \delta(Y)\,dY\right)^{1/2} \leq C \|H\|_{L^\infty(B(X,3\delta(X)/4))}, \mbox{\ \ for all } X\in \Omega.
\end{equation}

Then the following implications hold: 
\begin{align}
\label{eq:Aloc->Glambda}
&\mbox{$(A_{\rm loc}) \Longrightarrow (G\lambda)^\dd \Longrightarrow (G\lambda)$.}
\\[4pt]
\label{eq:Aloc->B-dyadic}
&\mbox{$(A_{\rm loc}) \Longrightarrow (B)_q^\dd$, \quad for all \ $0<q<\infty$.}
\\[4pt]
\label{eq:B-dyadic->B}
&\mbox{$(B)_q^\dd\,\, \mbox{ for some $0<q<\infty$ } \Longrightarrow (B)_q$.}
\\[4pt]
\label{eq:B-dyadic->A}
&(B)_q^\dd \,\, \mbox{ for some $0<q<\infty$ }\Longrightarrow (A)^\dd.
\\[4pt]
\label{eq:A-loc->A}
&(A)^\dd \ \&\ \eqref{locCacc} \Longrightarrow (A).
\\[4pt]
\label{eq:B-loc->A}
&(B_\loc)_q \,\, \mbox{ for some $0<q<\infty$ } \Longrightarrow (A)^\dd.
\end{align}
In the previous implications the implicit constants of each of the conclusions depend on $n$, $q$, the ADR character of $\pom$, the choice of $\eta, K, \tau$, the constant in Definition \ref{def:WD-struct} part $(iii)$, as well as the implicit constants in the corresponding hypotheses.
\end{theorem}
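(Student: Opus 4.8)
The plan is to prove the chain of implications in Theorem~\ref{theor:good-lambda} by treating each arrow essentially independently, relying throughout on the comparability of dyadic and traditional cones from Remarks~\ref{remark:rcones} and~\ref{remark:COA}, the bounded-overlap property of the Whitney regions $\{U_Q\}_{Q\in\dd}$, and the fact that on each $U_Q$ one has $\delta(\cdot)\approx\ell(Q)$. The two genuinely substantive links are the good-$\lambda$ inequality $(G\lambda)^\dd$ and the dyadic-to-global passages; the remaining arrows are either formal (cone comparisons, Chebyshev) or follow from the John--Nirenberg-type Lemma~\ref{lemma:J-N} already at our disposal.

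First I would establish $(A_{\rm loc})\Longrightarrow(G\lambda)^\dd$, which I expect to be the main obstacle. Fix $Q\in\dd(\pom)$ and $\alpha>0$, let $\mathcal{O}_\alpha:=\{x\in Q:\A^Q G(x)>\alpha\}$, and perform a Calder\'on--Zygmund/Whitney stopping-time decomposition of $\mathcal{O}_\alpha$ into maximal dyadic subcubes $\{Q_j\}\subset\dd_Q$; on the dyadic parent $\widetilde{Q_j}$ one has a point where $\A^Q G\le\alpha$, so the ``tail'' of the cone outside $U$-regions strictly below $Q_j$ contributes at most $\alpha$ (as in \eqref{GQ} in the proof of Lemma~\ref{lemma:J-N}). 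On the bad set $\{x\in Q_j:\A^Q G(x)>(1+\eps)\alpha,\ \widehat N_*^Q H(x)\le\gamma\alpha\}$ the truncated area integral $\A^{Q_j}G$ exceeds $\eps\alpha$, while the non-tangential maximal function controlled by $\gamma\alpha$ lets us bound $\|H\|_{L^\infty}$ on the relevant sawtooth $\widehat\Omega_{\F_j,Q_j}$ by (a dimensional multiple of) $\gamma\alpha$ --- here one needs that the region seen by $\A^{Q_j}$ over $Q_j$ is contained in a sawtooth $\widehat\Omega_{\F_j,Q_j}$ associated to the stopping cubes one generation down, together with the interior reverse-H\"older/Caccioppoli-type bound (in the presence of \eqref{locCacc}, or simply using that $\|H\|_{L^\infty(\widehat\Omega)}\le\gamma\alpha$ up to the portion of $Q_j$ where $\widehat N_*^Q H>\gamma\alpha$, which one first removes and estimates crudely). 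Then Chebyshev applied to $(A_{\rm loc})$ on $\widehat\Omega_{\F_j,Q_j}$ gives
\[
\sigma\{x\in Q_j:\A^{Q_j}G(x)>\eps\alpha\}\ \lesssim\ \frac{1}{(\eps\alpha)^2}\dint_{\widehat\Omega_{\F_j,Q_j}}|G|^2\delta\,dX\ \lesssim\ \frac{(\gamma\alpha)^2}{(\eps\alpha)^2}\sigma(Q_j)\ =\ (\gamma/\eps)^2\sigma(Q_j),
\]
and summing over $j$ (disjointness) yields \eqref{eqgl1-loc} with $\theta=2$. The implication $(G\lambda)^\dd\Longrightarrow(G\lambda)$ is then immediate by the cone-comparison: enlarging parameters so that $\Gamma\subset\widehat\Gamma\subset\Gamma_{\Omega,\kappa}$ and summing the local good-$\lambda$ over a bounded-overlap family of cubes $Q$ covering $\pom$ at the scale of a fixed $\Delta(x,r)$, exactly as in Remark~\ref{remark:N<S:cubes->balls}.

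Next, the arrow \eqref{eq:Aloc->B-dyadic}, $(A_{\rm loc})\Longrightarrow(B)_q^\dd$, follows by the standard good-$\lambda$ integration argument: integrating \eqref{eqgl1-loc} in $\alpha$ against $\alpha^{q-1}\,d\alpha$, choosing $\gamma$ small relative to $\eps$ and then $\eps$ small, gives $\|\A^Q G\|_{L^q(Q)}\lesssim\|\widehat N_*^Q H\|_{L^q(Q)}$ uniformly in $Q$ --- this is the usual ``good-$\lambda$ implies $L^q$ comparison'' with the only care being to run it on the bounded set $Q$ so that all quantities are finite a priori (which one arranges by first proving it for $G$ replaced by $G\wedge M$ or by noting $\A^Q G\in L^q(Q)$ from the interior hypothesis, then letting $M\to\infty$). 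For \eqref{eq:B-dyadic->B} and the comparison of $\A<N$ for traditional cones one simply invokes Remarks~\ref{remark:rcones} and~\ref{remark:COA} (and Remark~\ref{remark:N<S:cubes->balls}): $\|\A G\|_{L^q(\pom)}\lesssim(\text{sum of }\|\A^Q G\|_{L^q(Q)}\text{ over a bounded overlap cover})$ when $\pom$ is bounded, and a routine limiting/summation when $\pom$ is unbounded and the cones are genuinely infinite (here one exhausts by the local cones $\Gamma^Q$ as $\ell(Q)\to\infty$ and uses monotone convergence).

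Finally, for \eqref{eq:B-dyadic->A}, i.e.\ $(B)_q^\dd\Longrightarrow(A)^\dd$, fix $Q_0\in\dd(\pom)$ and write
\[
\dint_{T_{Q_0}}|G|^2\delta\,dX\ \approx\ \sum_{Q\in\dd_{Q_0}}\sigma(Q)\dint_{U_Q}|G|^2\delta^{1-n}\,dX\ \le\ \int_{Q_0}\Big(\dint_{\Gamma^{Q_0}(x)}|G|^2\delta^{1-n}\,dY\Big)d\sigma(x)=\|\A^{Q_0}G\|_{L^2(Q_0)}^2,
\]
using $\delta\approx\ell(Q)$ on $U_Q$ and bounded overlap (exactly the computation at the end of the proof of Theorem~\ref{theor:CME:Lip->CAD}); applying $(B)_2^\dd$ (or $(B)_q^\dd$ for the given $q$, after a routine $L^q\!\to\!L^2$ reduction via the same good-$\lambda$ self-improvement, or just by re-running the argument at exponent $q$ with $G,H$ replaced suitably) bounds this by $\|\widehat N_*^{Q_0}H\|_{L^2(Q_0)}^2\le\|H\|_{L^\infty(T_{Q_0})}^2\sigma(Q_0)\le\|H\|_{L^\infty(\Omega)}^2\sigma(Q_0)$, which is $(A)^\dd$. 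The implication \eqref{eq:A-loc->A}, $(A)^\dd\,\&\,\eqref{locCacc}\Longrightarrow(A)$, is precisely inequality \eqref{CME:dyadic} together with the observation that \eqref{locCacc} forces $\|G\|_{\C_0(\Omega)}\lesssim\|H\|_{L^\infty(\Omega)}^2$ by definition of $\C_0$ in \eqref{eqdefCME-interior:def}. Lastly \eqref{eq:B-loc->A}, $(B_\loc)_q\Longrightarrow(A)^\dd$, follows by applying the already-proven argument for \eqref{eq:B-dyadic->A} inside each local sawtooth $\widehat\Omega_{\F,Q}$: since $(B_\loc)_q$ furnishes $\A<N$ on every $\partial\widehat\Omega_{\F,Q}$, and the local cones $\Gamma^Q$ in $\Omega$ are comparable to cones in $\widehat\Omega_{\emptyset,Q}=\widehat T_Q$, one gets $\dint_{T_Q}|G|^2\delta\,dX\lesssim\|H\|_{L^\infty}^2\sigma(Q)$ uniformly, which is $(A)^\dd$. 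The dependence of constants is tracked through each step and only involves $n$, $q$, the ADR constant, the parameters $\eta,K,\tau$, the constant in Definition~\ref{def:WD-struct}(iii), and the constants in the hypotheses, as claimed.
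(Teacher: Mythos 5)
Most of your outline runs parallel to the paper's own argument: the stopping-time decomposition of $\{\A^{Q}G>\alpha\}$ into maximal cubes, the use of a sawtooth adapted to the set where $\widehat N_*^{Q}H\le\gamma\alpha$ so that $\|H\|_{L^\infty}$ on that sawtooth is at most $\gamma\alpha$ (this is exactly what Lemma~\ref{lemma:Aloc-L2} formalizes; note that in your Chebyshev display the level set must be intersected with the good set $\{\widehat N_*^{Q}H\le\gamma\alpha\}$, since the cone over a bad point need not lie in the sawtooth), the good-$\lambda$ integration with a truncation of $G$ to secure a priori finiteness, the exhaustion by cubes of a fixed large generation plus monotone convergence for the local-to-global passages (your mention of ``a fixed $\Delta(x,r)$'' is out of place for the global statements $(G\lambda)$ and $(B)_q$, but the mechanism you describe is the right one), and the deduction of $(A)$ from $(A)^\dd$ via \eqref{CME:dyadic} together with the observation that \eqref{locCacc} is exactly a bound on $\|G\|_{\C_0(\Omega)}$.

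The genuine gap is in \eqref{eq:B-dyadic->A} and \eqref{eq:B-loc->A} when $q<2$. Your argument hinges on the Fubini identity \eqref{Fubini-ME-SFE}, which ties $\dint_{T_{Q_0}}|G|^2\delta\,dX$ to $\|\A^{Q_0}G\|_{L^2(Q_0)}^2$, so you need an $L^2$ bound on $\A^{Q_0}G$ over $Q_0$. From $(B)_q^\dd$ with $q\ge2$ this is immediate (for $q>2$ by H\"older), but for $q<2$ neither of your fallbacks works: the ``good-$\lambda$ self-improvement'' is unavailable because $(G\lambda)^\dd$ was derived from $(A_{\rm loc})$, which is not a hypothesis in \eqref{eq:B-dyadic->A}, and ``re-running the argument at exponent $q$'' cannot help since the Carleson functional is intrinsically quadratic. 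The paper's mechanism, which you invoke in your preamble but never actually deploy here, is designed precisely for this: from \eqref{eq.sn.loc} and Chebyshev one gets the uniform single-level estimate $\sigma\{x\in Q:\,\A^{Q}F(x)>1\}\le\tfrac12\,\sigma(Q)$ for $F=G\,(2^{1/q}C_q\|H\|_{L^\infty(\Omega)})^{-1}$, and then the John--Nirenberg Lemma~\ref{lemma:J-N} with $p=2$ upgrades this to $\sup_{Q}\fint_{Q}(\A^{Q}F)^2\,d\sigma\lesssim1$, after which \eqref{Fubini-ME-SFE} yields $(A)^\dd$. The same repair is needed in your route to \eqref{eq:B-loc->A} (where, in addition, one should record that $H^n(\partial\widehat T_{Q_0})\lesssim\sigma(Q_0)$ by the upper ADR property of the box boundary, cf. Remark~\ref{remark:ADR-sawtooth}, in order to bound $\|N_{*,\widehat T_{Q_0}}H\|_{L^q(\partial\widehat T_{Q_0})}^q$ by $\|H\|_{L^\infty(\Omega)}^q\sigma(Q_0)$, together with the cone comparison $\Gamma^{Q_0}(x)\subset\Gamma_{\widehat T_{Q_0},\kappa}(x)$ that you do indicate).
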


\begin{remark}\label{remark:H-bounded}
 In the previous result it is understood that $(A)$ or $(A)^\dd$ are vacuous, unless $H\in L^\infty(\Omega)$. Regarding $(A_\loc)$, if $H\notin L^\infty(\widehat{\Omega}_{\F,Q})$, for some $Q\in\dd(\pom)$ and for some pairwise disjoint family of cubes $\F\subset\dd_Q$, then it is understood that $F=G/\|H\|_{L^\infty(\widehat{\Omega}_{\F,Q})}=0$ and  $\|G\|_{\C(\widehat{\Omega}_{\F,Q})}/\|H\|_{L^\infty(\widehat{\Omega}_{\F,Q})}=0$. Hence, in the sup the only relevant sawtooths $\widehat{\Omega}_{\F,Q}$ are those on which $H$ is essentially bounded.
\end{remark}

\begin{remark}
We note that the assumption \eqref{locCacc} in \eqref{eq:A-loc->A} is only needed when $\Omega$ is unbounded and $\pom$ is bounded because all dyadic cones are contained in a $C\,\diam(\pom)$-neighborhood of $E$. Hence from $(A)^\dd$ we only get information for $F$ in that region. However,  in all practical applications to solutions of elliptic PDEs \eqref{locCacc}  is easily justified by Caccioppoli's inequality.
\end{remark}

\begin{remark}\label{Good-lambda:classes}
It is possible to show the equivalence of previous conditions upon assuming that they hold in some class of sets. To be more precise, let $\Omega\subset\ree$ be an open set with ADR boundary and suppose that we have a collection $\{\Omega'\}_{\Omega'\in\Sigma}$ such that each $\Omega'\in\Sigma$ is an open subset of $\Omega$, $\pom'$ is ADR boundary, and also that $\widehat{\Omega}_{\F,Q}\in\Sigma$ for every $Q\in \dd(\pom')$ and any pairwise disjoint family of cubes $\F\subset\dd_Q$. Assume further that 
\begin{equation}\label{locCacc-alt}
\left(\frac{1}{r^{n}}\dint_{B(X,r)}  |G(Y)|^2\, \delta(Y)\,dY\right)^{1/2} \leq C \|H\|_{L^\infty(B(X,2 r))}, \mbox{\ \ for all } B(X,2r)\subset\Omega.
\end{equation}
Then, $(A)$ holds on  every $\Omega'\in \Sigma$ iff $(B)_q^\dd$ holds for every $\Omega'\in \Sigma$ and for all (some) $0<q<\infty$ iff $(B)_q$ holds for every $\Omega'\in \Sigma$ and for all (some) $0<q<\infty$; with the understanding that all implicit constants in the statements above are uniform within $\Sigma$. 
We have several examples of classes $\Sigma$. Suppose first that $\Omega=\ree\setminus E$ with $E$ being UR (resp. ADR). In that case $\Sigma$ is the class of open sets $\Omega'\subset \ree\setminus E$ with $\partial\Omega$ being UR (resp. ADR) and the implicit constant in each condition should depend on the UR (resp. ADR) character of each $\Omega'$.  Another interesting example is that when $\Omega$ is some given CAD (resp. 1-sided CAD) and $\Sigma$ is the collection of chord-arc subdomains (resp. 1-sided chord-arc subdomains) $\Omega'\subset\Omega$, in that case the implicit constant in each condition should depend on the CAD (1-sided CAD) character of each $\Omega'$. 
\end{remark}

\begin{lemma}\label{lemma:Aloc-L2} 
Let $\Omega\subset \ree$ be an open set with $\pom$ being ADR and let $\{\W_Q\}_{Q\in\dd(\pom)}$ be a Whitney-dyadic structure for $\Omega$ with some parameters $\eta$ and $K$. If $(A_{\rm loc})$ holds for $G\in L^2_{\rm loc}(\Omega)$ and $H\in C(\Omega)$, then 
\begin{equation}
\|\A^{Q_0}G\|_{L^2(F)} 
\le C
\sigma(Q_0)^{\frac12}\,\Big(
\sup_{\F} \|G\|_{\C(\widehat{\Omega}_{\F,Q_0})}/\|H\|_{L^\infty(\widehat{\Omega}_{\F,Q_0})}^2
\Big)^{\frac12}\|\widehat{N}_*^{Q_0} H\|_{L^\infty(F)},
\end{equation}
for every $Q_0\in\dd(\pom)$ and every Borel set $F\subset Q_0$, and where the sup is taken over all families $\F\in\dd_{Q_0}$ which are pairwise disjoint. The constant $C$ depends on 
$n$, the ADR character of $\pom$, the choice of $\eta, K, \tau$, and the constant in Definition \ref{def:WD-struct} part $(iii)$.
\end{lemma}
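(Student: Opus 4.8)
The plan is to bound $\|\A^{Q_0}G\|_{L^2(F)}^2$ by distributing the cone integral over the Whitney regions $U_Q$ and then recognizing the resulting sum as a Carleson-type sum controlled by a single sawtooth. First I would write
\[
\|\A^{Q_0}G\|_{L^2(F)}^2
=\int_F\dint_{\Gamma^{Q_0}(x)}|G(Y)|^2\delta(Y)^{1-n}\,dY\,d\sigma(x)
\lesssim \sum_{Q\in\dd_{Q_0}}\sigma(Q\cap F)\dint_{U_Q}|G(Y)|^2\delta(Y)^{1-n}\,dY,
\]
using Fubini, the definition \eqref{defconetrunc} of $\Gamma^{Q_0}$, the bounded overlap of $\{U_Q\}$, and the fact that $\delta(\cdot)\approx\ell(Q)$ on $U_Q$ to replace $\delta^{1-n}$ by $\sigma(Q)^{-1}\delta$. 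So it suffices to show that for the Borel set $F$,
\[
\sum_{Q\in\dd_{Q_0}}\frac{\sigma(Q\cap F)}{\sigma(Q)}\dint_{U_Q}|G(Y)|^2\delta(Y)\,dY
\;\lesssim\;\sigma(Q_0)\,\Big(\sup_{\F}\|G\|_{\C(\widehat\Omega_{\F,Q_0})}/\|H\|_{L^\infty(\widehat\Omega_{\F,Q_0})}^2\Big)\,\|\widehat N_*^{Q_0}H\|_{L^\infty(F)}^2.
\]

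The key step is a stopping-time decomposition adapted to the non-tangential maximal function. Set $\lambda:=\|\widehat N_*^{Q_0}H\|_{L^\infty(F)}$ (we may assume $\lambda<\infty$, else nothing to prove), and, starting from $Q_0$, select the maximal subcubes $Q_j\in\dd_{Q_0}$ for which $\|H\|_{L^\infty(\widehat U_{Q_j})}>A\lambda$ for a large dimensional/structural constant $A$; let $\F=\{Q_j\}$. By maximality the cubes in $\F$ are pairwise disjoint, and on every $Q\in\dd_{\F,Q_0}$ one has $\|H\|_{L^\infty(\widehat U_Q)}\le A\lambda$, hence $\widehat\Omega_{\F,Q_0}$ is a sawtooth on which $\|H\|_{L^\infty(\widehat\Omega_{\F,Q_0})}\lesssim\lambda$; thus $(A_\loc)$ gives $\sum_{Q\in\dd_{\F,Q_0}}\dint_{U_Q}|G|^2\delta\,dY\lesssim \|G\|_{\C(\widehat\Omega_{\F,Q_0})}\sigma(Q_0)\lesssim \sup_\F(\cdots)\,\lambda^2\,\sigma(Q_0)$, which handles the ``good'' part of the sum (after dropping the harmless factor $\sigma(Q\cap F)/\sigma(Q)\le 1$). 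For the ``bad'' part, namely cubes $Q$ contained in some $Q_j$, I would use that $F\cap Q_j=\emptyset$: indeed if $x\in F\cap Q_j$ then, since $Q_j\ni x$ and $\widehat U_{Q_j}\subset\widehat\Gamma^{Q_0}(x)$, we would get $\widehat N_*^{Q_0}H(x)\ge\|H\|_{L^\infty(\widehat U_{Q_j})}>A\lambda\ge\lambda$, contradicting $x\in F$. Hence $\sigma(Q\cap F)=0$ for every $Q\subset Q_j$, so the bad part contributes nothing to the weighted sum, and we are done.

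The main obstacle I anticipate is purely bookkeeping: making sure the constant $A$ and the fattening parameters ($I^*(\tau)$ versus $I^*(2\tau)$) are chosen so that (i) the inclusion $\widehat U_{Q_j}\subset\widehat\Gamma^{Q_0}(x)$ used in the contradiction argument is valid — this requires $K$ large enough, as already assumed in the excerpt — and (ii) on $\dd_{\F,Q_0}$ the bound $\|H\|_{L^\infty(\widehat U_Q)}\le A\lambda$ really does upgrade to $\|H\|_{L^\infty(\widehat\Omega_{\F,Q_0})}\lesssim\lambda$, which follows because $\widehat\Omega_{\F,Q_0}=\interior(\bigcup_{Q\in\dd_{\F,Q_0}}\widehat U_Q)$ by \eqref{eq3.saw}. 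There is also the minor subtlety that we need $(A_\loc)$ applied with the local sawtooth $\widehat\Omega_{\F,Q_0}$ bounded, which it is since $\widehat\Omega_{\F,Q_0}\subset B_{Q_0}^*$ by \eqref{eq3.3aab}; and we use $\sigma(Q_0)\approx\ell(Q_0)^n$ from ADR to match the stated right-hand side. None of these is deep, but care is needed so that all implicit constants depend only on the allowed parameters.
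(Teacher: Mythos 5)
Your argument is correct and reaches the same endgame as the paper (reduce everything to a single sawtooth $\widehat{\Omega}_{\F,Q_0}$, use $\delta(Y)\approx\ell(Q)\approx\dist(Y,\partial\widehat{\Omega}_{\F,Q_0})$ on $U_Q$, bounded overlap, then apply $(A_{\rm loc})$ and ADR), but the stopping-time you use to build $\F$ is genuinely different. The paper stops purely geometrically: it takes the maximal cubes $Q_j\subset Q_0$ with $Q_j\cap F=\emptyset$, so every surviving cube meets $F$, and the bound $\|H\|_{L^\infty(\widehat{\Omega}_{\F,Q_0})}\le\|\widehat{N}_*^{Q_0}H\|_{L^\infty(F)}$ then falls out at the very end, since each $Y$ in the sawtooth lies in some $\widehat{U}_Q\subset\widehat{\Gamma}^{Q_0}(z)$ with $z\in Q\cap F$. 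You instead stop on the size of $H$, taking maximal $Q_j$ with $\|H\|_{L^\infty(\widehat{U}_{Q_j})}>A\lambda$, which builds the $L^\infty$ control on the sawtooth into the construction, and then you must check that the discarded cubes carry no mass of $F$. Both routes use exactly the same ingredients and give the same constant dependence; the paper's version is marginally leaner because no threshold $\lambda$ or constant $A$ is needed (in your scheme any $A\ge 1$, even $A=1$, works, so the ``large constant'' is superfluous), and because the containment $\widehat{U}_{Q_j}\subset\widehat{\Gamma}^{Q_0}(x)$ for $x\in Q_j$ is immediate from the definition of the truncated dyadic cone, with no largeness of $K$ required. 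One small precision in your bad-cube step: since $\|\widehat{N}_*^{Q_0}H\|_{L^\infty(F)}$ is an essential supremum, the contradiction only applies to $\sigma$-a.e.\ $x\in F$, so the correct conclusion is $\sigma(F\cap Q_j)=0$ rather than $F\cap Q_j=\emptyset$; that is exactly what your weighted sum needs, so the proof goes through, but the statement should be phrased that way.
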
	

\begin{proof} 
We may assume without lost of generality that $\sigma(F)>0$ and also that $\|\widehat{N}_*^{Q_0} H\|_{L^\infty(F)}<\infty$. Subdivide $Q_0\in\dd(\pom)$ dyadically and stop the first time that $Q\cap F=\emptyset$. This generates a possibly empty maximal (hence pairwise disjoint) family $\F=\{Q_j\}_j\subset\dd_{Q_0}\setminus\{Q_0\}$, so that $Q_j\cap F=\emptyset$ for every $Q_j\in\F$, and $Q\cap F\neq\emptyset$ for every $Q\in\dd_{\F,Q_0}$. 

Let us observe that if $Q\cap F\neq\emptyset$ then necessarily $Q\in\dd_{\F,Q_0}$, otherwise $Q\subset Q_j\in\F$ and hence $Q\cap F=\emptyset$, which is a contradiction. Recall that by construction for every $Y\in U_Q$ we have $\delta(Y)\approx\ell(Q)\approx \dist(Y,\partial \widehat \Omega_{\F,Q_0})$ since, as explained above, $\widehat \Omega_{\F,Q_0}$ is composed of fattened Whitney regions $\widehat{U}_{Q}$, which, in turn, have bounded overlap. Writing $\delta(\cdot)=\dist(\cdot, \pom)$, all these yield
\begin{align*}
\int_F \A^{Q_0}G(x)^2 d\sigma(x)
&\le
\int_F \sum_{x\in Q\in \dd_{Q_0}} \dint_{U_Q} G(Y)^2\,\delta(Y)^{1-n}\,dY\,d\sigma(x)
\\
&
=\sum_{Q\in \dd_{Q_0}} \sigma(F\cap Q) \dint_{U_Q} G(Y)^2\,\delta(Y)^{1-n}\,dY
\\
&
\lesssim
\sum_{Q\in \dd_{\F, Q_0}} \dint_{\widehat{U}_Q} G(Y)^2\,\dist(Y, \partial \widehat \Omega_{\F,Q})\,dY
\\
&
\lesssim
\dint_{\widehat \Omega_{\F,Q_0}} G(Y)^2\,\dist(Y, \partial \widehat \Omega_{\F,Q_0})\,dY.
\end{align*}
Pick then $y\in\partial \widehat \Omega_{\F,Q_0}$ and use $(A_{\rm loc})$ in the sawtooth domain $\widehat \Omega_{\F,Q_0}$ to conclude
\begin{multline*}
\int_F \A^{Q_0}G(x)^2 d\sigma(x)
\lesssim
\dint_{B(y, 2\diam(\widehat \Omega_{\F,Q})) \cap \widehat \Omega_{\F,Q_0}}  G(Y)^2\,\dist(Y, \partial \widehat \Omega_{\F,Q_0})\,dY
\\
\lesssim 
\|G\|_{\C(\widehat \Omega_{\F,Q_0})}\le C_0 \|H\|_{L^\infty(\widehat \Omega_{\F,Q})}^2\diam(\widehat \Omega_{\F,Q_0})^n 
\approx
C_0
\|G\|_{\C(\widehat \Omega_{\F,Q_0})} \|H\|_{L^\infty(\widehat \Omega_{\F,Q_0})}^2\sigma(Q_0),
\end{multline*}
where $C_0=\sup_{\F} \|G\|_{\C(\widehat{\Omega}_{\F,Q_0})}/\|H\|_{L^\infty(\widehat{\Omega}_{\F,Q_0})}^2$.
To conclude we observe that if $Y\in \widehat \Omega_{\F,Q_0}$, then $Y\in\widehat{U}_Q$ for some $Q\in\dd_{\F,Q_0}$. The latter implies that we can find $z\in Q\cap F\neq\emptyset$. Hence $Y\in \Gamma^{Q_0}(z)$ and $|H(Y)|\le \widehat{N}_*^{Q_0} H(z)\le\|\widehat{N}_*^{Q_0} H\|_{L^\infty(F)}$. As a result, 
\[
\int_F \A^{Q_0}G(x)^2 d\sigma(x)\lesssim
C_0  \|\widehat{N}_*^{Q_0} H\|_{L^\infty(F)}^2\sigma(Q_0).
\]
This completes the proof.
\end{proof}

\subsection{Proof of Theorem~\ref{theor:good-lambda}: $(A_{\rm loc}) \Longrightarrow (G\lambda)^\dd$}

Fix $Q_0\in\dd=\dd(\pom)$ and for any $\alpha>0$, set
\[
E_\alpha=\{x\in Q_0: \A^{Q_0} G(x)>\alpha\},
\qquad
F_\alpha=\{x\in Q_0: \widehat{N}_*^{Q_0} H(x)\le \alpha\}.
\]
Note that if $E_\alpha=\emptyset$ then \eqref{eqgl1-loc} (with $Q=Q_0$) is trivial and there is nothing to prove. Assume then that $E_\alpha\neq\emptyset$.

We momentarily suppose that $E_\alpha\subsetneq Q_0$. Given $x\in E_\alpha$, the monotone convergence theorem guarantees that there exists $k_x\ge 0$ such that
\begin{equation}\label{prdgbrtes}
\dint_{\bigcup\limits_{\substack{x\in Q\in \dd_{Q_0}\\ \ell(Q)\ge 2^{-k_x}}} U_Q} |G(Y)|^2\,\delta(Y)^{1-n}>\alpha^2,
\end{equation}
where $\delta(\cdot)=\dist(\cdot,\pom)$.

Let $Q_x\in\dd_{Q_0}$ be the unique cube with $Q_x\ni x$ and $\ell(Q_x)=2^{-k_x}$ and note that for every $y\in Q_x$
\[
\Gamma^{Q_0}(y)
=
\bigcup_{y\in Q\in\dd_{Q_0}} U_Q\supset \bigcup_{Q_x\subset  Q\in\dd_{Q_0}} U_Q
=
\bigcup_{\substack{x\in Q\in \dd_{Q_0}\\ \ell(Q)\ge 2^{-k_x}}} U_Q.
\]
This and \eqref{prdgbrtes} implies that $\A^{Q_0} G(y)>\alpha$. We have then show that for every $x\in E_\alpha$ there exists $Q_x\in\dd_{Q_0}$ such that $Q_x\subset E_\alpha$. We can then take $Q_x^{\rm max}$, with $Q_x\subset Q_x^{\rm max}\subset Q_0$, the maximal cube so that $Q_x^{\rm max}\subset E_\alpha$. Note that $Q_x\subsetneq Q_0$ since $E_\alpha\subsetneq Q_0$. Write then $\F=\{Q_j\}_j\subset\dd_{Q_0}\setminus\{Q_0\}$ for the collection of maximal (hence pairwise disjoint) cubes $Q_x^{\rm max}$ with $x\in E_\alpha$. By construction, $E_\alpha=\bigcup_{Q_j\in\F} Q_j$ and for every $Q_j\in\F$, by maximality, we can find $x_j \in\widetilde{Q}_j\setminus E_\alpha$, where $\widetilde{Q}_j$ is the dyadic parent of $Q_j$. In the latter scenario, if $x\in Q_j$
\[
\Gamma^{Q_0}(x)
=
\bigcup_{x\in Q \in\dd_{Q_0}} U_{Q}
=
\Big(\bigcup_{x\in Q \in\dd_{Q_j}} U_{Q}\Big)
\cup
\Big(\bigcup_{Q_j\subsetneq Q\subset Q_0} U_{Q}\Big)
\subset 
\Gamma^{Q_j}(x)\cup \Gamma^{Q_0}(x_j)
\]
and, consequently,
\[
\A^{Q_0}G(x)
\le
\A^{Q_j}G(x)
+
\A^{Q_0}G(x_j)
\le
\A^{Q_j}G(x)+\alpha, \qquad x\in Q_j.
\]
Using this, for every $\epsilon>0$ we have
\begin{align*}
E_{(1+\epsilon)\alpha}
=
E_{(1+\epsilon)\alpha}\cap E_\alpha 
=
\bigcup_{Q_j\in \F} E_{(1+\epsilon)\alpha}\cap Q_j 
\subset
\bigcup_{Q_j\in \F} \{x\in Q_j: \A^{Q_j}G(x)>\epsilon\alpha\}.
\end{align*}
This holds under the assumption  $E_\alpha\subsetneq Q_0$ but it clearly extends to the case  $E_\alpha\subsetneq Q_0$ by setting $\F=\{Q_0\}$. Hence, invoking  
Chebyshev's and Lemma \ref{lemma:Aloc-L2} in every $Q_j$ inequality we arrive at 
\begin{align*}
\sigma(E_{(1+\epsilon)\alpha}\cap F_{\gamma\alpha})
&\le
\sum_{Q_j\in \F} \sigma\big(\{x\in Q_j: \A^{Q_j}G(x)>\epsilon\alpha\}\cap F_{\gamma\alpha}\big)
\\
&\le
\frac1{(\epsilon\alpha)^2}\sum_{Q_j\in \F}\int_{F_{\gamma\alpha}\cap Q_j} \A^{Q_j}G(x)^2\,d\sigma(x)
\\
&\lesssim
\frac{1}{(\epsilon\alpha)^2} \Big(\sup_{Q_0, \F}\|G\|_{\C(\widehat \Omega_{\F,Q_0})}/\|H\|_{L^\infty(\widehat{\Omega}_{\F,Q_0})}^2\Big) \sum_{Q_j\in \F} \|\widehat{N}_*^{Q_0} H\|_{L^\infty(F_{\gamma\alpha}\cap Q_j)}^2\,\sigma(Q_0)
\\
&\le
\Big(\frac{\gamma}{\epsilon}\Big)^2 \Big(\sup_{Q_0, \F}\|G\|_{\C(\widehat \Omega_{\F,Q_0})}/\|H\|_{L^\infty(\widehat{\Omega}_{\F,Q_0})}^2 \Big)\,
\sum_{Q_j\in \F}\sigma(Q_j)
\\
&\le
\Big(\frac{\gamma}{\epsilon}\Big)^2 \Big(\sup_{Q_0, \F}\|G\|_{\C(\widehat \Omega_{\F,Q_0})}/\|H\|_{L^\infty(\widehat{\Omega}_{\F,Q_0})}^2 \Big)\,
\sigma\Big(\bigcup_{Q_j\in \F}Q_j\Big)
\\
&
=
\Big(\frac{\gamma}{\epsilon}\Big)^2 \Big(\sup_{Q_0, \F}\|G\|_{\C(\widehat \Omega_{\F,Q_0})} /\|H\|_{L^\infty(\widehat{\Omega}_{\F,Q_0})}^2 \Big)\,\sigma(E_\alpha),
\end{align*}
where the sup is taken over all $Q_0\in \dd$ and over all families $\F\in\dd_{Q_0}$ which are pairwise disjoint. This completes the proof. \qed

\subsection{Proof of Theorem~\ref{theor:good-lambda}: $(A_{\rm loc}) \Longrightarrow (B)_q^\dd$, for all $0<q<\infty$}

We start by observing that if $G\in L^2_{\loc}(\Omega)$ then for every $\Omega'\subset \Omega$ one has $\|G\,1_{\Omega'}\|_{\C(\widehat{\Omega}_{\F,Q})}\le \|G\|_{\C(\widehat{\Omega}_{\F,Q})}$ for every $Q\in\dd=\dd(\pom)$ and for every family of pairwise disjoint cubes $\F\in\dd_Q$. This means that if $(A_{\rm loc})$ holds for $G$ and $H$ then so it does for $G\,1_{\Omega'}$ and $H$ uniformly in $\Omega'$. Therefore, from what we have proved so far, $(G\lambda)^\dd$ holds for $G\,1_{\Omega'}$ and $H$ uniformly in $\Omega'$.

Fix $x_0\in \pom$ and given $k\in \N$ set 
\[
\Omega_k=\big\{X\in B(x_0,k)\cap\Omega: |G(X)|\le k, \delta(X)\ge k^{-1}\big\}
\]
and note that for every $0<q<\infty$ and for every $x\in \pom$,
\[
\A(G\,1_{\Omega_k})(x)^2
=
\dint_{\Gamma(x)\cap \Omega_k} |G(Y)|^2\,\delta(Y)^{1-n}\,dY
\le
k^{n+1}|B(x_0,k)|
\approx k^{2(n+1)}.
\]
On the other hand, suppose that $x\in \pom$ is so that $\Gamma(x)\cap\Omega_k\neq\emptyset$. Pick $Z\in \Gamma(x)\cap\Omega_k\neq\emptyset$, then $Z\in I^*$ with $I\in\W_Q$ and $x\in Q\in\dd$. Using \eqref{eq2.whitney2} it follows that 
\begin{multline*}
|x-x_0|
\le
|x-x_Q|+\diam(Q)+\dist(I,Q)+\diam(I^*)+|Z-x_0|
\lesssim
\ell(I)+k
\\
\approx
\delta(Z)+k
\lesssim
|X-z_0|+k
\le 2k.
\end{multline*}
As a consequence, $\supp \A(G\,1_{\Omega_k})\subset B(x_0, C,K)$. These, together with the fact that $\A^Q(G\,1_{\Omega_k})(x)\le \A(G\,1_{\Omega_k})(x)$ for every $x\in \pom$, allow us to conclude that $\A(G\,1_{\Omega_k}), \A^Q(G\,1_{\Omega_k})\in L^\infty_c(\pom)\subset L^q(\pom)$ for every $Q\in \dd$, albeit with bounds that depend on $k$.

Using the previous observations and invoking  $(G\lambda)^\dd$ with $G\,1_{\Omega_k}$ and $H$ (with constant that is independent of $k$) we have for every $Q\in\dd$
\begin{align}\label{eqgl13}
&\|\A^Q(G\,1_{\Omega_k})\|_{L^q(Q)}^q= (1+\eps)^q \int_0^\infty q\alpha^q \sigma \{x\in Q:\, \A^Q (G\,1_{\Omega_k})(x)>(1+\eps)\,\alpha\}\, \frac{d\alpha}{\alpha}
\\[4pt] \nonumber
&\quad\leq  (1+\eps)^q\int_0^\infty q\alpha^q \sigma \{x\in Q:\, \A^Q (G\,1_{\Omega_k})(x)>(1+\eps)\,\alpha, \,\widehat{N}_*^Q H(x)\leq \gamma\alpha \}\, \frac{d\alpha}{\alpha} 
\\[4pt]\nonumber
&\hskip4cm+ 
(1+\eps)^q\int_0^\infty q\alpha^q \sigma \{x\in Q:\, \widehat{N}_*^Q H(x)> \gamma\alpha \}\, \frac{d\alpha}{\alpha} 
\\[4pt]\nonumber
&\quad\leq  C\,\left(\frac{\gamma}{\eps}\right)^\theta (1+\eps)^q\int_0^\infty q\alpha^q \sigma \{x\in Q:\, \A^Q (G\,1_{\Omega_k})(x)>\alpha  \}\, \frac{d\alpha}{\alpha} 
+ \left(\frac{1+\eps}{\gamma}\right)^q \|\widehat{N}_*^Q H\|_{L^q(Q)}^q
\\[4pt]\nonumber
&\quad= C\left(\frac{\gamma}{\eps}\right)^\theta   (1+\eps)^q\,\|\A^Q (G\,1_{\Omega_k})\|_{L^q(Q)}^q + \left(\frac{1+\eps}{\gamma}\right)^q \|\widehat{N}_* H\|_{L^q(Q)}^q.
\end{align}
Pick $\eps=1$ and choose $\gamma$ sufficiently small to ensure that $C\,\gamma^\theta\, 2^q<\frac12$. Using that $\|\A^Q (G\,1_{\Omega_k})\|_{L^q(Q)}^q<\infty$ we can hide this term on the left-hand side of \eqref{eqgl13} and conclude that
\begin{equation}\label{bftghy}
\|\A^Q (G\,1_{\Omega_k})\|_{L^q(Q)}^q
\lesssim
\|\widehat{N}_*^Q H\|_{L^q(Q)}^q,
\end{equation}
with an implicit constant depending on $n$, the ADR character of $\pom$, the choice of $\eta, K, \tau$, the constant in Definition \ref{def:WD-struct} part $(iii)$, $q$, and the implicit constant in $(G\lambda)$, but nonetheless independent of $k$. By the monotone convergence theorem and the fact that $|G(X)|<\infty$ for a.e.~$X\in\Omega$, since $G\in L^2_{\loc}(\Omega)$, it follows that $\A^Q (G\,1_{\Omega_k})(x)\nearrow \A^Q G(x)$. Then we can use the monotone convergence theorem to obtain from \eqref{bftghy} 
\[
\|\A^Q G\|_{L^q(Q)}^q
=
\lim_{k\to\infty}
\|\A^Q (G\,1_{\Omega_k})\|_{L^q(Q)}^q
\lesssim
\|\widehat{N}_*^Q H\|_{L^q(Q)}^q,
\]
completing the proof. \qed

\begin{remark}
The previous arguments easily yield that for any $0<q<\infty$, one has that $(G\lambda)^\dd  \Longrightarrow (B_\loc)_q$ provided $\|\A^Q G\|_{L^q(Q)}<\infty$. A very similar argument gives that $(G\lambda) \Longrightarrow (B)_q$ provided $\|\A G\|_{L^q(\pom)}<\infty$. Details are left to the interested reader.
\end{remark}

\subsection{Proof of Theorem~\ref{theor:good-lambda}: $(G\lambda)^\dd \Longrightarrow (G\lambda)$}

We note that if $\pom$ is bounded, then $\pom$ itself is the largest cube in $\dd=\dd(\pom)$, say $\pom=Q_0$, hence, $ (G\lambda)$ is a particular case of $(G\lambda)^\dd$. Consider next the case $\pom$ unbounded and for every $k\in\N$ write
\[
\Gamma^k(x) =\bigcup_{\substack{x\in Q\in \dd\\ \ell(Q)\le 2^k}} U_Q, \qquad x\in \pom,
\]
and associated with these cones define $\A^k$ and $\widehat{N}^k_*$. Given $Q\in\dd_{-k}$, i.e., $\ell(Q)=2^{-k}$, one easily see that $\Gamma^{Q_0}(x)=\Gamma^k(x)$ for every $x\in Q_0$. Hence, for every $k\in\N$, using  $(G\lambda)^\dd$  we obtain
\begin{align}\label{vbjmfg}
&
\sigma\{x\in \pom:\, \A^k G(x)>(1+\eps)\, \alpha,\,\widehat{N}_* H(x)\leq \gamma \alpha\} 
\\
&\qquad\qquad
\le
\sigma\{x\in \pom:\, \A^k G(x)>(1+\eps)\, \alpha,\,\widehat{N}^k_* H(x)\leq \gamma \alpha\} 
\nonumber\\
&\qquad\qquad
=
\sum_{Q\in\dd_{-k}} \sigma\{x\in Q:\, \A^k G(x)>(1+\eps)\, \alpha,\,\widehat{N}^k_* H(x)\leq \gamma \alpha\} 
\nonumber\\
&\qquad\qquad
=
\sum_{Q\in\dd_{-k}} \sigma\{x\in Q:\, \A^Q G(x)>(1+\eps)\, \alpha,\,\widehat{N}^Q_* H(x)\leq \gamma \alpha\} 
\nonumber\\
&\qquad\qquad
 \lesssim (\gamma/\eps)^\theta \sum_{Q\in\dd_{-k}} \sigma\{x\in Q:\, \A^Q G(x)>\alpha\} 
\nonumber\\
&\qquad\qquad
 =(\gamma/\eps)^\theta \sum_{Q\in\dd_{-k}} \sigma\{x\in Q:\, \A^k G(x)>\alpha\} 
\nonumber\\
&\qquad\qquad 
=(\gamma/\eps)^\theta \sigma\{x\in \pom\, \A^k G(x)>\alpha\} 
\nonumber\\
&\qquad\qquad
 \le (\gamma/\eps)^\theta \sigma\{x\in \pom:\, \A G(x)>\alpha\}. 
\nonumber
\end{align}
On the other hand, the monotone convergence theorem gives that $\A^k G(x)\nearrow \A G(x)$ as $k\to\infty$ and for every $x\in \pom$. Hence, another use of the monotone convergence theorem and \eqref{vbjmfg} yield
\begin{multline*}
\sigma\{x\in \pom:\, \A G(x)>(1+\eps)\, \alpha,\,\widehat{N}_* H(x)\leq \gamma \alpha\} 
\\
=\lim_{k\to\infty}
\sigma\{x\in \pom:\, \A^k G(x)>(1+\eps)\, \alpha,\,\widehat{N}_* H(x)\leq \gamma \alpha\} 
\lesssim
(\gamma/\eps)^\theta \sigma\{x\in \pom:\, \A G(x)>\alpha\}, 
\end{multline*}
and the proof is complete. \qed

\subsection{Proof of Theorem~\ref{theor:good-lambda}: $(B)_q^\dd$ for some $0<q<\infty \Longrightarrow (B)_q$}
We note that if $\pom$ is bounded, then $\pom$ itself is the largest cube in $\dd= \dd(\pom)$, say $\pom=Q_0$, hence, $(B)_q$ is a particular case of $(B_\loc)_q$. 
If $\pom$ is unbounded we use  the same argument  as in the previous proof
\begin{multline*}
\int_{\pom} \A^k G(x)^q\,d\sigma(x)
=
\sum_{Q\in\dd_{-k}} \int_Q \A^k G(x)^q\,d\sigma(x)
=
\sum_{Q\in\dd_{-k}} \int_Q \A^Q G(x)^q\,d\sigma(x)
\\
\lesssim
\sum_{Q\in\dd_{-k}} \int_Q \widehat{N}_*^Q H(x)^q\,d\sigma(x)
=
\sum_{Q\in\dd_{-k}} \int_Q \widehat{N}_*^k H(x)^q\,d\sigma(x)
\\
=
\int_{\pom} \widehat{N}_*^k H(x)^q\,d\sigma(x)
\le
\int_{\pom} \widehat{N}_* H(x)^q\,d\sigma(x).
\end{multline*}
From here the fact that $\A^k G(x)\nearrow \A G(x)$ as $k\to\infty$ and for every $x\in \pom$ and the monotone convergence theorem gives the desired estimate. \qed

\subsection{Proof of Theorem~\ref{theor:good-lambda}: $(B)_q^\dd$  for some $0<q<\infty \Longrightarrow (A)^\dd$.}
Assume  that $(B_{\rm loc})_q$, for some $0<q<\infty$ holds. We may assume that $H\in L^\infty(\Omega)$. Hence, for every $Q\in\dd(\pom)$,
\[
\int_Q \A^Q G(x)^q\,d\sigma(x)
\le
C_q^q\, \int_Q \widetilde{N}_*^Q H(x)^q\,d\sigma(x)
\le
C_q^q\,\|H\|_{L^\infty(\Omega)}^q\,\sigma(Q)
\]
Writing  $F:= G \,\left(2 ^{1/q}\,C_q\, \|H\|_{L^\infty(\Omega)}\right)^{-1}$ we have by Chebyshev's
\begin{align*}
\sigma\{x\in Q:\,\A^Q F(x)> 1\}\leq 
\int_Q \A^Q F(x)^q\,d\sigma(x)
\le\frac12\,\sigma(Q).
\end{align*}
We then invoke Lemma \ref{lemma:J-N} with $p=2$ and obtain
\[
\sup_{Q\in\dd_{Q_0}}\,\fint_Q \A^Q F(x)^2\,d\sigma(x)
\lesssim 1.
\]
On the other hand, writing $\delta(\cdot)=\dist(\cdot,\pom)$ recalling that the family $\{U_{Q'}\}_{Q'\in\dd(\pom)}$ has bounded overlap
\begin{multline}\label{Fubini-ME-SFE}
\dint_{T_Q} 
F^2\delta\,dY
\approx
\sum_{Q'\in\dd_Q} \dint_{U_{Q'}} 
F^2\delta\,dY
\approx
\sum_{Q'\in\dd_Q} \sigma(Q') \dint_{U_{Q'}} 
F^2\delta^{1-n}\,dY
\\
=
\int_Q \sum_{x\in Q'\in\dd_Q}  \dint_{U_{Q'}} F^2\delta^{1-n}\,dY\,d\sigma(x)
\\
\approx 
\int_Q\dint_{\Gamma^Q(x)} F^2\delta^{1-n}\,dY\,d\sigma(x)
=
\int_Q \A^Q F(x)^2\,d\sigma(x),
\end{multline}
Thus,
\[
\|F\|_{\C^{\rm dyad}(\Omega)}=\sup_{Q\in\dd_{Q_0}}\,\frac{1}{\sigma(Q)}\dint_{T_Q} 
F(Y)^2\delta(X)\,dY
\lesssim 1,
\]
and the proof is complete. \qed

\subsection{Proof of Theorem~\ref{theor:good-lambda}: $(A)^\dd \ \&\ \eqref{locCacc} \Longrightarrow (A) $.}

This follows trivially from \eqref{CME:dyadic}:
\[
\|G\|_{\C(\Omega)}\lesssim \|G\|_{\C^{\rm dyad}(\Omega)}+ \|G\|_{\C_0(\Omega)}
\lesssim
\|H\|_{L^\infty(\Omega)}^2,
\]
which is the desired estimate.
\qed

\subsection{Proof of Theorem~\ref{theor:good-lambda}: $(B_\loc)_q \,\, \mbox{ for some $0<q<\infty$ } \Longrightarrow (A)^\dd$.}
Write $\dd=\dd(\pom)$ and $\delta(\cdot)=\dist(\cdot,\pom)$. Assume $(B_\loc)$ and fix $Q_0\in\dd$. We may suppose that $H\in L^\infty(\Omega)$, otherwise there is nothing to prove. Recall that $\widehat{T}_{Q_0}=\widehat{\Omega}_{\emptyset,Q_0}$, hence $(B_\loc)$ implies that $A<N$ on $L^q(\partial T_{Q_0})$. This, Remark \ref{remark:N<S:cubes->balls} yields for  every $\kappa>0$
\begin{multline}\label{qw3basvef}
\|\A_{\widehat{T}_{Q_0},\kappa}G\|_{L^q(\partial \widehat{T}_{Q_0})}^q
\lesssim
\|N_{*,\widehat{T}_{Q_0},\kappa}H\|_{L^q(\partial \widehat{T}_{Q_0})}^q
\\
\le
\|H\|_{L^\infty(\Omega)}^q H^n(\partial T_Q)
\lesssim
\|H\|_{L^\infty(\Omega)}^q \diam(\partial T_Q)^n
\lesssim 
\|H\|_{L^\infty(\Omega)}^q \ell(Q)^n
\approx
\|H\|_{L^\infty(\Omega)}^q \sigma(Q)^n,
\end{multline}
where we have used that $\partial T_Q$ is upper ADR (see Remark \ref{remark:ADR-sawtooth}), \eqref{eq3.3aab}, and that $\pom$ is ADR.

Let $x\in Q_0$ and $Y\in \Gamma^{Q_0}(x)$. Then $Y\in I^*$ with $I\in\W_Q$  with $x\in Q\in\dd_{Q_0}$. Recalling that $I^*=I^*(\tau)$ and that $\widehat{T}_{Q_0}$ is defined using fattened Whitney cubes of the form $J^*(2\tau)$ we clearly see that $Y\in\T_{Q_0}\subset \widehat{T}_{Q_0}$ with $\delta(Y)\approx\dist(Y, \partial \widehat{T}_{Q_0})$. Consequently, 
\[
|Y-x|
\le
\diam(I)+\dist(I,Q)+\diam(Q)
\lesssim
\ell(I)
\approx
\delta(Y)\approx\dist(Y, \partial \widehat{T}_{Q_0}).
\]
Then we can find $\kappa$ depending on $n$, the ADR constants of $\pom$, $\eta$, $K$, and the constant in Definition \ref{def:WD-struct} part $(iii)$ such that
$Y\in\Gamma_{\widehat{T}_{Q_0},\kappa}(x)$. Since $Q_0\subset\partial \widehat{T}_{Q_0}$  (see \cite[Proposition 6.1]{HM-I})  we then obtain 
\begin{multline*}
\A^{Q_0} G(x)=\left(\dint_{\Gamma^Q(x)}|G(Y)|^2 \delta(Y)^{1-n} dY\right)^{\frac12}
\approx
\left(\dint_{\Gamma^Q(x)}|G(Y)|^2 \dist(Y,\partial T_{Q_0})^{1-n} dY\right)^{\frac12}
\\
\le
\left(\dint_{\Gamma_{\widehat{T}_{Q_0},\kappa}(x)}|G(Y)|^2 \dist(Y,\partial T_{Q_0})^{1-n} dY\right)^{\frac12}
=\A_{\widehat{T}_{Q_0},\kappa}G(x).
\end{multline*}
This and \eqref{qw3basvef} imply
\[
\fint_{Q_0} \A^{Q_0} G(x)^q\,d\sigma(x)
\lesssim
\fint_{Q_0} \A_{\widehat{T}_{Q_0},\kappa}G(x)^q\,d\sigma(x)
\le
C \|H\|_{L^\infty(\Omega)}^q.
\]
Writing $F=G (C\|H\|_{L^\infty(\Omega)})^{-1}$ and for $N$ large enough it follows from Chebyshev's inequality 
\[
{\sigma\left\{x\in Q_0:\,\A^{Q_0} F(x)> 1\right\}}
\leq 
\int_{Q_0} \A^{Q_0} F(x)^q\,d\sigma(x)
\le
\frac12\sigma(Q).
\]
Since $Q_0\in\dd$ is arbitrary we can apply Lemma \ref{lemma:J-N} with $p=2$ and obtain
\[
\sup_{Q\in\dd_{Q_0}}\,\fint_Q \A^Q F(x)^2\,d\sigma(x)
\lesssim
1. 
\]
This and \eqref{Fubini-ME-SFE} give 
\[
\|F\|_{\C^{\rm dyad}(\Omega)}=\sup_{Q\in\dd_{Q_0}}\,\frac{1}{\sigma(Q)}\dint_{T_Q} 
F(Y)^2\delta(X)\,dY
\lesssim 1,
\]
which is the desired estimate. \qed

\section{Transference of $N<S$ estimates: from Lipschitz to chord-arc domains} 
Before starting, we introduce some notation. Let $D\subset \ree$ be a bounded CAD. Given $Q\in\dd(\partial D)$ or $\Delta=\Delta(x,r)$, with $x\in\partial D$ and $0<r\lesssim\diam(\partial D)$, we will write $X_Q^+$ and $X_\Delta^+$ to denote respectively some interior corkscrew points relative to $Q$ (that is, relative to $\Delta_Q$, see \eqref{cube-ball}) and $\Delta$. When $\partial D$ is bounded, we write $X_D^+$ to denote a corkscrew point relative to a surface ball $\Delta(x,3\diam(\partial D)/2)=\partial D$ for some $x\in\partial D$.


Also, recall the  dyadic Hardy-Littlewood maximal function from Definition~\ref{defHLmax}. In addition, we will be using its continuous analogue. 
Let $E\subset \ree$ be an $n$-dimensional  ADR set. By $M=M_{E}$ we denote the continuous (non-centered) Hardy-Littlewood maximal function on $E$, that is, for $f\in L^1_{\rm loc}(E)$
$$M f(x)=\sup_{\Delta\ni x} \fint_\Delta |f(y)|\,d\sigma(y), $$
where the sup is taken over all $\Delta$, surface balls on $E$ containing $x$. For $0<p<\infty$, we also write $M_{p} f=M(|f|^p)^\frac1p$. 
It is clear from \eqref{cube-ball} that $M^\dd f(x)\lesssim Mf(x)$ for every $x\in E$. The converse might fail pointwise, but both maximal functions are bounded in $L^p(E)$, $p>1$. 

We are now ready to state the main result of this section:

\begin{theorem}\label{theor:N<S:Lip->CAD}
Let $D\subset \ree$ be a  CAD. Let $u\in W^{1,2}_{\rm loc}(D)\cap C(D)$ and 
assume that there exists $C_0>0$ such that for any $c\in\RR$, and for any cube $I$ with $2I\subset D$, 
\begin{equation}\label{locbdd}
\sup_{X\in I}|u(X)-c|\leq C_0 \left(\ell(I)^{-n-1}\dint_{2 I} |u-c|^2 \, dX\right)^{\frac12}.
\end{equation}
Suppose that the $N<S$ estimates are valid on $L^2$ on all bounded Lipschitz subdomains $\Omega\subset D$, that is, for any  bounded Lipschitz subdomain $\Omega\subset D$ there holds
\begin{equation}\label{eqn5.9}
\left\|N_{*, \Omega} \big(u-u(X_{\Omega}^+)\big)\right\|_{L^2(\pom)}\leq C_{\Omega} \left\|S_{\Omega} u\right\|_{L^2(\po)}. 
\end{equation}
Here $X_{\Omega}^+$ is any interior corkscrew point of $\Omega$ at the scale of $\diam (\Omega)$, and the constant  $C_{\Omega}$ in \eqref{eqn5.9} depends on the Lipschitz character of $\Omega$,  the dimension $n$, the implicit choice of $\kappa$ (the aperture of the cones in $N_{*, \Omega}$ and $S_{\Omega}$),  and the implicit corkscrew constant for the point $X_{\Omega}^+$.  

Given $\eta\ll 1$ and $K\gg 1$, consider $\{\W_Q\}_{Q\in\dd(\partial D)}$ a Whitney-dyadic structure for $D$ with parameters $\eta$ and $K$, see Section \ref{sections:CAD}. Then there exists $0<c_0\ll 1$ and $C>0$, depending on $n$, the CAD character of $D$, the choice of $\eta, K, \tau$, such that for every $\eps>0$, every $0<\gamma<c_0\,\eps/C_0$, for all $\alpha>0$, and for all $Q\in\dd(\partial D)$
\begin{multline}\label{eqn5.1:local}
\sigma\{x\in Q:\, N_*^Q (u-u(X_Q^+))(x)> (1+\eps)\, \alpha, \,M^{\dd}_{Q_0,2}(\widehat{S}^{Q}u)(x)\leq \gamma  \alpha\} \\[4pt]
\leq  
C^*_{\gamma, \eps}
\,\sigma\{x\in Q:\, N_*^{Q}(u-u(X_Q^+))(x)>\alpha\},
\end{multline}
where $C_{\gamma,\eps}^*=\big(1-\theta+ C\,({\gamma}/{\eps})^2 \big)$ and $\theta\in (0,1)$ is from Corollary~\ref{cIBPLS} (hence depends on $n$ and the CAD character of $D$). Therefore
\begin{equation}\label{eqn6.5:local}
\|N_*^Q(u-u(X_Q^+))\|_{L^q(Q)}\leq C' \|\widehat{S}^Q u\|_{L^q(Q)}, \quad \mbox{for all}\quad q>2.
\end{equation} 
where $C'$ depends  on $n$, the CAD character of $D$, $C_0$, the choice of $\eta, K, \tau$, and $q$. 

As a consequence, for any $x\in\partial D$ and $0<r<2\diam(\partial D)$ there exists $K'$ depending on $n$, the CAD character of $D$ such that for every $\kappa>0$
\begin{equation}\label{eqn6.5:local-balls}
\|N_{*,D,\kappa}^r (u-u(X_{\Delta(x,r)}^+)\|_{L^q(\Delta(x,r))}\le C'' \|S^{K'r}_{D,\kappa} u\|_{L^q(\Delta(x,K' r))},  \quad \mbox{for all}\quad q>2.
\end{equation}
where $\Delta(x,r)=B(x,r)\cap\pom$, and where $C''$ depends on $q$, $n$, the CAD character of $D$, $C_0$, and $\kappa$.	
In particular, if $\partial D$ is bounded 
\begin{equation}\label{eqn6.5}
\|N_{*,D,\kappa}(u-u(X_D^+))\|_{L^q(\partial D)}\leq C'' \|S_{D,\kappa} u\|_{L^q(\partial D)}, \quad \mbox{for all}\quad q>2
\end{equation} 
and if $\partial D$ is unbounded and $u(X)\to 0$ as $|X|\to\infty$ then 
\begin{equation}\label{eqn6.5:unbounded}
\|N_{*,D,\kappa}u\|_{L^q(\partial D)}\leq C'' \|S_{D,\kappa} u\|_{L^q(\partial D)}, \quad \mbox{for all}\quad q>2.
\end{equation} 
\end{theorem}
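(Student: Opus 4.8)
The plan is to run a good-$\lambda$ inequality comparing the \emph{local} dyadic non-tangential maximal function of $u-u(X_Q^+)$ against the \emph{fattened} dyadic square function $\widehat S^Q u$, and then to convert \eqref{eqn5.1:local} into the $L^q$ bound \eqref{eqn6.5:local} for $q>2$ by a standard distributional argument (integrate $q\alpha^{q-1}$ against the level-set estimate, choose $\eps=1$ and $\gamma$ small enough that $C^*_{\gamma,1}<1$, and absorb, using that $N_*^Q(u-u(X_Q^+))\in L^q(Q)$ a priori because the cone $\Gamma^Q$ is bounded and $u$ is continuous; a truncation/exhaustion as in the proof of $(A_{\rm loc})\Rightarrow(B)_q^\dd$ in Theorem~\ref{theor:good-lambda} makes this rigorous). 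Finally, the passage from the dyadic-cone estimate \eqref{eqn6.5:local} to the ``traditional cone'' estimates \eqref{eqn6.5:local-balls}, \eqref{eqn6.5}, \eqref{eqn6.5:unbounded} is exactly the comparison-of-cones mechanism already used in Remark~\ref{remark:N<S:cubes->balls} and Remarks~\ref{remark:rcones}, \ref{remark:COA}: cover a surface ball $\Delta(x,r)$ by a bounded-overlap family of cubes $Q$ of comparable size, apply \eqref{eqn6.5:local} on each, and sum; in the unbounded case use that $u(X)\to 0$ at infinity so that $u(X_Q^+)\to 0$ along the exhausting cubes and $N_{*,D,\kappa}u$ is recovered in the limit by monotone convergence, while $S^{K'r}_{D,\kappa}u$ is controlled by the global $S_{D,\kappa}u$.

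So the real content is \eqref{eqn5.1:local}. Here I would follow the Calder\'on--Zygmund stopping-time scheme adapted to the bilateral setup, mirroring the proof of $(A_{\rm loc})\Rightarrow(G\lambda)^\dd$. Fix $Q\in\dd(\partial D)$ and $\alpha>0$, and let $E_\alpha=\{x\in Q: N_*^Q(u-u(X_Q^+))(x)>\alpha\}$. If $E_\alpha=\emptyset$ there is nothing to prove; if $E_\alpha=Q$ one argues directly, so assume $E_\alpha\subsetneq Q$. As in the good-$\lambda$ proof, for each $x\in E_\alpha$ there is a maximal cube $Q_x^{\max}\in\dd_Q$ with $Q_x^{\max}\subset E_\alpha$, giving a pairwise disjoint family $\F=\{Q_j\}\subset\dd_Q\setminus\{Q\}$ with $E_\alpha=\bigcup_j Q_j$, and for each $Q_j$ a point $x_j\in\widetilde Q_j\setminus E_\alpha$, so $N_*^Q(u-u(X_Q^+))(x_j)\le\alpha$. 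On $Q_j$ one splits $\Gamma^Q(x)\subset\Gamma^{Q_j}(x)\cup(\text{tail through }\widetilde Q_j)$, whence $N_*^{Q}(u-u(X_Q^+))(x)\le N_*^{Q_j}(u-u(Y))(x)+\alpha+|\,u(Y)-u(X_Q^+)\,|$ for a suitable vertex point $Y$ associated to $Q_j$; because $Y$ and $X_Q^+$ are joined by a Harnack chain of bounded length staying well inside $D$ and $u$ is continuous, the term $|u(Y)-u(X_Q^+)|$ is harmless (it can be absorbed into the telescoping along the chain, each link estimated by \eqref{locbdd} in terms of $\widehat S^{Q}u$, which is where the factor $\gamma\alpha$ and the constant $C_0$ enter, and this is the source of the restriction $\gamma<c_0\eps/C_0$).

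Then on each $Q_j$ I would invoke the interior big pieces of Lipschitz subdomains of Corollary~\ref{cIBPLS}: there is a bounded Lipschitz subdomain $\Omega_{Q_j}\subset D\cap B_{Q_j}$ with $\sigma(\partial\Omega_{Q_j}\cap Q_j)\ge\theta\,\sigma(Q_j)$ and uniformly controlled Lipschitz character, together with a corkscrew $Y_{Q_j}\in U_{Q_j}$. Apply the hypothesis \eqref{eqn5.9}, i.e.\ the $L^2$ $N<S$ bound on the Lipschitz domain $\Omega_{Q_j}$, with reference point $X_{\Omega_{Q_j}}^+$ comparable to $Y_{Q_j}$; comparing the cones and square functions of $\Omega_{Q_j}$ with the truncated dyadic objects $N_*^{Q_j}$, $\widehat S^{Q_j}$ as in Remark~\ref{remark:N<S:cubes->balls}, and using Chebyshev together with the pointwise bound on $\partial\Omega_{Q_j}\cap Q_j$ by $M^\dd_{Q_0,2}(\widehat S^Q u)\le\gamma\alpha$, gives
\[
\sigma\{x\in Q_j: N_*^{Q_j}(u-u(X_Q^+))(x)>\eps\alpha,\ M^\dd_{Q_0,2}(\widehat S^Q u)\le\gamma\alpha\}\lesssim \Big(\frac{\gamma}{\eps}\Big)^2\sigma(Q_j),
\]
so on each $Q_j$ the ``bad'' set is at most $(1-\theta)\sigma(Q_j)$ coming from $Q_j\setminus\partial\Omega_{Q_j}$ plus $C(\gamma/\eps)^2\sigma(Q_j)$; summing over $j$ and using $\sum_j\sigma(Q_j)=\sigma(E_\alpha)$ yields \eqref{eqn5.1:local} with $C^*_{\gamma,\eps}=1-\theta+C(\gamma/\eps)^2$. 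The main obstacle I anticipate is the careful bookkeeping around the change of reference point $u(X_Q^+)\mapsto u(Y_{Q_j})\mapsto u(X_{\Omega_{Q_j}}^+)$ and making the ``tail'' contributions genuinely controlled by $\widehat S^Q u$ (not just $S^Q u$) via the fattened Whitney regions and \eqref{locbdd}; this is precisely why the statement uses the fattened square function $\widehat S^Q$ and the maximal function $M^\dd_{Q_0,2}$ rather than their unfattened, unaveraged counterparts, and getting the fattening parameters to line up across the three domains $D$, $\Omega_{Q_j}$, and the sawtooth pieces is the delicate part.
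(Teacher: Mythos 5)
Your overall strategy is the same as the paper's: stopping-time decomposition of $\{N_*^Q(u-u(X_Q^+))>\alpha\}$, the interior big pieces of Lipschitz subdomains from Corollary~\ref{cIBPLS} on each stopping cube, the hypothesis \eqref{eqn5.9} on those Lipschitz pieces via Chebyshev, control of all error terms by $M^{\dd}_{Q_0,2}(\widehat S^{Q}u)\le\gamma\alpha$ through the one-scale oscillation estimate coming from \eqref{locbdd} and Poincar\'e, and then a distributional argument to pass to $L^q$ and a change of cones to get the ball/global statements. However, two of your steps fail as written. First, the control of the reference-point shift: you bound $|u(Y)-u(X_Q^+)|$ (with $Y$ a corkscrew at the scale of the stopping cube $Q_j$ and $X_Q^+$ the corkscrew of the top cube $Q$) by telescoping along ``a Harnack chain of bounded length'' with each link estimated by \eqref{locbdd}. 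That chain is not of bounded length: it has on the order of $\log_2(\ell(Q)/\ell(Q_j))$ links, so the telescoping gives at best $C_0\gamma\alpha\,\log(\ell(Q)/\ell(Q_j))$ (or a square root of the logarithm after Cauchy--Schwarz), which is not $\lesssim\eps\alpha$ uniformly in $j$, and the restriction $\gamma<c_0\eps/C_0$ cannot repair this. The correct move, which your own setup already provides, is to use the stopping-time point $x_j\in\widetilde Q_j\setminus E_\alpha$: by the construction of $\W_Q$ in Section~\ref{sections:CAD} the corkscrew $X_{\widetilde Q_j}^+$ lies in $U_{\widetilde Q_j}\subset\Gamma^{Q}(x_j)$ (indeed also in $U_{Q_j}$), so $|u(X_{\widetilde Q_j}^+)-u(X_Q^+)|\le\alpha$ for free, and only a single-scale oscillation step (the analogue of \eqref{locbdd} on one Whitney region, contributing $C\,C_0\gamma\alpha\le\eps\alpha/2$) is needed to move from $X_{\widetilde Q_j}^+$ to $Y$; no long chain ever appears.

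Second, the passage from \eqref{eqn5.1:local} to \eqref{eqn6.5:local}: you propose to ``choose $\eps=1$ and $\gamma$ small enough that $C^*_{\gamma,1}<1$ and absorb.'' The absorption in the distributional integral produces the factor $(1+\eps)^q$, so one needs $C^*_{\gamma,\eps}(1+\eps)^q<1$; since $C^*_{\gamma,\eps}=1-\theta+C(\gamma/\eps)^2\ge 1-\theta$ no matter how small $\gamma$ is, the choice $\eps=1$ fails whenever $(1-\theta)2^q\ge 1$, which is the generic situation because $\theta$ is the (small) big-pieces constant of Corollary~\ref{cIBPLS}. One must first take $\eps$ small enough (depending on $q$ and $\theta$) so that $(1-\theta)(1+\eps)^q<1/4$, and only then take $\gamma<c_0\eps/C_0$ small; this is exactly why \eqref{eqn5.1:local} is stated for every $\eps>0$. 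Your truncation of the distribution-function integral (to make the absorbed quantity finite) is the right device and is also what the paper does; combined with the corrected choice of $\eps$ and the $L^{q}$-boundedness of $M^{\dd}_{Q_0,2}$ for $q>2$, the rest of your outline, including the change-of-cones reduction for \eqref{eqn6.5:local-balls}--\eqref{eqn6.5:unbounded}, goes through along the lines of Remarks~\ref{remark:rcones}, \ref{remark:COA} and \ref{remark:N<S:cubes->balls} (with the caveat that comparing the dyadic cones of $D$ with the traditional cones of the small Lipschitz piece $\Omega_{Q_j}$ requires the pointwise substitution $Y\mapsto Y_{Q'}$ of Corollary~\ref{cIBPLS}$(ii)$ plus one more oscillation estimate, not merely a same-domain aperture change).
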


We remark that contrary to the previous sections, we do not consider general $\A G$ and $N_* H$ anymore. This is a necessity, as the argument of the area integral has to be the gradient of the argument of the non-tangential maximal function. The assumption \eqref{locbdd} is a standard interior regularity estimate for solutions of elliptic equations 
(also known as Moser's local boundedness estimate). In principle, we need a somewhat different version.  
Recall that $\widehat{U}_Q$ is a fattened version of the Whitney region $U_Q$.  We have 
\begin{equation}\label{locbdd1}
|u(Y_Q)-c|\leq C_0 \left(\ell(Q)^{-n-1}\dint_{\widehat{U}_Q} |u-c|^2 \, dX\right)^{\frac12},
\end{equation}
where $Y_Q$ is any point lying in $U_Q$, so that there is
a ball centered at $Y_Q$, of radius proportional to $\ell(Q)$, which lies inside $\widehat{U}_Q$. 
We note that if we assumed \eqref{locbdd} or \eqref{locbdd1}  without enlarging the integrals on the respective right-hand sides, we  could obtain a version of \eqref{eqn5.1:local}--\eqref{eqn6.5:local} without enlarging the ``aperture of cones" on the right hand side (that is, with $S^Q$ in place of $\widehat{S}^Q$). But that is minor and \eqref{locbdd} looks a bit more familiar and more in line with \eqref{oscbdd} below.

\begin{proof} 
For starters, write $\dd=\dd(\partial D)$ and $\delta(\cdot)=\dist(\cdot, \partial D)$. Fix $\eta\ll 1$ and $K\gg 1$ and consider $\{\W_Q\}_{Q\in\dd(\partial D)}$ a  Whitney-dyadic structure for $D$ with parameters $\eta$ and $K$ from Section \ref{sections:CAD}. We a claim that for every $Q\in\dd$
\begin{equation}\label{locbdd:SFE}
\sup_{X,Y\in U_Q} |u(X)-u(Y)|
\le
C\, C_0\, \inf_{z\in Q }\widehat{S}^{Q}u (z)
\le
C\, C_0\, \fint_Q \widehat{S}^{Q}u\,d\sigma
\end{equation}
where $C$ depends on $n$, $\eta$, $K$, $\tau$, and the CAD character of $D$, and $C_0$ is the constant in \eqref{locbdd}. 
To see this observe that for every $Q\in\dd$ and $X\in U_Q$ we have that $X\in I^*(\tau)$ for some $I\in \W_Q$. Let $I_X\subset D$ be the cube centered at $X$ with side length $\tau\,\ell(I)$ so that $2 I_X\subset I^*(2\tau)\subset \widehat{U}_Q$. Note that $\ell(I_X)\approx\ell(I)\approx\ell(Q)$. Then, \eqref{locbdd} yields,  for every $c\in\re$,
\begin{equation}\label{locbdd:UQ}
|u(X)-c|
\leq C_0 \left(\ell(I_X)^{-n-1}\dint_{2I_X} |u-c|^2 \, dX\right)^{\frac12}
\lesssim
C_0 \left(\ell(Q)^{-n-1}\dint_{\widehat{U}_Q} |u-c|^2 \, dX\right)^{\frac12}.
\end{equation}
With this at hand, let $Q\in\dd$ and $X,Y\in U_Q$ and $z\in Q$. Setting 
\[
c_Q
:= 
\frac{1}{|\widehat U_Q|}\dint_{\widehat{U}_Q} v\, dZ\] 
obtain 
\begin{multline*}
|u(X)-u(Y)| \leq |u(X)-c_Q|+|u(Y)-c_Q|
\lesssim
C_0 \left(\ell(Q)^{-n-1}\dint_{\widehat U_Q} |u-c_Q|^2\, dZ\right)^{\frac12} 
\\
\lesssim C_0 \left(\ell(Q)^{-n+1}\dint_{\widehat U_Q} |\nabla u|^2\, dX\right)^{\frac12}
\approx 
C_0 \left(\dint_{\widehat U_Q} |\nabla u|^2\,\delta^{1-n}\, dX\right)^{\frac12}
\le
C_0 \widehat{S}^{Q}u (z),
\end{multline*}
where the second inequality follows from \eqref{locbdd:UQ}, the third from  Poincar\'e's inequality in the context of Whitney regions (see the argument in \cite[Proof of Lemma 3.1]{HMT}), and the last from the fact that $\delta(\cdot)\approx\ell(Q)$ in $\widehat U_Q$. This proves our claim.

Let us fix $Q_0\in\dd$ and write $v:=u-u(X_{Q_0}^+)$, with $X_{Q_0}^+$ begin the corkscrew relative to $Q_0$, that is, relative to the surface ball $\Delta_{Q_0}$ (cf. \eqref{cube-ball} and \eqref{cube-ball2}). For every $\alpha>0$ we set
\[
E_\alpha:=\{x\in Q_0:\, N_*^{Q_0} v(x)>\alpha\},
\qquad
F_\alpha:=\{x\in Q_0:\, M^{\dd}_{Q_0,2}(\widehat{S}^{Q_0}v)( x)\le \alpha\},
\]
where $M^{\dd}_{Q,2}$ was defined in Definition \ref{defHLmax}. Our goal is to obtain for every $\alpha,\gamma,\eps>0$ with  $0<\gamma\ll \eps/C_0$ there holds
\begin{align}\label{eqn5.1:local:*}
\sigma(E_{(1+\eps)\alpha}\cap F_{\gamma\,\alpha})
\le 
C^*_{\gamma, \eps}
\,\sigma(E_\alpha),
\end{align}
and we will me more specific about the constant $C^*_{\gamma, \eps}$ momentarily. With this goal in mind we fix  $\alpha,\gamma,\eps>0$. We may assume that the set $E_\alpha\neq\emptyset$ otherwise \eqref{eqn5.1:local} is trivial. 

 Let $x\in E_\alpha$, then there exist $Q_x\in\dd_{Q_0}$ with $x\in Q_x$ and $Y\in U_{Q_x}$ such that $|v(Y)|>\alpha$. Note that $U_{Q_x} \subset\Gamma^{Q_0}(y)$ for every $y\in Q_x$, hence $N_*^{Q_0}v(y)\ge |v(Y)|>\alpha$ and $Q_x\subset E_\alpha$. We can then take $Q_x^{\rm max}$, with $Q_x\subset Q_x^{\rm max}\subset Q_0$, the maximal cube so that $Q_x^{\rm max}\subset E_\alpha$. Write then $\F=\{Q_j\}_j\subset\dd_{Q_0}$ for the collection of maximal (hence pairwise disjoint) cubes $Q_x^{\rm max}$ with $x\in E_\alpha$. By construction, $E_\alpha=\bigcup_{Q_j\in\F} Q_j$.

Given $Q\in\F $, invoke Corollary~\ref{cIBPLS} and take a bounded Lipschitz domain $\Omega_Q\subset D$ satisfying properties $(i)$--$(iii)$ in the statement. In particular, we set $F_Q:=\partial\Omega_Q \cap Q\subset Q$ such that  $\sigma(F_Q)\geq \theta \,\sigma (Q)$. Our goal is to show that 
\begin{align}\label{eqn5.4:local}
\sigma(E_{(1+\eps)\alpha}\cap F_{\gamma\,\alpha}\cap F_Q)
\le 
C\, \Big(\frac{\gamma}{\eps}\Big)^2C_{\Omega_Q}
\,\sigma(Q),
\end{align}
where $C_{\Omega_Q}$ is the constant from \eqref{eqn5.9}, hence it depends on the Lipschitz character of $\Omega_Q$, which in turn depends only on the CAD character of $D$, and $C$ depends as well on the CAD character of $D$
Assuming this momentarily, we obtain \eqref{eqn5.1:local}:
\begin{multline*}
\sigma(E_{(1+\eps)\alpha}\cap F_{\gamma\,\alpha})
=
\sigma(E_{(1+\eps)\alpha}\cap E_\alpha\cap F_{\gamma\,\alpha})
=
\sum_{Q\in\F}\sigma(E_{(1+\eps)\alpha}\cap E_\alpha\cap Q)
\\
\le
\sum_{Q\in\F} \big( \sigma(Q\setminus F_Q) +\sigma(E_{(1+\eps)\alpha}\cap E_\alpha\cap F_Q)\big)
\\
\le
\Big(1-\theta+ C\, \Big(\frac{\gamma}{\eps}\Big)^2 \sup_{Q\in\dd}C_{\Omega_Q} \Big)\sum_{Q\in\F}\sigma(Q)
= 
C_{\gamma,\eps}^*
\sigma(E_\alpha)
\end{multline*}
where $C_{\gamma,\eps}^*=\big(1-\theta+ C\,({\gamma}/{\eps})^2 \sup_{Q\in\dd} C_{\Omega_Q}\big)$. Note that  $\sup_{Q\in\dd} C_{\Omega_Q}<\infty$ and ultimately depends on the CAD character of $D$, since all the Lipschitz characters of the $\Omega_Q$'s are uniformly bounded depending on the CAD character of $D$ (cf.  Corollary~\ref{cIBPLS}) and our assumption states that $C_{\Omega_Q}$ depends on the Lipschitz character of $\Omega_Q$,  the dimension $n$, and the choice of $\kappa$ (the aperture of the cones).

Let us then obtain \eqref{eqn5.4:local}. We may assume that the left-hand side is non-zero, hence we can pick $z_Q\in E_{(1+\eps)\alpha}\cap F_{\gamma\,\alpha}\cap F_Q$.  
Let $Y_Q$ be from Corollary~\ref{cIBPLS} part $(ii)$ whose existence is guaranteed by part $(i)$ and note that $Y_Q\in U_Q$.

We need to consider two separate cases. First assume that $Q\subsetneq Q_0$. By the maximality of $Q\in\F$, we can find $\widetilde{x} \in\widetilde{Q}\setminus E_\alpha$, where $\widetilde{Q}$ is the dyadic parent of $Q$. That is, $N_*^{Q_0} v(\widetilde{x})\le \alpha$ and, in particular, $|v(X)|\le \alpha$ for every $X\in U_{\widetilde{Q}}$ since $U_{\widetilde{Q}}\subset \Gamma^{Q_0}(\widetilde{x})$. Note then that if $x\in Q$, then
\begin{multline}\label{localiz-NQ0}
N_*^{Q_0} v(x)
=
\sup_{Y\in \Gamma^{Q_0}(x)}|v(Y)|
=
\max\Big\{\sup_{Y\in \Gamma^{Q}(x)}|v(Y)|, \max_{Y\in U_Q, \widetilde{Q}\subset Q\subset Q_0 }|v(Y)|\Big\}
\\
\le
\max\Big\{N_*^{Q} v(x),N_*^{Q_0} v(\widetilde{x})\Big\}
\le
\max\Big\{N_*^{Q} v(x),\alpha\Big\}.
\end{multline}
Since  $|v(X)|\le \alpha$ for every $X\in U_{\widetilde{Q}}$, we have that $|v(X_{\widetilde{Q}}^+)|\le\alpha$ where $X_{\widetilde{Q}}^+$ is the interior corkscrew point relative to $\widetilde{Q}$ (with respect to $D$ which is a CAD). Then, recalling that the construction of $\W_Q$ guarantees that $X_{\widetilde{Q}}^+\in U_Q$, and that $Y_Q\in U_Q$, 
we have, by \eqref{locbdd:SFE}, 
\begin{multline}\label{Pdet}
|v(X_{\widetilde Q}^+)-v(Y_{Q})| = |u(X_{\widetilde Q}^+)-u(Y_{Q})|
\le
C\,C_0 \fint_Q \widehat{S}^{Q}u\,d\sigma
\le
C\,C_0 \fint_Q \widehat{S}^{Q_0}u\,d\sigma
\\
=
C\,C_0 \fint_Q \widehat{S}^{Q_0}v\,d\sigma
\le M^{\dd}_{Q_0,2}(\widehat{S}^{Q_0}v)( z_Q)
\le
C\, C_0 \gamma\,\alpha
,
\end{multline}
where we have used that $z_Q\in Q\cap F_{\gamma\,\alpha}$. As a consequence,
\begin{equation}\label{eqn5.6:new}
|v(Y_{Q})|\le |v(Y_{Q})-v(X_{\widetilde Q}^+)|+|v(X_{\widetilde Q}^+)|\le (1+C\,C_0\,\gamma)\,\alpha \le (1+\eps/2)\,\alpha,
\end{equation}
where $C$ depends on the CAD character of $D$, and provided $\gamma<(2\,C\, C_0)^{-1}\,\eps=:2\,c_0\,\eps$. As a result, using \eqref{localiz-NQ0}, for every $x\in E_{(1+\eps)\alpha}$ we arrive at
\[
(1+\eps)\,\alpha
< 
N_*^{Q_0}v(x)
=
N_*^{Q}v(x)
\le 
N_*^{Q}(v-v(Y_Q))(x) +|v(Y_Q)| 
\le 
N_*^{Q}(v-v(Y_Q))(x) + (1+\eps/2)\,\alpha,
\]
and, consequently, 
\begin{equation}\label{eqn5.5}
E_{(1+\eps)\alpha}\cap F_{\gamma\,\alpha}\cap F_Q
\subset
\{x\in F_{\gamma\,\alpha}\cap  F_Q:  N_*^{Q}(v-v(Y_Q))(x)> \eps\,\alpha/2\},
\end{equation}
where we recall that we are currently considering the case $E_\alpha\subsetneq Q$.

In the second case $Q=Q_0$, hence $\F=\{Q\}$ and $E_\alpha=Q$. Since $Y_Q, X_{Q_0}^+\in U_{Q_0}$  we can invoke \eqref{locbdd:SFE} to obtain
\begin{multline}\label{adqeaFWEfw}
|v(Y_Q)|=|u(Y_Q)-u(X_{Q_0})^+|
\le
C\,C_0 \fint_{Q_0} \widehat{S}^{Q_0}u\,d\sigma
=
C\,C_0 \fint_{Q_0} \widehat{S}^{Q_0}v\,d\sigma
\\
\le 
M^{\dd}_{Q_0,2}(\widehat{S}^{Q_0}v)( z_Q)
\le
C\, C_0 \gamma\,\alpha
\le (1+C\,C_0\,\gamma)\,\alpha \le (1+\eps/2)\,\alpha,
\end{multline}
where $C$ depends on the CAD character of $D$, and provided $\gamma<(2\,C\, C_0)^{-1}\,\eps=:2\,c_0\,\eps$. Consequently, 
for every $x\in E_{(1+\eps)\alpha}$ we arrive at
\[
(1+\eps)\,\alpha
< 
N_*^{Q_0}v(x)
\le 
N_*^{Q_0}(v-v(Y_Q))(x) +|v(Y_Q)|
\le 
N_*^{Q}(v-v(Y_Q))(x) + (1+\eps/2)\,\alpha
\]
Thus, $N_*^{Q}(v-v(Y_Q))(x)= N_*^{Q_0}(v-v(Y_Q))(x)> \eps\,\alpha/2$ and \eqref{eqn5.5} holds also in this case.

We can now merge the two cases and with the proof. Pick $x\in F_{\gamma\,\alpha}\cap  F_Q =\partial\Omega_Q\cap Q$ be such that $N_*^{Q}(v-v(Y_Q))(x)> \eps\,\alpha/2$. Then, there exist $Q'\in \dd_Q$ with $Q'\ni x$ and $Y\in U_{Q'}$ such that $|v(Y)-v(Y_Q)|> \eps\alpha/2$. Thus, $y_{Q'}:=x\in Q'\cap\partial\Omega_Q= F_Q\cap Q'$ and applying once again condition $(ii)$ of Corollary~\ref{cIBPLS}
we can find the corresponding ponding $Y_{Q'}\in U_{Q'}$ so that
\[
|Y_{Q'}-y_{Q'}|<\ell(Q')\le C\,\dist(Y_{Q'},\partial\Omega_Q):=(1+\kappa)\,\dist(Y_{Q'},\partial\Omega_Q).
\]
where $C\ge 2$ is the constant from Corollary~\ref{cIBPLS}. This means that $Y_{Q'}\in \Gamma_{\Omega_Q}(y_{Q'})=\Gamma_{\Omega_Q}(x)$ (cf.  \eqref{conetrad}). On the other hand, since $Y, Y_{Q'}\in U_{Q'}$ and $x\in F_{\gamma\,\alpha}\cap Q'$, one can see that \eqref{locbdd:SFE} yields
\begin{multline}\label{Afgaeget}
|v(Y)-v(Y_{Q'})|
=
|u(Y)-u(Y_{Q'})|
\le
C\,C_0 \fint_{Q'}\widehat{S}^{Q'}u\,d\sigma
\le
C\,C_0 \fint_{Q'}\widehat{S}^{Q_0}u\,d\sigma
\\
=
C\,C_0 \fint_{Q'}\widehat{S}^{Q_0}v\,d\sigma
\le
C\,C_0 M^{\dd}_{Q_0,2}(\widehat{S}^{Q_0}v)(x)
\le
C\, C_0 \gamma\,\alpha
\le
\eps\,\alpha/4,
\end{multline}
provided $\gamma<c_0\,\eps=(4\,C\, C_0)^{-1}\,\eps$. Hence, 
\[
 \eps\alpha/2<|v(Y)-v(Y_Q)|
 \le 
|v(Y)-v(Y_{Q'})|+ |v(Y_Q')-v(Y_{Q})|
 \le 
 \eps\,\alpha/4+ |v(Y_Q')-v(Y_{Q})|
\]
and
\[
N_{*,\Omega_Q}(v-v(Y_Q))(x)
=
\sup_{Z\in \Gamma_{Q}(x)} |v(Y)-v(Y_Q)|
\ge
|v(Y_{Q'})-v(Y_Q)|
\ge
 \eps\,\alpha/4.
\]
All these yield
\begin{equation*}
E_{(1+\eps)\alpha}\cap F_{\gamma\,\alpha}\cap F_Q
\subset
\{x\in \partial\Omega_Q:  N_{*,\Omega_Q}(v-v(Y_Q))(x) > \eps\,\alpha/4\}
.
\end{equation*}
Use Chebyshev's inequality and the assumption \eqref{eqn5.9} we write 
\begin{multline}\label{eqn5.10-0}
\sigma(E_{(1+\eps)\alpha}\cap F_{\gamma\,\alpha}\cap F_Q) 
\leq \sigma\{x\in \partial\Omega_Q:\, N_{*, \Omega_Q} (v-v(Y_Q))(x)> \eps\alpha/4\}
\\ 
 \le
\left(\frac{4}{\eps\alpha}\right)^2 \int_{\po_Q} N_{*, \Omega_Q} (v-v(Y_Q))(x)^2\,dH^n(x) 
\lesssim C_{\Omega_Q} \frac{16}{(\eps\alpha)^2} \int_{\po_Q}\left(S_{\Omega_Q} v(x)\right)^2\,dH^n(x), 
\end{multline}
where $C_{\Omega_Q}$ depends on $n$ and the CAD of $D$, and so do all the implicit constants. 
Note that
\begin{align}\label{eqn5.10-1}
&\int_{\po_Q}\left(S_{\Omega_Q} v(x)\right)^2\,dH^n(x) 
=
\int_{\po_Q}\dint_{|Y-x|\le (1+\kappa)\dist(Y,\pom_Q)}|\nabla v(Y)|^2\dist(Y,\pom_Q)^{1-n}\,dY\,dH^n(x)
\\ \nonumber
&\qquad\le
\dint_{\Omega_Q} |\nabla v(Y)|^2\dist(Y,\pom_Q)^{1-n}\,H^n(B(Y, (2+\kappa)\dist(Y,\pom_Q))\cap\pom_Q)\,dY
\\ \nonumber
&
\qquad\lesssim
\dint_{\Omega_Q}|\nabla v(Y)|^2\dist (Y, \po_Q)\,dY
\\ \nonumber
&\qquad\le \dint_{T_Q}|\nabla v(Y)|^2\delta(Y)\,dY,
\end{align}
where we have used that $\pom_Q$ is ADR with constant depending on the CAD character of $\Omega_G$, hence ultimately on the CAD character of $D$, and the last inequality follows from  to the fact that $\Omega_Q\subset D\cap B_Q\subset T_Q$ (see $(iii)$ in Corollary~\ref{cIBPLS} and \eqref{eq3.3aab:inte}) and, in particular,  $\dist (Y, \po_Q)\leq \dist (Y, \partial D)=\delta(Y)$ for every $Y\in \Omega_Q$. Note that, \eqref{Fubini-ME-SFE} with $\widetilde{G}=|\nabla v|$ implies
\begin{multline}\label{eqn5.10-2}
\dint_{T_Q}|\nabla v|^2\delta \,dY
\approx 
\int_Q\dint_{\Gamma^Q(x)} |\nabla v|^2\delta^{1-n}\,dY\,d\sigma(x)
\\
=
\int_Q \widehat{S}^{Q} v^2\,d\sigma
\le  M^{\dd}_{Q_0,2}(\widehat{S}^{Q_0}v)(z_Q)^2\,\sigma(Q)
\le
(\gamma\,\alpha)^2 \,\sigma(Q),
\end{multline}
where we have used that $z_Q \in F_{\gamma\,\alpha}$. Thus, \eqref{eqn5.10-0}, \eqref{eqn5.10-1}, and \eqref{eqn5.10-2}, imply
\[
\sigma(E_{(1+\eps)\alpha}\cap F_{\gamma\,\alpha}\cap F_Q) \lesssim  C_{\Omega_Q} (\gamma/\eps)^2 \,\sigma(Q),
\]
which is \eqref{eqn5.4:local}. 

To continue the proof, having at hand \eqref{eqn5.1:local}, an argument analogous to  \eqref{eqgl13} yields \eqref{eqn6.5:local}.  To be specific, we show that taking $\eps>0$ small enough depending on $n$ and the CAD character of $D$ and then taking $\gamma>0$ small enough depending on the same parameters and $\eps$, the estimate \eqref{eqn5.1:local} yields \eqref{eqn6.5:local}. It is here that we use a possibility to pick $\eps>0$ sufficiently small. Indeed, fix any $q>2$, $Q_0\in\dd$ and write $v:=u-u(X_{Q_0})$. Then, much as in \eqref{eqgl13}, for every $N>1$
\begin{align}\label{awffevev}
&\mathrm{I}_N:=\int_0^N q\alpha^q \sigma \{x\in Q_0:\, N_*^{Q_0} v(x)>\alpha\}\, \frac{d\alpha}{\alpha}
\\ \nonumber
&\ = (1+\eps)^q \int_0^{N/(1+\epsilon)} q\alpha^q \sigma \{x\in Q_0:\, N_*^{Q_0} v(x)>(1+\eps)\,\alpha\}\, \frac{d\alpha}{\alpha}
\\[4pt] \nonumber
&\ \leq  (1+\eps)^q\int_0^{N} q\alpha^q \sigma \{x\in Q_0:\, N_*^{Q_0}v (x)>(1+\eps)\alpha, \,M^{\dd}_{Q_0,2}(\widehat{S}^{Q_0}v)(x)\leq \gamma\alpha \}\, \frac{d\alpha}{\alpha} 
\\[4pt]\nonumber
&\ \hskip2cm+ \left(\frac{1+\eps}{\gamma}\right)^q \|M^{\dd}_{Q_0,2}(\widehat{S}^{Q_0}v)\|_{L^q(Q_0)}^q 
\\[4pt]\nonumber
&\ \leq  C_{\gamma, \eps}^*\, \,(1+\eps)^q\int_0^N q\alpha^q \sigma \{x\in Q_0:\, N_*^{Q_0}v (x)>\alpha  \}\, \frac{d\alpha}{\alpha} 
+ 
(1+\eps)^q/\gamma^q\,\|M^{\dd}_{Q_0,2}(\widehat{S}^{Q_0}v)\|_{L^q(Q_0)}^q
\\[4pt]\nonumber
&
\ =
\big(1-\theta+ C\,({\gamma}/{\eps})^2 \sup_{Q\in\dd} C_{\Omega_Q}\big)\,(1+\eps)^q\,\mathrm{I}_N+ (1+\eps)^q/\gamma^q\,\|M^{\dd}_{Q_0,2}(\widehat{S}^{Q_0}v)\|_{L^q(Q_0)}^q.
\end{align}
At this point we first choose $\eps>0$ small enough so that $(1-\theta)\,(1+\eps)^q<1/4$, and once $\eps$ is fixed we take $0<\gamma<c_0 \,\eps/C_0$ small enough so that $C\,({\gamma}/{\eps})^2 \sup_{Q\in\dd} C_{\Omega_Q}\,(1+\eps)^q<1/4$. With these choices and using that $I_N\le N^q\,\sigma(Q_0)<\infty$, we can hide this term with $I_N$on the left-hand side of \eqref{awffevev} to obtain 
\[
\mathrm{I}_N\le 2\,  (1+\eps)^q/\gamma^q\,\|M^{\dd}_{Q_0,2}(\widehat{S}^{Q_0}v)\|_{L^q(Q_0)}^q.
\]
Noting that $\mathrm{I}_N\nearrow \|N_*^{Q_0} v\|_{L^q(Q_0)}^q$ as $N\to\infty$, and using that $M^{\dd}_{Q_0,2}$ is bounded on $L^q(Q_0)$ since $q>2$ we obtain as desired \eqref{eqn6.5:local}.

We next see how to obtain \eqref{eqn6.5:local-balls} using the ideas in Remark \ref{remark:N<S:cubes->balls}. Proceeding as there, once we have fixed $\{\W_Q\}_{Q\in\dd(\partial D)}$ a Whitney-dyadic structure for $D$ with some parameters $\eta$ and $K$. Given $x\in\partial D$ and $0<r<2\diam(\partial D)$, write $\Delta=\Delta(x,r)$ and $B=B(x,r)$ and  consider the case $r\ll\diam(\partial D)$.  Then $\Gamma^r(y)\subset 2B$ for every $y\in\Delta$, if $z\notin 6\,\Delta$ then $\Gamma_{\Omega,1}(z)\cap 2B=\emptyset$ , and $\Gamma_{\Omega,1}(y)\subset\Gamma(y)$ for every $y\in \partial D$. All these and with Remark \ref{remark:COA} imply
\begin{multline*}
\|N^r_{*,D, \kappa} (u-u(X_\Delta^+))\|_{L^q(\Delta)}
\le
\|N_{*,D, \kappa} ((u-u(X_\Delta^+))1_{2B})\|_{L^q(\partial D)}
\\
\lesssim
\|N_{*,D, 1} ((u-u(X_\Delta^+))1_{2B})\|_{L^q(\partial D)}
\le
\|N_{*} ((u-u(X_\Delta^+))1_{2B})\|_{L^q(6\Delta)}.
\end{multline*}
We introduce $\mathcal{D}_\Delta$ as in \eqref{def:D-Delta}. Let $Q\in\mathcal{D}_\Delta$ and note that $\delta(X_Q^+)\approx\ell(Q)\approx r\approx\delta(X_\Delta^+)$ and also $|X_Q^+-X_\Delta^+|\lesssim r$. Hence we can use the Harnack chain condition to find a collection of cubes $I_1,\dots, I_{N}$ with $N\lesssim 1$ so that $X_Q^+\in I_0$, $X_\Delta^+\in I_N$, $\dist(4I_j,\partial D)\approx\ell(I_j)\approx r\approx\ell(Q)$ for $1\le j\le N$, and there exists $X_{j}\in I_j\cap I_{j+1}\neq\emptyset$ for each $1\le j\le N-1$. Write $X_0=X_{Q^+}$, $X_{N}=X_\Delta^+$, and note that for every $1\le j\le N$
\[
\dist(I_j, Q)
\le
|X_j-x_Q|
\le
|X_j-X_Q^+|+|X_Q^+-x_Q|
\lesssim
\sum_{k=0}^{j-1} |X_k-X_{k+1}|+\ell(Q)
\le
\sum_{k=0}^{j-1} \diam(I_{k+1})+\ell(Q)
\approx\ell(Q).
\]
Thus, there exist $\eta'$, and $K'$ depending on $n$, the CAD character of $D$, and fixed parameters $\eta$ and $K$, such that if $\{\W'_Q\}_{Q\in\dd(\partial D)}$ is Whitney-dyadic structure for $D$ with parameters $\eta'$ and $K'$, and if $I\in\W$ with $I\cap 2\,I_j\neq\emptyset$, then $I\in (\W'_Q)^0\subset \W'_Q$. Consequently, $2I\subset U_Q'$ (the Whitney region corresponding to $Q$ with the Whitney-dyadic structure $\{\W'_Q\}_{Q\in\dd(\partial D)}$). All these and \eqref{locbdd} yield
\begin{align*}
|u(X_Q^+)-u(X_\Delta^+)|
&=
|u(X_0)-u(X_{N})|
\le
\sum_{j=0}^{N-1} |u(X_j)-u(X_{j+1})|
\\
&\le
\sum_{j=0}^{N-1} \Big|u(X_j)-\ell(2I_{j+1})^{-n-1}\dint_{2\,I_{j+1}}u\,dY\Big|+\Big|u(X_{j+1})-\ell(2I_{j+1})^{-n-1}\dint_{2\,I_{j+1}}u\,dY\Big|
\\
&\lesssim
\sup_{1\le j\le N} \sup_{X\in I_j}  \Big|u(X)-\ell(2I_{j})^{-n-1}\dint_{2\,I_{j}}u\,dY\Big|
\\
&\lesssim
C_0\sup_{1\le j\le N} \Big( \ell(2I_{j})^{-n-1}\dint_{2\,I_{j}} \Big|u-\ell(2I_{j})^{-n-1}\dint_{2\,I_{j}}u\Big|^2\,dY\Big)^{\frac12}
\\
&\lesssim
C_0\sup_{1\le j\le N} \ell(I_j)\Big( \ell(2I_{j})^{-n-1}\dint_{2\,I_{j}}|\nabla u|^2\,dY\Big)^{\frac12}
\\
&
\approx C_0\sup_{1\le j\le N} \Big(\dint_{2\,I_{j}}|\nabla u|^2\,\delta^{1-n}\,dY\Big)^{\frac12}
\\
&
\le C_0\sup_{1\le j\le N} \Big(\dint_{U_Q'}|\nabla u|^2\,\delta^{1-n}\,dY\Big)^{\frac12}
\\
&
\le C_0\inf_{y\in Q} \A^{\prime,Q}(\nabla u)(y),
\end{align*}
where $\A^{\prime,Q}$ is the local area integral to the cones $\Gamma'(\cdot)$ made up with the Whitney regions $U'_{Q'}$'s. 

On the other hand for each $Q\in\mathcal{D}_\Delta$ we have much as before that $\Gamma(y)\cap2B\subset \Gamma^Q(y)\subset \widehat{\Gamma}^Q(y)\subset B(x_Q, K\ell(Q))\cap D\subset K'B\cap  D$ for every $y\in Q\in\mathcal{D}_\Delta$. Taking $K'$ even larger we also have that $\Gamma^{\prime, Q}(y)\subset D\subset K'B\cap  D$ for every $y\in Q\in\mathcal{D}_\Delta$. 
  Thus all the previous considerations, \eqref{eqn6.5:local} for $q>2$, and Remark \ref{remark:COA} give
\begin{align*}
\|N^r_{*,D, \kappa} (u-u(X_\Delta^+))\|_{L^q(\Delta)}^q
&\lesssim 
\sum_{Q\in\mathcal{D}_\Delta} \|N_{*} ((u-u(X_\Delta^+))1_{2B})\|_{L^q(Q)}^q
\\
&\le
\sum_{Q\in\mathcal{D}_\Delta} \big(\|N_*^Q ((u-u(X_Q^+))1_{2B})\|_{L^q(Q)}^q + |u(X_Q^+)-u(X_\Delta^+)|^q\sigma(Q)\big)
\\
&\lesssim
\sum_{Q\in\mathcal{D}_\Delta} \big(\|\widehat{S}^Q u\|_{L^q(Q)}^q+C_0\inf_{y\in Q} \A^{\prime,Q}(\nabla u)(y)^q\sigma(Q)\big).
\\
&\le
\|\widehat{\A}(|\nabla u|1_{k'B})\|_{L^q(\partial D )}^q+ C_0 \|\A'(|\nabla u|1_{k'B})\|_{L^q(\partial D )}^q
\\
&\lesssim
(1+C_0)\|{\A}_{D,\min\{1,\kappa\}}(|\nabla u|1_{k'B})\|_{L^q(\partial D )}^q
\\
&\lesssim
(1+C_0)\|{\A}_{D,\kappa}^{3 K' r}(|\nabla u)|\|_{L^q(3K'\Delta)}^q
\\
&=
(1+C_0)\|S_{D,\kappa}^{3 K' r} u\|_{L^q(3K'\Delta)}^q
\end{align*}
where we have used that $\Gamma_{\Omega,1}(z)\cap K'B\neq\emptyset$ then $z\in 3K'\Delta$. This proves \eqref{eqn6.5:local-balls}.

To complete the proof we observe that if $\partial D$ is bounded then for any $x\in\partial D$ we have that $\partial D=\Delta(x,3\diam(\partial D)/2)$. Thus  \eqref{eqn6.5} readily follows from \eqref{eqn6.5:local-balls}. On the other hand, to obtain for \eqref{eqn6.5:unbounded} fix $x_0=\in\partial D$ and write $\Delta_R=\Delta(x_0, R)$. Given $\epsilon>0$, there exist $R_\epsilon$ such that $|u(X)|<\eps$ for every $|X-x_0|\ge R_\epsilon$ with $X\in D$. By the Corkscrew condition $B(X_{\Delta_R}^+, R/C)\subset B(x_0, R)$  for some $C>1$ and then $|X_{\Delta_R}^+-x_0|\ge R/C$

Fix $y\in \partial D$ and let $R> 2\max\{C\,R_\epsilon,|y-x_0|\}$ so that $B(x_0, R_\epsilon)\subset B(y, R)$ and $|X_{\Delta_R}^+-x_0|\ge R/C>R_\epsilon$. Hence, $|u(X_{\Delta_R}^+)|<\epsilon$, $|u(Z)|<\epsilon$ for every $D\setminus B(y,R)$, and 
\begin{multline*}
|N_{*,\kappa } u(y)- N_{*,k}^R (u- u(X_{\Delta_R}^+)(y)1_{\Delta_R}(y)|
=
\big|N_{*,\kappa } u(y)- N_{*,k}  \big((u- u(X_{\Delta_R}^+)1_{B(y,R)})\big)(y)\big|
\\
\le
\big|N_{*,\kappa } \big( u- (u- u(X_{\Delta_R}^+)1_{B(y,R)})\big)(y)|
\le
\big|N_{*,\kappa } (u 1_{D\setminus B(y,R)})\big)(y)| + |u(X_{\Delta_R}^+)|<2 \epsilon.
\end{multline*}
This shows that for every $y\in \partial D$
\[
\lim_{R\to\infty}  N_{*,k}^R (u- u(X_{\Delta_R}^+)(y)1_{\Delta_R}(y)=N_{*,\kappa } u(y).
\]
Thus Fatou's Lemma and \eqref{eqn6.5:local-balls} imply for every $q>2$
\begin{multline*}
\int_{\partial D} N_{*,\kappa } u(y)^q\,d\sigma(y)
\le
\liminf_{R\to\infty} \int_{\Delta_R} N_{*,k}^R (u- u(X_{\Delta_R}^+)(y)^q\,d\sigma(y)
\\
\le
C''\liminf_{R\to\infty} \int_{K'\Delta_R} S^{K'R} u(y)^q\,d\sigma(y)
\le
C''\int_{\partial D} S u(y)^q\,d\sigma(y).
\end{multline*}
This completes the proof.
%
%
\end{proof}

%

Our next goal is to extend the previous result so that we have the $N<S$ estimates in all $L^q$. 
We need to introduce some notation. Recall that if $D$ is a CAD, we have constructed a Whitney-dyadic structures $\{\W_Q\}_{Q\in\dd(\partial D)}$ for $D$ with parameters $\eta$ and $K$, see Section \ref{sections:CAD}. In the following result we will need to work with two different Whitney-dyadic structures associated with different parameters  and we need to introduce some notation to distinguish between the associated objects. More specifically, let $\{\W_Q\}_{Q\in\dd(\partial D)}$ (resp.  $\{\W'_Q\}_{Q\in\dd(\partial D)}$) be a Whitney-dyadic structure for $D$  with parameters $\eta\ll 1$ and $K\gg 1$ (resp. ${\eta}'\ll 1$ and ${K}'\gg 1$). Associated with $\{\W_Q\}_{Q\in\dd(\partial D}$ (resp. $\{\W_Q'\}_{Q\in\dd(\partial D)}$) we define the Whitney regions $U_{Q}$, the dyadic cones $\Gamma$ and the local dyadic cones $\Gamma^Q$ (resp. $U_Q'$, $\Gamma'$, $\Gamma^{\prime,Q}$) as in \eqref{eq3.3aa}, \eqref{defcone}, or \eqref{defconetrunc}. With this we define $N_*$, $N_*^Q$, $S$, $S^Q$ (resp. $N_*'$, $N_*^{\prime, Q}$, $S'$, $S^{\prime,Q}$) as in Definition \ref{defsfntmax:dyadic}.

\begin{theorem}\label{theor:N<S:CAD->CAD:all-p} 
Let $D\subset \ree$ be a CAD. Let $u\in W^{1,2}_{\rm loc}(D)\cap C(D)$ be so that \eqref{locbdd} holds and assume that there exists $C_0'>0$ and $p>2$ such that for any cube $I$ with $2I\subset D$, 
\begin{equation}\label{revHol}
 \left(\ell(I)^{-n-1}\dint_{I} |\nabla u|^p \, dX\right)^{\frac1{p}}\le C_0'\left(\ell(I)^{-n-1}\dint_{2I} |\nabla u|^2 \, dX\right)^{\frac12}. 
\end{equation}
Suppose that the $N<S$ estimates are valid on $L^p$ on all bounded chord-arc subdomains  $\Omega\subset D$, that is, for any bounded chord-arc subdomain  $\Omega\subset D$, there holds
\begin{equation}\label{eqn6.5-bis}
\|N_{*,\Omega}(u-u(X_\Omega^+))\|_{L^p(\pom)}\leq C_\Omega \|S_{\Omega} u\|_{L^p(\pom)}. 
\end{equation} 
Here $X_{\Omega}^+$ is any interior corkscrew point of $\Omega$ at the scale of $\diam (\Omega)$, and the constant  $C_{\Omega}$ depends on the CAD character of $\Omega$, the dimension $n$, $p$, the implicit choice of $\kappa$ (the aperture of the cones in $N_{*,\Omega}$ and $S_{\Omega}$), and the implicit corkscrew constant for the point $X_{\Omega}^+$. 

Given $\eta\ll 1$ and $K\gg 1$, consider $\{\W_Q\}_{Q\in\dd(\partial D)}$ a Whitney-dyadic structure for $D$ with parameters $\eta$ and $K$, see Section \ref{sections:CAD}.
Then, there exist $\eta'\ll \eta$ and $K'\gg K$ (depending on $n$, the CAD character of $D$, and  the choice of $\eta, K, \tau$) so that if $\{\W_Q'\}_{Q\in\dd(\partial D)}$ is a  Whitney-dyadic structure for $D$  with parameters $\eta$ and $K$, 
for every $Q\in\dd(\partial D)$,
\begin{equation}\label{eqn6.5:new:local}
\|N_*^Q(u-u(X_Q^+))\|_{L^q(Q)}\leq C' \|S^{\prime,Q} u\|_{L^q(Q)}, \quad \mbox{for all}\quad 0<q<\infty. 
\end{equation} 
where $C'$ depends  on $n$, the CAD character of $D$, $C_0$, $C_0'$, $q$, and  the choice of $\eta, K, \tau$. Here $N_*^Q$ is the non-tangential maximal function associated with the Whitney-dyadic structure $\{\W_Q\}_{\dd(\partial D)}$ while ${S}^{\prime,Q}$ is the square function with the associated with the Whitney-dyadic structure $\{\W_Q'\}_{\dd(\partial D)}$.

As a consequence, for any $x\in\partial D$ and $0<r<2\diam(\partial D)$ there exists $K'$ depending on $n$, the CAD character of $D$ such that for every $\kappa>0$
\begin{equation}\label{eqn6.5:new:local:balls}
\|N_{*,D,\kappa}^r (u-u(X_{\Delta(x,r)}^+)\|_{L^q(\Delta(x,r))}\le C'' \|S^{K'r}_{D,\kappa} u\|_{L^q(\Delta(x,K' r))},  \quad \mbox{for all}\quad 0<q<\infty.
\end{equation}
where $\Delta(x,r)=B(x,r)\cap\pom$, and where $C''$ depends on $q$, $n$, the CAD character of $D$, $C_0$, $C_0'$, and $\kappa$.	
In particular, if $\partial D$ is bounded 
\begin{equation}\label{eqn6.5:new}
\|N_{*,D,\kappa}(u-u(X_D^+))\|_{L^q(\partial D)}\leq C'' \|S_{D,\kappa} u\|_{L^q(\partial D)}, \quad \mbox{for all}\quad 0<q<\infty,
\end{equation} 
and if $\partial D$ is unbounded and $u(X)\to 0$ as $|X|\to\infty$ then 
\begin{equation}\label{eqn6.5:new:unbounded}
\|N_{*,D,\kappa}u\|_{L^q(\partial D)}\leq C'' \|S_{D,\kappa} u\|_{L^q(\partial D)}, \quad \mbox{for all}\quad 0<q<\infty.
\end{equation} 
\end{theorem}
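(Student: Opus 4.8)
\textbf{Proof strategy for Theorem~\ref{theor:N<S:CAD->CAD:all-p}.}

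The plan is to run the same good-$\lambda$ machinery as in the proof of Theorem~\ref{theor:N<S:Lip->CAD}, but now starting from the $N<S$ hypothesis on bounded \emph{chord-arc} subdomains rather than Lipschitz ones, and using the reverse H\"older hypothesis \eqref{revHol} to gain the full range $0<q<\infty$ instead of only $q>2$. First I would fix $\{\W_Q\}_{Q\in\dd(\partial D)}$ with parameters $\eta,K$, and choose a \emph{second}, finer Whitney-dyadic structure $\{\W_Q'\}_{Q\in\dd(\partial D)}$ with parameters $\eta'\ll\eta$, $K'\gg K$; the role of $\W_Q'$ is that its Whitney regions $U_Q'$ are large enough to contain the (finitely many) Harnack-chain-connected dilated Whitney cubes needed both to run the oscillation estimate \eqref{locbdd:SFE} and, crucially, to majorize the relevant integrals of $|\nabla u|^p$ by $S^{\prime,Q}u$ after applying \eqref{revHol}. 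Concretely, for each $Q$ one has the analogue of \eqref{locbdd:SFE}, namely $\sup_{X,Y\in U_Q}|u(X)-u(Y)|\lesssim C_0\,\fint_Q \widehat S^{Q}u\,d\sigma$, and one also has, via \eqref{revHol}, Poincar\'e on Whitney regions, and H\"older, a bound of the form $\big(\fint_{\widehat U_Q}|\nabla u|^p\delta^{p-1}\big)^{1/p}\lesssim C_0'\inf_{y\in Q}\A^{\prime,Q}(\nabla u)(y)=C_0'\inf_{y\in Q}S^{\prime,Q}u(y)$, which will feed a higher-integrability version of the stopping-time argument.

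Next I would carry out the core good-$\lambda$ inequality. Fix $Q_0\in\dd(\partial D)$, set $v:=u-u(X_{Q_0}^+)$, and for $\alpha>0$ let $E_\alpha=\{x\in Q_0: N_*^{Q_0}v(x)>\alpha\}$, $F_\alpha=\{x\in Q_0: M^{\dd}_{Q_0,p}(S^{\prime,Q_0}v)(x)\le\alpha\}$ (note the maximal function is now $L^p$-based, using \eqref{revHol} to keep $M^\dd_{Q_0,p}$ bounded on $L^q$ for all $q>p$, and a separate good-$\lambda$ self-improvement for $q\le p$). Do the Calder\'on--Zygmund stopping-time decomposition of $E_\alpha$ into maximal cubes $\F=\{Q_j\}$ exactly as before; for each $Q_j$ invoke Corollary~\ref{cIBPLS}, but this time I would \emph{not} pass to the interior Lipschitz subdomain $\Omega_{Q_j}$ --- instead I would use the chord-arc sawtooth subdomain naturally attached to the stopping-time family, i.e. a bounded CAD of the form $\widehat\Omega_{\F',Q_j}$ (which is CAD with controlled character by Lemma~\ref{lemma:CAD-geom}), and apply the hypothesis \eqref{eqn6.5-bis} on that domain. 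The localization identity \eqref{localiz-NQ0}, the reduction to $N_{*,\Omega_{Q_j}}(v-v(Y_{Q_j}))>\eps\alpha/4$ on a corkscrew-accessible portion of $\partial\Omega_{Q_j}$, and Chebyshev in $L^p$ then give
\[
\sigma\big(E_{(1+\eps)\alpha}\cap F_{\gamma\alpha}\cap Q_j\big)\lesssim C_{\Omega_{Q_j}}\Big(\frac{\gamma}{\eps}\Big)^p\sigma(Q_j),
\]
and summing over $\F$ yields $\sigma(E_{(1+\eps)\alpha}\cap F_{\gamma\alpha})\le\big(1-\theta+C(\gamma/\eps)^p\big)\sigma(E_\alpha)$; the $\eps$-enlargement is absorbed exactly as in \eqref{awffevev}, first choosing $\eps$ small so that $(1-\theta)(1+\eps)^q<1/4$ and then $\gamma$ small. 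This proves \eqref{eqn6.5:new:local}. The passage to \eqref{eqn6.5:new:local:balls}, and then \eqref{eqn6.5:new} and \eqref{eqn6.5:new:unbounded}, is then verbatim the argument of Remarks~\ref{remark:N<S:cubes->balls}, \ref{remark:rcones}, \ref{remark:COA} together with the Harnack-chain/telescoping comparison of $u(X_Q^+)$ and $u(X_\Delta^+)$ and the Fatou argument at infinity, all copied from the end of the proof of Theorem~\ref{theor:N<S:Lip->CAD}.

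The main obstacle, and the only genuinely new point, is the book-keeping forced by the reverse H\"older exponent $p$: one must make the two Whitney-dyadic structures interact correctly so that after applying \eqref{revHol} on interior cubes $2I\subset D$ the resulting averages of $|\nabla u|^p$ live inside Whitney regions $U_Q'$ of the \emph{coarser-in-aperture but finer-in-parameters} structure, while the non-tangential maximal function stays with the original structure --- this is exactly why the statement produces $N_*^Q$ against $S^{\prime,Q}$ with two different structures. One has to verify (choosing $\eta'\ll\eta$, $K'\gg K$ depending only on $n$, the CAD character, and $\eta,K,\tau$) that every dilated Whitney cube $2I$ meeting the Harnack chains used in the oscillation estimates is contained in some $U_Q'$, and that the finite-overlap and $\delta\approx\ell(Q)$ properties survive; this is routine but must be stated carefully. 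A secondary technical point is controlling $M^\dd_{Q_0,p}(S^{\prime,Q_0}v)$ in $L^q$ for $0<q\le p$: here one does not use boundedness of the maximal operator but instead runs the good-$\lambda$ inequality itself once more with the roles arranged so that the distribution function of $N_*^{Q_0}v$ is compared directly to that of $S^{\prime,Q_0}v$, exactly as in the standard Fefferman--Stein-type argument, which the excerpt already invokes in spirit in the remark following the proof of \eqref{eq:Aloc->B-dyadic}.
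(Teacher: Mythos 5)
Your skeleton (good-$\lambda$ with a stopping-time decomposition, passing to chord-arc sawtooth subdomains via Lemma \ref{lemma:CAD-geom}, applying \eqref{eqn6.5-bis} there, and using two Whitney-dyadic structures so that \eqref{revHol} can be exploited) matches the paper's strategy in outline, but two essential points are missing or wrong. First, you define the good set through $M^{\dd}_{Q_0,p}(S^{\prime,Q_0}v)$. With that choice the final absorption step only yields $\|N_*^{Q_0}v\|_{L^q}\lesssim\|M^{\dd}_{Q_0,p}(S^{\prime,Q_0}v)\|_{L^q}$, and the maximal operator is bounded on $L^q$ only for $q>p$; your fallback for $q\le p$ (``a separate good-$\lambda$ self-improvement, Fefferman--Stein style'') is not an argument, and it is precisely the range $q\le p$ that makes the theorem nontrivial. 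The paper's proof removes the maximal function altogether: the good set is $\{S^{\prime,Q_0}v\le\gamma\alpha\}$ itself (see \eqref{eqn5.1-bis}). This changes the structure of the proof: since the control is only pointwise on the good set, one cannot average over $Q_j$ via a single good point $z_{Q_j}$; instead, inside each stopping cube $Q$ one takes an arbitrary closed subset $E_Q'$ of the bad set, runs a second stopping time over $Q\setminus E_Q'$, builds the sawtooth $\Omega_\star=\widehat\Omega_{\widetilde\F,Q}$ whose boundary contains $E_Q'$, applies \eqref{eqn6.5-bis} there, and concludes by inner regularity. Also note that once the hypothesis is applied on sawtooths rather than on the interior Lipschitz big pieces, the $(1-\theta)$ term you carry in the good-$\lambda$ constant disappears (and is in fact incompatible with your claim that $F_Q=Q$ is fully controlled); the paper's constant is purely $C(\gamma/\eps)^p(1+C_0')$.

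Second, and more seriously, your ``Chebyshev in $L^p$ then gives $\sigma(\cdot)\lesssim C_\Omega(\gamma/\eps)^p\sigma(Q_j)$'' hides the actual analytic difficulty. After Chebyshev and \eqref{eqn6.5-bis} one is left with $\int_{\partial\Omega_\star}(S_{\Omega_\star}v)^p\,d\sigma_\star$, and unlike the $L^2$ case there is no Fubini identity of the type \eqref{Fubini-ME-SFE} (or \eqref{eqn5.10-1}--\eqref{eqn5.10-2}) converting this into a Carleson-box integral of $|\nabla v|^2\delta$. One must estimate it directly by $(\gamma\alpha)^p\sigma(Q)$: the portion of $\partial\Omega_\star$ on $\partial D$ and the cone region where $\delta_\star\approx\delta$ are handled by cone-containment (this is where $\eta',K'$ are chosen, cf.\ Claim \ref{c6.36}), but on $\partial\Omega_\star\cap D$, in the region where $\dist(\cdot,\partial\Omega_\star)\ll\dist(\cdot,\partial D)$, the weight $\delta_\star^{1-n}$ is much larger than $\delta^{1-n}$ and $S_{\Omega_\star}v$ is not dominated by any square function relative to $D$. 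It is exactly here that \eqref{revHol} enters, via a H\"older argument in the cone with exponent $p/2$, interior reverse H\"older on Whitney cubes, and a packing estimate $\sum_{I\in\W_\star}\ell(I)^n\lesssim\sigma(Q)$ using the ADR property of $\partial\Omega_\star$. Your proposal invokes \eqref{revHol} only to produce an a priori bound on $\fint_{\widehat U_Q}|\nabla u|^p$, which is not where it is needed, and never addresses this degenerate region; as written, the claimed bound \eqref{eqn6.5:new:local} does not follow.
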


We note that \eqref{revHol} can be relaxed so that it suffices to assume that it holds for $I=2J$ with $J\in\W(D)$. We also note that the same proof allows us to work with 1-sided CAD. That is, if $D$ is a 1-sided CAD and \eqref{eqn6.5-bis} holds for all bounded 1-sided chord-arc subdomains then \eqref{eqn6.5:new:local} and \eqref{eqn6.5:new} hold for $D$. Further details are left to the interested reader.

\begin{proof} 
For starters we fix $\eta\ll 1$ and $K\gg 1$ and let $\{\W_Q\}_{Q\in D}$ be Whitney-dyadic structure for $D$ with parameters $\eta$ and $K$.  Let $\eta'\ll\eta$ be small enough and $K'\gg K$ large enough to be chosen and let $\{\W_Q'\}_{Q\in D}$ be Whitney-dyadic structure for $D$ with parameters $\eta'$ and $K'$. Taking into account \eqref{eq2.whitney2} if $(\eta')^{1/4}\le C^{-1}\eta^{1/2}$ and $(K')^{1/2}\ge C K^{1/2}$, then $\W_Q\subset (\W'_Q)^0\subset {\W}_Q'$ for every $Q\in\dd=\dd(\partial D)$.  Consequently, 
$\widehat{\Gamma}^Q(x)\subset \Gamma_Q'(x)$
and
$\widehat{S}^Q v(x)\le {S}^{\prime,Q} v(x)$  for every $x\in\partial D$, $Q\in\dd$, and $v\in W^{1,2}_{\rm loc}(D)$.

Much as in the proof of Theorem~\ref{theor:N<S:Lip->CAD}, matters can be reduced to showing that for every $\alpha,\gamma,\eps>0$ with $0<\gamma\ll\eps/C_0$
and for any given $Q_0\in\dd$, 
\begin{multline}\label{eqn5.1-bis}
\sigma\{x\in Q_0:\, N_*^{Q_0} (u-u(X_{Q_0}^+))(x)> (1+\eps)\, \alpha, \, S^{\prime, Q_0}u(x)\leq \gamma  \alpha\} \\[4pt]
\leq   C^*_{\gamma, \eps}
\,\sigma\{x\in Q_0:\, N_*^{Q_0}(u-u(X_{Q_0}^+))(x)>\alpha\}, 
\end{multline}
and we will me more specific about the constant $C^*_{\gamma, \eps}$ momentarily.

Let us fix $Q_0\in\dd$ and write $v:=u-u(X_{Q_0}^+)$, with $X_{Q_0}^+$ begin the corkscrew relative to $Q_0$, that is, relative to the surface ball $\Delta_{Q_0}$ (cf. \eqref{cube-ball} and \eqref{cube-ball2}). For every $\alpha>0$ we set
\[
E_\alpha:=\{x\in Q_0:\, N_*^{Q_0} v(x)>\alpha\},
\qquad
\widetilde{F}_\alpha:=\{x\in Q_0:\, S^{\prime, Q_0}v( x)\le \alpha\},
\]
Our goal is to obtain 
\begin{align}\label{eqn5.1:local:new}
\sigma(E_{(1+\eps)\alpha}\cap \widetilde{F}_{\gamma\,\alpha})
\le 
C^*_{\gamma, \eps}
\,\sigma(E_\alpha),
\end{align}
where $C_{\gamma,\eps}^*=C\,({\gamma}/{\eps})^p (1+C_0')\sup_{Q\in\dd,\widetilde{\F}} C_{\Omega_{\widetilde{\F},Q}}\big)$, where the sup runs over all $Q\in\dd$ and all pairwise disjoint families $\widetilde{F}\subset\dd_Q\setminus\{Q\}$. Note that  $\sup_{Q\in\dd,\widetilde{\F}} C_{\Omega_{\widetilde{\F},Q}}\big)<\infty$ and ultimately depends on the CAD character of $D$, since all the sawtooth subdomains $\Omega_{\widetilde{\F},Q}$
are CAD with uniform constants (see Lemma \ref{lemma:CAD-geom}) and our assumption states that $C_{\Omega_{\widetilde{\F},Q}}$ depends on the CAD character of $\Omega_{\widetilde{\F},Q}$.

With this goal in mind we fix  $\alpha,\gamma,\eps>0$. We may assume that the set $E_\alpha\neq\emptyset$, otherwise \eqref{eqn5.1-bis} is trivial. As in the proof of Theorem~\ref{theor:N<S:Lip->CAD}we can find $\F=\{Q_j\}_j\subset\dd_{Q_0}$, a family  of maximal (hence pairwise disjoint) cubes with respect to the property $Q\subset E_\alpha$, so that $E_\alpha=\bigcup_{Q_j\in\F} Q_j$. We then fix $Q\in\F$ and we just need to see that
\begin{equation}\label{eqn5.5-bis}
\sigma(E_{(1+\eps)\alpha}\cap \widetilde{F}_{\gamma\,\alpha}\cap Q)
\le 
C^*_{\gamma, \eps}
\,\sigma(Q),
\end{equation}
assuming that $\gamma <c_0\, \eps$ with a suitably small $c_0$ depending on $n$, the CAD character of $D$. We may assume that $\sigma(E_{(1+\eps)\alpha}\cap \widetilde{F}_{\gamma\,\alpha}\cap Q)>0$ and pick $z_Q\in E_{(1+\eps)\alpha}\cap \widetilde{F}_{\gamma\,\alpha}\cap Q$. We follow the same argument of the proof of Theorem~\ref{theor:N<S:Lip->CAD}taking into account that the set $F_{\gamma\,\alpha}$ needs to be replaced by $\widetilde{F}_{\gamma\,\alpha}$. Here we do not invoke Corollary~\ref{cIBPLS} and we formally take $F_Q=Q$. Also we take $Y_Q=X_Q^+$, the corkscrew relative to $Q$. We replace \eqref{Pdet} by 
\begin{align*}
|v(X_{\widetilde Q}^+)-v(Y_{Q})| = |u(X_{\widetilde Q}^+)-u(Y_{Q})|
\le
C\,C_0 \inf_{z\in Q} \widehat{S}^{Q}u(z)
\le
\widehat{S}^{Q_0}v(z_Q)
\le
S^{\prime, Q_0}v(z_Q)
\le
C\, C_0 \gamma\,\alpha
,
\end{align*}
where we have used \eqref{locbdd:SFE} and the fact that $z_Q\in Q\cap \widetilde{F}_{\gamma\,\alpha}$. Thus, assuming that $\gamma<(2\,C\, C_0)^{-1}\,\eps=:2\,c_0\,\eps$, one arrives at \eqref{eqn5.5} with $\widetilde{F}_{\gamma\,\alpha}$ in place of ${F}_{\gamma\,\alpha}$ and $F_Q=Q$ in the case $E_\alpha\subsetneq Q$. On the other hand, the same estimate holds in the case $Q=Q_0$ since $Y_Q=X_Q^+=X_{Q_0}^+$, hence \eqref{adqeaFWEfw} becomes trivial. 
Thus we have obtained that in either case 
\begin{equation}\label{eqn5.5:new}
E_{(1+\eps)\alpha}\cap \widetilde{F}_{\gamma\,\alpha}\cap Q
\subset
\{x\in \widetilde{F}_{\gamma\,\alpha}\cap  F_Q:  N_*^{Q}(v-v(Y_Q))(x)> \eps\,\alpha/2\}=:E_Q.
\end{equation}
Let $E_Q'$ be an arbitrary closed subset of $E_Q$ with $\sigma(E_Q')>0$. Let $x\in Q\setminus E_Q'$. Since $E_Q'$ is closed  there exists $r_x>0$ such that $B(x,r_x)\cap E_Q'=\emptyset$. Pick any $Q_x\in\dd$ with $Q_x\ni x$ and $\ell(Q_x)\ll \min\{\ell(Q), r_x\}$. Then, $x\in Q_x\cap Q$ and necessarily  $Q_x\subset Q$. Also $Q_x\subset B(x,r_x)$ since $x\in Q_x$ and $\diam(Q_x)\approx \ell(Q_x)\ll r_x$. All in one, $Q_x\subset Q\setminus E_Q'$ and there exists a maximal cube $Q_x^{\rm max}\in\dd_Q$ so that $Q_x^{\rm max}\subset Q\setminus E_Q'$. Note that $Q_x^{\rm max}\subsetneq Q$, otherwise $E_Q'=\emptyset$ which contradicts the fact that $\sigma(E_{(1+\eps)\alpha}\cap \widetilde{F}_{\gamma\,\alpha}\cap Q)>0$. Let $\widetilde{\F}$ be the family of maximal (hence pairwise disjoint) cubes $Q_x^{\rm max}$ with $x\in E_Q'$. Note that $\widetilde{\F}\subset\dd_{Q}\setminus\{Q\}$ and $Q\setminus E_Q'=\cup_{Q'\in \widetilde{\F}} Q'$.

Let $\Omega_\star=\widehat{\Omega}_{\widetilde{\F},Q}$. Let us write $\delta_\star(\cdot)=\dist(\cdot,\pom_\star)$ and $\sigma_\star= H^n|_{\pom_\star}$.  We start with Chebyshev's inequality and the fact that $E_Q'\subset E_Q$
\begin{equation}\label{eq6.31}
\sigma(E_Q')\leq \left(\frac{2}{\eps\alpha}\right)^p \int_{E_Q'} N_*^Q (v-v(Y_Q))(x)^p\,d\sigma(x),
\end{equation}
and now change the cones from those used in $N_*^Q$ (dyadic, with respect to $D$) to the traditional  ones \eqref{conetrad} with respect to $\Omega_\star$. More precisely, let $x\in E_Q'=Q\setminus \cup_{Q'\in\widetilde{\F}} Q'\subset \partial\Omega_\star\cap\partial D$ (see \cite[Proposition 6.1]{HM-I}) and $Y\in\Gamma^Q(x)$. Then $Y\in I^*(\tau)$ with $I\in \W_{Q'}$ with $x\in Q'\in\dd_Q$ and
\[
|Y-x|
\le
\diam(I)+\dist(I,Q')+\diam(Q')
\lesssim
\ell(I).
\]
Note that $Q'\in \dd_{\widetilde{\F},Q}$, otherwise $Q'\subset Q''\in\widetilde{\F}$ and hence $x\in \cup_{Q''\in\widetilde{\F}} Q''=Q\setminus E_Q'$. As a consequence,  $\interior(I^*(2\tau))\subset \interior(U_{Q', 2\tau})=\interior (\widehat{U}_{Q'})\subset\Omega_\star$ and  $\delta_\star(Y)\gtrsim \ell(I)$. All this show that
$|Y-x|\lesssim \delta_\star(Y)$ and this means for some choice of $\kappa$ (depending on the CAD character, and $\eta$ and $K$), $Y\in \Gamma_{\Omega_\star,\kappa}(x)$ (cf. \eqref{conetrad}). 
Thus, with the notation in \eqref{defN*-regular},
\[
N_*^Q (v-v(Y_Q))(x)
=
\sup_{Y\in \Gamma^Q(x)} |v(Y)-v(Y_Q)|
\le
\sup_{Y\in \Gamma_{\Omega_\star,\kappa}(x)} |v(Y)-v(Y_Q)|
=:
N_{*,\Omega_\star,\kappa} (v-v(Y_Q))(x).
\]
and \eqref{eq6.31} leads to
\begin{multline}\label{eq6.32}
\sigma(E_Q')\leq 
\left(\frac{2}{\eps\alpha}\right)^p \int_{E_Q'} N_{*,\Omega_\star,\kappa} (v-v(Y_Q))(x)^p\,d\sigma_\star(x)
\\
\le
\left(\frac{2}{\eps\alpha}\right)^p \int_{\pom_{\star}} N_{*,\Omega_\star,\kappa} (v-v(Y_Q))(x)^p\,d\sigma_\star(x)
\\
\lesssim
\frac{1}{(\eps\alpha)^p} \int_{\pom_{\star}} N_{*,\Omega_\star,\kappa_0} (v-v(Y_Q))(x)^p\,d\sigma_\star(x),
\end{multline}
where the last estimate follows from a change of aperture in the cones (see Remark \ref{remark:COA}). We remark that $Y_Q=X_Q^+$, which is a corkscrew point for $Q$ with respect to $D$. By construction, if we take $I\in \W$ so that $X_Q^+\in I$ then, $I\in \W_Q$. Hence, much as before
\[
\delta(Y_Q)\approx \ell(Q)\approx\ell(I)\lesssim \delta_\star(Y_Q)\le \delta(Y_Q). 
\]
Hence $Y_Q$ is an interior corkscrew of $\Omega_\star$ at the scale $\diam(\Omega_\star)\approx \ell(Q)$ (cf. \eqref{eq3.3aab}). Note that  $v(\cdot)-v(Y_Q)=u(\cdot)- u(Y_Q)$ and $\nabla v=\nabla u$ in $D$. This and the fact that $\Omega_\star$ is a CAD (see Lemma \ref{lemma:CAD-geom}) allow us  to invoke \eqref{eqn6.5-bis},  which together with \eqref{eq6.32}, yields
\begin{align}\label{eq6.34}
\sigma(E_Q')
&\lesssim
\frac{1}{(\eps\alpha)^p} \int_{\po_\star } N_{*,\Omega_\star,\kappa_0} (v-v(Y_Q))(x)^p\,d\sigma_\star(x) 
\\ \nonumber
&\le 
C_{\Omega_\star}\frac{1}{(\eps\alpha)^p}  \int_{\po_\star } S_{\Omega_\star,\kappa_0} v(x)^p\,d\sigma_\star(x)
\\ \nonumber
&\lesssim 
C_{\Omega_\star}\frac{1}{(\eps\alpha)^p}  \int_{\po_\star } S_{\Omega_\star,1} v(x)^p\,d\sigma_\star(x)
\\ \nonumber
&=
C_{\Omega_\star}\frac{1}{(\eps\alpha)^p}  \int_{E_Q'} S_{\Omega_\star,1} v(x)^p\,d\sigma_\star(x)
+
C_{\Omega_\star}\frac{1}{(\eps\alpha)^p}  \int_{\pom_\star\cap D} S_{\Omega_\star,1} v(x)^p\,d\sigma_\star(x)
\\ \nonumber
&=:
C_{\Omega_\star}(\mathrm{I}+\mathrm{II})
.
\end{align}
where the third estimate follows from a change of aperture in the cones (see Remark \ref{remark:COA})) and the first equality from \cite[Propositions 6.1 and 6.3]{HM-I}.

To estimate the previous terms we first need to introduce some notation. Given $x\in\pom_\star$ and for some parameter $N\ge 1$ (depending on the CAD character of $D$) to be chosen later we write
\[
\Gamma_{\Omega_\star,1}^1:=\Gamma_{\Omega_\star,1}\cap\{Y\in\Omega_\star: \delta(Y)\le \ell(Q)\},
\qquad
\Gamma_{\Omega_\star,1}^2:=\Gamma_{\Omega_\star,1}\setminus \Gamma_{\Omega_\star,1}^1.
\]
To proceed let us observe that if $Q'\in\dd_{\widetilde{\F},Q}$, then one can find $y_{Q'}\in Q'\cap E_Q'$: otherwise, $Q'\cap E_Q'=\emptyset$ and by construction there exists $Q''\in\widetilde{F}$ with $Q'\subset Q''$, contradicting the fact that $Q'\in\dd_{\widetilde{\F},Q}$. 

Given $x\in\pom_\star$, let $Y\in \Gamma_{\Omega,1}^2(x)\subset \Omega_\star=\widehat{\Omega}_{\widetilde{\F},Q}$, then $Y\in \widehat{U}_{Q'}$ for some $Q'\in \dd_{\widetilde{\F},Q}$. In particular, $Y\in \widehat{\Gamma}^{Q'}(y_{Q'})\subset \widehat{\Gamma}^{Q_0}(y_{Q'})$. Also, $\ell(Q)<\delta(Y)\approx\ell(Q')\le \ell(Q)$. This means that
\begin{multline}\label{estimate:cone-2}
\dint_{\Gamma_{\Omega_\star,1}^2(x)} |\nabla v|^2\delta^{1-n}\,dY
\le 
\sum_{\substack{Q'\in\dd_{\widetilde{\F},Q}\\ \ell(Q')\approx \ell(Q)}} \dint_{\widehat{\Gamma}^{Q_0}(y_{Q'})} |\nabla v|^2\delta^{1-n}\,dY
=
\sum_{\substack{Q'\in\dd_{\widetilde{\F},Q}\\ \ell(Q')\approx \ell(Q)}} \widehat{S}^{Q_0} v(y_{Q'})^2
\\
\le
\sum_{\substack{Q'\in\dd_{\widetilde{\F},Q}\\ \ell(Q')\approx \ell(Q)}} S^{\prime, Q_0} v(y_{Q'})^2
\le
(\gamma\,\alpha)^2\,\#\{Q'\in\dd_{Q}, \ell(Q')\approx \ell(Q)\}
\lesssim 
(\gamma\,\alpha)^2.
\end{multline}

We next turn to estimate $\mathrm{I}$. Let $x\in E_Q'\subset \partial\Omega_\star\cap\partial D$ (see \cite[Proposition 6.1]{HM-I}). Note first that if $Y\in \Gamma_{\Omega,1}(x)$, then  
$
\delta(Y)\le |Y-x|\le 2\delta_\star(Y)
$
and thus \eqref{estimate:cone-2} gives
\begin{multline}\label{egwget}
\widehat{S}_{\Omega_\star,1} v(x)^2
=
\dint_{\Gamma_{\Omega_\star,1}(x)} |\nabla v|^2\delta_{\star}^{1-n}\,dY
\lesssim
\dint_{\Gamma_{\Omega_\star,1}^1(x)} |\nabla v|^2\delta^{1-n}\,dY
+
\dint_{\Gamma_{\Omega_\star,1}^2(x)} |\nabla v|^2\delta^{1-n}\,dY
\\
\lesssim
\dint_{\Gamma_{\Omega_\star,1}^1(x)} |\nabla v|^2\delta^{1-n}\,dY
+
(\gamma\,\alpha)^2.
\end{multline}

Given $Y\in \Gamma_{\Omega,1}^1(x)\subset \Omega_\star\subset D$, one has $Y\in I_Y$ for some $I_Y\in\W$. Pick then $Q_Y\ni x$ with $\ell(I_Y)=\ell(Q_Y)$ and note that by \eqref{Whintey-4I}, and since $K$ is large enough, 
\[
\dist(Q_Y,I_Y)
\le
|x-Y|
\le 
2\,\dist(Y,\pom_\star)
\le 
2\,\delta(Y)
\le 
82\,\diam(I_Y)
=
82\sqrt{n}\ell(Q_Y)
\le
K^{1/2}\ell(Q_Y).
\]
This means that $I_Y\in \W_{Q_Y}^0\subset \W_{Q_Y}$. Besides, 
\[
\ell(Q_Y)=\ell(I_Y)\le \dist(I,\partial D)\le\delta(Y)\le\ell(Q)/N\le \ell(Q), 
\]
this together with the fact that $x\in Q_Y\cap Q$ gives that $Q_Y\in \dd_Q$. Hence, $Y\in I_Y\subset U_{Q_Y}\subset  \Gamma^{Q}(x)\subset\widehat{\Gamma}^{Q_0}(x)$and eventually
\begin{equation}\label{est:g1}
\dint_{\Gamma_{\Omega_\star,1}^1(x)} |\nabla v|^2\delta^{1-n}\,dY
\le 
\widehat{S}^{Q_0} v(x)^2
\le
S^{\prime, Q_0} v(x)^2
\le
(\gamma\,\alpha)^2,
\end{equation}
since $x\in E_Q'\subset E_Q$. This  and \eqref{egwget} imply that 
\begin{equation}\label{est-I-aaaa}
\mathrm{I}
\lesssim
\Big(\frac{\gamma\,\alpha}{\eps\alpha}\Big)^p\sigma(E'_Q)
\le
\Big(\frac{\gamma}{\eps}\Big)^p\sigma(Q).
\end{equation}

Turning to $II$, we start with the following 
\begin{claim}\label{c6.36} 
We can take choose $\eta'$ small enough and $K'$ large enough (depending on $n$, the CAD character of $D$, and  the choice of $\eta, K, \tau$) such that for any $x\in \pom_\star\cap D$ there exists $y_x\in E_Q'$ such that if $J\in \W$ verifies  $4J\cap \Gamma_{\Omega_\star,1}^1(x)\neq\emptyset$, then $4J\subset  \Gamma^{\prime,Q}(y_x)$ and, in particular, $\Gamma_{\Omega_\star,1}^1(x) \subset \Gamma^{\prime,Q}(y_x)$.
\end{claim}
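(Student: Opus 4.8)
The plan is to prove Claim~\ref{c6.36} by a careful chase through the Whitney-dyadic machinery, comparing the traditional cone $\Gamma_{\Omega_\star,1}(x)$ for a boundary point $x\in\pom_\star\cap D$ against a dyadic cone $\Gamma^{\prime,Q}(y_x)$ based on a well-chosen $y_x\in E_Q'$. First I would produce the point $y_x$: since $x\in\pom_\star\cap D=\pom_\star\setminus\pom$, the point $x$ lies on the boundary of the sawtooth $\Omega_\star=\widehat\Omega_{\widetilde\F,Q}$ but in the interior of $D$, so $x$ belongs to the boundary of some fattened Whitney region $\widehat U_{Q_x'}$ with $Q_x'\in\dd_{\widetilde\F,Q}$; as observed just above \eqref{estimate:cone-2}, every such $Q_x'$ contains a point $y_{Q_x'}\in Q_x'\cap E_Q'$, and I set $y_x:=y_{Q_x'}$. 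I would also record the basic scale information $\delta(x)\approx\ell(Q_x')$, which follows from $x\in\partial\widehat U_{Q_x'}$ and \eqref{eq2.whitney2}.

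Next I would take a Whitney cube $J\in\W$ with $4J\cap\Gamma_{\Omega_\star,1}^1(x)\neq\emptyset$, pick $Y\in 4J\cap\Gamma_{\Omega_\star,1}^1(x)$, and estimate. Since $Y\in\Gamma_{\Omega_\star,1}(x)$ we have $|Y-x|\le 2\delta_\star(Y)\le 2\delta(Y)$, and since $Y\in\Gamma_{\Omega_\star,1}^1$ we have $\delta(Y)\le\ell(Q)$; combining with the Whitney property \eqref{Whintey-4I} for $J$ and the comparability $\diam(J)\approx\dist(J,\pom)\approx\delta(Y)$ gives $\ell(J)\lesssim\delta(Y)\le\ell(Q)$ and $\dist(J,x)\lesssim\ell(J)$. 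Choosing a dyadic cube $Q_J\ni y_x$ with $\ell(Q_J)=\ell(J)$ (legitimate because $\ell(J)\lesssim\ell(Q)\lesssim\diam(\pom)$), I would bound
\[
\dist(J,Q_J)\le \dist(J,x)+|x-y_x|\lesssim \ell(J)+\diam(Q_x')\lesssim \ell(J)=\ell(Q_J),
\]
using $|x-y_x|\le\diam(\widehat U_{Q_x'})+\diam(Q_x')\lesssim\ell(Q_x')$ together with the fact that $x\in\partial\widehat U_{Q_x'}$ forces $\ell(Q_x')\approx\delta(x)\lesssim\delta(Y)$ (as $x$ is close to $Y$) hence $\ell(Q_x')\lesssim\ell(J)$. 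With $\eta'$ small enough and $K'$ large enough — quantified exactly by the implied constants in these inequalities via \eqref{eq3.1} and \eqref{eq2.whitney2} — this yields $J\in(\W'_{Q_J})^0\subset\W'_{Q_J}$, and enlarging $K'$ a bit more so that the comparable neighbours are also captured, $4J\subset U'_{Q_J}\subset\Gamma^{\prime,Q}(y_x)$ (the truncation at $Q$ being fine since $\ell(Q_J)\le\ell(Q)$ and $y_x\in Q$, so $Q_J\in\dd_Q$). Finally, since every point of $\Gamma_{\Omega_\star,1}^1(x)$ lies in some Whitney cube $J$ with $J\cap\Gamma_{\Omega_\star,1}^1(x)\neq\emptyset$, the pointwise inclusion $\Gamma_{\Omega_\star,1}^1(x)\subset\Gamma^{\prime,Q}(y_x)$ follows.

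The main obstacle I anticipate is bookkeeping the dependence of $\eta'$ and $K'$: one must verify that the two inequalities $\ell(J)\lesssim\ell(Q_J)$ (which forces $\eta'$ small, so that $(\eta')^{1/4}\ell(Q_J)\le\ell(J)$) and $\dist(J,Q_J)\lesssim K'^{1/2}\ell(Q_J)$ hold \emph{with constants independent of $x$, $Q$, $J$}, depending only on $n$, the CAD character of $D$, and $\eta,K,\tau$ — in particular that the chain of comparabilities relating $\ell(J)$, $\delta(Y)$, $\delta(x)$, $\ell(Q_x')$ does not secretly lose a factor depending on the depth of $Q_x'$ inside $\widetilde\F$. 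This is handled because all the comparisons are local Whitney-type estimates at a single scale $\approx\delta(Y)$, so the constants are absolute; once that is checked, the choice $\eta'\ll\eta$, $K'\gg K$ can be made uniformly, and the claim follows. A secondary (routine) point is to make sure the truncation levels match, i.e.\ that $Q_J\in\dd_Q$ rather than merely $Q_J\in\dd(\pom)$, which is immediate from $\ell(Q_J)\le\ell(Q)$ and $y_x\in Q$.
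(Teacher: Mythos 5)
Your proof is correct, and it follows the same basic mechanism as the paper — produce $y_x=y_{Q'}\in Q'\cap E_Q'$ from the cube $Q'\in\dd_{\widetilde\F,Q}$ with $x\in\partial I^*(2\tau)$, $I\in\W_{Q'}$, and then verify, via \eqref{eq3.1}, \eqref{eq2.whitney2} and \eqref{Whintey-4I}, that the Whitney cubes meeting $4J$ fall into the enlarged collections $(\W'_{\cdot})^0$ for suitable $\eta'\ll1$, $K'\gg1$ — but your execution is genuinely streamlined. The paper splits into two cases according to whether $|Y-x|\ge \tau\ell(I)/(2\sqrt n)$ or not, anchoring the comparison cube at scale $\approx\delta_\star(Y)$ in the far case (using $|Y-x|\le 2\delta_\star(Y)$ to absorb the additive error $\approx K^{1/2}\eta^{-1/2}\ell(I)$) and reusing $Q'$ itself in the near case. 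You avoid the case split entirely by always taking the comparison cube $Q_J\ni y_x$ at scale exactly $\ell(J)$ and observing that $\delta(x)\le\delta(Y)+|Y-x|\le 3\delta(Y)\approx\ell(J)$, so that $|x-y_x|\lesssim_{\eta,K,\tau}\ell(Q')\approx_{\eta,K,\tau}\delta(x)\lesssim\ell(J)$ and hence $\dist(J,Q_J)\lesssim_{n,\eta,K,\tau,\mathrm{CAD}}\ell(Q_J)$, which is all that is needed for membership in $(\W'_{Q_J})^0$; the truncation issue ($Q_J\in\dd_Q$) is handled exactly as you say, since $\ell(Q_J)=\ell(J)\le\delta(Y)/4\le\ell(Q)/4$ and $y_x\in Q\cap Q_J$. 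Two microscopic points worth tightening when writing this up: your bound for $|x-y_x|$ should also include the gap $\dist(\widehat U_{Q'},Q')\lesssim K^{1/2}\ell(Q')$ (harmless, same order as the terms you kept), and the absorption of the \emph{neighbours} $J'$ with $J'\cap 4J\neq\emptyset$ needs the standard fact that such $J'$ have $\ell(J')\approx\ell(J)$ with dimensional constants, which forces $\eta'$ small for a purely dimensional reason in addition to the $K'$-enlargement — both of which you flag and which stay within the allowed dependence on $n$, the CAD character of $D$, and $\eta,K,\tau$.
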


\begin{proof}
Fix $x\in\pom_\star\cap D$. Then $x\in\partial\widehat{I}$ where $\widehat{I}:=I^*(2\tau)=(1+2\tau)I$ with $I\in\W_{Q'}$, $Q'\in\dd_{\widetilde{\F},Q}$. In this scenario we observed before that we can find pick $y_x=y_{Q'}\cap E_{Q'}\cap Q'$. 

Let $Y\in 4J\cap \Gamma_{\Omega_\star,1}^1(x)$ and assume first that $|Y-x|\ge \ell(I)\,\tau/(2\sqrt{n})$. Pick $Q''\in \dd$ with $y_{Q'}\in Q''$ and $\delta_\star(Y)/2<\ell(Q'')\le \delta_\star(Y)$. Note that
$\ell(Q'') \le \delta_\star(Y)\le \delta(Y) \le \ell(Q)$ since $Y\in  \Gamma_{\Omega_\star,1}^1(x)$ and hence $Q''\subset Q$. 
Then, choosing $N$ large enough, depending on $n$ and the CAD character of $D$ (recall that $\eta$, $K$ have been already fixed depending also on the CAD character of $D$),
\begin{multline*}
\dist(4J,Q'')
\le
|Y-y_{Q'}|
\le
|Y-x|+\diam(\widehat{I})+ \dist(I,Q')+\diam(Q')
\\
\le
|Y-x|+ C K^{1/2}\eta^{-1/2}\ell(I)
\le
\big(1+ C K^{1/2}\eta^{-1/2}\tau^{-1}\big)|Y-x|
\\
\le
N\,|Y-x|/2 
\le
N\,\delta_\star(Y)
\le N\,\ell(Q''),
\end{multline*}
where we have used \eqref{eq2.whitney2}. Note also that by \eqref{Whintey-4I}
\[
\ell(Q'')\le \delta_\star(Y)\le \delta(Y) 
\le
\diam(4J)+\dist(4J,\partial D)
\le 
41\diam(J)
=
41\sqrt{n}\,\ell(J)
\]
and
\[
\ell(J)
\le 
\dist(4J,\partial D)/\sqrt{n}
\le
\dist(4J,Q'')/\sqrt{n} 
\le N\,\ell(Q'').
\]
All in one we have obtained that
\[
N^{-1}\ell(J)\le \ell(Q'')\le 41\sqrt{n}\,\ell(J), \qquad
\dist(4J,Q'')\le N\ell(Q'').
\]
If we now take $J'\in \W$ with $J'\cap 4J\neq\emptyset$, then the properties of the Whitney cubes guarantee that $\ell(J')\approx\ell(J)$ and hence the previous estimates easily extend to $J'$. This means that choosing $\eta'$ smaller and $K'$ larger (depending on the CAD character of $D$), we have that $J'\in (\W'_{Q''})^0\subset \W'_{Q''}$.
Since $y_{Q'}\in Q''$ we then have that
\[
4J\subset \bigcup_{J'\in\W; J'\cap 4J\neq\emptyset} J'\subset \bigcup_{y_{Q'}\in Q'''\in\dd_Q} \Big(\bigcup_{J'\in {\W}'_{Q'''}} I^*(\tau)\Big)
=
\bigcup_{y_{Q'}\in Q'''\in\dd_Q} U_{Q'''}'=\Gamma^{\prime, Q}(y_{Q'}).
\]

Consider finally the case on which $Y\in 4J\cap \Gamma_{\Omega_\star,1}^1(x)$ satisfies  $|Y-x|<\ell(I)\,\tau/(2\sqrt{n})$  so that $Y\in (1+2\tau I)=I^*(2\tau)=:\widehat{I}$ and $\ell(I)\approx \delta(Y)\approx \ell(J)$. Note then that if $J'\cap 4J\neq\emptyset$ we have $\ell(J')\approx\ell(J)\approx\ell(I)$. Since $I\in\W_{Q'}$, $Q'\in\dd_{\widetilde{\F},Q}$ we have by \eqref{eq2.whitney2} that
\[
\eta^{1/2}\ell(Q')\lesssim \ell(I)\approx \ell(J')\lesssim K^{1/2}\ell(Q),
\]
and
\[
\dist(J',Q)
\le
\diam(J')+\diam(4J)+|Y-x|+\diam(\widehat{I})+\dist(I,Q)
\lesssim
\ell(I)+\dist(I,Q)
\lesssim 
K^{1/2} \ell(Q).
\]
Thus, by taking $\eta'$ smaller and $K'$ bigger, if needed, we obtain that $J'\in  (\W'_{Q'})^0$. Much as before 
the fact that $y_{Q'}\in Q'$ yields
\[
4J\subset \bigcup_{J'\in\W; J'\cap 4J\neq\emptyset} J'\subset \bigcup_{y_{Q'}\in Q'''\in\dd_Q} \Big(\bigcup_{J'\in {\W}'_{Q'''}} I^*(\tau)\Big)
=
\bigcup_{y_{Q'}\in Q'''\in\dd_Q} U_{Q'''}'=\Gamma^{\prime, Q}(y_{Q'}).
\]
This completes the proof.
\end{proof}

Let us now get back to the proof, specifically, to the estimate for $II$ in \eqref{eq6.34}. Let $\varpi>0$ be small enough to be chosen and set for every $x\in\pom_\star\cap D$
\[
\Gamma_{\Omega_\star,1}^3(x)=\{Y\in \Gamma_{\Omega_\star,1}^1(x): \delta_\star(Y)\ge \varpi \delta(Y)\},
\qquad
\Gamma_{\Omega_\star,1}^4(x)=\{Y\in \Gamma_{\Omega_\star,1}^2(x): \delta_\star(Y)\ge \varpi \delta(Y)\},
\]
and
\[
\Gamma_{\Omega_\star,1}^5(x)=\{Y\in \Gamma_{\Omega_\star,1}(x): \delta_\star(Y)<\varpi \delta(Y)\}
.
\]
Thus
\begin{equation}\label{S:g1-5}
S_{\Omega_\star,1} v(x)^2
=\sum_{k=3}^5 
\dint_{\Gamma_{\Omega_\star,1}^k(x)} |\nabla v|^2\delta_{\star}^{1-n}\,dY
=: 
\sum_{k=3}^5 g_k(x)^2.
\end{equation}
Note that for $x\in\pom_\star\cap D$ invoking Claim~\ref{c6.36} we obtain
\[
g_3(x)^2
\le
\varpi^{1-n} \dint_{\Gamma_{\Omega_\star,1}^1(x)} |\nabla v|^2\delta^{1-n}\,dY
\le
\varpi^{1-n} \dint_{\Gamma^{\prime,Q}(y_x) } |\nabla v|^2\delta^{1-n}\,dY
=
\varpi^{1-n} S^{\prime, Q}u(y_x)^2
\le
(\gamma\alpha^2).
\]
Analogously, by \eqref{estimate:cone-2}
\[
g_4(x)^2
\le
\varpi^{1-n} \dint_{\Gamma_{\Omega_\star,2}^1(x)} |\nabla v|^2\delta^{1-n}\,dY
\le
\varpi^{1-n} \dint_{\Gamma^{\prime,Q}(x) } |\nabla v|^2\delta^{1-n}\,dY
\lesssim
(\gamma\alpha)^2.
\]
As a result,
\begin{multline}\label{est:g:3y5}
\int_{\pom_\star\cap D} \big(g_3^p+ g_4^p\big)\,d\sigma_\star
\lesssim 
\varpi^{(1-n)\,p/2}(\gamma\alpha)^p\sigma_\star(\pom_\star)
\\
\lesssim 
\varpi^{(1-n)\,p/2}(\gamma\alpha)^p\ell(Q)^n
\approx 
\varpi^{(1-n)\,p/2}(\gamma\alpha)^p\sigma(Q),
\end{multline}
where we have used that $\pom_\star$ is ADR with $\diam(\pom_\star)\lesssim\ell(Q)$ (cf. \eqref{eq3.3aab}).

We next consider $g_5$. Set $\W_\star=\{I\in\W: I\cap \pom_\star\neq\emptyset\}$ and note that $\partial\Omega_\star\cap D\subset\bigcup_{I\in \W_\star} I$. 
For every $x\in\pom_\star\cap D$ we then have that $x\in I_x\in \W_\star$ and also that $x\in\partial \widehat{J}_x$ with $J_x\in\W_{Q_x}$, $Q_x\in\dd_{\widetilde{\F}, Q}$. If $Y\in \Gamma_{\Omega_\star,1}^5(x)$ and $\varpi<1/4$, then
\[
\delta(Y)
\le 
|Y-x|+\delta(x)
\le 
2\delta_\star(Y)+\delta(x)
<
2\varpi\delta(Y)+\delta(x)
<
\delta(Y)/2+\delta(x).
\]
This and  \eqref{Whintey-4I} yield
\[
\delta(Y)
\le 
2\delta(x)
\le 
2(\diam(4 J_x)+\dist(4J_x,\partial D))
<100 \diam(J_x)
\]
and, $\varpi$ small enough,
\[
|Y-x|
\le
2\delta_\star(Y)
\le
2\varpi\delta(Y)
<
200\varpi\diam(J_x)
<
\tau\ell(J_x)/8.
\]
Recalling that $\widehat{J}_x:=J_x^*(2\tau)$ with $\tau\le \tau_0\le 2^{-4}$ it follows that $Y\in J_x^*(7\tau/4)\subset 2\, J_x$ and also $Y\in B(x,\ell(J_x))$. Hence, easy calculations lead to 
\begin{align*}
\dint_{\Gamma_{\Omega_\star,1}^5(x)} \delta_\star^{\frac{p}{p-2}-n}\,dY
\le
\max\{2^{\frac{p}{p-2}-n},1\}
\dint_{B(x,\ell(J_x))} |x-Y|^{\frac{p}{p-2}-n}\,dY
\lesssim
\ell(J_x)^{2\frac{p-1}{p-2}}\approx \ell(I_x)^{2\frac{p-1}{p-2}}. 
\end{align*}
Using Hölder's inequality with $p/2$ we arrive at
\begin{multline*}
g_5(x)
=
\Big(  \dint_{\Gamma_{\Omega_\star,1}^5(x)} |\nabla v|^2\delta_\star^{1-n}\,dY\Big)^{\frac{1}2}
\le
\Big(  \dint_{\Gamma_{\Omega_\star,1}^5(x)} \delta_\star^{\frac{p}{p-2}-n}\,dY\Big)^{\frac{p-2}{2\,p}}\,
\Big(  \dint_{\Gamma_{\Omega_\star,1}^5(x)} |\nabla v|^p\delta_\star^{-n}\,dY\Big)^{\frac{1}p}
\\
\lesssim
\ell(I_x)^{\frac{p-1}{p}}\,\Big(  \dint_{2J_x\cap B(x,2\,\delta_\star(x))\cap\Omega_\star} |\nabla v|^p\delta_\star^{-n}\,dY\Big)^{\frac{1}p}.
\end{multline*}
Next, for every $I\in\W_\star$ we set
\[
\W_\star^I:=\{J\in\W: J=J_x \mbox{ for some }x\in \pom_\star\cap I,\ 2J_x\cap \Gamma_{\Omega_\star,1}(x)\neq\emptyset \}
\]
and obtain
\begin{align}\label{Wafraweefr0}
\int_{\pom_\star\cap D} g_5^p\,d\sigma_\star
&\le
\sum_{I\in\W_\star} \int_{\pom_\star\cap I} g_5^p\,d\sigma_\star
\\ \nonumber
&\le
\sum_{I\in\W_\star} \ell(I)^{p-1} \int_{\pom_\star\cap I}  \dint_{2 {J}_x\cap B(x,2\,\delta_\star(x))\cap\Omega_\star} |\nabla v(Y)|^p\delta_\star(Y)^{-n}\,dY d\sigma_\star(x)
\\ \nonumber
&
\le
\sum_{I\in\W_\star} \ell(I)^{p-1} \sum_{J\in \W_\star^I}  \dint_{2{J}\cap\Omega_\star} |\nabla v(Y)|^p\delta_\star(Y)^{-n}\,\sigma_\star(\pom_\star\cap B(Y,2\delta_\star(x)))\,dY
\\ \nonumber
&
\lesssim
\sum_{I\in\W_\star} \ell(I)^{p-1}  \sum_{J\in \W_\star^I} \dint_{2{J}} |\nabla v(Y)|^p\,dY
\\ \nonumber
&
\lesssim
C_0'
\sum_{I\in\W_\star} \ell(I)^{p-1} \sum_{J\in \W_\star^I} \ell(J)^{(n+1)\frac{2-p}{2}}\Big(\dint_{4{J}} |\nabla v(Y)|^2\,dY\Big)^{\frac{p}{2}} 
\\ \nonumber
&\approx
C_0'\sum_{I\in\W_\star} \ell(I)^{n}  \sum_{J\in \W_\star^I} \Big(\dint_{4{J}} |\nabla v(Y)|^2\,\delta(Y)^{1-n}dY\Big)^{\frac{p}{2}}, 
\end{align}
where we have used that $\partial\Omega_\star$ is ADR (cf. \cite[Lemma 3.61]{HM-I}), 
\eqref{revHol} (since $4J\subset D$ by  \eqref{Whintey-4I}), that $\ell(J_x)\approx \ell(I)$ since $x\in I\cap\partial \widehat{J}_x$ (hence $I\cap J\neq\emptyset$), and finally that $\delta(\cdot)\approx \ell(J)$ in $4J$ by \eqref{Whintey-4I}.

Suppose next that $I\in\W_\star$ with $\ell(I)\ll \ell(Q)$. Note that if $J=J_x$ with $x\in\pom_\star\cap I$ then $x\in\partial \widehat{J}_x\cap I$, hence $\ell(J_x)\approx \ell(I)\ll\diam(I)$ and $4 J_x\subset\{Y\in D:\delta(Y)<\ell(Q)\}$. Thus, if $2J_x\cap \Gamma_{\Omega_\star,1}(x)\neq\emptyset$ necessarily $2J_x\cap \Gamma_{\Omega_\star,1}^1(x)\neq\emptyset$. We can then invoke Claim \ref{c6.36} with $J=J_x$ to find $y_x\in E_Q'$ so that
\begin{multline}\label{Wafraweefr1}
\sum_{J\in \W_\star^I} \Big(\dint_{4{J}} |\nabla v|^2\,\delta^{1-n}dY\Big)^{\frac{p}{2}}
\le
\Big(\dint_{\Gamma^{\prime,Q}(y_x)} |\nabla v|^2\,\delta^{1-n}dY\Big)^{\frac{p}{2}}
\#\{J\in\W: \partial\widehat{J}\cap I\neq\emptyset\}
\\
\lesssim
 S^{\prime, Q}(y_x)^p\le  S^{\prime, Q_0}(y_x)^p
 \le
 (\gamma\alpha)^p.
\end{multline}

Consider next the case $I\in\W_\star$ with $\ell(I)\gtrsim \ell(Q)$. For every $J\in\W_\star^I$ we have that $J=J_x$ for some $x\in \pom_\star\cap I$ and there exists $Z\in 2J\cap\Omega_\star$. As such $J\in\W_{Q_x}$ for some $Q_x\in\dd_{\widetilde{\F}, Q}$. In particular, $\ell(Q)\lesssim \ell(I)\approx \ell(J)\approx \ell(Q_x)\le\ell(Q)$. Take then an arbitrary $Y\in 4{J}\cap\Omega_\star$. Since $Z\in 2{J}$, one has $\delta(Y)\approx\ell(J)\approx\ell(Q)$. Also, $Z\in\Omega_\star=\widehat{\Omega}_{\widetilde{\F},Q}$, then $Z\in \widehat{U}_{Q'}$ for some $Q'\in \dd_{\widetilde{\F},Q}$ and, as observed above, the latter implies that one can find $y_{Q'}\in Q'\cap E_Q'$. We claim that $4J\subset \Gamma^{\prime,Q}(y_x)$. To see this let $Y\in 4J\subset D$ and take $I_Y\in\W$ with $Y\in I_Y$. Note that by 
\eqref{Whintey-4I} and \eqref{eq2.whitney2} $\ell(I_Y)\approx\delta(Y)\approx \ell(J)\approx\ell(Q)$ and 
\[
\dist(I_Y,Q)\le \dist(Y,Q)
\le
\diam(4J)+\dist(J,Q_x)
\lesssim
\ell(Q)+\ell(Q_x)
\approx\ell(Q).
\]
Thus taking $\eta'$ smaller and $K'$ larger if needed ((depending on $n$, the CAD character of $D$, and  the choice of $\eta, K, \tau$) we can assure that $I_Y\in(\W_Q')^0\subset \W'_Q$ and since $y_{Q'}\in Q'\subset Q$ we conclude that $Z\in \Gamma^{\prime,Q}(y_x)$ as desired. All these give an estimate similar to \eqref{estimate:cone-2}
\begin{multline}\label{Wafraweefr2}
 \sum_{J\in \W_\star^I} \Big(\dint_{4{J}} |\nabla v(Y)|^2\,\delta(Y)^{1-n}dY\Big)^{\frac{p}{2}}
\le 
 \#\{J\in\W: \partial\widehat{J}\cap I\neq\emptyset\}
 \Big(\dint_{\Gamma^{\prime,Q}(y_x)} |\nabla v|^2\delta^{1-n}\,dY\Big)^{\frac{p}{2}}
 \\ 
\qquad\lesssim
 S^{\prime, Q} v(y_{Q'})
 \le  S^{\prime, Q_0} v(y_{Q'})^p
 \le
 ()\gamma\,\alpha)^p.
\end{multline}

We finally combine \eqref{Wafraweefr0}, \eqref{Wafraweefr1}, and \eqref{Wafraweefr2}, to obtain 
\begin{align}\label{qfawfr}
\int_{\pom_\star\cap D} g_5^p\,d\sigma_\star
\lesssim
C_0'(\gamma\,\alpha)^p\sum_{I\in\W_\star} \ell(I)^{n} . 
\end{align}
To complete the proof we estimate the sum in the right-hand side. For every $I\in \W_\star$ pick $Z_I\in \pom_\star\cap I$ so that $\ell(I)\approx \delta(Z_I)$ and let $\Delta_\star^I:=B(Z_I,\delta(Z_I)/2)\cap\pom_\star$, which is surface ball with respect to $\Omega_\star$. The fact that $Z_I\in  \pom_\star\subset \cap D$ implies that there exists $I'\in \W_{Q'}$ with $Q'\in\dd_{\widetilde{\F},Q}$ and $Z_I\in\partial\widehat{I}$. Then,
$\ell(I)\approx\delta(Z_I)\approx \ell(I')\approx\ell(Q')\le\ell (Q)$ by \eqref{Whintey-4I} and \eqref{eq2.whitney2}). Note that $Q\in\dd_{\widetilde{\F},Q}$, hence $U_{Q}\subset \Omega_\star$. Pick $I_Q\in\W_Q$ (which is non-empty by construction) and note that $\ell(I_Q)\approx \ell(Q)$ by \eqref{eq2.whitney2} and $I_Q\subset U_Q\subset \Omega_\star$. Hence $\ell(Q)\approx\diam(I_Q)\le \diam(\Omega_\star)\lesssim \ell(Q)$ by \eqref{eq3.3aab}. All these show that $\delta(Z_I)\lesssim\diam(\pom_\star)$. Suppose next that $\Delta_\star^I\cap \Delta_\star^J\neq\emptyset$ for some $I,J\in\W_\star$ and let $Y$ belong to that intersection. Assume for instance that $\ell(I)\le \ell(J)$ and note that
\[
\delta(Z_J)
\le
|Z_J-Y|+|Y-Z_I|+\delta(Z_I)
\le
\frac12\delta(Z_J)+\frac32\delta(Z_I).
\]
Hence, $\ell(J)\approx\delta(Z_J)\le 3\delta (Z_I)\approx\ell(I)\le \ell(J)$ and
\[
\dist(I,J)\le |Z_I-Z_J|
\le
|Z_J-Y|+|Y-Z_I|
\le
\frac12\delta(Z_J)+\frac12\delta(Z_I)
\approx
\ell(I)+\ell(J)
\approx
\ell(I)
\approx
\ell(J).
\]
As a consequence, the family $\{\Delta_\star^I\}_{I\in\W_\star}$ has bounded overlap and therefore 
\[
\sum_{I\in\W_\star} \ell(I)^{n} 
\approx
\sum_{I\in\W_\star} \sigma_\star(\Delta_\star^I)
\lesssim
\sigma_\star\Big(\bigcup_{I\in\W_\star} \Delta_\star^I\Big)
\le
\sigma_\star(\pom_\star)
\lesssim
\diam(\pom_\star)^n
\approx
\ell(Q)^n
\approx
\sigma(Q)
, 
\]
where we have used that $\pom_\star$ is ADR (cf. \cite[Lemma 3.61]{HM-I}). This and 
\eqref{qfawfr} eventually yield
\begin{align*}
\int_{\pom_\star\cap D} g_5^p\,d\sigma_\star
\lesssim
C_0'(\gamma\,\alpha)^p\sigma(Q).
\end{align*}
This, \eqref{eq6.34}, \eqref{S:g1-5}, and \eqref{est:g:3y5} give
\begin{align*}
\textrm{II}
=
\frac{1}{(\eps\alpha)^p}  \int_{\pom_\star\cap D} S_{\Omega_\star,1} v^p\,d\sigma_\star
\lesssim
\frac{1}{(\eps\alpha)^p}  \int_{\pom_\star\cap D} (g_3^p+g_4^p+g_5^p)\,d\sigma_\star
\lesssim
(1+C_0')\,\Big(\frac{\gamma}{\eps}\Big)^p\sigma(Q).
\end{align*}
We next combine this with \eqref{eq6.34} and \eqref{est-I-aaaa} to arrive at 
\[
\sigma(E_Q')
\lesssim
C_{\Omega_\star}\,(1+C_0')\,\Big(\frac{\gamma}{\eps}\Big)^p\sigma(Q).
\]
Recalling that Let $E_Q'$ be an arbitrary closed subset of $E_Q$ with $\sigma(E_Q')>0$, by inner regularity of the Hausdorff measure,  we therefore obtain that
\[
\sigma(E_{(1+\eps)\alpha}\cap \widetilde{F}_{\gamma\,\alpha}\cap Q)\le 
\sigma(E_Q)
\lesssim
C_{\Omega_\star}\,(1+C_0')\,\Big(\frac{\gamma}{\eps}\Big)^p\sigma(Q).
\]
We have then show \eqref{eqn5.1:local:new} which in turn implies \eqref{eqn5.1-bis}. With the latter estimate in hand and for any $0<q<\infty$, we proceed as in \eqref{awffevev}:
\begin{align}\label{awffevev*}
&\mathrm{I}_N:=\int_0^N q\alpha^q \sigma \{x\in Q_0:\, N_*^{Q_0} v(x)>\alpha\}\, \frac{d\alpha}{\alpha}
\\ \nonumber
&\ = (1+\eps)^q \int_0^{N/(1+\epsilon)} q\alpha^q \sigma \{x\in Q_0:\, N_*^{Q_0} v(x)>(1+\eps)\,\alpha\}\, \frac{d\alpha}{\alpha}
\\[4pt] \nonumber
&\ \leq  (1+\eps)^q\int_0^{N} q\alpha^q \sigma \{x\in Q_0:\, N_*^{Q_0}v (x)>(1+\eps)\alpha, \,S^{\prime, Q_0}v(x)\leq \gamma\alpha \}\, \frac{d\alpha}{\alpha} 
\\[4pt]\nonumber
&\ \hskip2cm+ \left(\frac{1+\eps}{\gamma}\right)^q \|S^{\prime, Q_0}v\|_{L^q(Q_0)}^q 
\\[4pt]\nonumber
&\ \leq  C_{\gamma, \eps}^*\, \,(1+\eps)^q\int_0^N q\alpha^q \sigma \{x\in Q_0:\, N_*^{Q_0}v (x)>\alpha  \}\, \frac{d\alpha}{\alpha} 
+ 
(1+\eps)^q/\gamma^q\,\|S^{\prime, Q_0}v\|_{L^q(Q_0)}^q
\\[4pt]\nonumber
&
\ =
C\,({\gamma}/{\eps})^p (1+C_0')\big(\sup_{Q\in\dd,\widetilde{\F}} C_{\Omega_{\widetilde{\F},Q}}\big)
\,(1+\eps)^q\,\mathrm{I}_N+ (1+\eps)^q/\gamma^q\,\|S^{\prime, Q_0}v)\|_{L^q(Q_0)}^q.
\end{align}
At this point we first choose $\eps=1$ and next take $0<\gamma<c_0 \,\eps/C_0$ small enough so that $C\,{\gamma}^p(1+C_0')\sup_{Q\in\dd} C_{\Omega_Q}\,2^q<1/1$. With these choices and using that $I_N\le N^q\,\sigma(Q_0)<\infty$, we can hide this term on the left-hand side of \eqref{awffevev*} to obtain 
\[
\mathrm{I}_N\le 2\,  (1+\eps)^q/\gamma^q\,\|S^{\prime, Q_0}v\|_{L^q(Q_0)}^q.
\]
Noting that $\mathrm{I}_N\nearrow \|N_*^{Q_0} v\|_{L^q(Q_0)}^q$ as $N\to\infty$ we obtain as desired \eqref{eqn6.5:new:local}.

From \eqref{eqn6.5:new:local} one can obtain \eqref{eqn6.5:new:local:balls}, and hence \eqref{eqn6.5:new} and \eqref{eqn6.5:new:unbounded} much as in the proof of Theorem \ref{theor:N<S:Lip->CAD}and we omit details. 
\end{proof}

Combining Theorems \ref{theor:N<S:Lip->CAD} and \ref{theor:N<S:CAD->CAD:all-p} we can obtain the following:

\begin{corollary}\label{corol: N<S:Lip->CAD}
	Let $D\subset \ree$ be a  CAD. Let $u\in W^{1,2}_{\rm loc}(D)\cap C(D)$ so that \eqref{locbdd} and \eqref{revHol} hold for some $p>2$. Suppose that the $N<S$ estimates are valid on $L^2$ on all bounded Lipschitz subdomains $\Omega\subset D$ (see \eqref{eqn5.9} in Theorem \ref{theor:N<S:Lip->CAD}). 
	Then \eqref{eqn6.5:new:local}--\eqref{eqn6.5:new:unbounded} hold.
\end{corollary}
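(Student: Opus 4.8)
\textbf{Proof plan for Corollary~\ref{corol: N<S:Lip->CAD}.}
The plan is to deduce the corollary by chaining the two preceding theorems. First I would invoke Theorem~\ref{theor:N<S:Lip->CAD}: the hypotheses there are exactly that $u\in W^{1,2}_{\rm loc}(D)\cap C(D)$ satisfies the interior bound \eqref{locbdd} and that the $L^2$ estimate \eqref{eqn5.9} holds on all bounded Lipschitz subdomains $\Omega\subset D$, all of which are assumed here. Fixing a Whitney-dyadic structure $\{\W_Q\}_{Q\in\dd(\partial D)}$ for $D$ with parameters $\eta\ll1$ and $K\gg1$ as in Section~\ref{sections:CAD}, Theorem~\ref{theor:N<S:Lip->CAD} yields \eqref{eqn6.5:local}, namely
\[
\|N_*^Q(u-u(X_Q^+))\|_{L^q(Q)}\leq C' \|\widehat{S}^Q u\|_{L^q(Q)}, \qquad \text{for all } q>2,
\]
for every $Q\in\dd(\partial D)$, with $C'$ depending only on $n$, the CAD character of $D$, $C_0$, and the choice of $\eta,K,\tau$.

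The key observation is that this is precisely the hypothesis needed to feed into Theorem~\ref{theor:N<S:CAD->CAD:all-p}. Indeed, \eqref{eqn6.5:local} says that for any bounded chord-arc subdomain $\Omega\subset D$ one has, in particular, the $L^p$ estimate \eqref{eqn6.5-bis}: one applies \eqref{eqn6.5:local} not to $D$ itself but to the CAD $\Omega$ (whose Carleson/Whitney-dyadic apparatus is available since $\Omega$ is CAD), or alternatively one notes that the $L^p$ estimate on bounded chord-arc subdomains follows from running the localization step exactly as in the deduction of \eqref{eqn6.5:local-balls} from \eqref{eqn6.5:local}, using $p>2$. More directly: Theorem~\ref{theor:N<S:Lip->CAD}, applied with $D$ replaced by an arbitrary bounded CAD $\Omega\subset D$ (every bounded Lipschitz subdomain of $\Omega$ is also one of $D$, so the $L^2$ hypothesis \eqref{eqn5.9} is inherited), gives \eqref{eqn6.5} for that $\Omega$, i.e.\ $\|N_{*,\Omega}(u-u(X_\Omega^+))\|_{L^p(\partial\Omega)}\leq C_\Omega\|S_\Omega u\|_{L^p(\partial\Omega)}$ for every $p>2$, with $C_\Omega$ depending only on $n$, $p$, and the CAD character of $\Omega$. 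Since \eqref{revHol} is assumed here for the given $p>2$, the hypotheses of Theorem~\ref{theor:N<S:CAD->CAD:all-p} are all met, and that theorem delivers \eqref{eqn6.5:new:local}, and consequently \eqref{eqn6.5:new:local:balls}, \eqref{eqn6.5:new}, and \eqref{eqn6.5:new:unbounded}, which is the assertion of the corollary.

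There is essentially no obstacle here beyond bookkeeping: the one point that requires a line of care is verifying that the $L^2$ $N<S$ hypothesis on Lipschitz subdomains of $D$ transfers to the same hypothesis on Lipschitz subdomains of an arbitrary bounded CAD $\Omega\subset D$ (immediate, since a Lipschitz subdomain of $\Omega$ is a Lipschitz subdomain of $D$ with the same Lipschitz character), and that the constants produced by Theorem~\ref{theor:N<S:Lip->CAD} depend only on the CAD character of $\Omega$ and hence are uniformly bounded over all such $\Omega$ with character controlled by a fixed $M_0$, in the sense of Notation~\ref{notation:constants}. Once this is in place the two theorems compose cleanly and the proof is complete; I would simply write ``This is an immediate consequence of Theorems~\ref{theor:N<S:Lip->CAD} and \ref{theor:N<S:CAD->CAD:all-p}: the former upgrades the $L^2$ estimate on Lipschitz subdomains to the $L^p$ estimate \eqref{eqn6.5-bis} on all bounded chord-arc subdomains, and the latter then yields \eqref{eqn6.5:new:local}--\eqref{eqn6.5:new:unbounded}.''
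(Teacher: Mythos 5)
Your proposal is correct and follows essentially the same route as the paper: apply Theorem \ref{theor:N<S:Lip->CAD} to an arbitrary bounded chord-arc subdomain $\Omega\subset D$ (whose bounded Lipschitz subdomains are Lipschitz subdomains of $D$, so \eqref{eqn5.9} is inherited with uniform constants), obtaining \eqref{eqn6.5} for $\Omega$ with $q=p>2$, which is exactly the hypothesis \eqref{eqn6.5-bis} of Theorem \ref{theor:N<S:CAD->CAD:all-p}, and then conclude. The ``more direct'' argument you settle on is precisely the paper's proof; the earlier alternative you sketch via \eqref{eqn6.5:local} is unnecessary detour but does not affect correctness.
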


\begin{proof}
Let $\Omega\subset D$ be an arbitrary bounded CAD. Since any bounded Lipschitz sub-domain of $\Omega$ is also a subdomain of $D$ we can apply Theorem \ref{theor:N<S:Lip->CAD}to obtain \eqref{eqn6.5} for $\Omega$ and for every $q>2$. That is, we have the $N<S$ estimates are valid on all bounded chord-arc subdomains  $\Omega\subset D$ for $q=p>2$. Hence, Theorem \ref{theor:N<S:CAD->CAD:all-p} applies to obtain the desired conclusions. 
\end{proof}

\section{From $N<S$ bounds on chord-arc domains to $\eps$-approximability in the complement of a UR set}\label{eps-approx}

Recall the definition of $\eps$-approximability (Definition~\ref{def1.3}). 
The second main result in \cite{HMM2}, stated there for harmonic functions but proved in full generality, can be formulated as follows. 

\begin{theorem}\label{theor:eps-approx} Let $E\subset \ree$ be an $n$-dimensional UR set, $\ree\setminus E$, and
suppose that $u\in W^{1,2}_{\rm loc}(\ree\setminus E)\cap C(\ree\setminus E)\cap L^\infty(\ree\setminus E)$ is such 
that for any cube $I$ with $2 I\subset \ree\setminus E$
\begin{equation}\label{oscbdd}
\sup_{X, Y\in I}|u(X)-u(Y)|\leq C_0 \left(\ell(I)^{1-n}\dint_{2I} |\nabla u|^2 \, dX\right)^{\frac12}
\end{equation}
and
\[
\|\nabla u\|_{\C(\ree\setminus E)}\le C_0' \|u\|_{L^\infty(\ree\setminus E)}
\]
Assume, in addition, that $N<S$ estimates are valid on $L^2$ on all bounded chord-arc subdomains  $\Omega\subset \ree\setminus E$, that is, for any bounded chord-arc subdomain  $\Omega\subset \ree\setminus E$, there holds
\begin{equation}\label{eqe7.2-bis}
\|N_{*,\Omega}(u-u(X_\Omega^+))\|_{L^2(\pom)}\leq C_\Omega \|S_{\Omega} u\|_{L^2(\pom)}. 
\end{equation} 
Here $X_{\Omega}^+$ is any interior corkscrew point of $\Omega$ at the scale of $\diam (\Omega)$, and the constant  $C_{\Omega}$ depends on the CAD character of $\Omega$, the dimension $n$, $p$, the implicit choice of $\kappa$ (the aperture of the cones in $N_{*,\Omega}$ and $S_{\Omega}$), and the implicit corkscrew constant for the point $X_{\Omega}^+$ . 
Then $u$ is $\eps$-approximable on $ \ree\setminus E$, with the implicit constants depending only on $n$, the UR character of $E$, $C_0$, and $C_0'$.
\end{theorem}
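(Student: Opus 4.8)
\textbf{Proof strategy for Theorem~\ref{theor:eps-approx}.}

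The plan is to combine the $\eps$-approximability result on chord-arc domains---which we already have, because the hypotheses \eqref{oscbdd}, the Carleson estimate $\|\nabla u\|_{\C}\lesssim\|u\|_\infty$, and \eqref{eqe7.2-bis} let us run the classical Dahlberg-type corona argument on CAD to produce, for each $\eps>0$ and each bounded CAD $D\subset\ree\setminus E$, a function $\vp_D=\vp_D^\eps\in W^{1,1}_{\rm loc}(D)$ with $\|u-\vp_D\|_{L^\infty(D)}<\eps$ and $\sup_{x,r}r^{-n}\dint_{B(x,r)\cap D}|\nabla\vp_D|\,dY\le C_\eps$ (with $C_\eps$ depending only on $\eps$, $n$, the CAD character of $D$, $C_0$, $C_0'$)---with the bilateral corona decomposition of Lemma~\ref{lemma2.1} and the family of CAD domains $\Omega_\sbf^\pm$ furnished by Lemma~\ref{lemma3.15}. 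The point is that $\eps$-approximability is, like CME, a Carleson-type scale-invariant estimate, so the same transference mechanism used for Theorem~\ref{theor:CME:CAD->UR} should apply, but now to the measure $|\nabla\vp|\,dX$ in place of $|F|^2\delta\,dX$.

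First I would fix $\eps>0$. For each stopping-time regime $\sbf$ in the decomposition $\dd(E)=\G\cup\B$ (taken with $\eta\ll1$, $K\gg1$ depending on the UR character), apply the CAD result to the bounded chord-arc domains $\Omega_\sbf^+$ and $\Omega_\sbf^-$ to get approximators $\vp_\sbf^\pm$ on $\Omega_\sbf^\pm$ with $\|u-\vp_\sbf^\pm\|_{L^\infty}<\eps$ and the $L^1$-Carleson bound on $\nabla\vp_\sbf^\pm$, all with a uniform constant $C_\eps$ since the CAD characters of the $\Omega_\sbf^\pm$ are uniformly controlled. Next I would glue: since $\ree\setminus E=\bigcup_{Q\in\dd}U_Q$ and each $U_Q$ with $Q\in\G$ splits into $U_Q^\pm\subset\Omega_\sbf^\pm$ (for the $\sbf$ containing $Q$), while the bad cubes satisfy a Carleson packing condition, one defines $\vp$ on $\ree\setminus E$ by using $\vp_\sbf^\pm$ on the Whitney regions inside $\Omega_\sbf^\pm$ and patching across the transitions with a partition of unity subordinate to the (finitely overlapping) dilated Whitney cubes. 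On each Whitney region the local oscillation bound \eqref{oscbdd} together with Caccioppoli and the global Carleson bound on $\nabla u$ controls $|u-\vp_\sbf^\pm|$ and the extra gradient terms created by the cutoffs: on a transition cube $I$ one gains a factor $\ell(I)^{-1}$ from $\nabla(\text{cutoff})$ but multiplies by $|\vp_\sbf^\pm-\vp_{\sbf'}^\pm|\lesssim|\vp_\sbf^\pm-u|+|u-\vp_{\sbf'}^\pm|<2\eps$ on the overlap, plus an oscillation-of-$u$ term bounded via \eqref{oscbdd} by $(\dint_{I^*}|\nabla u|^2\delta^{1-n})^{1/2}\cdot|I|^{1/2}$, which is a square-function expression.

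Then I would verify the two defining properties. The $L^\infty$ bound $\|u-\vp\|_\infty<C\eps$ is local and follows region by region from $\|u-\vp_\sbf^\pm\|_{L^\infty(\Omega_\sbf^\pm)}<\eps$ and the bounded overlap of the patches. For the Carleson bound on $\nabla\vp$, fix a surface ball $\Delta=\Delta(x,r)$; by the reduction \eqref{CME:dyadic} (applied to $|\nabla\vp|$, which is an $L^1$ rather than $L^2$ statement but the same dyadic summation works) it suffices to bound $\sigma(Q)^{-1}\dint_{T_Q}|\nabla\vp|\,\delta^{?}$---actually the cleanest route is to sum directly: $\dint_{B(x,r)\cap(\ree\setminus E)}|\nabla\vp|\,dY\le\sum_{Q\subset\text{relevant}}\dint_{U_Q}|\nabla\vp|\,dY$, split into $Q\in\B$ (controlled by the packing condition and the trivial bound $\dint_{U_Q}|\nabla\vp|\lesssim\dint_{U_Q}|\nabla u|+\ell(Q)^{-1}\dint|u-\vp|\lesssim\ell(Q)^n$ using the interior bounds) and $Q\in\G$, grouped by regime $\sbf$; within each $\sbf$ the sum $\sum_{Q\in\sbf,\,Q\cap\Delta\neq\emptyset}\dint_{U_Q^\pm}|\nabla\vp_\sbf^\pm|$ is dominated by $\|\nabla\vp_\sbf^\pm\|_{\C^{1}(\Omega_\sbf^\pm)}\cdot\sigma(Q(\sbf)\cap C\Delta)$, and then the packing condition $\sum_{\sbf:Q(\sbf)\subset C\Delta}\sigma(Q(\sbf))\lesssim\sigma(C\Delta)\lesssim r^n$ closes the estimate; the extra cutoff gradient terms are handled by the square-function/Carleson bound $\|\nabla u\|_{\C(\ree\setminus E)}\lesssim\|u\|_\infty$ exactly as in the proof of Theorem~\ref{theor:CME:CAD->UR}, plus the $2\eps$-smallness on overlaps which keeps the transition contribution proportional to $r^n$ (indeed bounded independently of how many transitions occur, by bounded overlap).

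\textbf{Main obstacle.} The delicate point is the gluing across regime boundaries: where two stopping-time regimes $\sbf,\sbf'$ meet, or where one passes from $\Omega_\sbf^+$ to $\Omega_\sbf^-$ (the two sheets of the approximating Lipschitz graph), the individual approximators $\vp_\sbf^+$ and $\vp_\sbf^-$ are unrelated, and the partition-of-unity patching produces gradient terms $\sim\ell(Q)^{-1}|\vp_\sbf^+-\vp_\sbf^-|$ on the relevant Whitney cubes. One must show these transition cubes carry a Carleson measure: this is where the coherency of $\sbf$, Remark~\ref{r3.11a} (the modified centers $Y_Q^\pm$ lie in both $U_Q^\pm$ and $U_{\widetilde Q}^\pm$ at definite distance $\approx\ell(Q)$ from the boundary, so $u$ at these points is controlled), and the fact that the $\vp_\sbf^\pm$ are each within $\eps$ of the \emph{same} function $u$ must be combined: $|\vp_\sbf^+-\vp_\sbf^-|\le|\vp_\sbf^+-u|+|u-\vp_\sbf^-|<2\eps$ wherever both are defined, so the transition term is $\lesssim\eps\,\ell(Q)^{-1}$ per unit volume on cubes of size $\ell(Q)$, contributing $\lesssim\eps\,\ell(Q)^n$, and the transition cubes at each scale are packed by the Carleson estimate on $\B$-cubes and the maximal-cube packing. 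This is essentially the argument of \cite[\S4]{HMM2}; the work here is to confirm it goes through verbatim under the abstract hypotheses \eqref{oscbdd}, \eqref{eqe7.2-bis} rather than assuming harmonicity, which is legitimate precisely because those hypotheses are exactly the inputs \cite{HMM2} uses.
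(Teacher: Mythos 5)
Your outline diverges from the paper's actual route, and both places where it diverges contain genuine gaps. First, your step (1) — that the hypotheses ``let us run the classical Dahlberg-type corona argument'' to produce $\eps$-approximators on every bounded chord-arc subdomain — is not available as a black box. The classical constructions (Dahlberg, Garnett, \cite{KKPT}, \cite{HKMP}) are for harmonic functions or solutions on Lipschitz domains; nothing in this paper establishes $\eps$-approximability on CAD under the abstract hypotheses \eqref{oscbdd}, $\|\nabla u\|_{\C}\lesssim\|u\|_{L^\infty}$ and \eqref{eqe7.2-bis}, and proving it would require exactly the corona/stopping-time construction that constitutes the proof of the theorem itself. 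The paper (following \cite[Section 5, Remark 5.29]{HMM2}) never needs approximability on CAD as an input: it uses \eqref{eqe7.2-bis} only on the sawtooth domains $\Omega_{\sbf'}^{\pm}$ associated to semi-coherent subregimes obtained by a \emph{further} stopping time on the oscillation of $u$ inside each regime of Lemma \ref{lemma2.1}; the $N<S$ bound together with the Carleson hypothesis on $\nabla u$ packs the maximal cubes of these subregimes, and $\vp$ is then taken essentially constant (equal to a value of $u$ at a corona center, cf. Remark \ref{r3.11a}) on each subregime, so the $\eps$-closeness comes from the stopping rule, not from pre-existing approximators.

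Second, your gluing step does not close quantitatively. The partition-of-unity gradients contribute about $\eps\,\ell(I)^{n}$ per transition Whitney cube, and for the Carleson bound you need the family of transition cubes inside $B(x,r)$ to satisfy a packing condition: the factor $\eps$ cannot rescue a non-packed family, since $\sum_{I\subset B(x,r)}\ell(I)^{n}$ over \emph{all} Whitney cubes already diverges (there are $\approx 2^{kn}r^{n}$ cubes of side $2^{-k}$, so each scale contributes $\approx r^{n}$). The packing you invoke — bad cubes and maximal cubes $Q(\sbf)$ from Lemma \ref{lemma2.1}(ii) — controls transitions up and down the dyadic tree, but not ``sideways'' transitions between two distinct regimes that are spatially adjacent at comparable scales, which can occur at every scale along a large region; no argument is given that these cubes pack, and in general they need not. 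The mechanism that actually controls the jump set in the cited proof is different: the jumps of the locally constant $\vp$ are merely bounded by $2\|u\|_{L^\infty}$, and their total surface measure inside $B(x,r)$ is controlled by the (upper) ADR property of the sawtooth boundaries $\partial\Omega_{\sbf'}^{\pm}$ (Proposition \ref{prop:Sawtooths-ADR}), each contributing $\lesssim\ell(Q(\sbf'))^{n}\approx\sigma(Q(\sbf'))$, summed via the packing of the refined maximal cubes; no smallness of the jump and no overlap of neighboring approximators is used. (Also, the construction you allude to is \cite[Section 5]{HMM2}, not Section 4, and it is not a gluing of approximators from the $\Omega_{\sbf}^{\pm}$.) To repair your write-up you would either have to prove the CAD-level approximability and the packing of sideways transition cubes — both of which are at least as hard as the theorem — or follow the paper and verify that the direct construction of \cite{HMM2} goes through verbatim under \eqref{oscbdd}, the Carleson hypothesis, and \eqref{eqe7.2-bis} applied to the subregime sawtooths.
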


Strictly speaking, the previous result was proved in \cite[Section 5]{HMM2} for harmonic functions but it was observed in \cite[Remark 5.29]{HMM2} that the same argument can be carried out under the current 
assumptions\footnote{In \cite[Remark 5.29]{HMM2}, we inadvertently neglected to mention that our proof utilized
estimate \eqref{eqe7.2-bis}; in fact, it is utilized in an essential way.  One should bear this in mind when 
comparing the statement
of Theorem \ref{theor:eps-approx} with \cite[Remark 5.29]{HMM2}.
The former is correct.}. 
Let us note that one can weaken \eqref{oscbdd} by just assuming that for any $Q\in\dd(E)$ and for any connected component of $U_Q^i$ there holds 
\begin{equation}\label{oscbdd-weak}
\sup_{X, Y\in U_Q^i}|u(X)-u(Y)|\leq C_0 \left(\ell(Q)^{-n-1}\dint_{\widehat U_Q^i} |u|^2 \, dX\right)^{\frac12}.
\end{equation}
Also, in the course of the proof one uses \eqref{eqe7.2-bis} for the bounded chord-arc subdomains of the form 
$\Omega=\Omega_{\sbf}^\pm$ defined by \eqref{eq3.2} (with $\sbf'=\sbf$). Further details are left to the interested reader.

\section{Applications: Solutions, subsolutions, and supersolutions  of divergence form elliptic equations with bounded measurable coefficients}\label{appl}

\subsection{Estimates for solutions of second order divergence form elliptic operators with coefficients satisfying a Carleson measure condition}

Given an open set $\Omega\subset\ree$, consider a divergence form elliptic operator $L:=-\div (A(\cdot)\nabla)$,
defined in $\Omega$, where $A$ is an $(n+1)\times(n+1)$ matrix with real bounded measurable coefficients, possibly non-symmetric, satisfying the  ellipticity condition
\begin{equation}
\label{eq1.1*} \lambda^{-1}\,|\xi|^{2}\leq\,A(X)\,\xi\cdot,\xi
:= \sum_{i,j=1}^{n+1}A_{ij}(X)\xi_{j} \xi_{i}, \qquad
|A(X)\,\xi\cdot \zeta|\le \lambda|\xi|\,|\zeta|,
\end{equation}
for some $\lambda\ge 1$, and for all $\xi, \zeta\in\ree$, and for a.e $X\in \Omega$. As usual, the divergence form equation is interpreted in the weak sense, i.e., we say that $Lu=0$
in $\Omega$ if $u\in W^{1,2}_{\rm loc}(\Omega)$ and
\begin{equation}\label{eqweak}
\dint_\Omega A(X) \nabla u(X) \cdot \nabla \Psi(X)\,dX = 0\,,
\end{equation} for all  $\Psi \in C_0^\infty(\Omega)$.

Let us introduce some notation. Given an open set $\Omega\subset\ree$ and $A$, an $(n+1)\times(n+1)$ matrix defined on $\ree\setminus E$ with real bounded measurable coefficients, possibly non-symmetric, satisfying the  ellipticity condition \eqref{eq1.1*}, we say that $A\in KP(\Omega)$ (the Kenig-Pipher class) if $|\nabla A(\cdot)|\dist(\cdot,\partial\Omega)\in L^\infty(\Omega)$ and $\|\nabla A\|_{\C(\Omega)}<\infty$. 
It has been demonstrated in \cite{KPdrift}  that if $\Omega$ is a Lipschitz domain and $A\in KP(\Omega)$ weak solutions to $Lu$ satisfy square function/non-tangential maximal function estimates and Carleson measure estimates on $\Omega$.  Strictly speaking, the class of matrices is slightly smaller and the details of the proof are only provided there for $N<S$ direction (and only for $p>2$), but all ingredients are laid out for a reader to reconstruct a complete proof. One can also consult \cite{DFM2} for complete details presented in this and more general, higher co-dimensional, case. For the 
precise case we are considering here, the  following result can be found in \cite[Appendix A]{HMT}\footnote{The argument 
 in \cite[Appendix A]{HMT} follows that of  \cite{KPdrift} very closely.}:
\begin{equation}\label{CME-KP}
\parbox{.85\textwidth}{Let $\Omega$ be a Lipschitz domain and let $A\in KP(\Omega)$. Then, any weak solution $u\in W^{1,2}_\loc(\Omega)\cap L^\infty(\Omega)$ to $Lu=0$ in $\Omega$  satisfies $\|\nabla u\|_{\C(\Omega)}\lesssim \|u\|_{L^\infty(\Omega)}^2$  with implicit constant depending on $n$, the Lipschitz character of $\Omega$, ellipticity, and the the implicit constants in $A\in KP(\Omega)$. 	
}
\end{equation}

We also need the following auxiliary result (cf. \cite[Lemma~3.1]{KPdrift}):

\begin{lemma}\label{l8.3} Let $E\subset \ree$ be a closed set and let $A$ be an $(n+1)\times(n+1)$ matrix defined on $\ree\setminus E$ with real bounded measurable coefficients, possibly non-symmetric, satisfying the  ellipticity condition \eqref{eq1.1*}. If $A\in KP(\ree\setminus E)$ then $A\in KP(D)$ for any subset $D\subset \ree\setminus E$. Moreover, $\|\nabla A(\cdot)\dist(\cdot,\partial D)\|_{L^\infty(D)}\le \|\nabla A(\cdot)\dist(\cdot,E)\|_{L^\infty(\ree\setminus E)}$ and
\[
\|\nabla A\|_{\C(D)}\le C\big(\|\nabla A\|_{\C(\ree\setminus E)}+ \|\nabla A(\cdot)\dist(\cdot,E)\|_{L^\infty(\ree\setminus E)}^2\big),
\]
where $C$ depends only on dimension. 
\end{lemma}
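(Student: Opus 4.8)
\textbf{Proof plan for Lemma \ref{l8.3}.} The statement has two independent pieces: first, the pointwise bound $\|\nabla A(\cdot)\dist(\cdot,\partial D)\|_{L^\infty(D)}\le \|\nabla A(\cdot)\dist(\cdot,E)\|_{L^\infty(\ree\setminus E)}$; second, the Carleson norm bound on $D$. The first is immediate: for every $X\in D\subset\ree\setminus E$ we have $\dist(X,\partial D)\le\dist(X,E)$ since $E=\partial(\ree\setminus E)$ is farther from $X$ than $\partial D$ is (because $\partial D\subset\overline{\ree\setminus E}$, and more simply because $D\subset\ree\setminus E$ forces $\dist(X,\partial D)\le\dist(X,\ree\setminus(\ree\setminus E))=\dist(X,E)$). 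Multiplying the a.e.\ pointwise inequality $|\nabla A(X)|\dist(X,\partial D)\le|\nabla A(X)|\dist(X,E)$ and taking suprema gives the claim, and in particular shows $A\in KP(D)$ provided we also control the Carleson term.

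For the Carleson bound, fix $x\in\partial D$ and $0<r<\infty$ and estimate $\frac1{r^n}\dint_{B(x,r)\cap D}|\nabla A(Y)|^2\dist(Y,\partial D)\,dY$. The idea, following \cite[Lemma 3.1]{KPdrift}, is a Whitney decomposition of $B(x,r)\cap D$ adapted to $\partial D$ together with the observation that on each Whitney cube $I$ of $D$ one has $\dist(Y,\partial D)\approx\ell(I)$, so the local contribution is $\lesssim\ell(I)^{n-1}\dint_{2I}|\nabla A|^2\,dY$; one then splits according to whether $\dist(Y,E)$ is comparable to $\dist(Y,\partial D)$ or much larger. When $\dist(Y,E)\approx\dist(Y,\partial D)$, the Carleson box in $D$ at $(x,r)$ is comparable (up to bounded overlap and a Vitali-type covering of the relevant Whitney cubes by balls $B(z,cr')$ with $z\in E$) to a union of pieces of Carleson boxes in $\ree\setminus E$ against $E$, so this part is controlled by $\|\nabla A\|_{\C(\ree\setminus E)}$. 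When $\dist(Y,E)\gg\dist(Y,\partial D)$ — i.e.\ $Y$ is deep inside $\ree\setminus E$ but near $\partial D$ — one uses the pointwise hypothesis $|\nabla A(Y)|\le\|\nabla A(\cdot)\dist(\cdot,E)\|_{L^\infty(\ree\setminus E)}/\dist(Y,E)\le\|\nabla A(\cdot)\dist(\cdot,E)\|_{L^\infty}/\dist(Y,\partial D)$, so that on each such Whitney cube $I$ of side $\ell(I)$ one gets $\dint_I|\nabla A|^2\dist(Y,\partial D)\,dY\lesssim \|\nabla A(\cdot)\dist(\cdot,E)\|_{L^\infty}^2\,\ell(I)^{n-1}\cdot\ell(I)\cdot\ell(I)^{-1}=\|\nabla A(\cdot)\dist(\cdot,E)\|_{L^\infty}^2\,\ell(I)^n/\ell(I)$, and summing the side lengths of Whitney cubes meeting $B(x,r)$ against a geometric series (using that the number of cubes of a fixed dyadic size intersecting $B(x,r)$ is $\lesssim1$, exactly as in \eqref{pior4fgagv}) yields a bound $\lesssim\|\nabla A(\cdot)\dist(\cdot,E)\|_{L^\infty}^2\,r^n$. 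Dividing by $r^n$ and taking the supremum over $x,r$ gives the stated inequality, with $C$ depending only on $n$ (through the Whitney-cube overlap and covering constants).

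The main obstacle — really the only nontrivial point — is the bookkeeping in the ``$\dist(\cdot,E)\approx\dist(\cdot,\partial D)$'' regime: one must check that the Whitney cubes of $D$ lying in $B(x,r)\cap D$ on which $\dist(Y,E)\lesssim\dist(Y,\partial D)$ can be grouped and assigned to surface balls $\Delta(z,\rho)=B(z,\rho)\cap E$ with $z\in E$, $\rho\lesssim r+\dist(x,E)$, in such a way that the resulting Carleson boxes in $\ree\setminus E$ have bounded overlap and total ``mass'' $\lesssim r^n$; here one uses that each such cube $I$ has a nearby point of $E$ within distance $\lesssim\ell(I)$, that $\ell(I)\lesssim r$, and a standard Besicovitch/Vitali covering argument. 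This is routine but is where all the geometry sits; everything else is the pointwise hypotheses plus the elementary summation in \eqref{pior4fgagv}. I would present the two regimes as two displayed chains of inequalities and invoke the Whitney properties \eqref{Whintey-4I} and the counting bound \eqref{pior4fgagv} without re-deriving them.
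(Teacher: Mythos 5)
Your first part (the pointwise bound $\dist(\cdot,\partial D)\le\dist(\cdot,E)$ on $D$, hence the $L^\infty$ piece of $A\in KP(D)$) is correct. The Carleson estimate, however, has a genuine gap, and it sits exactly in your second regime. Write $M:=\|\nabla A(\cdot)\dist(\cdot,E)\|_{L^\infty(\ree\setminus E)}$. On the set where $\dist(Y,E)\gg\dist(Y,\partial D)$ you discard the Carleson hypothesis and integrate only the pointwise bound $|\nabla A(Y)|^2\dist(Y,\partial D)\le M^2\,\dist(Y,\partial D)\dist(Y,E)^{-2}$. Since $D$ is an \emph{arbitrary} open subset of $\ree\setminus E$, points in this regime may still have $\dist(\cdot,E)$ arbitrarily small (only the ratio is large), and then this majorant is simply not integrable at scale $r^n$: take $E=\{t=0\}$ and $D=\{t>0\}\setminus\bigcup_{j\ge0}\{t=2^{-j}\}$, $x=0\in\partial D$, $r=1$; in the band of points at height $t\approx 2^{-j}$ lying within $2^{-j}/(2C)$ of a removed hyperplane one has $\dist(Y,E)\approx 2^{-j}> C\dist(Y,\partial D)$, and the integral of $\dist(\cdot,\partial D)\dist(\cdot,E)^{-2}$ over each such band is $\approx C^{-2}$, so the sum over $j$ diverges. (This does not contradict the lemma: a matrix with $|\nabla A|\approx M/\dist(\cdot,E)$ there would violate $\|\nabla A\|_{\C(\ree\setminus E)}<\infty$. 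The point is that the $L^\infty$ hypothesis alone cannot control your second regime wherever $\dist(\cdot,E)\lesssim r$; the Carleson condition on $E$ must be used there too.) Two further mechanical problems: the counting bound you import from \eqref{pior4fgagv} (boundedly many Whitney cubes of each dyadic size meet $B(x,r)$) holds there only because those cubes have side length comparable to their distance from $x$, so all of them sit in $B(x,C2^{k})$; for small Whitney cubes of $D$ the number of cubes of size $2^{k}$ meeting $B(x,r)$ can be as large as $(r/2^{k})^{n+1}$, since no regularity of $\partial D$ is assumed. And the per-cube bound should be $M^{2}\ell(I)^{n}$, not $M^{2}\ell(I)^{n-1}$; even granting your count, summing $\ell(I)^{n-1}$ over dyadic scales gives $r^{n-1}$, not the $r^{n}$ you claim.

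The paper's proof avoids all of this by splitting according to the position of the \emph{center} at scale $r$, not pointwise, and it needs no Whitney decomposition or covering argument. If $\dist(x,E)\le 2r$, pick $z\in E$ with $|x-z|=\dist(x,E)$; then $B(x,r)\subset B(z,3r)$ and, using $\dist(Y,\partial D)\le\dist(Y,E)$ on all of $D$, the integral is at most $\dint_{B(z,3r)\setminus E}|\nabla A|^{2}\dist(\cdot,E)\,dY\le(3r)^{n}\|\nabla A\|_{\C(\ree\setminus E)}$. If $\dist(x,E)>2r$, then every $Y\in B(x,r)\cap D$ satisfies $\dist(Y,E)>r$ and $\dist(Y,\partial D)\le|Y-x|<r$, so the pointwise hypothesis gives $\dint_{B(x,r)\cap D}|\nabla A|^{2}\dist(\cdot,\partial D)\,dY\le M^{2}r^{-1}|B(x,r)|=c_{n}M^{2}r^{n}$. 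Note that it is precisely the condition $\dist(\cdot,E)>r$ on the whole ball that tames the $\dist(\cdot,E)^{-2}$ singularity which defeats your pointwise regime; you should restructure your argument along these lines.
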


\begin{proof} 
	
Note first that since $D\subset\ree\setminus E$ then $\dist(X,\partial D)\le \dist(X,E)$ for every $X\in D$. In particular, one has $\|\nabla A(\cdot)|\dist(\cdot,\partial D)\|_{L^\infty(D)}\le \|\nabla A(\cdot)\dist(\cdot,E)\|_{L^\infty(\ree\setminus E)}$. 

Next, we fix $B(x, r)$ with $x\in \partial D$ and $0<r<\infty$. We shall consider two cases. First, if $\dist(x,E)\leq 2r$ we pick $z\in E$ with $\dist(x,E)=|x-z|$ and observe that $B(x,r)\subset B(z,3r)$. Then, 
\[
\dint_{B(x,r)\cap D} |\nabla A(Y)|^2 \dist(Y,\partial D) \,dY
\le
\dint_{B(z,3r)\cap D} |\nabla A(Y)|^2 \dist(Y,E) \,dY
\le
(3\,r)^n \|\nabla A\|_{\C(\ree\setminus E)}.
\]
In the second case, $\dist(X,E)> 2r$, we have $\dist(Y,E)>r$ and $\dist(Y,\partial D)\le |Y-x|<r$ for every $Y\in B(x, r)\cap D$. Hence, 
\begin{multline*}
\dint_{B(x,r)\cap D} |\nabla A(Y)|^2 \dist(Y,\partial D) \,dY
\le
\|\nabla A(\cdot)\dist(\cdot,E)\|_{L^\infty(\ree\setminus E)}^2
\dint_{B(z,r)\cap D} \frac{\dist(Y,\partial D)}{\dist (Y,E)^2} \,dY
\\
\le
\|\nabla A(\cdot)\dist(\cdot,E)\|_{L^\infty(\ree\setminus E)}^2  r^{-1}\,|B(x,r)|
=
c_n\|\nabla A(\cdot)\dist(\cdot,E)\|_{L^\infty(\ree\setminus E)}^2  r^n.
\end{multline*}
All these readily give the desired estimate. 
\end{proof}

\begin{theorem}\label{t8.9} Let $E\subset \ree$ be an $n$-dimensional UR set.  Let $A$ be an $(n+1)\times(n+1)$ matrix defined on $\ree\setminus E$ with real bounded measurable coefficients, possibly non-symmetric, satisfying the  ellipticity condition \eqref{eq1.1*} and so that $A\in KP(\ree\setminus E)$. Then any weak solution $u\in W^{1,2}_\loc(\ree\setminus E)$ to $Lu=0$ in $\ree\setminus E$ satisfies the $S<N$ estimates
\begin{equation}\label{eq8.10}
\|S_{\ree\setminus E}u\|_{L^p(E)} \leq C \|N_{*,{\ree\setminus E}} u\|_{L^p(E)}, \quad 0<p<\infty,
\end{equation} 
and 
\begin{equation}\label{eq8.10:balls}
\|S_{\ree\setminus E}^r u\|_{L^p(\Delta(x,r))}\lesssim \|N^{K'r}_{*,\ree\setminus E} u\|_{L^p(\Delta(x,K' r))}, \quad 0<p<\infty,
\end{equation}
for any $x\in E$ and $0<r<2\diam(E)$, where $\Delta(x,r)=B(x,r)\cap E$, and where $K'$ depends on $n$ and the UR character of $E$; 
as well as its local dyadic analogue, for any Whitney-dyadic structure $\{\W_Q\}_{Q\in\dd(E)}$ for $\ree\setminus E$ with parameters $\eta$ and $K$,
\begin{equation}\label{eq8.11}
\|S^Qu\|_{L^p(Q)} \leq C \|\widehat N_*^Q u\|_{L^p(Q)}, \quad Q\in \dd(E), \quad  0<p<\infty.
\end{equation} 
If, in addition, bounded, $u\in L^\infty(\ree\setminus E)$ then the Carleson measure estimate 
\begin{equation}\label{eq8.12}
\|\nabla u\|_{\C(\ree\setminus E)}
\leq \,C\, \|u\|^2_{L^\infty(\ree\setminus E)}\, ,
\end{equation}
holds and $u$ is $\eps$-approximable on $\ree\setminus E$, in the sense of Definition~\ref{def1.3}. All constants depend on $n$, the UR character of $E$, the ellipticity of $A$, $\|\nabla A(\cdot)\dist(\cdot,E)\|_{L^\infty(\ree\setminus E)}$, $\|\nabla A\|_{\C(\ree\setminus E)}$, the aperture of the cone $\kappa$ implicit in \eqref{eq8.10}, and the implicit parameters $\eta, K,\tau$ implicit in \eqref{eq8.11}.
\end{theorem}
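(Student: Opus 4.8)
The proof of Theorem \ref{t8.9} will be a matter of assembling the transference machinery developed in Sections \ref{section:sgl}--\ref{eps-approx} with the known Lipschitz-domain estimates, so I would organize it around verifying the hypotheses of those theorems. The starting point is that, by \cite{KPdrift} (or the precise statement \eqref{CME-KP} and its companions for the $N<S$ direction), all the desired estimates are valid on bounded Lipschitz subdomains of $\ree\setminus E$: the Carleson measure estimate holds for bounded solutions, and the $N<S$ estimate on $L^2$ holds for arbitrary (not necessarily bounded) solutions after subtracting the value at an interior corkscrew point. The crucial observation enabling this is Lemma \ref{l8.3}, which guarantees that $A\in KP(\ree\setminus E)$ descends to $A\in KP(\Omega)$ for every subdomain $\Omega\subset\ree\setminus E$, with $KP$ constants controlled uniformly in terms of $\|\nabla A(\cdot)\dist(\cdot,E)\|_{L^\infty(\ree\setminus E)}$ and $\|\nabla A\|_{\C(\ree\setminus E)}$; hence the Lipschitz-domain constants in \eqref{CME-KP} and in the $N<S$ bound depend only on the Lipschitz character, dimension, ellipticity, and those two $KP$ quantities, precisely as demanded by Notation \ref{notation:constants}.

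With that in place, I would prove \eqref{eq8.12} first. Apply Corollary \ref{corol:CME:Lip->UR} with $F=\nabla u/\|u\|_{L^\infty(\ree\setminus E)}$: by \eqref{CME-KP} combined with Lemma \ref{l8.3}, $F$ satisfies the Carleson measure estimate on every bounded Lipschitz subdomain of $\ree\setminus E$ with a constant depending only on the Lipschitz character (and the allowable parameters), so \eqref{eqn2.2-general*:generalthm} yields \eqref{eq8.12}. One must note here that $\|F\|_{\C_0(\ree\setminus E)}<\infty$ is automatic by Caccioppoli's inequality, which is exactly the harmless prerequisite flagged in Remark \ref{remark:CMO0}; alternatively it is controlled by \eqref{eqdefCME-Ec-interior:Lip}. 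Next, for the square-function estimate \eqref{eq8.10}, \eqref{eq8.10:balls}, \eqref{eq8.11}, I would invoke Theorem \ref{theor:good-lambda} in the form packaged by Remark \ref{Good-lambda:classes}: take $\Sigma$ to be the class of open subsets of $\ree\setminus E$ with UR boundary. Interior Caccioppoli and Moser estimates give \eqref{locCacc-alt} for $G=\nabla u$, $H=u$ (with the reverse-Hölder upgrade if needed); condition $(A_{\rm loc})$ holds because every local sawtooth $\widehat\Omega_{\F,Q}$ is a CAD (Lemma \ref{lemma:CAD-geom} applied to the relevant domains, or Lemma \ref{lemma3.15} for the sawtooths $\Omega_\sbf^\pm$), hence in particular admits big pieces of Lipschitz subdomains, so the CME for $\nabla u$ on such sawtooths follows from \eqref{eqn2.2-general} of Theorem \ref{theor:CME:CAD->UR} (or directly from Theorem \ref{theor:CME:Lip->CAD} together with \eqref{CME-KP}). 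Then the chain of implications \eqref{eq:Aloc->B-dyadic}, \eqref{eq:B-dyadic->B} of Theorem \ref{theor:good-lambda} delivers $(B)_q$ and $(B)_q^\dd$ for all $0<q<\infty$, which is exactly \eqref{eq8.10}--\eqref{eq8.11} after translating dyadic cones to traditional cones via Remarks \ref{remark:rcones}, \ref{remark:COA} and Remark \ref{remark:N<S:cubes->balls}.

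Finally, for $\eps$-approximability of a bounded solution, I would apply Theorem \ref{theor:eps-approx} (equivalently Theorem \ref{t0.4}): the oscillation bound \eqref{oscbdd} is a standard interior estimate for $Lu=0$ (Moser plus Caccioppoli), the Carleson bound $\|\nabla u\|_{\C(\ree\setminus E)}\lesssim\|u\|_{L^\infty}^2$ is \eqref{eq8.12} just established, and the hypothesis \eqref{eqe7.2-bis}, i.e.\ the $N<S$ estimate on $L^2$ on every bounded chord-arc subdomain $\Omega\subset\ree\setminus E$, follows from the Lipschitz-domain $N<S$ bound of \cite{KPdrift} promoted to chord-arc domains by Theorem \ref{theor:N<S:Lip->CAD} (applied with $D=\Omega$, using that $A\in KP(\Omega)$ by Lemma \ref{l8.3}, that solutions satisfy the Moser bound \eqref{locbdd}, and that bounded Lipschitz subdomains of $\Omega$ are bounded Lipschitz subdomains of $\ree\setminus E$). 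Conclude that $u$ is $\eps$-approximable on $\ree\setminus E$. The only real point requiring care — and the step I expect to be the main obstacle — is the bookkeeping of constants: one must check that at each transference step the constant produced depends only on $n$, the UR character of $E$, ellipticity, the two $KP$ quantities, and the discretionary parameters $\eta,K,\tau$, and never on the individual subdomain in an uncontrolled way; this is precisely the content of Notation \ref{notation:constants} and Lemma \ref{l8.3}, so the verification is routine but must be done explicitly. Everything else is a direct citation of the results above.
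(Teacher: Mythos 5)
Your overall architecture matches the paper's (Lipschitz CME from $A\in KP$ plus Lemma \ref{l8.3}, transference to get \eqref{eq8.12}, the good-$\lambda$ machinery of Theorem \ref{theor:good-lambda} for the $S<N$ bounds, and Theorem \ref{theor:eps-approx} for $\eps$-approximability), but there is a genuine gap in your verification of $(A_{\rm loc})$. You assert that every local sawtooth $\widehat{\Omega}_{\F,Q}$ of $\ree\setminus E$ is a chord-arc domain, citing Lemma \ref{lemma:CAD-geom} and Lemma \ref{lemma3.15}. Neither applies here: Lemma \ref{lemma:CAD-geom} requires the \emph{ambient} open set to be a CAD, whereas $\ree\setminus E$ carries no corkscrew or Harnack chain structure (it need not even be connected), and Lemma \ref{lemma3.15} covers only the special sawtooths $\Omega_{\sbf'}^{\pm}$ built from semi-coherent subregimes of the corona decomposition, not arbitrary $\widehat{\Omega}_{\F,Q}$ with $Q\in\dd(E)$ and $\F\subset\dd_Q$ arbitrary. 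In general such sawtooths are not CADs, so Theorem \ref{theor:CME:Lip->CAD} (and your invocation of Theorem \ref{theor:CME:CAD->UR} for them) cannot be applied as stated, and your proof of $(A_{\rm loc})$ collapses. What is actually true, and what the paper's argument rests on, is Proposition \ref{prop:Sawtooths-UR} (Appendix A): the sawtooths $\widehat{\Omega}_{\F,Q}$ have \emph{UR boundaries}, with UR character controlled by $n$ and the UR character of $E$. One then needs the intermediate claim that the Carleson measure estimate $\|\nabla u\|_{\C(\Omega)}\lesssim\|u\|^2_{L^\infty(\Omega)}$ holds on every open $\Omega\subset\ree\setminus E$ with UR boundary; this is obtained by extending $u$ by zero and $A$ by the identity across $\pom$ (checking via Lemma \ref{l8.3} that the extended matrix is in $KP(\ree\setminus\pom)$ and that the extension is a solution there), and then applying Corollary \ref{corol:CME:Lip->UR} (equivalently Remark \ref{remark:CME-E-OMmga}) relative to the UR set $\pom$ together with \eqref{CME-KP} on Lipschitz subdomains. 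With that claim, $(A_{\rm loc})$ holds uniformly in $Q$ and $\F$, and the rest of your good-$\lambda$ step goes through.

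A secondary, smaller issue concerns the $\eps$-approximability step: Theorem \ref{theor:eps-approx} requires the $N<S$ estimate \eqref{eqe7.2-bis} on $L^2$ for all bounded chord-arc subdomains, but Theorem \ref{theor:N<S:Lip->CAD} alone only produces $L^q$ bounds for $q>2$. You need Corollary \ref{corol: N<S:Lip->CAD} (i.e.\ Theorem \ref{theor:N<S:CAD->CAD:all-p} on top of Theorem \ref{theor:N<S:Lip->CAD}), whose hypothesis \eqref{revHol} holds for solutions by interior reverse Hölder; also note that the $L^2$ Lipschitz-domain hypothesis \eqref{eqn5.9} is most safely justified as in the paper, via \eqref{CME-KP} plus \cite{KKiPT} (giving $\omega_L\in A_\infty$) and then \cite{DJK}, since \cite{KPdrift} as cited provides the $N<S$ direction only for $p>2$. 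Your constant bookkeeping remark is correct and consistent with Notation \ref{notation:constants}, but the missing ingredient is precisely Proposition \ref{prop:Sawtooths-UR} together with the UR-boundary CME claim, not a routine verification.
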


\begin{proof} 
Fix $A\in KP(\ree\setminus E)$ with ellipticity constant $\lambda$ and take any weak solution $u\in W^{1,2}_\loc(\ree\setminus E)$ to $Lu=0$ in $\ree\setminus E$. 
\begin{claim}\label{claim:elliptic-PDE:UR-sub}
For any $\Omega\subset\ree\setminus E$ with $\pom$ being UR there holds
\[
\|\nabla u\|_{\C(\Omega)}\lesssim \|u\|_{L^\infty(\Omega)}^2,
\]
with an implicit constants on $n$, the UR character of $E$, $\lambda$,  and the implicit constants in $A\in KP(\ree\setminus E)$.
\end{claim}

Assuming this momentarily, and taking $\Omega=\ree\setminus E$ we readily obtain \eqref{eq8.12}. On the other hand, given an arbitrary  $Q\in\dd(E)$ and arbitrary pairwise disjoint family $\F\subset\dd_Q$, let $G=\nabla u\in L^2_\loc(\ree\setminus E)$ and $H=u\in C(\ree\setminus E)$. Note that Proposition~\ref{prop:Sawtooths-UR} says that $\widehat{\Omega}_{\F,Q}$ is an open set with UR boundary and with UR character depending on $n$ and the UR character of $E$. Hence, Claim \ref{claim:elliptic-PDE:UR-sub} says that
\[
\|G\|_{\C(\widehat{\Omega}_{\F,Q})}=\|\nabla u\|_{\C(\widehat{\Omega}_{\F,Q})}\lesssim \|u\|_{L^\infty(\widehat{\Omega}_{\F,Q})}^2=\|H\|_{L^\infty(\widehat{\Omega}_{\F,Q})}^2
\]
with a constant which is independent of $u$, $Q$ and $\F$, and depends on $n$, the UR character of $E$, the ellipticity of $A$, and the implicit constants in $A\in KP(\ree\setminus E)$. This means that $(A_\loc)$ in Theorem~\ref{theor:good-lambda} holds for the open set $\ree\setminus E$. As such \eqref{eq:Aloc->B-dyadic}, \eqref{eq:B-dyadic->B}, 
and Remark \ref{remark:N<S:cubes->balls} imply \eqref{eq8.10}--\eqref{eq8.11}. 

\begin{proof}[Proof of Claim \ref{claim:elliptic-PDE:UR-sub}]
Take an arbitrary any open subset $\Omega\subset\ree\setminus E$ with $\pom$ being UR. We may assume that $0<\|u\|_{L^\infty(\Omega)}<\infty$, otherwise the desired estimate is trivial. 
Set $A_{\Omega}:=A$ in $\Omega$ and $A_\Omega:=Id$ (the identity matrix) in $\ree\setminus\Omega$ which is an elliptic matrix with ellipticity constant at most $\lambda$. Note that Lemma \ref{l8.3} gives
\begin{multline*}
\|\nabla A_\Omega\|_{\C(\ree\setminus\pom)}
=
\sup_{x\in \pom,\, 0<r<\infty} \frac1{r^n}\dint_{B(x,r)\setminus\pom} |\nabla A_\Omega(Y)|^2 \dist(Y,\pom) \,dY
\\
=
\sup_{x\in \pom,\, 0<r<\infty} \frac1{r^n}\dint_{B(x,r)\cap \Omega} |\nabla A(Y)|^2 \dist(Y,\pom) \,dY
\\
\le 
\|\nabla A\|_{\C(\Omega)}
\le
C_n\big(\|\nabla A\|_{\C(\ree\setminus E)}+ \|\nabla A(\cdot)\dist(\cdot,E)\|_{L^\infty(\ree\setminus E)}^2\big)
\end{multline*}
and
\[
\|\nabla A_\Omega \dist(\cdot,\pom)\|_{L^\infty(\ree\setminus\pom)}
=
\|\nabla A \dist(\cdot,\pom)\|_{L^\infty(\Omega)}
\le
\|\nabla A \dist(\cdot,E)\|_{L^\infty(\ree\setminus E)}.
\]
Write also $u_{\Omega}=u$ in $\Omega$ and $u_\Omega:=0$ in $\ree\setminus\Omega$. Note that $u_\Omega\in W^{1,2}_\loc(\ree\setminus\pom)$ satisfies, in the weak sense,  
$-\div(A_\Omega\nabla u_\Omega )=Lu=0$ in $\Omega$ and $-\div(A_\Omega\nabla u_\Omega )=0$ and $\ree\setminus\overline{\Omega}=0$. This and the fact that $\Omega$ is open imply that $-\div(A_\Omega\nabla u_\Omega )=0$ in $\ree\setminus \pom$ in the weak sense. Note also that $u_\Omega\in L^\infty(\ree\setminus\pom)$ implies  $\|u_\Omega\|_{L^\infty(\ree\setminus \pom)}= \|u\|_{L^\infty(\Omega)}<\infty$.

Fix $D\subset\Omega$ an arbitrary bounded Lipschitz subdomain and $F=\nabla u_\Omega/\|u_\Omega\|_{L^\infty(\Omega)}^2$.  By Lemma \ref{l8.3}, we have that $A_\Omega\in KP(\Omega)\subset KP(D)$ (with uniform bounds controlled by those of $A_\Omega\in KP(\Omega)$, hence ultimately on those of $A\in KP(\ree\setminus E)$). By \eqref{CME-KP} applied to $u_\Omega$ for the operator $L_\Omega$  in $D$ we obtain
\[
\|F\|_{\C(D)}=
\frac{\|\nabla u_\Omega\|_{\C(D)}}{\|u_\Omega\|_{L^\infty(\Omega)}^2}
\lesssim 
\frac{\|u_\Omega\|_{L^\infty(D)}^2}{\|u_\Omega\|_{L^\infty(\Omega)}^2}
\le 1,
\]
with implicit constant depending on $n$, the Lipschitz character of $D'$, $\lambda$ and the implicit constants of $A\in KP(\ree\setminus E)$. 
This and Corollary \ref{corol:CME:Lip->UR} (or Remark \ref{remark:CME-E-OMmga} for a more direct argument) to the UR set  $\pom$  yield
\[
\frac{\|\nabla u\|_{\C(\Omega)}}{\|u\|_{L^\infty(\Omega)}^{2}}
=
\frac{\|\nabla u_\Omega\|_{\C(\ree\setminus \pom)}}{\|u_\Omega\|_{L^\infty(\Omega)}^{2}}
=
\|F\|_{\C(\Omega)}
\lesssim
\sup_{D\subset \ree\setminus \pom} \|F\|_{\C(D)}
=
\sup_{D\subset \Omega} \|F\|_{\C(D)}
\lesssim 1, 
\]
with implicit constants depending only on $n$, the UR character of $\pom$,  $\lambda$, and the implicit constants in $A\in KP(\ree\setminus E)$.
This completes the proof of \eqref{eq8.12}.
\end{proof}

To continue with the proof of Theorem \ref{t8.9} we are left with showing that if we further assume that $u\in L^\infty(\ree\setminus E)$ then $u$ is $\eps$-approximable on $\ree\setminus E$. Firstly, all auxiliary estimates \eqref{locbdd},  \eqref{revHol}, and  \eqref{oscbdd} hold for $u$ in the open set $\ree\setminus E$, and hence in any open subset $\Omega\subset \ree\setminus E$, by the usual interior estimates for solutions of elliptic PDEs (see, e.g., \cite{KenigBook}). We point out again that $N<S$ estimates \eqref{eqn5.9} on all bounded Lipschitz subdomains of $\Omega$ hold essentially by \cite{KPdrift}. More precisely, let $D\subset\ree\setminus E$ be an arbitrary chord-arc subdomain. For every a bounded Lipschitz subdomain $\Omega\subset D$, by Lemma \ref{l8.3} it follows that $A\in KP(\Omega)$ with bounds that depend on the implicit constants in  $A\in KP(\ree\setminus E)$. In turn \eqref{CME-KP} and \cite{KKiPT} yield that the associated elliptic measure belongs to the class $A_\infty(\pom)$ with respect to surface measure. Thus, \cite{DJK} allows us to obtain $N<S$ estimates are valid on $L^q$, $0<q<\infty$, on $\Omega$. Corollary \ref{corol: N<S:Lip->CAD} readily  gives $N<S$ on $L^q$, $0<q<\infty$. This together with the fact that we have already show \eqref{eq8.12} allow us to invoke Theorem \ref{theor:eps-approx} to conclude as desired that $u$ is $\eps$-approximable with constants depending only 
on $n$, the UR character of $E$, $\lambda$,  and the implicit constants in $A\in KP(\ree\setminus E)$.
\end{proof}

\subsection{Estimates for subsolutions and supersolutions of second order divergence form elliptic operators with coefficients satisfying a Carleson measure condition} 
Our methods allow us to deal not only with solutions but also with subsolutions (thus, also with supersolutions) of the operators considered in the previous section. Before, stating the result let us recall that given an open set $\Omega\subset\ree$ and a second order divergence form elliptic operators $L:=-\div (A(\cdot)\nabla)$, defined in $\Omega$, where $A$ is an $(n+1)\times(n+1)$ matrix with real bounded measurable coefficients, possibly non-symmetric, satisfying the  ellipticity condition \eqref{eq1.1*}, we say that $u\in W^{1,2}_{\loc}(\Omega)$ is a weak $L$-subsolution (or, $Lu\le 0$) in $\Omega$ if 
\begin{equation}\label{eqweak:sub}
\dint_\Omega A(X) \nabla u(X) \cdot \nabla \Psi(X)\,dX \le  0\,,
\end{equation} 
for all  $0\le \Psi \in C_0^\infty(\Omega)$. Analogously,  $u\in W^{1,2}_{\loc}(\Omega)$ is a weak $L$-supersolution (or, $Lu\ge 0$) if $-u$ is a subsolution.

We are now ready to state our main result in this section.  We note that it applies in particular to the Laplace operator, hence the obtained estimates are valid for any subharmonic or superharmonic functions.

\begin{theorem}\label{t8.9:sub} Let $E\subset \ree$ be an $n$-dimensional UR set.  Let $A$ be an $(n+1)\times(n+1)$ matrix defined on $\ree\setminus E$ with real bounded measurable coefficients, possibly non-symmetric, satisfying the  ellipticity condition \eqref{eq1.1*} and so that $A\in KP(\ree\setminus E)$. Then any weak $L$-subsolution or $L$-supersolution $u\in W^{1,2}_\loc(\ree\setminus E)$ in $\ree\setminus E$ satisfies the $S<N$ estimates
	\begin{equation}\label{eq8.10:sub}
	\|S_{\ree\setminus E}u\|_{L^p(E)} \leq C \|N_{*,{\ree\setminus E}} u\|_{L^p(E)}, \quad 0<p<\infty,
	\end{equation} 
	and 
	\begin{equation}\label{eq8.10:balls:sub}
	\|S_{\ree\setminus E}^r u\|_{L^p(\Delta(x,r))}\lesssim \|N^{K'r}_{*,\ree\setminus E} u\|_{L^p(\Delta(x,K' r))}, \quad 0<p<\infty,
	\end{equation}
	for any $x\in E$ and $0<r<2\diam(E)$, where $\Delta(x,r)=B(x,r)\cap E$, and where $K'$ depends on $n$ and the UR character of $E$; 
	as well as its local dyadic analogue, for any Whitney-dyadic structure $\{\W_Q\}_{Q\in\dd(E)}$ for $\ree\setminus E$ with parameters $\eta$ and $K$,
	\begin{equation}\label{eq8.11:sub}
	\|S^Qu\|_{L^p(Q)} \leq C \|\widehat N_*^Q u\|_{L^p(Q)}, \quad Q\in \dd(E), \quad  0<p<\infty.
	\end{equation} 
	If, in addition, bounded, $u\in L^\infty(\ree\setminus E)$ then the following Carleson measure estimate hold
	\begin{equation}\label{eq8.12:sub}
	\|\nabla u\|_{\C(\ree\setminus E)}
	\leq \,C\, \|u\|^2_{L^\infty(\ree\setminus E)}\, .
	\end{equation}
All constants depend on $n$, the UR character of $E$, the ellipticity of $A$, $\|\nabla A(\cdot)\dist(\cdot,E)\|_{L^\infty(\ree\setminus E)}$, $\|\nabla A\|_{\C(\ree\setminus E)}$, the aperture of the cone $\kappa$ implicit in \eqref{eq8.10}, and the parameters $\eta, K,\tau$ implicit in \eqref{eq8.11}.
\end{theorem}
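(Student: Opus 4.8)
\textbf{Proof strategy for Theorem~\ref{t8.9:sub}.} The plan is to reduce the subsolution (equivalently, supersolution) case to the solution case treated in Theorem~\ref{t8.9} by means of the Caccioppoli-type estimate and an auxiliary comparison. It suffices to treat subsolutions, since $u$ is an $L$-supersolution precisely when $-u$ is an $L$-subsolution, and all the quantities $S$, $N_*$, $\|\nabla u\|_{\C}$, $\|u\|_{L^\infty}$ are invariant under $u\mapsto -u$. First I would observe that all the square function / non-tangential maximal function bounds \eqref{eq8.10:sub}--\eqref{eq8.11:sub} follow from the abstract machinery of Theorem~\ref{theor:good-lambda} exactly as in the proof of \eqref{eq8.10}--\eqref{eq8.11}, \emph{provided} one establishes the local Carleson measure estimate
\[
\|\nabla u\|_{\C(\widehat{\Omega}_{\F,Q})}\lesssim \|u\|_{L^\infty(\widehat{\Omega}_{\F,Q})}^2
\]
uniformly over $Q\in\dd(E)$ and pairwise disjoint $\F\subset\dd_Q$; then condition $(A_\loc)$ holds with $G=\nabla u$, $H=u$, and \eqref{eq:Aloc->B-dyadic}, \eqref{eq:B-dyadic->B}, together with Remark~\ref{remark:N<S:cubes->balls}, give the stated estimates. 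Note that \eqref{locCacc-alt} is available here too, since Caccioppoli's inequality holds for subsolutions (test \eqref{eqweak:sub} against $\Psi=\zeta^2(u-c)^+$ or the analogous truncation — the sign works out because $u-c$ may be taken of one sign on the relevant ball after splitting), so the local interior bound on $\nabla u$ in terms of $\esssup |u|$ is unaffected. Hence everything reduces, just as in Theorem~\ref{t8.9}, to proving the global Carleson measure estimate \eqref{eq8.12:sub} on an arbitrary open subset $\Omega\subset\ree\setminus E$ with UR boundary (one applies it to $\Omega=\ree\setminus E$ and to $\Omega=\widehat{\Omega}_{\F,Q}$, the latter having UR boundary with controlled character by Proposition~\ref{prop:Sawtooths-UR}).

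Next I would reduce, via Corollary~\ref{corol:CME:Lip->UR} (transference from Lipschitz subdomains to $\ree\setminus E$, applied to the UR set $\pom$ after the trivial zero-extension of $\nabla u$, exactly as in the proof of Claim~\ref{claim:elliptic-PDE:UR-sub}), to proving the Carleson measure estimate $\|\nabla u\|_{\C(D)}\lesssim \|u\|_{L^\infty(D)}^2$ on every bounded Lipschitz subdomain $D\subset\ree\setminus E$, with constant depending only on the Lipschitz character, ellipticity, and the $KP$-constants. By Lemma~\ref{l8.3} we have $A\in KP(D)$ with controlled bounds, so the task is: for $L=-\div(A\nabla)$ with $A\in KP(D)$, $D$ bounded Lipschitz, and $u\in W^{1,2}_\loc(D)\cap L^\infty(D)$ a weak $L$-subsolution, show $\|\nabla u\|_{\C(D)}\lesssim \|u\|_{L^\infty(D)}^2$. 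The cleanest route is a Riesz-decomposition-type argument: one writes $u = v + w$, where $v$ solves $Lv=0$ in $D$ with the same boundary data as $u$ (or, more robustly, one works locally on Carleson boxes $B(x,r)\cap D$ and solves the Dirichlet problem there), so that $v$ is a genuine solution to which \eqref{CME-KP} applies, giving $\|\nabla v\|_{\C(D)}\lesssim \|v\|_{L^\infty(D)}^2 \le C\|u\|_{L^\infty(D)}^2$ by the maximum principle; and $w=u-v$ is an $L$-subsolution with $w\le 0$ on $\partial D$ (so $w\le 0$ in $D$), $Lw=Lu\le 0$, and $w\in W^{1,2}_0$-type. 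For $w$ one uses the integration-by-parts / Rellich identity: testing the equation $\dint_D A\nabla w\cdot\nabla\Psi \le 0$ against $\Psi = (-w)\,\varphi^2\,\delta$-type cutoffs (with $\delta(\cdot)=\dist(\cdot,\partial D)$ and $\varphi$ a Carleson-box cutoff) and exploiting ellipticity together with $|\nabla A|\,\delta\in L^\infty$ and $\|\nabla A\|_{\C}<\infty$, one absorbs the gradient term and controls $\dint |\nabla w|^2\delta$ by $\|w\|_{L^\infty}^2 r^n$ plus lower-order Carleson terms handled by the $KP$ hypothesis. An alternative, and perhaps technically lighter, approach avoids solving an auxiliary Dirichlet problem: work directly with $u$, use the subsolution inequality with test function $\Psi=\zeta^2(\|u\|_\infty - u + \epsilon)$, which is $\ge 0$, to get $\dint \zeta^2 A\nabla u\cdot\nabla u \le \dint$ (boundary/cutoff terms); the left side controls $\lambda^{-1}\dint\zeta^2|\nabla u|^2$, and then one follows the $KP$-type iteration of \cite{KPdrift}, \cite{HMT} (which only uses ellipticity, the $KP$ condition on $A$, and an $S<N$ mechanism, not the equation $Lu=0$ per se but rather the energy inequality) to propagate this into the full Carleson bound. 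Either way the key point is that the machinery of \cite{KPdrift}/\cite{HMT} for \eqref{CME-KP} is driven by the \emph{energy inequality} $\dint\zeta^2 A\nabla u\cdot\nabla u \lesssim (\text{controllable terms})$, which subsolutions satisfy, and by interior estimates (Moser's local boundedness for subsolutions, which is classical), so the proof goes through essentially verbatim.

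The main obstacle, and the step requiring the most care, is precisely this extension of the Lipschitz-domain Carleson measure estimate \eqref{CME-KP} from solutions to subsolutions. One must check that the proof in \cite[Appendix~A]{HMT} (following \cite{KPdrift}) — which combines a Rellich–Nečas identity, the $KP$ structural hypothesis $|\nabla A|\delta\in L^\infty$ and $\|\nabla A\|_{\C}<\infty$, and good-$\lambda$/stopping-time arguments — uses the equation only in the form of the energy identity and interior regularity, both of which survive the passage to subsolutions (the energy identity becoming an inequality of the favorable sign, and interior $L^\infty$ bounds holding for subsolutions by De Giorgi–Nash–Moser). One should also verify that the maximum principle comparison $\|v\|_{L^\infty(D)}\le\|u\|_{L^\infty(D)}$ (if one takes the decomposition route) is legitimate, which it is since $v$ and $u$ agree on $\partial D$ in the $W^{1,2}$-trace sense and $v$ is a solution in a Lipschitz domain. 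Once \eqref{CME-KP} is known for subsolutions on Lipschitz domains, all the rest is a mechanical application of the transference theorems and Theorem~\ref{theor:good-lambda} already established in the paper, exactly mirroring the proof of Theorem~\ref{t8.9}, and no $\eps$-approximability statement is claimed for subsolutions (indeed the $N<S$ estimate used for $\eps$-approximability is not available in the subsolution setting), so the proof terminates at \eqref{eq8.12:sub}.
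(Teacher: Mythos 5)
Your overall architecture matches the paper's: reduce supersolutions to subsolutions, reduce the whole theorem to a subsolution analogue of \eqref{CME-KP} on bounded Lipschitz domains, and then transfer via Corollary \ref{corol:CME:Lip->UR}, Proposition \ref{prop:Sawtooths-UR} and Theorem \ref{theor:good-lambda} exactly as in Theorem \ref{t8.9}. The gap is in the one step that genuinely requires a new idea, namely the Lipschitz-domain Carleson measure estimate for subsolutions, and there both of your sketches have the sign backwards. For a subsolution the admissible test functions are the nonnegative ones, and the weighted energy identity driving the Kenig--Pipher/CHMT scheme is
\begin{equation*}
\int A\nabla u\cdot\nabla u\,\varphi^{2}\delta\,dX
=\int A\nabla u\cdot\nabla\big(u\,\varphi^{2}\delta\big)\,dX-\int u\,A\nabla u\cdot\nabla\big(\varphi^{2}\delta\big)\,dX ,
\end{equation*}
whose first right-hand term is $\le 0$ only when $u\,\varphi^{2}\delta\ge 0$, i.e.\ only after reducing to a \emph{nonnegative} subsolution (replace $u$ by $u+\|u\|_{L^\infty}$, which leaves $\nabla u$ unchanged and at most doubles the $L^\infty$ norm). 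Your test functions give the opposite inequality: with $\Psi=\zeta^{2}(\|u\|_\infty-u)\ge 0$ the subsolution inequality yields $\int\zeta^{2}A\nabla u\cdot\nabla u\ge 2\int\zeta(\|u\|_\infty-u)A\nabla u\cdot\nabla\zeta$, a \emph{lower} bound on the energy (that $\Psi$ is the correct choice for supersolutions, not subsolutions); likewise, testing the nonpositive subsolution $w$ of your decomposition with $\Psi=(-w)\varphi^{2}\delta$ gives $\int A\nabla w\cdot\nabla w\,\varphi^{2}\delta\ge\int(-w)A\nabla w\cdot\nabla(\varphi^{2}\delta)$, again the useless direction. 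So neither the ``direct energy'' route nor the $w$-part of the decomposition, as written, produces the upper bound on $\int|\nabla u|^{2}\delta$, and the assertion that the \cite{KPdrift}/\cite{HMT} argument ``only uses the energy inequality'' glosses over precisely this point; the missing reduction $u\mapsto u+\|u\|_{L^\infty}$ is the key observation.

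For comparison, the paper proves the subsolution version \eqref{CME-KP:sub} differently from what you propose: after reducing to a.e.\ nonnegative subsolutions by adding $\|u\|_{L^\infty}$, it does not re-run the Kenig--Pipher argument at all. Instead it notes that $A\in KP(\Omega)$ already gives $\omega_L\in A_\infty(\sigma)$ on the Lipschitz domain (by \eqref{CME-KP} for solutions together with \cite{KKiPT}), and then runs the $A_\infty\Longrightarrow$ CME argument of \cite[Theorem 1.1, $(b)\Longrightarrow(a)$]{CHMT} with $u$ a nonnegative subsolution in place of a solution; the only modifications needed are Caccioppoli's inequality for nonnegative subsolutions and replacing ``$=0$'' by ``$\le 0$'' in the key identity, an inequality whose favorable sign is available exactly because $u\ge 0$. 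Your decomposition $u=v+w$ would also require this shift for $w$ (besides the issue of making sense of boundary data for $u\in W^{1,2}_{\loc}\cap L^\infty$ and the maximum-principle comparison), so it is both simpler and necessary to shift $u$ itself. Everything else in your proposal (the reduction of \eqref{eq8.10:sub}--\eqref{eq8.11:sub} to $(A_{\rm loc})$ via sawtooths with UR boundary, and the absence of any $\eps$-approximability claim) is consistent with the paper.
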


\begin{proof}
We start observing that we just need to consider the case where $u$ is a weak $L$-subsolution (because, if $u$ is a weak $L$-supersolution then $-u$ is a weak $L$-subsolution).   We proceed much in the proof of Theorem \ref{t8.9:sub} and a careful reading shows that we just need a version of \eqref{CME-KP} valid for weak $L$-subsolutions. That is, we need to obtain the following:
\begin{equation}\label{CME-KP:sub}
\parbox{.85\textwidth}{Let $\Omega$ be a Lipschitz domain and let $A\in KP(\Omega)$. Then, any weak $L$-subsolution $u\in W^{1,2}_\loc(\Omega)\cap L^\infty(\Omega)$  in $\Omega$  satisfies $\|\nabla u\|_{\C(\Omega)}\lesssim \|u\|_{L^\infty(\Omega)}^2$  with implicit constant depending on $n$, the Lipschitz character of $\Omega$, ellipticity, and the the implicit constants in $A\in KP(\Omega)$. 	
}
\end{equation}

With this goal in mind, fix then an arbitrary weak $L$-subsolution $u\in W^{1,2}_\loc(\Omega)\cap L^\infty(\Omega)$  in $\Omega$. We may suppose that $u$ is a.e.~non-negative. Indeed,  assume for the moment that
we have proved \eqref{CME-KP:sub} for a.e.~non-negative weak $L$-subsolutions, and let $u\in W^{1,2}_\loc(\Omega)$ 
be an arbitrary bounded
weak $L$-subsolution, so that
$\widetilde{u}:=u+\|u\|_{L^\infty(\Omega)}\in W^{1,2}_\loc(\Omega)\cap L^\infty(\Omega)$
is an a.e.~non-negative weak $L$-subsolution  in $\Omega$. 
We then observe that our assumption for a.e.~non-negative weak $L$-subsolutions yields the desired estimate for $u$:
\[
\|\nabla u\|_{\C(\Omega)}=\|\nabla\widetilde{u}\|_{\C(\Omega)}\lesssim \|\widetilde{u}\|_{L^\infty(\Omega)}^2\le 2\, \|u\|_{L^\infty(\Omega)}^2.
\]

Let us then verify \eqref{CME-KP:sub} for an a.e.~non-negative weak $L$-subsolution 
$u\in W^{1,2}_\loc(\Omega)\cap L^\infty(\Omega)$. We observe that since $A\in KP(\Omega)$, by \eqref{CME-KP} and \cite{KKiPT}, it follows that the elliptic measure $\omega_L$ belongs to $A_\infty(\sigma)$ with $\sigma=H^n|_{\partial\Omega}$. With this in hand, we carefully follow the argument in \cite[Proof of Theorem 1.1: $(b)\Longrightarrow(a)$]{CHMT} with $u$ being the fixed a.e.~non-negative weak  $L$-subsolution in $\Omega$ in place of a solution and observing that Lipschitz domains are clearly 1-sided CAD. To justify that the argument can be adapted to the present situation we just need two observations. First, that $u$ satisfies Caccioppoli's estimate (the proof is a straightforward modification of the standard argument using that $u$ is a non-negative a.e.~weak $L$-subsolution). Second, in \cite[(3.64)]{CHMT} one has to replace ``$=0$'' by ``$\le 0$'' because in the present scenario $u$ is a non-negative a.e.~weak $L$-subsolution (in place of a solution). With these two observations an interested reader could easily see that the argument goes through and eventually show that  $\|\nabla u\|_{\C(\Omega)}\lesssim \|u\|_{L^\infty(\Omega)}^2$. Hence, \eqref{CME-KP:sub} holds and this completes the proof. 
\end{proof}

\subsection{Higher order elliptic equations and systems with constant coefficients}
In \cite{DKPV} the authors obtained square function/non-tangential maximal function estimates for higher order elliptic equations and systems on bounded Lipschitz domains. These results have never been extended, even to CAD domains, and here we present a generalization of Carleson measure estimates to the complements of UR sets.

For any multiindex $\alpha=(\alpha_1, \dots, \alpha_{n+1})\in \NN_0^{n+1}$, we write $|\alpha|=\alpha_1+\dots+\alpha_{n+1}$ and  $\alpha!=\alpha_1!\cdots\alpha_{n+1}!$ where $0!=1$. Also $\partial^\alpha=\partial^{\alpha_1}\dots\partial^{\alpha_{n+1}}$ and for every $Y\in\ree$ we write
$Y^\alpha=Y_1^{\alpha_1}\cdots Y_{n+1}^{\alpha_{n+1}}$  where  $a^0=1$ for every $a\in\re$. Finally, $\nabla^k$, $k\in \N$ stands for the vector of all partial derivatives of order $k$. For $k=0$, $\nabla^0$ is just the identity operator.

Let $K, m\in\NN$. For every $1\le j,k\le K$, let $L^{jk}=\sum_{|\alpha|=2\,m} a_{\alpha \beta}^{jk}\partial^\alpha $, where  $\alpha=(\alpha_1, \dots, \alpha_{n+1})\in \NN_0^{n+1}$. The coefficients $a_{\alpha \beta}^{jk}$, $1\le \alpha,\beta\le n+1$, $1\le j, k\le K$ are real constants. Given and open set $\Omega$ and $u=(u_1, \dots, u_K)$, with $u_j \in W^{m, 2}_{\rm loc}(\Omega)$, $1\le j\le K$, we say that $Lu=0$,  if 
\[
\sum_{k=1}^K L^{jk}u^k
=
\sum_{k=1}^K \sum_{|\alpha|=|\beta|=m} a_{\alpha \beta}^{jk} \partial^\alpha \partial^\beta u^k
=
0, \qquad j=1,\dots,K, 
\]
as usual, in the weak sense, similarly to \eqref{eqweak}. Here, $W^{m,2}(\Omega)$ is the space of functions with all derivatives of orders $0,\dots,m$ in $L^2(\Omega)$ and $W^{m, 2}_{\rm loc}(\Omega)$ is the space of functions locally in $W^{m,2}(\Omega)$. We assume, in addition, that $L$ is symmetric: $L^{jk}=L^{kj}$ for $1\leq j,k\leq K$, and that the Legendre-Hadamard ellipticity condition holds: there exists $\lambda>0$ such that 
\begin{equation}\label{LH-higher}
 \sum_{j,k=1}^K \sum_{|\alpha|=|\beta|=m} a_{\alpha\beta}^{jk}\,\xi^\alpha\xi^\beta \zeta_j\zeta_k \geq \lambda\, |\xi|^{2m}|\zeta|^2, \quad \mbox{for all $\zeta=(\zeta_1, \dots, \zeta_K	)\in \re^K$, $\xi \in \ree$.}
\end{equation}

\begin{theorem}\label{t8.14} 
Let $E\subset \ree$ be an $n$-dimensional UR set. Given $K,m\in\NN$, let $L$ be a symmetric constant coefficient $2m$-order $K\times K$ system, satisfying the Legendre-Hadamard ellipticity condition, as above. Then any weak solution $u\in [W^{m, 2}_{\rm loc}(\ree\setminus E)\cap C^{m-1}(\ree\setminus E)]^K$ to $Lu=0$ in $\ree\setminus E$ satisfies the $S<N$ estimates
\begin{equation}\label{eq8.10-bis}
\|S_{\ree\setminus E}(\nabla^{m-1}u)\|_{L^p(E)} \leq C \|N_{*,{\ree\setminus E}} (|\nabla^{m-1}u|)\|_{L^p(E)}, \quad 0<p<\infty,
\end{equation} 
and 
\begin{equation}\label{eq8.10-bis:balls}
\|S_{\ree\setminus E}^r (\nabla^{m-1}u)\|_{L^p(\Delta(x,r))}\lesssim \|N^{K'r}_{*,\ree\setminus E} (|\nabla^{m-1}u|)\|_{L^p(\Delta(x,K' r))}, \quad 0<p<\infty,
\end{equation}
for any $x\in E$ and $0<r<2\diam(E)$, where $\Delta(x,r)=B(x,r)\cap E$, and where $K'$ depends on $n$ and the UR character of $E$; 
as well as its local dyadic analogue, for any Whitney-dyadic structure $\{\W_Q\}_{Q\in\dd(E)}$ for $\ree\setminus E$ with parameters $\eta$ and $K$,
\begin{equation}\label{eq8.11-bis}
\|S^Q(\nabla^{m-1}u)\|_{L^p(Q)} \leq C \|\widehat N_*^Q (|\nabla^{m-1}u|)\|_{L^p(Q)}, \quad Q\in \dd(E), \quad  0<p<\infty.
\end{equation} 
If $u$ is, in addition, such that $\nabla^{m-1} u\in L^\infty(\Omega)$, then the Carleson measure estimate 
\begin{equation}\label{eq8.12-bis}
\|\nabla^m u\|_{\C(\ree\setminus E)}
\leq \,C\, \|\nabla^{m-1}u\|^2_{L^\infty(\ree\setminus E)}\, ,
\end{equation}
holds. 
All constants depend on $n$, the UR character of $E$, the Legendre-Hadamard ellipticity constant, $\sup_{j,k,\alpha,\beta }|a_{\alpha\beta}^{jk}|$, the aperture of the cone $\kappa$ implicit in \eqref{eq8.10-bis}, and the implicit parameters $\eta, K,\tau$ implicit in \eqref{eq8.11-bis}.
\end{theorem}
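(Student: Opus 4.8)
The plan is to follow the proof of Theorem~\ref{t8.9} almost verbatim, the only substantive change being the replacement of the Kenig--Pipher input \eqref{CME-KP} by its analogue for constant coefficient higher order systems on bounded Lipschitz domains. Thus the first step is to record the base case: if $\Omega$ is a bounded Lipschitz domain and $u$ is a weak solution of $Lu=0$ in $\Omega$ with $\nabla^{m-1}u\in L^\infty(\Omega)$, then $\|\nabla^m u\|_{\C(\Omega)}\lesssim\|\nabla^{m-1}u\|_{L^\infty(\Omega)}^2$, with constant depending only on $n$, the Legendre--Hadamard ellipticity constant, $\sup_{j,k,\alpha,\beta}|a_{\alpha\beta}^{jk}|$, and the Lipschitz character of $\Omega$. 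This is obtained from \cite{DKPV} in the same manner that \eqref{CME-KP} is obtained from \cite{KPdrift}: \cite{DKPV} provides the square function/non-tangential maximal function estimates on bounded Lipschitz domains, and since sawtooth subdomains of a Lipschitz domain are again Lipschitz with uniformly controlled character, these estimates hold in particular on all the sawtooth subdomains of $\Omega$, i.e. hypothesis $(B_{\rm loc})_2$ of Theorem~\ref{theor:good-lambda} holds for $G=\nabla^m u$ and $H=|\nabla^{m-1}u|$; then \eqref{eq:B-loc->A} and \eqref{eq:A-loc->A} deliver the Carleson measure bound on $\Omega$, the auxiliary hypothesis \eqref{locCacc} being nothing but the interior Caccioppoli inequality for $L$.

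With the base case in hand, the argument is a transcription of that of Theorem~\ref{t8.9}. First one proves the analogue of Claim~\ref{claim:elliptic-PDE:UR-sub}: for every open $\Omega\subset\ree\setminus E$ with uniformly rectifiable boundary, $\|\nabla^m u\|_{\C(\Omega)}\lesssim\|\nabla^{m-1}u\|_{L^\infty(\Omega)}^2$, with constant depending only on $n$ and the UR character of $\partial\Omega$. When the right-hand side is infinite there is nothing to prove; otherwise the restriction of $u$ to any bounded Lipschitz subdomain $D\subset\Omega$ is again a weak solution of the \emph{very same} system (no change of coefficients is involved here, which makes this step even simpler than in the second order case), so the base case gives $\|\nabla^m u\|_{\C(D)}\lesssim\|\nabla^{m-1}u\|_{L^\infty(\Omega)}^2$, and Remark~\ref{remark:CME-E-OMmga} transfers this to $\Omega$ since $\partial\Omega$ is UR. Applying this with $\Omega=\widehat\Omega_{\F,Q}$---which has UR boundary with character controlled by that of $E$ by Proposition~\ref{prop:Sawtooths-UR}---yields hypothesis $(A_{\rm loc})$ of Theorem~\ref{theor:good-lambda} for the open set $\ree\setminus E$, with $G=\nabla^m u$ and $H=|\nabla^{m-1}u|$ (note $H\in C(\ree\setminus E)$ thanks to the $C^{m-1}$ hypothesis, so $N_*H$ is well defined, and recall the normalization convention of Remark~\ref{remark:H-bounded}, which is precisely what lets us avoid assuming $\nabla^{m-1}u\in L^\infty$ for the $S<N$ conclusions).

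Finally one harvests the conclusions from Theorem~\ref{theor:good-lambda}. Since $\nabla(\nabla^{m-1}u)=\nabla^m u$, one has $\A^Q(\nabla^m u)=S^Q(\nabla^{m-1}u)$, so implication \eqref{eq:Aloc->B-dyadic} ($(A_{\rm loc})\Rightarrow(B)_q^\dd$ for all $0<q<\infty$) is exactly the local dyadic bound \eqref{eq8.11-bis}; \eqref{eq:B-dyadic->B} then gives its global dyadic version, and Remarks~\ref{remark:rcones}, \ref{remark:COA} and~\ref{remark:N<S:cubes->balls} (in particular \eqref{eq.sn.loc-balls}) convert everything into the estimates with traditional cones, \eqref{eq8.10-bis} and its localized form \eqref{eq8.10-bis:balls}. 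When in addition $\nabla^{m-1}u\in L^\infty(\ree\setminus E)$, the Carleson measure estimate \eqref{eq8.12-bis} is just the Claim applied with $\Omega=\ree\setminus E$ (alternatively, one runs $(A_{\rm loc})\Rightarrow(A)^\dd\Rightarrow(A)$ via \eqref{eq:Aloc->B-dyadic}, \eqref{eq:B-dyadic->A} and \eqref{eq:A-loc->A}, using that \eqref{locCacc} and the finiteness of $\|\nabla^m u\|_{\C_0(\ree\setminus E)}$ both follow from interior Caccioppoli estimates); tracking the constants yields the stated dependence. The one point that requires genuine care---and the expected main obstacle---is the base case: one must upgrade the fixed-scale $N/S$ estimates of \cite{DKPV} to a bona fide Carleson measure estimate on Lipschitz domains, which hinges on the (standard but not entirely trivial) fact that sawtooth subdomains of Lipschitz domains remain Lipschitz with uniform character, so that the good-$\lambda$/John--Nirenberg mechanism---equivalently, Theorem~\ref{theor:good-lambda} applied on the Lipschitz domain itself---can be invoked. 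Everything downstream of that is a faithful copy of the proof of Theorem~\ref{t8.9}.
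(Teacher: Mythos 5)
Your overall architecture is exactly the paper's: replace the Kenig--Pipher input \eqref{CME-KP} by a Carleson measure estimate for the higher order system on bounded Lipschitz domains, run the analogue of Claim~\ref{claim:elliptic-PDE:UR-sub} (restriction to Lipschitz subdomains plus Remark~\ref{remark:CME-E-OMmga}), verify $(A_{\rm loc})$ for $G=\nabla^m u$, $H=|\nabla^{m-1}u|$ using Proposition~\ref{prop:Sawtooths-UR}, and harvest \eqref{eq8.11-bis}, \eqref{eq8.10-bis}, \eqref{eq8.10-bis:balls}, \eqref{eq8.12-bis} from Theorem~\ref{theor:good-lambda} and Remarks~\ref{remark:rcones}--\ref{remark:N<S:cubes->balls}. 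The paper's proof is precisely this \emph{mutatis mutandis} transcription of Theorem~\ref{t8.9}.

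The genuine gap is in your base case. You assume that \cite{DKPV} only furnishes $S/N$ estimates and try to upgrade them to a Carleson measure estimate on a bounded Lipschitz domain $\Omega$ by verifying $(B_{\rm loc})_2$ of Theorem~\ref{theor:good-lambda}, and for this you invoke the claim that the dyadic sawtooth subdomains $\widehat{\Omega}_{\F,Q}$ of a Lipschitz domain are again Lipschitz with uniformly controlled character. That claim is not correct (and is not the ``standard fact'' you take it to be): the Whitney-cube sawtooths of a Lipschitz domain are in general only chord-arc domains --- this is exactly what Lemma~\ref{lemma:CAD-geom} provides, and it is the best one can say; their boundaries need not be locally Lipschitz graphs. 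Consequently you cannot apply the Lipschitz-domain estimates of \cite{DKPV} on those sawtooths to get $(B_{\rm loc})_2$, and extending the $\A<N$ bounds from Lipschitz to chord-arc subdomains is precisely the nontrivial transference this paper is developing, so the step is circular as written. The repair is that no such derivation is needed: \cite[Theorem 2, p.~1455]{DKPV} already states the Carleson measure estimate $\|\nabla^m u\|_{\C(\Omega)}\lesssim \|\nabla^{m-1}u\|_{L^\infty(\Omega)}^2$ on bounded Lipschitz domains for weak solutions of these constant coefficient symmetric systems (their Theorem 3 is the separate $N<S$ bound, used later in Theorem~\ref{t8.14:new}). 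Once you quote that result directly as the substitute for \eqref{CME-KP}, everything downstream in your proposal goes through as you describe.
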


\begin{remark}
It is easy to see that from the previous result, one can also obtain analogous estimates in any chord-arc domain $D\subset\ree$. To see this let us consider any weak solution $u\in [W^{m, 2}_{\rm loc}(D)]^K$ to $Lu=0$ in $D$. Let $\widetilde{u}:=u$ in $D$ and $\widetilde{u}=0\in\ree\setminus \overline{D}$. Then $\widetilde{u}\in [W^{m, 2}_{\rm loc}(\ree\setminus \partial D)]^K$ satisfies $L\widetilde{u}=0$ in $\ree\setminus \partial D$ in the weak sense. As such, and using the fact that since $D$ is a CAD then $\partial D$ is UR, we obtain \eqref{eq8.10-bis} for $\widetilde{u}$ in $\ree\setminus \partial D$ which immediately gives the corresponding estimate for $u$ in $D$. The same occurs with \eqref{eq8.12-bis}.  Further details are left to the interested reader.
\end{remark}

\begin{proof} 
The proof runs much as that of Theorem \ref{t8.9}. One replaces \eqref{CME-KP} with the fact that for any bounded Lipschitz domain $\Omega\subset\ree$, it was shown in \cite[Theorem 2, p.~1455]{DKPV} that for any weak solution $u\in [W^{m, 2}_{\rm loc}(\Omega)]^K$ to $Lu=0$ in $\Omega$ with $\nabla^{m-1} u\in L^\infty(\Omega)$ verifies 
$\|\nabla^m u\|_{\C(\Omega)}\lesssim \|\nabla^{m-1}u\|_{L^\infty(\Omega)}^2$. With this at hand the proof can be carried out \textit{mutatis mutandis}. Further details are left to the interested reader. 
\end{proof}

We can now state a higher order version of Theorems \ref{theor:N<S:Lip->CAD} and \ref{theor:N<S:CAD->CAD:all-p}:

\begin{theorem}\label{theor:N<S:HO}
	Let $D\subset \ree$ be a CAD, let $K,m\in \NN$ and let $u=(u_1,\dots,u_K)\in [W^{m,2}_{\rm loc}(D)\cap C^{m-1}(D)]^K$. 
	\begin{list}{$(\theenumi)$}{\usecounter{enumi}\leftmargin=.8cm
			\labelwidth=.8cm\itemsep=0.2cm\topsep=.01cm
			\renewcommand{\theenumi}{\roman{enumi}}}
		
		\item 	Assume that \eqref{locbdd} holds with $\nabla^{m-1} u$ in place of $u$.	Suppose that the $(m-1)$th-order $N<S$ estimates are valid on $L^2$ on all bounded Lipschitz subdomains $\Omega\subset D$, that is, \eqref{eqn5.9} holds for any  bounded Lipschitz subdomain $\Omega\subset D$ with $\nabla^{m-1}u$ in place of $u$, and where the constant may also depend on $m$ and $K$. 	
		Then \eqref{eqn5.1:local}--\eqref{eqn6.5:unbounded} hold replacing $u$ by $\nabla^{m-1} u$, and where all the constants may also depend on $m$ and $K$. 
		
		\item  Assume that \eqref{locbdd} holds with $\nabla^{m-1} u$ in place of $u$ and that \eqref{revHol} hold with $\nabla^m u$ in place of $\nabla u$ for some $p>2$. 	
		Suppose that the $(m-1)$th-order $N<S$ estimates are valid on $L^p$ on all bounded chord-arc $\Omega\subset D$, that is, \eqref{eqn6.5-bis} holds for any  bounded chord-arc subdomain $\Omega\subset D$ with $\nabla^{m-1}u$ in place of $u$, and where the constant may also depend on $m$ and $K$. 	
		Then \eqref{eqn6.5:new:local}--\eqref{eqn6.5:new:unbounded} hold with $\nabla^{m-1}u$ in place of $u$, 	
		and where all the constants may also depend on $m$ and $K$. 
		
		\item Assume that \eqref{locbdd} holds with $\nabla^{m-1} u$ in place of $u$ and that \eqref{revHol} hold with $\nabla^m u$ in place of $\nabla u$ for some $p>2$. Suppose that the $(m-1)$th-order $N<S$ estimates are valid on $L^2$ on all bounded Lipschitz subdomains $\Omega\subset D$, that is, \eqref{eqn5.9} holds for any  bounded Lipschitz subdomain $\Omega\subset D$ with $\nabla^{m-1}u$ in place of $u$, and where the constant may also depend on $m$ and $K$.  Then \eqref{eqn6.5:new:local}--\eqref{eqn6.5:new:unbounded} hold replacing $u$ by $\nabla^{m-1} u$, 	and where all the constants may also depend on $m$ and $K$. 
		
	\end{list}
	
\end{theorem}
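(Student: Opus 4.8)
The proof of Theorem~\ref{theor:N<S:HO} should be an essentially mechanical adaptation of the proofs of Theorems~\ref{theor:N<S:Lip->CAD} and~\ref{theor:N<S:CAD->CAD:all-p}, together with Corollary~\ref{corol: N<S:Lip->CAD}, once one recognizes that those arguments never really used that $u$ is scalar-valued or that its gradient is the object whose square function we control. The plan is the following. Throughout, replace the scalar $u$ by the vector $w:=\nabla^{m-1}u=(\partial^\gamma u_j)_{|\gamma|=m-1,\,1\le j\le K}$, which is a continuous $W^{1,2}_{\loc}(D)$ function with values in a finite-dimensional Euclidean space, and note that $|\nabla w|\approx|\nabla^m u|$ pointwise. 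All the non-tangential maximal functions, square functions, and area integrals in Definitions~\ref{defsfntmax:traditional} and~\ref{defsfntmax:dyadic} make sense verbatim for such vector-valued functions (one simply reads $|\cdot|$ as the Euclidean norm on the target), and all their elementary properties (Remarks~\ref{remark:rcones},~\ref{remark:COA}, the change-of-aperture estimates, the Fubini identity \eqref{Fubini-ME-SFE}) are unaffected.

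\textbf{Part $(i)$.} I would reproduce the proof of Theorem~\ref{theor:N<S:Lip->CAD} line by line with $w$ in place of $u$. The only ingredient that requires a comment is the interior Moser-type bound \eqref{locbdd} and its consequence \eqref{locbdd:SFE}: we are \emph{assuming} \eqref{locbdd} for $w$, so \eqref{locbdd:UQ}, \eqref{locbdd:SFE}, and the oscillation estimate \eqref{locbdd:SFE} hold for $w$ after applying Poincar\'e componentwise (the Poincar\'e inequality on Whitney regions used in \cite[Proof of Lemma 3.1]{HMT} is applied to each scalar component of $w$, then one sums the finitely many components). Likewise the hypothesis \eqref{eqn5.9} is assumed directly for $w$ on bounded Lipschitz subdomains. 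With these substitutions, the good-$\lambda$ inequality \eqref{eqn5.1:local}, the $L^q$ bound \eqref{eqn6.5:local} for $q>2$, and the ball and global versions \eqref{eqn6.5:local-balls}--\eqref{eqn6.5:unbounded} follow exactly as before, now for $w=\nabla^{m-1}u$; the constants acquire a harmless dependence on $m$ and $K$ through the number of components.

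\textbf{Part $(ii)$.} Similarly, I would run the proof of Theorem~\ref{theor:N<S:CAD->CAD:all-p} with $w$ in place of $u$, $\nabla w$ in place of $\nabla u$, and with the reverse-H\"older hypothesis \eqref{revHol} read for $\nabla^m u$ (equivalently for $\nabla w$). Every use of \eqref{locbdd}, \eqref{revHol}, \eqref{locbdd:SFE}, the Caccioppoli-type passage from oscillation of $w$ to $\int|\nabla w|^2$, and the covering/packing arguments in Claim~\ref{c6.36} and in the estimate of $g_5$ is purely real-variable and survives the vector substitution verbatim (again Poincar\'e is applied componentwise). The assumption \eqref{eqn6.5-bis} is taken directly for $w$ on bounded chord-arc subdomains. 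One then obtains \eqref{eqn6.5:new:local}--\eqref{eqn6.5:new:unbounded} with $\nabla^{m-1}u$ in place of $u$.

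\textbf{Part $(iii)$.} This is the higher-order analogue of Corollary~\ref{corol: N<S:Lip->CAD}: given \eqref{locbdd} for $\nabla^{m-1}u$, \eqref{revHol} for $\nabla^m u$, and $N<S$ on $L^2$ for $\nabla^{m-1}u$ on all bounded Lipschitz subdomains, first apply part $(i)$ to every bounded chord-arc subdomain $\Omega\subset D$ (a bounded Lipschitz subdomain of $\Omega$ is a bounded Lipschitz subdomain of $D$) to deduce $N<S$ on $L^p$ for $\nabla^{m-1}u$ on all bounded chord-arc subdomains $\Omega\subset D$ with $p>2$ being the exponent from \eqref{revHol}; then invoke part $(ii)$. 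I do not anticipate a genuine obstacle here: the whole content of Theorem~\ref{theor:N<S:HO} is bookkeeping, and the one place demanding (mild) care is verifying that the interior regularity machinery---Moser's estimate, Caccioppoli, Poincar\'e on Whitney regions---is applied componentwise to the vector $\nabla^{m-1}u$ and that the finitely many components are summed with constants depending only on $m,K$ and dimension. Accordingly, after recording the vector-valued reading of the relevant definitions, I would simply state that the proofs of Theorems~\ref{theor:N<S:Lip->CAD} and~\ref{theor:N<S:CAD->CAD:all-p} and of Corollary~\ref{corol: N<S:Lip->CAD} go through \emph{mutatis mutandis}, leaving the routine verifications to the reader.
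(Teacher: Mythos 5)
Your proposal matches the paper's own proof: the paper likewise sets $v=\nabla^{m-1}u$, observes that the hypotheses \eqref{locbdd}, \eqref{revHol}, \eqref{eqn5.9}, \eqref{eqn6.5-bis} transfer to this vector-valued function, and notes that Theorems~\ref{theor:N<S:Lip->CAD} and~\ref{theor:N<S:CAD->CAD:all-p} and Corollary~\ref{corol: N<S:Lip->CAD} extend verbatim to the vector-valued setting, which is exactly your reduction. Your componentwise treatment of Poincar\'e and the interior estimates, and your derivation of part $(iii)$ by combining parts $(i)$ and $(ii)$ as in Corollary~\ref{corol: N<S:Lip->CAD}, are precisely the ``further details'' the paper leaves to the reader, so the argument is correct and essentially identical.
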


\begin{proof}
	The proof is fairly easy. Consider the vector $v=\nabla^{m-1}u\in [W^{1,2}_\loc(D)\cap C(D)]^{K(n-1)^{m-1}}$. Note that our current assumptions in $(i)$--$(iii)$ imply that $v$ satisfies \eqref{locbdd}. Also, in items $(ii)$, $(iii)$ we will have that $v$ verifies \eqref{revHol}. Note that \eqref{eqn5.9} is satisfied by $v$ in parts $(i)$ and $(iii)$, and \eqref{eqn6.5-bis} holds for $v$ in part $(ii)$. We also no that Theorems \ref{theor:N<S:Lip->CAD} and \ref{theor:N<S:CAD->CAD:all-p}, and Corollary \ref{corol: N<S:Lip->CAD} can be easily extended to vector-valued functions $u$. With all these at hand, we readily obtain the corresponding estimates for $v$ which translated into those stated for $u$. Further details are left to the interested reader. 
\end{proof}

One can also obtain a higher-order version of Theorem \ref{theor:eps-approx} using the same ideas:

\begin{theorem}\label{theor:eps-approx:HO} Let $E\subset \ree$ be an $n$-dimensional UR set, $\ree\setminus E$, and let $m,K\in N$. Suppose that $u\in [W^{m,2}_{\rm loc}(\ree\setminus E)\cap C^{m-1}(\ree\setminus E)\cap L^\infty(\ree\setminus E)]^K$ is such 
	that for any cube $I$ with $2 I\subset \ree\setminus E$
	\begin{equation}\label{oscbdd:HO}
	\sup_{X, Y\in I}\big|\nabla^{m-1}u(X)-\nabla^{m-1}u(Y)\big|\leq C_0 \left(\ell(I)^{1-n}\dint_{2I} \big|\nabla^m u\big|^2 \, dX\right)^{\frac12}
	\end{equation}
	and
	\[
	\|\nabla^m u\|_{\C(\ree\setminus E)}\le C_0' \|\nabla^{m-1} u\|_{L^\infty(\ree\setminus E)}
	\]
	Assume, in addition, $(m-1)$th-order that $N<S$ estimates are valid on $L^2$ on all bounded chord-arc subdomains  $\Omega\subset \ree\setminus E$, that is, for any bounded chord-arc subdomain  $\Omega\subset \ree\setminus E$, there holds
	\begin{equation}\label{eqe7.2-bis:HO}
	\big\|N_{*,\Omega}\big(\nabla^{m-1}u-\nabla^{m-1}u(X_\Omega^+)\big)\big\|_{L^2(\pom)}\leq C_\Omega \big\|S_{\Omega} \big(\nabla^{m-1}u\big)\big\|_{L^2(\pom)}. 
	\end{equation} 
	Here $X_{\Omega}^+$ is any interior corkscrew point of $\Omega$ at the scale of $\diam (\Omega)$, and the constant  $C_{\Omega}$ depends on the CAD character of $\Omega$, the dimension $n$, $m$, $K$, $p$, the implicit choice of $\kappa$ (the aperture of the cones in $N_{*,\Omega}$ and $S_{\Omega}$), and the implicit corkscrew constant for the point $X_{\Omega}^+$ . 
	Then $\nabla^{m-1}u$ is $\eps$-approximable on $ \ree\setminus E$, with the implicit constants depending only on $n$, $m$, $K$,the UR character of $E$, $C_0$, and $C_0'$.
\end{theorem}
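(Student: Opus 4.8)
The plan is to reduce Theorem~\ref{theor:eps-approx:HO} to the first-order, vector-valued version of Theorem~\ref{theor:eps-approx}, exactly as Theorem~\ref{theor:N<S:HO} was reduced to Theorems~\ref{theor:N<S:Lip->CAD} and \ref{theor:N<S:CAD->CAD:all-p}. Set $v:=\nabla^{m-1}u$, regarded as a function with values in $\RR^M$, where $M=K\binom{n+m-1}{m-1}$ is the number of components of $\nabla^{m-1}$ of a $K$-vector; then $v\in[W^{1,2}_{\rm loc}(\ree\setminus E)\cap C(\ree\setminus E)\cap L^\infty(\ree\setminus E)]^M$, and the entries of $\nabla v$ are (up to reindexing) the entries of $\nabla^m u$, so that $|\nabla v|\approx|\nabla^m u|$ pointwise. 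First I would observe that the entire machinery used to prove Theorem~\ref{theor:eps-approx} is insensitive to the scalar-versus-vector distinction: the area integral, square function and non-tangential maximal function are built from the Euclidean norms $|\nabla v|$ and $|v|$; the transference of Carleson measure estimates (Corollary~\ref{corol:CME:Lip->UR}, Theorems~\ref{theor:CME:Lip->CAD} and \ref{theor:CME:CAD->UR}) and the good-$\lambda$/John--Nirenberg scheme of Theorem~\ref{theor:good-lambda} are purely real-variable and geometric; and the only PDE-type inputs in the argument of \cite[Section~5]{HMM2} are the interior oscillation bound \eqref{oscbdd} (or its weakened form \eqref{oscbdd-weak} on the connected components $U_Q^i$ of Whitney regions) together with the Carleson measure estimate on sawtooth subdomains. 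Consequently Theorem~\ref{theor:eps-approx} holds verbatim for $\RR^M$-valued functions, with $\eps$-approximability understood for a $\varphi\in [W^{1,1}_{\rm loc}]^M$ satisfying $\|v-\varphi\|_{L^\infty(\ree\setminus E)}<\eps$ and the Carleson bound \eqref{eq1.5} with $|\nabla\varphi|$ the Euclidean norm of the $\RR^M$-valued gradient (equivalently, applying Definition~\ref{def1.3} componentwise).

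Next I would translate the hypotheses. Condition \eqref{oscbdd:HO} is precisely \eqref{oscbdd} for $v$, up to the harmless constant in $|\nabla v|\approx|\nabla^m u|$; the assumption $\|\nabla^m u\|_{\C(\ree\setminus E)}\le C_0'\|\nabla^{m-1}u\|_{L^\infty(\ree\setminus E)}$ reads $\|\nabla v\|_{\C(\ree\setminus E)}\lesssim\|v\|_{L^\infty(\ree\setminus E)}$; and \eqref{eqe7.2-bis:HO} is exactly the $N<S$ hypothesis \eqref{eqe7.2-bis} for $v$ on every bounded chord-arc subdomain $\Omega\subset\ree\setminus E$, with interior corkscrew point $X_\Omega^+$ at the scale of $\diam(\Omega)$ and constant $C_\Omega$ depending on the CAD character of $\Omega$, on $n$, and on $m,K$. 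Applying the vector-valued form of Theorem~\ref{theor:eps-approx} to $v$ then yields that $v=\nabla^{m-1}u$ is $\eps$-approximable on $\ree\setminus E$, with implicit constants depending only on $n$, $m$, $K$, the UR character of $E$, $C_0$, and $C_0'$, which is the assertion.

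The only point requiring care --- rather than a genuine obstacle --- is the first step: confirming that the construction of the approximant $\varphi$ in \cite[Section~5]{HMM2}, carried out via a stopping-time decomposition on the good sawtooth regions $\Omega_{\sbf}^{\pm}$ of \eqref{eq3.2}, goes through component by component with no loss, and that at those sawtooths one may indeed invoke \eqref{eqe7.2-bis} and the sawtooth Carleson bound supplied above for $v$. Since the PDE enters only through these two ingredients, both of which we have verified for $v$, the reduction is routine and the remainder is a transcription of the argument of \cite{HMM2}. Further details are left to the interested reader.
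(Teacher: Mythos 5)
Your proposal is correct and is essentially the paper's own argument: the paper proves Theorem \ref{theor:eps-approx:HO} by exactly the reduction you describe, namely applying the (vector-valued) first-order result, Theorem \ref{theor:eps-approx}, to $v=\nabla^{m-1}u$, after observing that \eqref{oscbdd:HO}, the Carleson hypothesis on $\nabla^m u$, and \eqref{eqe7.2-bis:HO} are precisely the hypotheses \eqref{oscbdd}, $\|\nabla v\|_{\C(\ree\setminus E)}\lesssim\|v\|_{L^\infty(\ree\setminus E)}$, and \eqref{eqe7.2-bis} for $v$, and that the argument of \cite[Section 5]{HMM2} (which uses only these inputs on the sawtooths $\Omega_{\sbf}^{\pm}$) is insensitive to the scalar-versus-vector distinction. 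Your added remarks on the componentwise/Euclidean-norm interpretation of $\eps$-approximability and on where \eqref{eqe7.2-bis} enters are consistent with what the paper leaves to the reader.
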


As a corollary of all these we can obtain $N<S$ estimates and $\eps$-approximability for solutions of a symmetric constant coefficient $2m$-order $K\times K$ systems. 

\begin{theorem}\label{t8.14:new} 
	Given $K,m\in\NN$, let $L$ be a symmetric constant coefficient $2m$-order $K\times K$ system, satisfying the Legendre-Hadamard ellipticity condition, as above.
	
	\begin{list}{$(\theenumi)$}{\usecounter{enumi}\leftmargin=.8cm
			\labelwidth=.8cm\itemsep=0.2cm\topsep=.01cm
			\renewcommand{\theenumi}{\roman{enumi}}}
		
		\item If $D\subset \ree$ is a CAD,  then any weak solution $u\in [W^{m, 2}_{\rm loc}(D)\cap C^{m-1}(D)]^K$ to $Lu=0$ in $D$ satisfies 
		for any $x\in\partial D$ and $0<r<2\diam(\partial D)$ and for every $\kappa>0$
		\begin{equation}\label{eqn6.5:new:local:balls:HO}
		\big\|N_{*,D,\kappa}^r \big(\nabla^{m-1}u-\nabla^{m-1}u(X_{\Delta(x,r)}^+\big)\big\|_{L^q(\Delta(x,r))}\le C \big\|S^{C'r}_{D,\kappa} \big(\nabla^{m-1}u\big)\big\|_{L^q(\Delta(x,C' r))},  \quad \mbox{for all}\quad 0<q<\infty.
		\end{equation}
		where $\Delta(x,r)=B(x,r)\cap\pom$. Here $C$ depends on $n$, $q$, $K$, $m$, the CAD character of $D$, the Legendre-Hadamard ellipticity constant, $\sup_{j,k,\alpha,\beta }|a_{\alpha\beta}^{jk}|$, and the aperture of the cone $\kappa$, and $C'$ depends on $n$ and the CAD character of $D$. 
		In particular, if $\partial D$ is bounded 
		\begin{equation}\label{eqn6.5:new:HO}
		\big\|N_{*,D,\kappa}\big(\nabla^{m-1}u-\nabla^{m-1}u(X_D^+)\big)\big\|_{L^q(\partial D)}\leq C'' \big\|S_{D,\kappa} \big(\nabla^{m-1}u\big)\big\|_{L^q(\partial D)}, \quad \mbox{for all}\quad 0<q<\infty,
		\end{equation} 
		and if $\partial D$ is unbounded and $\nabla^{m-1}u(X)\to 0$ as $|X|\to\infty$ then 
		\begin{equation}\label{eqn6.5:new:unbounded:HO}
		\|N_{*,D,\kappa}\big(\nabla^{m-1} u\big)\|_{L^q(\partial D)}\leq C'' \big\|S_{D,\kappa} \big(\nabla^{m-1}u\big)\big\|_{L^q(\partial D)}, \quad \mbox{for all}\quad 0<q<\infty.
		\end{equation}

		\item	Let $E\subset \ree$ be an $n$-dimensional UR set.  Then any weak solution $u\in [W^{m, 2}_{\rm loc}(\ree\setminus E)\cap C^{m-1}(\ree\setminus E)\cap L^\infty(\ree\setminus E)]^K$ to $Lu=0$ in $\ree\setminus E$ satisfies that $\nabla^{m-1}u$ is $\eps$-approximable in $\ree\setminus E$ with implicit constants depending on $n$, $K$, $m$, the UR character of $E$, the Legendre-Hadamard ellipticity constant, $\sup_{j,k,\alpha,\beta }|a_{\alpha\beta}^{jk}|$.
	\end{list}
	
\end{theorem}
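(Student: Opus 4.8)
The plan is to reduce everything to the vector-valued, higher-order transference results already proved in Sections \ref{section:sgl}--\ref{eps-approx}, by passing from $u$ to $v:=\nabla^{m-1}u$ and treating $v$ as a (vector-valued) function of $W^{1,2}_{\loc}$-regularity to which Theorems \ref{theor:N<S:HO} and \ref{theor:eps-approx:HO} apply. First I would record the interior estimates: because $L$ has constant coefficients and satisfies the Legendre--Hadamard condition \eqref{LH-higher}, any weak solution of $Lu=0$ is smooth in the interior and its derivatives obey Caccioppoli- and Cauchy-type bounds on Whitney balls; this furnishes \eqref{locbdd} and \eqref{revHol} (for every exponent, in particular some $p>2$) with $\nabla^{m-1}u$, $\nabla^m u$ in place of $u$, $\nabla u$, as well as the oscillation bound \eqref{oscbdd:HO}. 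I would also note that the relation $Lu=0$ on an open set restricts to any open subset, so these facts are available on all subdomains simultaneously.

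For part $(i)$, the key external input is \cite[Theorem 2, p.~1455]{DKPV}: on a bounded Lipschitz domain $\Omega$, solutions of $Lw=0$ satisfy the $(m-1)$th-order $N<S$ estimate on $L^2$. I would first put this into the corkscrew-normalized form \eqref{eqn5.9} (with $\nabla^{m-1}u$ in place of $u$) required by the transference machinery, subtracting $\nabla^{m-1}u(X_\Omega^+)$ and absorbing the resulting harmless term near the corkscrew point via the interior estimates; the constant then depends only on the Lipschitz character, $n$, $m$, $K$, the ellipticity data, and the aperture. Given a CAD $D$, this holds for every bounded Lipschitz subdomain of $D$, so Theorem \ref{theor:N<S:HO}$(iii)$ (the higher-order analogue of Corollary \ref{corol: N<S:Lip->CAD}, which upgrades $L^2$ bounds on Lipschitz subdomains to $L^q$ bounds for all $0<q<\infty$ on the CAD, using \eqref{locbdd} and \eqref{revHol}) applies directly and yields \eqref{eqn6.5:new:local:balls:HO}, hence also the bounded- and unbounded-boundary versions \eqref{eqn6.5:new:HO} and \eqref{eqn6.5:new:unbounded:HO}.

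For part $(ii)$, I would apply Theorem \ref{theor:eps-approx:HO} to $u$ on $\ree\setminus E$. Its three hypotheses are verified as follows: the interior oscillation bound \eqref{oscbdd:HO} was recorded above; the Carleson measure estimate $\|\nabla^m u\|_{\C(\ree\setminus E)}\le C_0'\|\nabla^{m-1}u\|_{L^\infty(\ree\setminus E)}^2$ is precisely the conclusion \eqref{eq8.12-bis} of Theorem \ref{t8.14}; and the $(m-1)$th-order $N<S$ estimate \eqref{eqe7.2-bis:HO} on an arbitrary bounded chord-arc subdomain $\Omega\subset\ree\setminus E$ follows from part $(i)$ applied with $D=\Omega$ (a CAD) and $q=2$. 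Theorem \ref{theor:eps-approx:HO} then gives that $\nabla^{m-1}u$ is $\eps$-approximable on $\ree\setminus E$ with the stated dependence of constants.

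The main obstacle I anticipate is not conceptual but bookkeeping: extracting from \cite{DKPV}, whose results are stated on bounded Lipschitz domains, exactly the scale-invariant, corkscrew-normalized $N<S$ inequality in the uniform form demanded by Theorems \ref{theor:N<S:HO} and \ref{theor:eps-approx:HO}, and checking that the proofs of all the transference results of Sections \ref{section:sgl}--\ref{eps-approx} --- stated there for scalar $u$ --- go through verbatim for vector-valued $v=\nabla^{m-1}u$ (they do, since only the metric structure of cones, sawtooths and Whitney regions, the Caccioppoli/Poincar\'e estimates on Whitney balls, and the good-$\lambda$ machinery are used, none of which sees the number of components). Once these two points are settled, the remainder is a routine specialization, carried out \emph{mutatis mutandis} as in the proofs of Theorems \ref{t8.9} and \ref{t8.14}.
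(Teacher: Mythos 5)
Your proposal is correct and follows essentially the same route as the paper: verify the interior estimates \eqref{locbdd}, \eqref{revHol}, \eqref{oscbdd:HO} for $\nabla^{m-1}u$ (the paper does this via \cite{Bar16} plus Sobolev embedding), obtain the corkscrew-normalized $L^2$ $N<S$ bound on bounded Lipschitz subdomains from \cite{DKPV}, and then invoke Theorem \ref{theor:N<S:HO} part $(iii)$ for part $(i)$ and Theorem \ref{theor:eps-approx:HO} (with the Carleson hypothesis supplied by \eqref{eq8.12-bis} of Theorem \ref{t8.14}) for part $(ii)$. Two minor remarks: the $N<S$ input is \cite[Theorem 3, p.~1456]{DKPV} (Theorem 2 there is the Carleson measure estimate used for Theorem \ref{t8.14}), and the paper realizes the normalization at $X_\Omega^+$ by subtracting the Taylor polynomial $P_{m-1,X_\Omega^+}u$, which is still a solution and satisfies $\nabla^{m-1}\big(u-P_{m-1,X_\Omega^+}u\big)=\nabla^{m-1}u-\nabla^{m-1}u(X_\Omega^+)$, so no error term needs to be absorbed.
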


\begin{proof} We aim to use Theorem \ref{theor:N<S:HO} part $(iii)$ and Theorem \ref{theor:eps-approx:HO}. To this end, we need to verify the interior estimates:   \eqref{locbdd} with $\nabla^{m-1} u$ in place of $u$, \eqref{revHol} with $\nabla^m u$ in place of $\nabla u$ for some $p>2$, and \eqref{oscbdd:HO}, and to 	obtain 	 
	$(m-1)$th-order $N<S$ estimates on $L^2$ on all bounded Lipschitz subdomains  $\Omega$ and for any weak solution $u\in [W^{m, 2}_{\rm loc}(\Omega)\cap C^{m-1}(\Omega)]^K$ to $Lu=0$ in $\Omega$. That is, we need to show that \eqref{eqe7.2-bis:HO} holds on all bounded Lipschitz subdomains  $\Omega$. Let us start with the latter. To see this we introduce 
	\[
	P_{m-1, X_\Omega^+}u(X)= \sum_{|\alpha|\le m-1} \frac{\partial^\alpha u(X_\Omega^+) }{\alpha!} (X-X_\Omega^+)^\alpha,\qquad X\in\Omega.
	\]	
	and observe that $\nabla^k P_{m-1, X_\Omega^+}u(X_\Omega^+)=\nabla^k u(X_\Omega^+)$ for $0\le k\le m-2$; $\nabla^{m-1} P_{m-1, X_\Omega^+}u(\cdot)\equiv\nabla^{m-1} u(X_\Omega^+)$; and $\nabla^m P_{m-1, X_\Omega^+}u\equiv 0$.  Thus if we write $v=u-P_{m-1, X_\Omega^+}u(\cdot)$ we have that 
	$v\in [W^{m, 2}_{\rm loc}(\Omega)\cap C^{m-1}(\Omega)]^K$ is a weak solution to $Lv=0$ in $\Omega$ satisfying $\nabla^k v(X_\Omega^+)=0$ for all $0\le k\le m-1$;  
	$\nabla^{m-1} v=\nabla^{m-1} u- \nabla^{m-1}u(X_\Omega^+)$; and $\nabla^{m} v=\nabla^{m} u$. As such we can invoke \cite[Theorem 3, p.~1456]{DKPV} to obtain that
	\begin{multline*}
	\big\|N_{*,\Omega}\big(\nabla^{m-1}u-\nabla^{m-1}u(X_\Omega^+)\big)\big\|_{L^2(\pom)}
	=
	\big\|N_{*,\Omega} \big(\nabla^{m-1}v\big)\big\|_{L^2(\pom)}
	\\
	\lesssim 
	\big\|S_{\Omega} \big(\nabla^{m-1}v\big)\big\|_{L^2(\pom)}
	= 
	\big\|S_{\Omega} \big(\nabla^{m-1}u\big)\big\|_{L^2(\pom)}.
	\end{multline*}
	
Turning to interior estimates, we recall from \cite[Corollary 22, p. 384]{Bar16}, that for all solutions to $Lu=0$ in $2I$ we have
\begin{equation}\label{eqinte1}
	\dint_{I} \big|\nabla^j u\big|^2 \, dX\leq C \ell(I)^{-2j}\,\dint_{2I} \big| u\big|^2 \, dX, \quad j=0,\dots,m. 
	\end{equation}
In fact, \cite{Bar16} pertains to much more general elliptic systems with  bounded measurable coefficients. It uses the weak G\r{a}rding inequality \cite[(10), p.~380]{Bar16}. To obtain the latter (with $\delta=0$) we can see that Plancherel's theorem, the fact that we are currently consider the case with real constant coefficients, and the Legendre-Hadamard condition \eqref{LH-higher} easily yield, for every smooth compactly supported function $\varphi$,
\begin{align*}
{\rm Re\,} \langle \nabla^m\varphi,A\,\nabla^m \varphi\rangle_{\ree} 
&= 
{\rm Re\,}
\dint_{\ree}  \sum_{j,k=1}^K \sum_{|\alpha|=|\beta|=m} \overline{\partial^\alpha \varphi_j(X)} \,a_{\alpha\beta}^{jk}\, \partial^\beta \varphi_k(X)\,dX
\\
&=
\sum_{j,k=1}^K \sum_{|\alpha|=|\beta|=m} a_{\alpha\beta}^{jk}\, {\rm Re\,}\dint_{\ree}  (-2\,\pi\, i\, \xi)^\alpha\, (2\,\pi\, i \xi)^\beta\, \overline{\widehat{\varphi}_j(\xi)}\, \widehat{\varphi}_k(\xi)\, d\xi
\\
&=
\dint_{\ree}  \sum_{j,k=1}^K \sum_{|\alpha|=|\beta|=m} a_{\alpha\beta}^{jk}\, (2\pi\xi)^\alpha\, (2\,\pi\, \xi)^\beta\, {\rm Re\,}\big(\,\overline{\widehat{\varphi}_j(\xi)}\, \widehat{\varphi}_k(\xi)\big)\, d\xi
\\
&\ge 
\lambda
\dint_{\ree}  \big(2\,\pi\,|\xi|\big)^{2m}\, | \widehat{\varphi}_j(X)|^2\, dX
\\
&
=
\lambda
\dint_{\ree}  |\nabla^m \varphi_j(\xi)|^2\, d\xi,
\end{align*}
and so \cite{Bar16} applies to our setting. 

Now, for constant coefficient operators any derivative of a solution is still a solution, and, in fact, we will use $v:=u-P_{m-1, X_I}u(\cdot)$ built similarly to above, only using $X_I$ being the center of $I$ in place of $X_\Omega^+$. Clearly, $\nabla ^m v=\nabla^m u$ is a solution too, and so a repeated application of \eqref{eqinte1} yields 
\begin{equation}\label{eqinte2}
	\dint_{I} \big|\nabla^{k} v\big|^2 \, dX\leq C \ell(I)^{-2(k-m)}\,\dint_{2I} \big| \nabla^m v\big|^2 \, dX, \quad k\geq m.
	\end{equation}
Taking $k>m-1$ large enough, depending on the dimension only, so that the Sobolev space $W^{k,2}(I)$ embeds into the H\"older space $C^{m-1, \alpha}(I)$, $\alpha>0$, we can show that 
\begin{multline}\label{eqinte3}
	\sup_{X, Y\in I}\big|\nabla^{m-1}u(X)-\nabla^{m-1}u(Y)\big|=\sup_{X, Y\in I}\big|\nabla^{m-1}v(X)-\nabla^{m-1}v(Y)\big|\\ \leq C \sum_{j=0}^k \left(\ell(I)^{-1-n +2(j-m+1)}\dint_{I} \big|\nabla^j v\big|^2 \, dX\right)^{\frac12}.
	\end{multline}
For $j>m$ we use \eqref{eqinte2} to descend to $j=m$. For $j<m$, we use Poincar\'e inequality to ascend to $j=m$, and all in all, the expression above is bounded by 
$$C \left(\ell(I)^{1+n}\dint_{2I} \big|\nabla^m v\big|^2 \, dX\right)^{\frac12}=C \left(\ell(I)^{1+n}\dint_{2I} \big|\nabla^m u\big|^2 \, dX\right)^{\frac12},$$
as desired. This yields \eqref{oscbdd:HO}.

In order to obtain \eqref{locbdd} with $\nabla^{m-1} u$ in place of $u$, we apply the same argument as above to $v:=\nabla^{m-1} u- \vec c$ for some constant vector $\vec c$. The function $v$ is also a solution of the initial system, and so \eqref{eqinte2} still holds. Much as above, by Morrey inequality (or generalized Sobolev embeddings), for $k$ large enough, depending on dimension only, we arrive at
\begin{equation}\label{eqinte4}
\sup_I |v|\leq C \sum_{j=0}^k \left(\ell(I)^{-1-n +2j}\dint_{I} \big|\nabla^j v\big|^2 \, dX\right)^{\frac12}\leq C \left(\ell(I)^{-1-n}\dint_{2I} \big|v\big|^2 \, dX\right)^{\frac12}, 
\end{equation}
where we have used \eqref{eqinte1} and \eqref{eqinte2} for the second inequality.

Finally, the reverse H\"older inequality \eqref{revHol} with $\nabla^m u$ in place of $\nabla u$  was also proved in \cite[Theorem 24]{Bar16}.
	
With all the previous ingredients we are ready to invoke Theorem \ref{theor:N<S:HO} part $(iii)$ and then Theorem \ref{theor:eps-approx:HO} to obtain the desired estimates. 
\end{proof}

\appendix

\section{Sawtooths have UR boundaries} \label{appa}

To start, recall from \cite[Appendix A]{HMM2} the fact that the sawtooth regions and Carleson boxes inherit the ADR property. 
In \cite[Appendix A]{HMM2}, we treated simultaneously the case that the set $E$ is ADR, but not necessarily UR, and also the case
that $E$ is UR.  The point was that the Whitney regions in the two cases (and thus also the corresponding
sawtooth regions and Carleson boxes) were somewhat different. 
In any case, the reader can easily see that, with the notation introduced in Definition \ref{def:WD-struct},  the arguments in \cite[Appendix A]{HMM2} can be carried out for any ADR set $E$ and with $\{\W_Q\}_{Q\in\dd(E)}$ any Whitney-dyadic structure for $\ree\setminus E$ with some parameters $\eta$ and $K$. In turn, both if $E$ happens to be merely an ADR set as in Section \ref{sections:ADR}, or a UR set as in Section \ref{sections:UR}, the corresponding constructions of Whitney-dyadic structure fit within the previous framework. Nonetheless, the same applies to any other Whitney-dyadic structure (constructed in a different way) but retaining the same properties.

Let us now recall some results from \cite{HMM2} that we shall use in the sequel.

\begin{proposition}\cite[Proposition A.2]{HMM2} \label{prop:Sawtooths-ADR}
Let $E\subset\ree$ be an $n$-dimensional ADR set and let $\{\W_Q\}_{Q\in\dd(E)}$ be a Whitney-dyadic structure for $\ree\setminus E$ with some parameters $\eta\ll 1$ and $K\gg 1$. Then all dyadic local sawtooths $\Omega_{\mathcal{F},Q}$ and all Carleson boxes $T_Q$ have $n$-dimensional ADR boundaries. In all cases, the implicit constants are uniform and depend only on dimension, the ADR constant of $E$, parameters $\eta$, $K$, and the constant $C$ in Definition \ref{def:WD-struct} part $(iii)$.
\end{proposition}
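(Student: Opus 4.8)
The statement to prove is Proposition~\ref{prop:Sawtooths-ADR}, which asserts that dyadic local sawtooths $\Omega_{\mathcal{F},Q}$ and Carleson boxes $T_Q$, built from \emph{any} Whitney-dyadic structure $\{\W_Q\}_{Q\in\dd(E)}$ for $\ree\setminus E$ over an ADR set $E$, have $n$-dimensional ADR boundaries with uniform constants. Since the statement is quoted verbatim from \cite[Proposition A.2]{HMM2}, the essential point is to explain why the argument given there for the two specific Whitney-dyadic structures (the $E$-merely-ADR case of Section~\ref{sections:ADR} and the $E$-UR case of Section~\ref{sections:UR}) actually only uses the axiomatic properties (i)--(iii) in Definition~\ref{def:WD-struct}, and therefore goes through for any such structure. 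So the first thing I would do is isolate exactly which quantitative inputs about $\W_Q$ are used in \cite{HMM2}: that each $\W_Q^0\neq\emptyset$, that $\W_Q^0\subset\W_Q$, and that \eqref{eq2.whitney2} holds (comparability of $\ell(I)$ with $\ell(Q)$ up to powers of $\eta,K$, and $\dist(I,Q)\lesssim K^{1/2}\ell(Q)$). These are precisely the three bullets of Definition~\ref{def:WD-struct}, so the transcription is legitimate.

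The core of the argument itself is the standard two-sided bound. For the \emph{upper} ADR bound on $\sigma_\star(\Delta_\star(x,r)) := H^n(\partial\Omega_\star\cap B(x,r))$ (where $\Omega_\star$ is $T_Q$ or $\Omega_{\mathcal F,Q}$): one covers $\partial\Omega_\star\cap B(x,r)$ by faces of the fattened Whitney cubes $I^*(\tau)$ that comprise $\partial\Omega_\star$, uses \eqref{whitney} to see each such face has $H^n$-measure $\approx \ell(I)^n$ and sits at distance $\approx\ell(I)$ from $E$, and then projects: each such $I$ determines a cube $Q_I\in\dd(E)$ with $\ell(Q_I)\approx\ell(I)$ lying near $B(x,Cr)\cap E$, with bounded multiplicity by \eqref{Whintey-4I} and \eqref{eq2.whitney2}. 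Summing $\ell(I)^n\approx\sigma(Q_I)$ against the ADR property of $E$ over $\bigcup Q_I\subset\Delta(x',Cr)$ yields the bound $\lesssim r^n$. For the \emph{lower} ADR bound, one splits into two regimes. If $r\lesssim\dist(x,E)$ then $B(x,cr)\cap\partial\Omega_\star$ is (part of) a single Whitney face, which is bi-Lipschitz to a piece of $\rn$, giving $\gtrsim r^n$ directly. If $r\gtrsim\dist(x,E)$, one finds $x_0\in E$ with $|x-x_0|\lesssim r$, then a cube $Q'\in\dd_Q$ (resp.\ $Q'\in\dd_{\mathcal F,Q}$) with $x_0\in Q'$ and $\ell(Q')\approx r$ contained in $\Delta(x_0,Cr)$; using $\W_{Q'}^0\neq\emptyset$ one locates a Whitney cube $I\in\W_{Q'}$ with $\ell(I)\approx r$, and the boundary of $\Omega_\star$ near that Whitney region contributes an $n$-dimensional surface piece of measure $\gtrsim r^n$, again because $\partial\Omega_\star$ locally is a Lipschitz graph assembled from Whitney faces (this is where one uses the fattening parameter $\tau$ and the fact, stated after Definition~\ref{def:WD-struct}, that $I^*(\tau)$ misses $(3/4)J$ for $I\neq J$, so no cancellation of boundary occurs). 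One must check that the relevant cube $Q'$ is genuinely in the truncated index set $\dd_Q$ or $\dd_{\mathcal F,Q}$; for the Carleson box $T_Q$ this requires $\ell(Q')\le\ell(Q)$, which is arranged by taking $r$ below a multiple of $\ell(Q)$, and for $\Omega_{\mathcal F,Q}$ one additionally uses that $Q'$ is not swallowed by the stopping family $\mathcal F$, which holds when $x_0$ is chosen in the part of $Q$ outside $\bigcup_{\mathcal F}Q_j$ (nonempty whenever $\partial\Omega_{\mathcal F,Q}\cap B(x,r)\neq\emptyset$).

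The main obstacle I anticipate is purely bookkeeping rather than conceptual: tracking the dependence of all constants on $\eta$, $K$, $\tau$ and the ADR constant of $E$ through the covering and projection arguments, and handling the boundary-point cases where $x\in\partial\Omega_\star$ lies ``on'' $E$ versus strictly interior to $\ree\setminus E$ (the two pieces $\partial\Omega_\star\cap E$ and $\partial\Omega_\star\setminus E$ behave differently and must be estimated separately, then combined). Since all of this is carried out in detail in \cite[Appendix A]{HMM2} and depends only on the listed axioms, the write-up can legitimately be brief: I would state that the proof is identical to that of \cite[Proposition A.2]{HMM2}, noting explicitly that the only properties of $\{\W_Q\}_{Q\in\dd(E)}$ invoked there are (i)--(iii) of Definition~\ref{def:WD-struct} together with the Whitney-cube properties \eqref{Whintey-4I}, \eqref{whitney}, and the separation property $I^*(\tau)\cap(3/4)J=\emptyset$ for $I\neq J$, so the result applies to any Whitney-dyadic structure, and in particular to those constructed in Sections~\ref{sections:ADR} and~\ref{sections:UR}. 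If a self-contained argument were required, I would reproduce the two-sided estimate sketched above, but for the present purpose the reduction to \cite{HMM2} is the appropriate level of detail.
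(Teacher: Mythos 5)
Your proposal matches the paper's treatment: the paper gives no new proof but simply cites \cite[Proposition A.2]{HMM2} and observes (at the start of Appendix A) that the arguments there use only the axiomatic properties of a Whitney-dyadic structure from Definition \ref{def:WD-struct}, so they apply verbatim to any such structure. Your additional sketch of the two-sided ADR estimate is consistent with the argument in \cite{HMM2}, so the reduction you propose is exactly the intended proof.
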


\begin{remark}\label{remark:ADR-sawtooth}
	Let $\Omega\subset\ree$ be an open set with ADR boundary and let $\{\W_{Q}\}_{Q\in\dd(\pom)}$ be a Whitney-dyadic structure for $\Omega$ with parameters $\eta$ and $K$. One can easily construct a Whitney-dyadic structure $\{\W_{Q}'\}_{Q\in\dd(\pom)}$ for $\ree\setminus \pom$ so that for every $I\in\W(\Omega)$ one has that $I\in \W_Q$ if and only if $I\in\W_{Q}'$, that is, the new Whitney-dyadic structure remains the same for the Whitney cubes contained in $\Omega$. To construct such a 
	Whitney-dyadic structure we define $(\W'_Q)^0$ as in \eqref{eq3.1} with the same parameters $\eta$ and $K$ but for all the Whitney cubes $I\in \W(\ree\setminus\pom)$. For every $Q\in\dd(\pom)$ we the set $\W_{Q}':=\W_Q\cup  ((\W'_Q)^0\cap\W(\ree\setminus\overline{\Omega}))$. It is straightforward to see that $\{\W_{Q}'\}_{Q\in\dd(\pom)}$ is a Whitney-dyadic structure for $\ree\setminus \pom$  with parameters $\eta$ and $K$ and agreeing with  $\{\W_{Q}\}_{Q\in\dd(\pom)}$ when restricted to the Whitney cubes contained in $\Omega$. Note also that the constants in Definition \ref{def:WD-struct} part $(iii)$ are the same for both.
	
	We then note by Proposition \eqref{prop:Sawtooths-ADR} all the associated dyadic local sawtooths $\Omega_{\mathcal{F},Q}'$ and all Carleson boxes $T_Q'$ (contained in $\ree\setminus \pom$) have $n$-dimensional ADR boundaries. In turn the agreement of  $\{\W_{Q}\}_{Q\in\dd(\pom)}$ with  $\{\W_{Q}'\}_{Q\in\dd(\pom)}$ inside $\Omega$
	implies at the very least that all the associated dyadic local sawtooths $\Omega_{\mathcal{F},Q}$ and all Carleson boxes $T_Q$ (contained now in $\Omega$) have 
	a boundary satisfying the upper ADR condition (that is the upper estimate in \ref{eq1.ADR}) with constant depending on the ADR constant of $\pom$, $\eta$, $K$ and the constant in Definition \ref{def:WD-struct} part $(iii)$.
\end{remark}

In what follows  we assume that $E$ is an ADR set and fix $\{\W_Q\}_{Q\in\dd(E)}$ a Whitney-dyadic structure for $\ree\setminus E$ with some parameters $\eta$ and $K$. As mentioned in Section \ref{sPrelim}, we always assume that if $\{\W_Q\}_{Q\in\dd(E)}$ is a Whitney-dyadic structure for $\ree\setminus E$ with some parameters $\eta$ and $K$, then $K$ is large enough (say $K\ge 40^2\,n$) so that for any $\ell(I) \lesssim \diam(E)$ we have $I\in \W_{Q_I^*}^0\subset \W_{Q_I^*}$, where 
$Q_I^*$ is some fixed nearest dyadic cube to $I$ with $\ell(I) = \ell(Q_I^*)$. To simplify the notation, it is convenient to find $m_0\in \mathbb{Z}_+,\, C_0 \in\mathbb{R}_+$ (say $2^{m_0}\approx C\,max\{K,\eta^{-1}\}^{1/2}$, $C_0=CK^{1/2}$, hence depending on $\eta$, $K$ and the constant $C$ in Definition \ref{def:WD-struct} part $(iii)$)
such that
\begin{equation}\label{eqA.1a}
2^{-m_0}\,\ell(Q)\leq \ell(I)  \leq 2^{m_0}\ell(Q),\, {\rm and}\, \dist(I,Q)\leq C_0 \ell(Q)\,,\quad \forall I\in \W_Q\,.
\end{equation} 
From now, we will use this parameters $m_0$ and $C_0$, rather than $\eta$, $K$ and the constant $C$ in Definition \ref{def:WD-struct} part $(iii)$.

Let us recall some notation from \cite[Appendix A]{HMM2}.   Given a cube $Q_0\in\dd$ and a family $\mathcal{F}$ of disjoint cubes $\F=\{Q_j\}\subset \mathbb{D}_{Q_0}$ (for the case $\F=\emptyset$ the changes are
straightforward and we leave them to the reader, also the case $\F=\{Q_0\}$ is disregarded since in that case
$\Omega_{\mathcal{F},Q_0}$ is the null set). We write $\Omega_\star=\Omega_{\mathcal{F},Q_0}$ and $\Sigma=\pom_{\star}\setminus E$.  Given $Q\in\dd$ we set
$$
\R_{Q}:=\bigcup_{Q'\in \dd_{Q}}\W_{Q'},
\qquad\mbox{and}\qquad
\Sigma_Q
=
\Sigma\bigcap \Big(\bigcup_{I\in \R_Q} I\Big).
$$

Let $C_1$ be a sufficiently large constant,
to be chosen below,
depending on $n$, the ADR constant of $E$, $m_0$ and $C_0$.
Let us introduce some new collections:
\begin{align*}
\F_{||}
&:=
\big\{
Q\in\dd\setminus\{Q_0\}:
\ell(Q)=\ell(Q_0), \ \dist(Q,Q_0)\le C_1\,\ell(Q_0)
\big\},
\\
\F_{\top}
&:=
\big\{
Q'\in \dd:
\dist(Q', Q_0)\le C_1\,\ell(Q_0),\
\ell(Q_0)<\ell(Q')\le C_1\,\ell(Q_0)
\big\},
\\
\F_{||}^*:
&=
\big\{Q\in\F_{||}: \Sigma_Q\neq\emptyset\big\}
=
\big\{Q\in\F_{||}: \exists\,I\in\R_{Q} \mbox{ such that } \Sigma\cap I\neq\emptyset\big\},
\\
\F^*:
&=
\big\{Q\in\F: \Sigma_Q\neq\emptyset\big\}
=
\big\{Q\in\F: \exists\,I\in\R_{Q} \mbox{ such that } \Sigma\cap I\neq\emptyset\big\},
\end{align*}
We also set
$$
\R_{\bot}=\bigcup_{Q\in\F^*} \R_Q,
\qquad\quad
\R_{||}=\bigcup_{Q\in\F_{||}^*} \R_Q,
\qquad\quad
\R_{\top}=\bigcup_{Q\in  \F_{\top}} \W_Q.
$$

\begin{lemma}\cite[Lemma A.3]{HMM2} \label{lemma:decomp-bdt-sawtooth}
Set $\W_\Sigma=\{I\in\W: I\cap \Sigma\neq\emptyset\}$ and define
\begin{align*}
\W_\Sigma^{\bot}
=
\bigcup_{Q\in\F^*} \W_{\Sigma,Q},
\qquad
\W_\Sigma^{||}
=
\bigcup_{Q\in\F_{||}^*} \W_{\Sigma,Q},
\qquad
\W_{\Sigma}^{\top}
=
\big\{I\in \W_\Sigma:Q_I^*\in\F_{\top}\big\}.
\end{align*}
where for every $Q\in\F^*\cup \F_{||}^*$ we set
$$\W_{\Sigma,Q}
=
\big\{I\in \W_\Sigma:Q_I^*\in\dd_{Q}\};
$$
and where we recall that $Q_I^*$ is the nearest dyadic cube to $I$ with $\ell(I) = \ell(Q_I^*)$ as defined above.
Then
\begin{equation}\label{decomp:Sigma:R}
\W_\Sigma
=
\W_\Sigma^{\bot}\cup\W_\Sigma^{||}\cup \W_\Sigma^{\top},
\end{equation}
where
\begin{equation}\label{decomp:Sigma:R:conta}
\W_\Sigma^{\bot}\subset \R_{\bot},
\qquad
\W_\Sigma^{||}\subset \R_{||},
\qquad
\W_\Sigma^{\top}\subset \R_{\top}.
\end{equation}
As a consequence,
\begin{equation}\label{decomp:Sigma}
\Sigma
=
\Sigma_{\bot}\cup\Sigma_{||}\cup\Sigma_{\top}
:=
\Big(\bigcup_{I\in\W_\Sigma^{\bot}} \Sigma\cap I\Big)
\bigcup
\Big(\bigcup_{I\in\W_\Sigma^{||}} \Sigma\cap I\Big)
\bigcup
\Big(\bigcup_{I\in\W_\Sigma^{\top}} \Sigma\cap I\Big).
\end{equation}
\end{lemma}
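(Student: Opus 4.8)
\textbf{Proof plan for Lemma~\ref{lemma:decomp-bdt-sawtooth}.}
The plan is to prove the three assertions \eqref{decomp:Sigma:R}, \eqref{decomp:Sigma:R:conta}, and \eqref{decomp:Sigma} essentially in this order, with the bulk of the work going into \eqref{decomp:Sigma:R} and its refinement \eqref{decomp:Sigma:R:conta}; the last display \eqref{decomp:Sigma} is then a purely formal consequence. First I would fix $I\in\W_\Sigma$, so there is a point $X\in I\cap\Sigma$. Since $\Sigma=\partial\Omega_\star\setminus E\subset\overline{\Omega_\star}\setminus E$ and $\Omega_\star=\Omega_{\F,Q_0}=\interior\bigl(\bigcup_{Q'\in\dd_{\F,Q_0}}U_{Q'}\bigr)$, the point $X$ lies in the closure of some Whitney region $U_{Q'}$ with $Q'\in\dd_{\F,Q_0}$; combining \eqref{eqA.1a} (or the defining inequalities \eqref{eq3.1}, \eqref{eq2.whitney2} of the Whitney-dyadic structure) with the Whitney property \eqref{Whintey-4I}, one sees that $\ell(Q_I^*)=\ell(I)\approx\ell(Q')$ and $\dist(I,Q')\lesssim\ell(Q')$, so $Q_I^*$ and $Q'$ are comparable in size and close; hence $Q_I^*\subset\widehat Q'$ for a bounded dyadic ancestor, or more to the point, $Q_I^*$ satisfies $\dist(Q_I^*,Q_0)\lesssim\ell(Q_0)$ whenever $\ell(Q_I^*)\gtrsim\ell(Q_0)$. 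This last observation is precisely what forces $Q_I^*\in\F_{\top}$ once $\ell(I)$ exceeds $\ell(Q_0)$ (taking $C_1$ large enough in terms of $n$, the ADR constant, $m_0$, $C_0$), which handles the $\W_\Sigma^{\top}$ piece and the inclusion $\W_\Sigma^{\top}\subset\R_{\top}$.

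Next, for the remaining cubes $I\in\W_\Sigma$ with $\ell(Q_I^*)\le\ell(Q_0)$, I would show $Q_I^*\in\dd_{Q_0}$, so $Q_I^*$ is either contained in some $Q_j\in\F$ or lies in $\dd_{\F,Q_0}$. If $Q_I^*\subset Q_j$ for some $Q_j\in\F$, then $I\in\W_{\Sigma,Q_j}$ and, since $X\in I\cap\Sigma$ certifies $\Sigma_{Q_j}\ne\emptyset$, we have $Q_j\in\F^*$; thus $I\in\W_\Sigma^{\bot}$ and $I\in\R_{Q_j}\subset\R_{\bot}$. If instead $Q_I^*\in\dd_{\F,Q_0}$, I would use the comparability of $Q_I^*$ with the cube $Q'$ producing $X$ together with the size/distance bounds just established to place $Q_I^*$ into one of the $\F_{||}$ cubes: one checks that a cube of side length comparable to $\ell(Q_0)$ and within distance $C_1\ell(Q_0)$ of $Q_0$ exists with $Q_I^*\in\dd_Q$, giving $Q\in\F_{||}$, and again $\Sigma_Q\ne\emptyset$ so $Q\in\F_{||}^*$, whence $I\in\W_\Sigma^{||}\subset\R_{||}$. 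Care is needed here about the borderline sizes $\ell(Q_I^*)\approx\ell(Q_0)$, and about the fact that a given $I$ might a priori be assignable to more than one of the three classes — this does not matter for a covering statement, but one should at least note that the displayed decomposition is an equality of sets (every $I\in\W_\Sigma$ is accounted for, and all three listed subfamilies are contained in $\W_\Sigma$ by construction).

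The main obstacle I anticipate is bookkeeping rather than conceptual: tracking the precise relationship between $Q_I^*$ (the nearest dyadic cube to $I$ of the same length), the cube $Q'\in\dd_{\F,Q_0}$ whose Whitney region's closure contains the boundary point $X$, and the cubes $Q\in\F_{||}^*$, $\F^*$, $\F_{\top}$ into whose $\R_Q$ or $\W_Q$ the cube $I$ must fall, and doing so uniformly so that $C_1$ can be fixed once and for all depending only on $n$, the ADR constant, $m_0$, and $C_0$. In particular, one must be careful that the constant $C_1$ chosen to define $\F_{||},\F_{\top}$ is compatible with the constants $m_0,C_0$ in \eqref{eqA.1a}; this is where the hypothesis that $K$ is large (so that $I\in\W_{Q_I^*}^0\subset\W_{Q_I^*}$ whenever $\ell(I)\lesssim\diam(E)$) gets used. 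Finally, \eqref{decomp:Sigma} follows immediately by taking $\Sigma\cap I$ for each $I$ in the three subfamilies of \eqref{decomp:Sigma:R} and using $\Sigma=\bigcup_{I\in\W_\Sigma}(\Sigma\cap I)$, which holds because every point of $\Sigma\subset\ree\setminus E$ lies in some Whitney cube of $\W(\ree\setminus E)$ that then meets $\Sigma$.
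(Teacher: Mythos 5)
Your outline breaks down precisely at the cubes $I\in\W_\Sigma$ with $\ell(Q_I^*)\le\ell(Q_0)$, in two places. First, the claim that for such $I$ you can show $Q_I^*\in\dd_{Q_0}$ is not true: the sawtooth boundary protrudes laterally beyond $Q_0$ (Whitney regions $U_{Q'}$, $Q'\in\dd_{\F,Q_0}$, need not project into $Q_0$), so $I$ can meet $\Sigma$ while its nearest same-length dyadic cube lies in a neighbor of $Q_0$ of the same generation; this is exactly the situation the family $\F_{||}$ exists to absorb, so the trichotomy must be organized by whether $Q_I^*\subset Q_0$ or not. Second, and more seriously, your disposal of the sub-case $Q_I^*\in\dd_{\F,Q_0}$ cannot work: every $Q\in\F_{||}$ has $\ell(Q)=\ell(Q_0)$ and $Q\ne Q_0$, hence $Q\cap Q_0=\emptyset$, so a cube $Q_I^*\subset Q_0$ can never belong to $\dd_Q$ for $Q\in\F_{||}^*$. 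The correct resolution is that this sub-case is \emph{empty}, and this is the one step that genuinely uses $X\in\partial\Omega_\star$ rather than merely $X\in\overline{\Omega_\star}$: if $Q_I^*\in\dd_{\F,Q_0}$ then, since $K\ge 40^2n$ guarantees $I\in\W^0_{Q_I^*}\subset\W_{Q_I^*}$, one has $I\subset\interior\big(I^*(\tau)\big)\subset\interior\big(U_{Q_I^*}\big)\subset\Omega_{\F,Q_0}$, so $I$ lies in the open sawtooth and cannot meet $\partial\Omega_{\F,Q_0}\supset\Sigma$, a contradiction. Your argument never invokes the boundary condition at all; as written it would apply to any Whitney cube meeting $\overline{\Omega_\star}\setminus E$, for which the asserted decomposition is false (a cube $I$ deep inside the sawtooth with $Q_I^*\in\dd_{\F,Q_0}$ lies in none of the three families). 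So the missing containment argument is an essential idea, not bookkeeping.

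The remaining parts of your plan are sound: the size and distance estimates via \eqref{eqA.1a} and \eqref{Whintey-4I} do give $\ell(Q_I^*)\lesssim\ell(Q_0)$ and $\dist(Q_I^*,Q_0)\lesssim\ell(Q_0)$ for every $I\in\W_\Sigma$, which, with $C_1$ fixed large depending only on $n$, the ADR constant, $m_0$, $C_0$, places $Q_I^*$ in $\F_\top$ when $\ell(Q_I^*)>\ell(Q_0)$, and in $\dd_Q$ for some $Q\in\F_{||}$ when $\ell(Q_I^*)\le\ell(Q_0)$ and $Q_I^*\not\subset Q_0$; the verifications $\Sigma_Q\ne\emptyset$ (so $Q\in\F^*$ or $\F^*_{||}$) and the inclusions \eqref{decomp:Sigma:R:conta} all follow from $I\in\W_{Q_I^*}$ together with $I\cap\Sigma\ne\emptyset$, as you indicate, and \eqref{decomp:Sigma} is indeed formal. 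Note finally that the present paper does not reprove this lemma but cites \cite[Lemma A.3]{HMM2}, where the argument is structured around the trichotomy $Q_I^*\subset Q_j\in\F$, $Q_I^*\not\subset Q_0$, $\ell(Q_I^*)>\ell(Q_0)$, with the case $Q_I^*\in\dd_{\F,Q_0}$ excluded by the interior-containment observation above.
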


\begin{lemma}\cite[Lemma A.7]{HMM2} \label{lemma:I-QI}
Given $I\in \W_\Sigma$, we can find $Q_I\in\dd$,  with $Q_I\subset Q_I^*$, such
that $\ell(I)\approx \ell(Q_I)$,  $\dist(Q_I,I)\approx \ell(I)$, and in addition,
\begin{equation}\label{Sigma:bdd-overlap:I}
\sum_{I\in\W_{\Sigma,Q}} 1_{Q_I}\lesssim 1_{Q},\qquad\mbox{for any }Q\in\F^*\cup\F^*_{||},
\end{equation}
and
\begin{equation}\label{Sigma:bdd-overlap:II}
\sum_{I\in\W_\Sigma^\top} 1_{Q_I}\lesssim 1_{B_{Q_0}^*\cap E},
\end{equation}
where the implicit constants depend on $n$, the ADR constant of $E$, $m_0$ and $C_0$, and where $B_{Q_0}^*=B(x_{Q_0}, C\,\ell(Q))$ with $C$ large enough depending on the same parameters.
\end{lemma}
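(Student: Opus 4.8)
The plan is to choose, for each $I\in\W_\Sigma$, the cube $Q_I$ to be a suitably \emph{deep} dyadic descendant of $Q_I^*$ of comparable size, and to reduce both overlap estimates to a single geometric statement about where $Q_I^*$ can sit. The first ingredient is an elementary observation: for every $R\in\dd$ there is a descendant $R'\subset R$ with $\ell(R')\geq c\,\ell(R)$ and $\dist(R',E\setminus R)\geq c\,\ell(R)$, where $c$ depends only on $n$ and the ADR constant of $E$. Indeed, fix $\varrho\in(0,a_0)$ so small that $C_1\varrho^\gamma<\tfrac12$; by property $(vi)$ of Lemma~\ref{lemmaCh} the set $\{x\in R:\dist(x,E\setminus R)>\varrho\,\ell(R)\}$ has positive $H^n$-measure, hence contains some $x_0$, and if $R'\ni x_0$ is the descendant of $R$ with $\ell(R')=2^{-m}\ell(R)$, with $2^{-m}$ chosen (depending only on $C_1$) so that $\diam(R')<\varrho\,\ell(R)/2$, then $\dist(R',E\setminus R)>\varrho\,\ell(R)/2$, as needed.

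Given $I\in\W_\Sigma$ I then set $Q_I:=R'$ for $R=Q_I^*$. The requirements $Q_I\subset Q_I^*$ and $\ell(Q_I)\approx\ell(I)=\ell(Q_I^*)$ hold by construction, and $\dist(Q_I,I)\approx\ell(I)$ is automatic: the lower bound since $Q_I\subset E$ and $I\in\W$, so $\dist(Q_I,I)\geq\dist(I,E)\gtrsim\ell(I)$, and the upper bound since $Q_I\subset Q_I^*$ and $\dist(Q_I^*,I)\approx\ell(Q_I^*)=\ell(I)$ (because $Q_I^*$ is a nearest dyadic cube of the same size as $I$, using $I\in\W_{Q_I^*}$ and \eqref{eq2.whitney2} for the upper bound on $\dist(I,Q_I^*)$ and $\dist(I,E)\approx\ell(I)$ for the lower one). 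Moreover, by the observation above, $\dist(Q_I,E\setminus Q_I^*)\approx\ell(I)$, the upper bound being trivial since $Q_I\subset Q_I^*$.

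Everything now hinges on the following claim, which I expect to be the main obstacle: \emph{if $I\in\W_{\Sigma,Q}$ with $Q\in\F^*\cup\F^*_{||}$, then $\dist(Q_I^*,E\setminus Q)\lesssim\ell(I)$}. Since $Q_I^*\subset Q$ (as $Q_I^*\in\dd_Q$) and $\diam(Q_I^*)\lesssim\ell(I)$, the claim yields $\dist(Q_I,E\setminus Q)\approx\ell(I)$. I would argue by contradiction. If $\dist(Q_I^*,E\setminus Q)\geq M\,\ell(I)$ for a large $M$ (chosen in terms of $n$, the ADR constant, $m_0$, $C_0$ and $\tau$), then any dyadic cube $Q''$ with $\ell(Q'')\approx\ell(I)$ and $\dist(Q'',I)\lesssim\ell(I)$ lies within $<M\,\ell(I)$ of $Q_I^*$, hence cannot meet $E\setminus Q$, hence $Q''\subset Q$; but then $Q''\notin\dd_{\F,Q_0}$, either because $Q=Q_j\in\F^*$ is a stopping cube and $Q''\subset Q_j$, or because $Q\in\F^*_{||}$ is disjoint from $Q_0$ and $Q''\subset Q$ lies outside $Q_0$. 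Using \eqref{eq2.whitney2} and the fact that for $\tau\leq\tau_0$ the dilate $I^*(\tau)$ meets $J^*(\tau)$ only when $I,J$ share a boundary point, one checks that no Whitney region $U_{Q''}$ with $Q''\in\dd_{\F,Q_0}$ comes within a small fixed multiple of $\ell(I)$ of $I$: such a $U_{Q''}$ would contain a Whitney cube $I''$ within $\lesssim\ell(I)$ of $I$, forcing $\ell(I'')\approx\ell(I)$, hence $\ell(Q'')\approx\ell(I)$ and $\dist(Q'',I)\lesssim\ell(I)$, i.e. $Q''\subset Q$, contradicting $Q''\in\dd_{\F,Q_0}$. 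Consequently a neighbourhood of $I$ is disjoint from $\bigcup_{Q''\in\dd_{\F,Q_0}}U_{Q''}\supset\Omega_\star$, so $I\cap\partial\Omega_\star=\varnothing$, contradicting $I\in\W_\Sigma$. The delicate part is purely the bookkeeping here: tracking how small ``small'' must be in terms of the discretionary constants $m_0,C_0,\tau,C_1$ and the structure of the families $\W_{Q''}$, so that the contradiction is genuine.

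Granting the claim, \eqref{Sigma:bdd-overlap:I} follows quickly. The containment $\sum_{I\in\W_{\Sigma,Q}}1_{Q_I}\lesssim 1_Q$ is immediate from $Q_I\subset Q_I^*\subset Q$. For the pointwise bound, fix $x\in Q$: if $x\in Q_I$ then $\dist(x,E\setminus Q)\approx\ell(I)$ by the claim and $\diam(Q_I)\lesssim\ell(I)$, so $\ell(I)$ ranges over only $O(1)$ dyadic scales; and for each fixed scale $2^{-k}$, the condition $x\in Q_I$ with $\ell(I)=2^{-k}$ forces $Q_I^*$ to be the unique size-$2^{-k}$ cube containing $x$, while $\dist(I,Q_I^*)\lesssim 2^{-k}$ leaves only $O(1)$ choices of $I$, so summing over the $O(1)$ scales gives the bound. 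Finally, for $I\in\W_\Sigma^{\top}$ one has $Q_I^*\in\F_{\top}$, hence $\ell(I)\approx\ell(Q_0)$ and (taking $C$ in $B_{Q_0}^*$ large relative to $C_1$) $Q_I\subset Q_I^*\subset B_{Q_0}^*\cap E$; since there are only boundedly many Whitney cubes of size $\approx\ell(Q_0)$ within $C_1\ell(Q_0)$ of $Q_0$, the family $\W_\Sigma^{\top}$ is finite with $\#\W_\Sigma^{\top}\lesssim 1$, and \eqref{Sigma:bdd-overlap:II} follows because $\sum_{I\in\W_\Sigma^{\top}}1_{Q_I}$ is supported in $B_{Q_0}^*\cap E$ and bounded there by $\#\W_\Sigma^{\top}$.
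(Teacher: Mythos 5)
Your proof is correct, and since the paper itself gives no in-text argument for this lemma (it simply cites \cite[Lemma A.7]{HMM2}), your route --- choosing $Q_I$ well inside $Q_I^*$ via the thin-boundary property $(vi)$ of Lemma \ref{lemmaCh} and pinning $\ell(I)\approx \dist(x,E\setminus Q)$ for $x\in Q_I$ to get the bounded overlap --- is essentially the standard one underlying the cited result. One remark: the ``delicate'' contradiction step you flag can be avoided by a direct argument, since any point of $I\cap\Sigma$ lies on $\partial I'^*(\tau)$ for some $I'\in\W_{Q'}$ with $Q'\in\dd_{\F,Q_0}$, whence $\ell(Q')\approx\ell(I)$ and $\dist(Q',I)\lesssim\ell(I)$ by \eqref{Whintey-4I} and \eqref{eq2.whitney2}, and such a $Q'$ (being either disjoint from $Q$, or strictly containing $Q$, or lying in $Q_0$ when $Q\in\F^*_{||}$) immediately furnishes a point of $E\setminus Q$ within $\lesssim\ell(I)$ of $Q_I^*$.
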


With the preceding results in hand, we turn to the main purpose of this appendix:
to prove that uniform rectifiability is also inherited by the sawtooth domains and Carleson boxes. 


\begin{proposition}\label{prop:Sawtooths-UR}
Let $E\subset\ree$ be an $n$-dimensional UR set and let $\{\W_Q\}_{Q\in\dd(E)}$ be a Whitney-dyadic structure for $\ree\setminus E$ with some parameters $\eta\ll 1$ and $K\gg 1$. Then all dyadic local sawtooths $\Omega_{\mathcal{F},Q}$ and all Carleson boxes $T_Q$ have $n$-dimensional UR boundaries. In all cases, the implicit constants are uniform and depend only on dimension, the UR character of $E$, and the parameters $m_0$ and $C_0$ (hence on the parameters $\eta$, $K$, and the constant $C$ in Definition \ref{def:WD-struct} part $(iii)$).
\end{proposition}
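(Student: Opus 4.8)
The plan is to verify uniform rectifiability of $\partial\Omega_{\mathcal F,Q_0}$ (and of the Carleson box $T_{Q_0}=\Omega_{\emptyset,Q_0}$, which is the special case $\mathcal F=\emptyset$) by checking one of the standard characterizations of UR, most conveniently the existence of \emph{big pieces of Lipschitz graphs} (BPLG) at every surface ball centered on $\partial\Omega_\star$, where $\Omega_\star:=\Omega_{\mathcal F,Q_0}$. By Proposition~\ref{prop:Sawtooths-ADR} we already know $\partial\Omega_\star$ is ADR with uniform constants, so it suffices to produce, for each $x\in\partial\Omega_\star$ and each $0<r<\diam(\partial\Omega_\star)$, a Lipschitz graph capturing a fixed proportion of $\Delta_\star(x,r)=B(x,r)\cap\partial\Omega_\star$. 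First I would decompose $\partial\Omega_\star$ as $E_\star\cup\Sigma$, where $E_\star:=\partial\Omega_\star\cap E$ is the part lying on the original UR set, and $\Sigma:=\partial\Omega_\star\setminus E$ is the ``new'' part made of faces of Whitney cubes; this is the decomposition already set up before Lemma~\ref{lemma:decomp-bdt-sawtooth}.

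\textbf{Key steps.} (1) \emph{Reduction to two regimes.} Fix a surface ball $\Delta_\star=\Delta_\star(x,r)$. If a fixed fraction of $\Delta_\star$ lies in $E_\star$, we are essentially done: since $E$ is UR it has BPLG at the corresponding ball in $E$, and by \eqref{cube-ball} and the ADR property these Lipschitz pieces transfer to $\partial\Omega_\star$; one only needs to check that the Lipschitz graph, or a controlled piece of it, actually sits inside $\partial\Omega_\star$ rather than being shielded by $\Sigma$, which follows from the fact that points of $E_\star$ are, by definition of the sawtooth, accessible through the Whitney regions $U_Q$ with $Q\in\dd_{\mathcal F,Q_0}$. (2) \emph{The $\Sigma$ regime.} If instead a fixed fraction of $\Delta_\star$ lies in $\Sigma$, then by Lemma~\ref{lemma:decomp-bdt-sawtooth} that portion is covered by the families $\W_\Sigma^{\bot}\cup\W_\Sigma^{||}\cup\W_\Sigma^{\top}$, and by Lemma~\ref{lemma:I-QI} to each $I\in\W_\Sigma$ we may attach a cube $Q_I\in\dd(E)$ with $\ell(Q_I)\approx\ell(I)$, $\dist(Q_I,I)\approx\ell(I)$, and with the bounded-overlap properties \eqref{Sigma:bdd-overlap:I}--\eqref{Sigma:bdd-overlap:II}. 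I would then use that $E$, being UR, admits a bilateral corona decomposition (Lemma~\ref{lemma2.1}): most of the cubes $Q_I$ lie in some coherent regime $\sbf$ and are therefore within $\eta\ell(Q_I)$ of a Lipschitz graph $\Gamma_{\sbf}$ with slope $\le\eta$; since the face $\Sigma\cap I$ is a flat piece at distance $\approx\ell(I)\approx\ell(Q_I)$ from $Q_I$, and on the correct side, a suitable dilate/translate of $\Gamma_{\sbf}$ (or, more precisely, the graph of $\varphi_{\sbf}$ shifted in the transverse direction) is itself a Lipschitz graph with controlled constant that captures $\Sigma\cap I$ up to a set of small measure; the Carleson packing of the bad cubes and of the maximal cubes $Q(\sbf)$ together with the bounded overlap from Lemma~\ref{lemma:I-QI} lets one sum and extract a big piece at scale $r$. (3) \emph{Conclusion.} Since one of the two regimes must account for at least half of $\sigma_\star(\Delta_\star)$, in either case we obtain a Lipschitz graph with uniformly controlled constant capturing a uniform proportion of $\Delta_\star$; hence $\partial\Omega_\star$ has BPLG, so it is UR, with character depending only on $n$, the UR character of $E$, and $m_0,C_0$.

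\textbf{Main obstacle.} The delicate point is step (2): turning the ``one-sided flatness'' of $E$ near the cubes $Q_I$ into an honest Lipschitz graph that contains the Whitney faces $\Sigma\cap I$ and lies on the boundary of the sawtooth. One must be careful that the faces coming from different Whitney cubes $I$, attached to cubes $Q_I$ in possibly different coherent regimes $\sbf$, can be organized so that within a single surface ball $\Delta_\star$ a \emph{single} stopping-time regime (or a boundedly-overlapping family of them) dominates, so that a single Lipschitz graph does the job; this is where the coherence of the regimes and the packing estimate of Lemma~\ref{lemma2.1}(ii), combined with \eqref{Sigma:bdd-overlap:I}--\eqref{Sigma:bdd-overlap:II}, are used in an essential and somewhat technical way. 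A secondary but routine nuisance is bookkeeping the ``top'' and ``parallel'' families $\W_\Sigma^\top,\W_\Sigma^{||}$ near the cap of the sawtooth, handled exactly as the boundary pieces at scale $\approx\ell(Q_0)$ were handled in the ADR argument of \cite[Appendix A]{HMM2}. I expect the write-up to follow the scheme of that appendix closely, inserting the corona decomposition of $E$ at the point where flatness of $\Sigma$ is needed; everything else is a transcription of the ADR bounded-overlap estimates already recalled above.
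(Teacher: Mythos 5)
There is a genuine gap, in fact two. First, your step (1) rests on the claim that uniform rectifiability of $E$ gives big pieces of Lipschitz \emph{graphs}: it does not. UR is equivalent to BPLI (Definition \ref{defur}) and to BP(BPLG), but UR sets need not contain big pieces of Lipschitz graphs at all, so BPLG cannot be invoked for $E$, and proving BPLG for $\partial\Omega_{\F,Q_0}$ (a strictly stronger property than what is asserted) is more than the proposition requires. Even if you replace BPLG by BPLI, the localization step fails: if a fixed fraction of $\Delta_\star(x,r)$ lies in $E_\star=\partial\Omega_\star\cap E$, the BPLI property of $E$ produces a Lipschitz image capturing a proportion $\theta^{-1}$ of $E\cap B(x,r)$, but nothing forces that piece to meet $E_\star\cap B(x,r)$ in positive proportion; ``big piece of the whole'' does not yield ``big piece of a prescribed positive-proportion subset'' (UR does not pass to such subsets), so the transfer to $\partial\Omega_\star$ is unjustified. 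Second, in step (2) the assertion that a translate/dilate of the corona graph $\Gamma_{\sbf}$ ``captures $\Sigma\cap I$ up to a set of small measure'' is false as stated: $\Gamma_{\sbf}$ has slope at most $\eta\ll1$ in coordinates adapted to $\sbf$, while $\Sigma\cap I$ consists of portions of coordinate faces of Whitney cubes, which can be essentially transverse to that graph, in which case their intersection with (any translate of) the graph has $H^n$-measure zero. Beyond this pointwise problem, within one surface ball $\Delta_\star(x,r)$ the faces occur at all scales between the stopping cubes and $\ell(Q_0)$ and belong to cubes $Q_I$ in possibly many different regimes $\sbf$; producing a \emph{single} Lipschitz map of uniformly bounded constant that captures a fixed proportion of this multiscale family is precisely the hard content, and the packing estimate of Lemma \ref{lemma2.1}$(ii)$ plus the bounded overlap of Lemma \ref{lemma:I-QI} do not by themselves organize the faces along one graph. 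So the geometric route, as outlined, does not close.

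For comparison, the paper avoids any direct geometric construction and instead verifies the David--Semmes singular-integral criterion: having the ADR property of $\partial\Omega_\star$ from Proposition \ref{prop:Sawtooths-ADR}, it proves uniform $L^2(\partial\Omega_\star)$ bounds for all truncated odd Calder\'on--Zygmund operators $T_{E_\star,\eps_0}$. The mechanism is (i) $\T_{E,0}:L^p(E)\to L^p(E_\star)$ via the Cotlar-type nontangential bound of Lemma \ref{lemmacotlar} together with Lemmas \ref{lemma:decomp-bdt-sawtooth} and \ref{lemma:I-QI}; (ii) duality to get $\T_{E_\star,0}:L^p(E_\star)\to L^p(E)$; and (iii) on $\Sigma$, a three-piece splitting of the kernel at scales $\ell(I)/M$ and $M\ell(I)$, where the coarse pieces are controlled by maximal functions and the fine piece uses that, near a given Whitney face, $\Sigma$ consists of boundedly many faces lying on hyperplanes or on Lipschitz graphs of constant $1$, so Coifman--McIntosh--Meyer applies. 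If you want to salvage a geometric proof you would need a corona-type construction for $\partial\Omega_\star$ itself (or an appeal to a BP$^2$-type theorem), not a transfer of individual graphs $\Gamma_\sbf$ from $E$.
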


The proof of this result follows the ideas from \cite[Appendix C]{HM-I} which in turn uses some ideas from Guy David,
and uses the following singular integral characterization of UR sets, established in \cite{DS1}.
Suppose that
$E\subset\ree$ is $n$-dimensional ADR.
The singular integral operators that we shall consider are those of the form
$$T_{E,\eps}f(x)=T_{\eps} f(x):= \int_{E} \mathcal{K}_\eps (x-y)\,f(y)\,dH^n(y)\,,$$
where $\mathcal{K}_\eps(x) := \mathcal{K} (x)\,\Phi(|x|/\eps)$,  with $0\leq \Phi\leq 1$,
$\Phi(\rho)\equiv 1$ if $ \rho\geq 2,$ $\Phi(\rho) \equiv 0$ if $\rho\leq 1$, and $\Phi \in
C^\infty(\mathbb{R})$, and where
the singular kernel $\mathcal{K} $ is an odd function, smooth on $\ree\setminus\{0\}$,
and satisfying
\begin{align}\label{eqC.1}
|\mathcal{K}(x)|&\,\leq\, C_0\,|x|^{-n}
\\[4pt]\label{eqC.2}
|\nabla^m \mathcal{K}(x)|&\,\leq \,C_m\,|x|^{-n-m}\,,\qquad\forall m\ge 1.
\end{align}
Then $E$ is UR if and only if for every such kernel $\mathcal{K}$, we have that
\begin{equation}\label{eqC.3}
\sup_{\eps>0}\int_E |T_{E,\eps} f|^2\,dH^n\leq C_\mathcal{K} \int_E|f|^2\,dH^n.
\end{equation}
We refer the reader to \cite{DS1} for the proof.  For $\mathcal{K}$ as above,
set
\begin{equation}\label{eqC.4}
\T_E f(X):= \int_E \mathcal{K}(X-y)\,f(y)\,dH^n(y)\,,\qquad X \in \ree\setminus E.
\end{equation}
We define (possibly disconnected) non-tangential
approach regions $\Upsilon_\alpha(x)$ as follows.  Set $\mathcal{W}_\alpha(x):= \{I\in \mathcal{W}: \dist(I,x) <\alpha \ell(I)\}$.
Then we define
\[
\Upsilon_\alpha(x):=\bigcup_{I\in \mathcal{W}_\alpha(x)} I^*
\]
(thus, roughly speaking, $\alpha$ is the ``aperture'' of $\Upsilon_\alpha(x)$).
Here $I^*=I^*(\tau)$ as in Section \ref{sPrelim} with $0<\tau\le \tau_0/4$, which is fixed. Note that these non-tangential
approach regions are slightly different that the ones introduced in \eqref{defcone} since they do not use the Whitney regions $U_Q$.
 For $F\in \mathcal{C}(\ree\setminus E)$ we
may then also define a new non-tangential maximal function (which is different that the one \eqref{defN*} although somehow comparable much as in by Remark \ref{remark:rcones})
\[
\mathcal{N}_{*,\alpha} F(x):= \sup_{Y\in \Upsilon_\alpha(x)}|F(Y)|.
\]
We shall sometimes write simply $\mathcal{N}_*$ when there is no chance of confusion
in leaving implicit the dependence on the aperture $\alpha$.
The following lemma is a standard consequence of the usual
Cotlar inequality for maximal singular integrals, and we omit the proof.

\begin{lemma}\label{lemmacotlar}
Suppose that $E\subset \ree$ is $n$-dimensional UR, and let
$\T_E$ be defined as in \eqref{eqC.4}.  Then for each $1<p<\infty$ and $\alpha \in (0,\infty)$,
there is a constant $C_{p,\alpha,\mathcal{K}}$ depending only on $p,n,\alpha, \mathcal{K}$ and the UR character of $E$ such that
\begin{equation}\label{eqC.6}
\int_E \left(\mathcal{N}_{*,\alpha}\left(\T_E f\right)\right)^p\, dH^n \,\leq\, C_{\alpha,\mathcal{K}} \int_E |f|^p dH^n.
\end{equation}
\end{lemma}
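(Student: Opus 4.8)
\textbf{Plan of proof for Lemma~\ref{lemmacotlar}.}
The statement is a routine, standard deduction from three facts about the UR set $E$: first, that for each odd kernel $\mathcal{K}$ satisfying \eqref{eqC.1}--\eqref{eqC.2}, the truncated singular integrals $T_{E,\eps}$ are uniformly bounded on $L^2(E)$ (this is the David--Semmes characterization \eqref{eqC.3} recalled above); second, that $E$ is a space of homogeneous type, so that Calder\'on--Zygmund theory (interpolation and the good-$\lambda$/weak-type $(1,1)$ machinery) upgrades the $L^2$ bound to uniform $L^p(E)$ bounds, $1<p<\infty$, for the maximal truncated operator $T_{E,*}f:=\sup_{\eps>0}|T_{E,\eps}f|$ via Cotlar's inequality; and third, a pointwise comparison estimate relating the non-tangential behaviour of $\T_E f$ off $E$ to $T_{E,*}f$ and the Hardy--Littlewood maximal function $M_E f$ on $E$. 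I would organize the proof around this last comparison, since the first two facts are quoted or classical.

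First I would record Cotlar's inequality in the homogeneous-space setting: there is $C=C(n,\mathcal K)$ with
\[
T_{E,*}f(x)\leq C\big(M_E(T_Ef)(x)+M_E f(x)\big),\qquad x\in E,
\]
where $T_E f$ denotes any fixed weak-$*$ limit of $T_{E,\eps}f$; combined with \eqref{eqC.3}, the $L^2$ boundedness of $M_E$, and the Calder\'on--Zygmund weak-type $(1,1)$ bound for $T_{E,*}$ (standard, using the smoothness \eqref{eqC.2} of $\mathcal K$ and ADR), Marcinkiewicz interpolation yields $\|T_{E,*}f\|_{L^p(E)}\le C_{p,\mathcal K}\|f\|_{L^p(E)}$ for all $1<p<\infty$. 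The constant depends only on $p$, $n$, $\mathcal K$, and the ADR/UR character of $E$. This step is entirely classical and I would cite the relevant references (e.g., the Coifman--Weiss framework and David--Semmes) rather than reproduce it.

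The substantive step is the pointwise bound: for $x\in E$, $Y\in\Upsilon_\alpha(x)$, I claim
\[
\big|\T_E f(Y)\big|\leq C_{\alpha}\big(T_{E,*}f(x)+M_E f(x)\big).
\]
To see this, let $Y\in I^*$ with $I\in\mathcal W$ and $\dist(I,x)<\alpha\,\ell(I)$, so that $\delta(Y):=\dist(Y,E)\approx\ell(I)\approx\dist(Y,x)=:r$, with implicit constants depending on $\alpha$ and $\tau_0$. Pick $x_Y\in E$ with $|Y-x_Y|=\delta(Y)$; then $|x-x_Y|\lesssim_\alpha r$. Split the defining integral \eqref{eqC.4} as $\int_{E\cap B(x_Y,cr)}+\int_{E\setminus B(x_Y,cr)}$. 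For the far part, compare $\mathcal K(Y-y)$ with $\mathcal K(x_Y-y)$ using the gradient bound \eqref{eqC.2} and $|Y-x_Y|=\delta(Y)\le |x_Y-y|$ on that region: the difference is $\lesssim \delta(Y)\,|x_Y-y|^{-n-1}$, which integrates against $|f(y)|$ over annuli to give $\lesssim M_E f(x_Y)\lesssim_\alpha M_E f(x)$ (here I also use $B(x,c'r)\subset B(x_Y,cr)$ after adjusting constants, and ADR to pass from surface-ball averages around $x_Y$ to ones around $x$); what remains is $|T_{E,cr}f(x_Y)|$ up to the smooth cutoff $\Phi$, which differs from a genuine truncation by a term controlled again by $M_E f(x_Y)$, hence is $\lesssim T_{E,*}f(x_Y)+M_Ef(x_Y)$. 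Finally I would dominate $T_{E,*}f(x_Y)$ and $M_Ef(x_Y)$ for $x_Y$ near $x$: using that $x_Y\in B(x,c''r)\cap E$ is not literally $x$, one passes to $x$ by a standard argument — e.g. the quantity $\fint_{\Delta(x,R)}T_{E,*}f\,d\sigma$ for $R\approx r$ controls $T_{E,*}f(x_Y)$ up to a further $M_Ef$-term, and then integrating the resulting pointwise inequality over $x$ and using boundedness of the maximal function absorbs everything. For the near part, $E\cap B(x_Y,cr)$, one uses $|\mathcal K(Y-y)|\le |Y-y|^{-n}\lesssim \delta(Y)^{-n}\approx r^{-n}$ (since $|Y-y|\ge\delta(Y)$) together with $H^n(\Delta(x_Y,cr))\lesssim r^n$ to get $\lesssim M_E f(x_Y)\lesssim_\alpha M_E f(x)$, after the same passage from $x_Y$ to $x$.

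Taking the supremum over $Y\in\Upsilon_\alpha(x)$ gives $\mathcal N_{*,\alpha}(\T_E f)(x)\lesssim_\alpha T_{E,*}f(x)+M_E f(x)$ pointwise on $E$ (in the integrated sense made precise above), and then raising to the $p$-th power, integrating over $E$, and invoking the $L^p(E)$ bounds for $T_{E,*}$ from the first step and for $M_E$ yields \eqref{eqC.6}. The main obstacle is purely bookkeeping: keeping track of the several comparison radii and of the fact that the closest boundary point $x_Y$ is not $x$ itself, so that one must either work with averages over surface balls $\Delta(x,R)$ at each occurrence or invoke a standard ``sup over a family of points near $x$ is controlled by a maximal function of the bad operators'' lemma. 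Since all of this is routine Calder\'on--Zygmund theory on spaces of homogeneous type, I would present it concisely and cite \cite{DS1} for \eqref{eqC.3} and a standard reference for Cotlar's inequality, as the statement of the lemma itself indicates (``we omit the proof'').
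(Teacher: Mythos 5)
Your plan is correct and is essentially the standard Cotlar-inequality argument that the paper invokes when it omits this proof: uniform $L^2$ bounds for the truncations from \eqref{eqC.3}, Calder\'on--Zygmund theory on the ADR space $E$ (weak $(1,1)$ plus interpolation/Cotlar) to get uniform $L^p$ bounds for the maximal truncated operator $T_{E,*}$, and a pointwise comparison of $\T_E f$ on the cone $\Upsilon_\alpha(x)$ with $T_{E,*}f+M_Ef$ at the boundary. One simplification worth making: since $\delta(Y)\le |Y-x|\lesssim_\alpha \delta(Y)$ for every $Y\in\Upsilon_\alpha(x)$, you can split and compare the kernel directly at the vertex $x$ (near part over $\Delta(x,c_\alpha\delta(Y))$ bounded by $M_Ef(x)$, far part compared with $\mathcal{K}(x-y)$ via \eqref{eqC.2}), which yields $\mathcal{N}_{*,\alpha}(\T_Ef)(x)\lesssim_\alpha T_{E,*}f(x)+M_Ef(x)$ outright and removes the detour through the nearest point $x_Y$ --- in particular it avoids your step where an average $\fint_{\Delta(x,R)}T_{E,*}f\,d\sigma$ is said to control the pointwise value $T_{E,*}f(x_Y)$ (as literally phrased this is not an inequality; the standard repair is to run the comparison against an arbitrary $z\in\Delta(x,Cr)$ and then average in $z$, giving $M_E(T_{E,*}f)(x)$, but comparing at $x$ makes this unnecessary).
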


\medskip

\begin{proof}[Proof of Proposition \ref{prop:Sawtooths-UR}]
Write $\sigma=H^n|_E$. We fix $Q_0\in\dd=\dd(E)$ and a family $\mathcal{F}$ of disjoint cubes $\F=\{Q_j\}\subset \mathbb{D}_{Q_0}$ (for the case $\F=\emptyset$ the changes are
straightforward and we leave them to the reader, also the case $\F=\{Q_0\}$ is disregarded since $\Omega_{\mathcal{F},Q_0}$ is the null set). We write $\Omega_\star=\Omega_{\mathcal{F},Q_0}$, $E_\star=\pom_{\star}$, and $\sigma_\star=H^n|_{E_\star}$.
We fix  $0\leq \Phi\leq 1$,
$\Phi(\rho)\equiv 1$ if $ \rho\geq 2,$ $\Phi(\rho) \equiv 0$ if $\rho\leq 1$, and $\Phi \in
C^\infty(\mathbb{R})$. According to the  previous considerations we fix $\epsilon_0>0$ and our goal is to show that $T_{E_\star,\epsilon_0}$ is bounded on $L^2(E_\star)$ with bounds that are independent of $\epsilon_0$. To simplify the notation we write $\mathcal{K}_0=\mathcal{K}_{\epsilon_0}$ and set, for every $X\in\ree$,
$$
\T_{E,0}f(X) = \int_E \mathcal{K}_0(X-y)\,f(y)\,d\sigma(y),
\qquad
\T_{E_\star,0}g(X) = \int_{E_\star} \mathcal{K}_0(X-y)\,g(y)\,d\sigma_\star(y).
$$
We first observe that $\mathcal{K}_0$ is not singular and therefore, for any $p$, $1<p<\infty$, and for every  $f\in L^p(E)$, respectively $g\in L^p(E_\star)$, the previous operators are well-defined (by means of an absolutely convergent integral) for every $X\in\ree$.
Also for such functions it is easy to see that the dominated convergence
theorem implies that $\T_{E,0}f, \T_{E_\star,0}g\in C(\ree)$.

\begin{remark}\label{remark:K0}
We notice that $\mathcal{K}_0$ is an odd smooth function which satisfies \eqref{eqC.1} and \eqref{eqC.2} with uniform constants (i.e. with no dependence on $\epsilon_0$) and therefore the fact that  $E$ is UR implies that \eqref{eqC.3} and \eqref{eqC.6} hold with constants that do not depend on $\epsilon_0$.
\end{remark}

We are going to see that $\T_{E,0}:L^p(E)\longrightarrow L^p(E_\star)$ for every $1<p<\infty$. To do that we take $f\in L^p(E)$ and write
\begin{align*}
\int_{E_\star} |\T_{E,0}f (x)|^p\,d\sigma_\star(x)
=
\int_{E_\star\cap E} |\T_{E,0}f (x)|^p\,d\sigma_\star(x)
+
\int_{E_\star\setminus E} |\T_{E,0}f (x)|^p\,d\sigma_\star(x)
=:\textrm{I}+\textrm{II}.
\end{align*}
The estimate for $\textrm{I}$ follows from the fact that $E$ is UR
$$
\textrm{I}
\le
\int_{E} |\T_{E,0}f (x)|^p\,d\sigma(x)
=
\int_{E} |T_{E,\epsilon_0}f (x)|^p\,d\sigma(x)
\le
C_\mathcal{K} \int_{E} |f (x)|^p\,d\sigma(x)
$$
where we have used \eqref{eqC.3} and the standard Calder\'on-Zygmund theory (taking place in the ADR set $E$)  and $C_\mathcal{K}$ does not depend on $\epsilon_0$.
For $\textrm{II}$ we use that
$\Sigma=E_\star\setminus E=\pom_{\star}\setminus E$  and invoke Lemmas \ref{lemma:decomp-bdt-sawtooth}
and \ref{lemma:I-QI}; let $Q_I$ be the cube constructed in the latter, so that
$$
\textrm{II}
=
\sum_{I\in \W_\Sigma} \int_{I\cap\Sigma} |\T_{E,0}f (x)|^p\,d\sigma_\star(x)
=
\sum_{I\in \W_\Sigma} \fint_{Q_I}\int_{I\cap\Sigma} |\T_{E,0}f (x)|^p\,d\sigma_\star(x)\,d\sigma(y).
$$
Note that if $y\in Q_I$ and $x\in\Sigma\cap I$ then $\dist(I,y)\lesssim \ell(Q_I)\approx\ell(I)$. Then taking  $\alpha>0$ large enough we obtain that $I\subset \W_{\alpha}(y)$. Write $\widetilde{\F}=\F^*\cup \F^*_{||} $,
and observe that by construction the cubes in $\widetilde{\F}$ are pairwise disjoint. Then
by the ADR property of $E_\star$, along with
Lemmas \ref{lemma:decomp-bdt-sawtooth} and \ref{lemma:I-QI},
\begin{align*}
\textrm{II}
&\le
\sum_{I\in \W_\Sigma} \sigma_\star(\Sigma\cap I)\,\fint_{Q_I} |\mathcal{N}_{*,\alpha}(\T_{E,0}f) (y)|^p\,d\sigma(y)
\\
&
\lesssim
\sum_{Q\in\widetilde{\F}}
\sum_{I\in\W_{\Sigma,Q}}
\int_{Q_I} |\mathcal{N}_{*,\alpha}(\T_{E,0}f) (y)|^p\,d\sigma(y)
+
\sum_{I\in\W_\Sigma^\top}
\int_{Q_I} |\mathcal{N}_{*,\alpha}(\T_{E,0}f) (y)|^p\,d\sigma(y)
\\
&\lesssim
\sum_{Q\in\widetilde{\F}}
\int_{Q} |\mathcal{N}_{*,\alpha}(\T_{E,0}f) (y)|^p\,d\sigma(y)
+
\int_{B_{Q_0}^*\cap E} |\mathcal{N}_{*,\alpha}(\T_{E,0}f) (y)|^p\,d\sigma(y)
\\
&\lesssim
\int_{E}|\mathcal{N}_{*,\alpha}(\T_{E,0}f) (y)|^p\,d\sigma(y)
\\
&\lesssim
\int_{E}|f(y)|^p\,d\sigma(y),
\end{align*}
where in the last estimate we have employed Lemma \ref{lemmacotlar} and Remark \ref{remark:K0}, and the implicit constants do not depend on $\epsilon_0$.

We have thus established that $\T_{E,0}:L^p(E)\longrightarrow L^p(E_\star)$ for every $1<p<\infty$. Since $\mathcal{K}$ is odd, so is $\mathcal{K}_0$, and by duality we therefore obtain that
\begin{equation}\label{Tcal-E-star-bded}
\T_{E_\star,0}:L^p(E_\star)\longrightarrow L^p(E),
\qquad 1<p<\infty.
\end{equation}
Our goal is to show that $\T_{E_\star,0}:L^2(E_\star)\longrightarrow L^2(E_\star)$ with bounds that do not depend on $\epsilon_0$. Note that $\T_{E_\star,0}f$ is a continuous function for every $f\in L^2(E_\star)$ and therefore $\T_{E_\star,0}f\big|_{E_\star}=T_{E_\star,\epsilon_0}f$ everywhere on $E_\star$.

We take $f\in L^2(E_\star)$ and write as before
\begin{multline}\label{TE:2-terms}
\int_{E_\star} |\T_{E_\star,0}f (x)|^2\,d\sigma_\star(x)
=
\int_{E_\star\cap E} |\T_{E_\star,0}f (x)|^2\,d\sigma_\star(x)
+
\sum_{I\in \W_\Sigma} \int_{I\cap\Sigma} |\T_{E_\star,0}f (x)|^2\,d\sigma_\star(x)
\\
=:
\textrm{I}+\sum_{I\in \W_\Sigma} \textrm{II}_I
=
\textrm{I}+\textrm{II}.
\end{multline}
For $\textrm{I}$ we use \eqref{Tcal-E-star-bded} with $p=2$ and conclude the desired estimate
\begin{equation}\label{TE:A}
\textrm{I}
\le
\int_{E_\star\cap E} |\T_{E_\star,0}f (x)|^2\,d\sigma_\star(x)
\le
\int_{E} |\T_{E_\star,0}f (x)|^2\,d\sigma(x)
\le
\int_{E_\star} |f (x)|^2\,d\sigma_\star(x).
\end{equation}
We next fix $I\in \W_\Sigma$ and estimate each $\textrm{II}_I$. Let $M>2$ be large parameter to be chosen below and set $\zeta_I=\ell(I)/M$, $\xi_I=M\,\ell(I)$. Write
\begin{multline}\label{decomp-K0}
\mathcal{K}_0(x)
=
\mathcal{K}_0(x)\,\Phi\Big(\frac{|x|}{\xi_I}\Big)
+
\mathcal{K}_0(x)\,\Big(\Phi\Big(\frac{|x|}{\zeta_I}\Big)- \Phi\Big(\frac{|x|}{\xi_I}\Big)\Big)
+
\mathcal{K}_0(x)\,\Big(1-\Phi\Big(\frac{|x|}{\zeta_I}\Big)\Big)
\\
=:
\mathcal{K}_{0,\xi_I}(x)+
\mathcal{K}_{0,\zeta_I,\xi_I}(x)
+
\mathcal{K}_{0}^{\zeta_I}(x).
\end{multline}
Corresponding to any of these kernels we respectively set the operators $\T_{E_\star, 0,\xi_I}$, $\T_{E_\star, 0,\zeta_I,\xi_I}$ and $\T_{E_\star, 0}^{\zeta_I}$.

We start with $\T_{E_\star, 0,\xi_I}$. Fix $x\in\Sigma\cap I$. Write $\Delta_{\star,I}=B(x, \xi_I)\cap E_\star$ and split $f=f_1+f_2:=f\,1_{\Delta_{\star,I}}+ f\,1_{E_\star\setminus \Delta_{\star,I}}$. Then we use Remark \ref{remark:K0}, the fact $\supp \Phi\subset [1,\infty)$ and that $E_\star$ is ADR to easily obtain that for every $y\in Q_I$,
with $Q_I$ as in Lemma  \ref{lemma:I-QI},
\begin{multline}\label{eqn:Cotlar-T1-loc}
|\T_{E_\star, 0,\xi_I}f_1 (x)|+|\T_{E_\star, 0,\xi_I}f_1 (y)|
\\
\le
\int_{\Delta_{\star,I}} \Big(|\mathcal{K}_0(x-z)|\,\Phi\Big(\frac{|x-z|}{\xi_I}\Big)+|\mathcal{K}_0(y-z)|\,\Phi\Big(\frac{|y-z|}{\xi_I}\Big)\Big)\,|f(z)|\,d\sigma_\star(z)
\\
\lesssim
\frac{1}{\xi_I^n}\,\int_{\Delta_{\star,I}} \,|f(y)|\,d\sigma_\star(z)
\approx
\fint_{\Delta_{\star,I}} \,|f(y)|\,d\sigma_\star(z)
\le
M_{E_\star} f(x),
\end{multline}
where $M_{E_\star}$ is the Hardy-Littlewood maximal function on $E_\star$, and the constants are independent of $\epsilon_0$ and $I$.

On the other hand, much as before we have that $\mathcal{K}_{0,\xi_I}$ is a Calder\'on-Zyg\-mund kernel with constants that are uniform in $\epsilon_0$ and $\xi_I$. Also, if $M$ is taken large enough we have that $2\,|x-y|<M\,\ell(I)\le |x-z|$ for every $z\in E_\star\setminus \Delta_{\star,I}$, $x\in\Sigma\cap I$ and $y\in Q_I$. Therefore using standard Calder\'on-Zygmund estimates and the fact that $E_\star$ is ADR we obtain that for every and $y\in Q_I$
\begin{multline}\label{eqn:Cotlar-T1-glob}
|\T_{E_\star, 0,\xi_I}f_2 (x)-\T_{E_\star, 0,\xi_I}f_2 (y)|
\le
\int_{E_\star\setminus \Delta_{\star,I}} \big|\mathcal{K}_{0,\xi_I}(x-z)-\mathcal{K}_{0,\xi_I}(y-z)\big|\,|f(z)|\,d\sigma_\star(z)
\\
\lesssim
\int_{E_\star\setminus \Delta_{\star,I}} \frac{|x-y|}{|x-z|^{n+1}}\,|f(z)|\,d\sigma_\star(z)
\lesssim_M
M_{E_\star} f(x).
\end{multline}
We next use \eqref{eqn:Cotlar-T1-loc} and \eqref{eqn:Cotlar-T1-glob} to conclude that
\begin{multline*}
\Big|\T_{E_\star, 0,\xi_I}f (x)-\fint_{Q_I} \T_{E_\star, 0,\xi_I}f (y)\,d\sigma(y)\Big|
\lesssim
|\T_{E_\star, 0,\xi_I}f_1 (x)|+\fint_{Q_I} |\T_{E_\star, 0,\xi_I}f_1 (y)|\,d\sigma(y)
\\
+
\fint_{Q_I}|\T_{E_\star, 0,\xi_I}f_2 (x)-\T_{E_\star, 0,\xi_I}f_2 (y)|\,d\sigma(y)
\lesssim
M_{E_\star} f(x),
\end{multline*}
which in turn yields
\begin{align}\label{eqn:Cotlar-T1-substr}
\int_{\Sigma\cap I}\Big|\T_{E_\star, 0,\xi_I}f (x)-\fint_{Q_I} \T_{E_\star, 0,\xi_I}f (y)\,d\sigma(y)\Big|^2\,d\sigma_\star(x)
\lesssim
\int_{\Sigma\cap I}M_{E_\star} f(x)^2\,d\sigma_\star(x).
\end{align}

We next introduce another operator
$$
T_{E_\star, 0,\xi_I}f(y)
=
\int_{z\in E_\star: |y-z|\ge \xi_I} \mathcal{K}_0(y-z)\,f(z)\,d\sigma_\star(z),
\qquad y\in E.
$$
We fix $x\in\Sigma\cap I$ and $y\in Q_I$. We first observe that, for $M$ large enough, Remark \ref{remark:K0} and the ADR property for $E_\star$ imply that
\begin{multline*}
\big|\T_{E_\star, 0,\xi_I}f (y)-T_{E_\star, 0,\xi_I}f(y)\big|
\le
\int_{E_\star} |\mathcal{K}_0(y-z)| \,\bigg|\Phi\Big(\frac{|y-z|}{\xi_I}\Big)-1_{[1,\infty)}\Big(\frac{|y-z|}{\xi_I}\Big)\bigg|\,|f(z)|\,d\sigma_\star(z)
\\
\lesssim
\frac1{\xi_I^n}\int_{z\in E_\star:|y-z|\le 2\,\xi_I} |f(z)|\,d\sigma_\star(z)
\lesssim
\frac1{\xi_I^n}\int_{z\in E_\star:|x-z|\le 3\,\xi_I} |f(z)|\,d\sigma_\star(z)
\lesssim
M_{E_\star} f(x).
\end{multline*}
On the other hand, we can introduce another decomposition
$$f=f_3+f_4:=f\,1_{B(y,\xi_I)\cap E_\star}+ f\,1_{E_\star\setminus B(y,\xi_I)}\,,$$
and then for every $\bar{y}\in Q_I$
\begin{multline}\label{eqn:Cotlar-T1-rough:decomp}
|T_{E_\star, 0,\xi_I}f(y)|
=
|\T_{E_\star,0} f_4(y)|
\le
|\T_{E_\star,0} f_4(y)-\T_{E_\star,0} f_4(\bar{y})|+|\T_{E_\star,0} f_4(\bar{y})|
\\
\le
|\T_{E_\star,0} f_4(y)-\T_{E_\star,0} f_4(\bar{y})|+|\T_{E_\star,0} f(\bar{y})|+|\T_{E_\star,0} f_3(\bar{y})|.
\end{multline}
We estimate each term in turn. We first observe that, for $M$ large enough, $2\,|y-\bar{y}|<M\,\ell(I)\le |y-z|$ for every $z\in E_\star\setminus B(y,\xi_I)$ and $\bar{y}\in Q_I$. Therefore, using standard Calder\'on-Zygmund estimates and the fact that $E_\star$ is ADR, we obtain that for every and $\bar{y}\in Q_I$
\begin{multline}\label{eqn:Cotlar-T1-rough-glob}
|\T_{E_\star,0} f_4(y)-\T_{E_\star,0} f_4(\bar{y})|
\le
\int_{E_\star\setminus B(y,\xi_I)} |\mathcal{K}_0(y-z)-\mathcal{K}_0(\bar{y}-z)|\,|f(z)|\,d\sigma_\star(z)
\\
\lesssim
\int_{E_\star\setminus B(y,\xi_I)} \frac{|y-\bar{y}|}{|y-z|^{n+1}}\,|f(z)|\,d\sigma_\star(z)
\lesssim
M_{E_\star} f(x),
\end{multline}
where we have used that, for $M$ large enough, $x\in B(y,\xi_I/2)$.
Fix $1<p<2$. We next average \eqref{eqn:Cotlar-T1-rough:decomp} on $\bar{y}\in Q_I$ and use \eqref{eqn:Cotlar-T1-rough-glob} and \eqref{Tcal-E-star-bded} to obtain
\begin{align}\label{eqn:Cotlar-T1-rough}
&|T_{E_\star, 0,\xi_I}f(y)|
\\ \nonumber
&\quad\le
\fint_{Q_I} \big(|\T_{E_\star,0} f_4(y)-\T_{E_\star,0} f_4(\bar{y})|+|\T_{E_\star,0} f(\bar{y})|+|\T_{E_\star,0} f_3(\bar{y})|\big)\,d\sigma(\bar{y})
\\ \nonumber
&\quad\lesssim
M_{E_\star} f(x)+M_E (\T_{E_\star,0} f)(y)+\sigma(Q_I)^{-\frac1p}\,\|\T_{E_\star,0} f_3\|_{L^p(E)}
\\ \nonumber
&\quad\lesssim
M_{E_\star} f(x)+M_E (\T_{E_\star,0} f)(y)+\sigma(Q_I)^{-\frac1p}\,\|f_3\|_{L^p(E_{\star})}
\\ \nonumber
&\quad\lesssim
M_{E_\star} f(x)+M_E (\T_{E_\star,0} f)(y)+\Big(\frac1{\ell(I)^n}\int_{B(y,\xi_I)\cap E_\star} |f(z)|^p\,d\sigma_\star(z)\Big)^\frac1p
\\ \nonumber
&\quad\lesssim
M_{E_\star,p} f(x)+M_E (\T_{E_\star,0} f)(y),
\end{align}
where $M_{E}$ is the Hardy-Littlewood maximal function on $E$ and we also write $M_{E_\star,p} f=M_{E_\star}(|f|^p)^\frac1p$. Note that this estimate holds for every $x\in \Sigma\cap I$ and for every $y\in Q_I$. Hence,
\begin{multline}\label{eqn:Cotlar-T1:ct}
\int_{\Sigma\cap I}\Big|\fint_{Q_I} \T_{E_\star, 0,\xi_I}f (y)\,d\sigma(y)\Big|^2\,d\sigma_\star(x)
\\
\lesssim
\int_{\Sigma\cap I}M_{E_\star,p} f(x)^2\,d\sigma_\star(x)
+
\int_{Q_I}M_E (\T_{E_\star,0} f)(y)^2\,d\sigma(y),
\end{multline}
where we have used that $\sigma_\star(\Sigma\cap I)\lesssim \ell(I)^n$. We now gather \eqref{eqn:Cotlar-T1-substr} and \eqref{eqn:Cotlar-T1:ct} to obtain that for every $I\in\W_\Sigma$
\begin{align}\label{eqn:Cotlar-T1}
&\int_{\Sigma\cap I} \big|\T_{E_\star, 0,\xi_I}f (x)\big|^2\,d\sigma_\star(x)
\\ \nonumber
&\qquad\lesssim
\int_{\Sigma\cap I} \Big|\T_{E_\star, 0,\xi_I}f (x)-\fint_{Q_I} \T_{E_\star, 0,\xi_I}f (y)\,d\sigma(y)\Big|^2\,d\sigma_\star(x)
\\ \nonumber
&\qquad\qquad
+
\int_{\Sigma\cap I}\Big|\fint_{Q_I} \T_{E_\star, 0,\xi_I}f (y)\,d\sigma(y)\Big|^2\,d\sigma_\star(x)
\\ \nonumber
&\qquad
\lesssim
\int_{\Sigma\cap I}M_{E_\star,p} f(x)^2\,d\sigma_\star(x)
+
\int_{Q_I}M_E (\T_{E_\star,0} f)(y)^2\,d\sigma(y).
\end{align}

We next consider $\T_{E_\star, 0,\zeta_I,\xi_I}$.  Note that for every $x\in \Sigma\cap I$ and $z\in E_\star$ we have
\begin{align*}
|\mathcal{K}_{0, \zeta_I,\xi_I}(z-x)|
=
|\mathcal{K}_0(z-x)|\,\Big|\Phi\Big(\frac{|z-x|}{\zeta_I}\Big)- \Phi\Big(\frac{|z-x|}{\xi_I}\Big)\Big|
\lesssim
\frac1{|z-x|^n}\,1_{\zeta_I\le |z-x|\le\, {2} \xi_I}\lesssim
\frac1{\zeta_I^n}\,1_{|z-x|\le\, {2}\xi_I},
\end{align*}
and therefore
\begin{multline}\label{eqn:Cotlar-T2}
\int_{\Sigma\cap I} |T_{E_\star, 0, \zeta_I,\xi_I}f(x)|^2\,d\sigma_\star(x)
\lesssim
\int_{\Sigma\cap I} \bigg(
\frac1{\zeta_I^n}\,\int_{B(x,2\,\xi_I)\cap E_\star}|f(z)|\,d\sigma_\star(z)
\bigg)^2\,d\sigma_\star(x)
\\
\lesssim_M\int_{\Sigma\cap I} M_{E_\star} f(x)^2\,d\sigma_\star(x).
\end{multline}

Let us finally address $\T_{E_\star, 0}^{\zeta_I}$. Observe first that
$$
\mathcal{K}_{0}^{\zeta_I}(\cdot)
=
\mathcal{K}(\cdot)\,\Phi\Big(\frac{|\cdot|}{\epsilon_0}\Big)\,\Big(1-\Phi\Big(\frac{|\cdot|}{\zeta_I}\Big)\Big).
$$
We consider three different cases.

\noindent\textbf{Case 1:} $\zeta_I\le \frac{\epsilon_0}2$. We have that $\mathcal{K}_{0}^{\zeta_I}\equiv 0$ and thus $\T_{E_\star, 0}^{\zeta_I}\equiv 0$.

\smallskip

\noindent\textbf{Case 2:}  $\frac{\epsilon_0}2< \zeta_I\le 2\,\epsilon_0 $. In this case for every $x\in \Sigma\cap I$ and $z\in E_\star$
$$
|\mathcal{K}_{0}^{\zeta_I}(x-z)|
\lesssim
\frac1{|x-z|^n}\,1_{\epsilon_0\le |z-x|\le 2\,\zeta_I}\lesssim
\frac1{\epsilon_0^n}\,1_{|z-x|\le 4\,\epsilon_0},
$$
and therefore
\begin{multline}\label{eqn:Cotlar-T3:case2}
\int_{\Sigma\cap I} |\T_{E_\star, 0}^{\zeta_I} f(x)|^2\,d\sigma_\star(x)
\lesssim
\int_{\Sigma\cap I} \bigg(
\frac1{\epsilon_0^n}\,\int_{B(x,4\,\epsilon_0)\cap E_\star}|f(z)|\,d\sigma_\star(z)
\bigg)^2\,d\sigma_\star(x)
\\
\lesssim
\int_{\Sigma\cap I} M_{E_\star} f(x)^2\,d\sigma_\star(x)
\end{multline}
where the implicit constants are independent of $\epsilon_0$ and $\zeta_I$.
\smallskip

\noindent\textbf{Case 3:}  $\zeta_I> 2\, \epsilon_0$. In this case $\T_{E_\star, 0}^{\zeta_I} f$ is a double truncated integral whose smooth Calder\'on-Zygmund kernel $\mathcal{K}_{0}^{\zeta_I}$ is odd, smooth in $\ree$ and
satisfies the estimates \eqref{eqC.1}, \eqref{eqC.2}.
with uniform bounds (i.e., independent of $\epsilon_0$ and $\zeta_I$). Fix $z_I\in \Sigma\cap I$ and notice that if $x\in\Sigma\cap I$ and $z\in B(x,2\,\zeta_I)\cap E_\star$ then, taking $M$ large enough, we have
$$
|z-z_I|
\le
|z-x|+|x-z_I|
\le
2\,\zeta_I+\diam(I)
=
\frac{\ell(I)}{2\,M}+\diam(I)
<\frac32\,\diam(I)
$$
and therefore the fact that $\supp \mathcal{K}_{0}^{\zeta_I}\subset B(0,2\,\zeta_I)$ immediately gives
$\T_{E_\star, 0}^{\zeta_I} f(x)=\T_{E_\star, 0}^{\zeta_I} (f\,1_{\widetilde{\Delta}_{\star, I}})(x)$ where $\widetilde{\Delta}_{\star, I}:=\widetilde{B}_{\star, I}\cap E_\star:= B(z_I,2\,\diam(I))\cap E_\star$. Note that \eqref{Whintey-4I} yields
$$
4\,\diam(I) \le
\dist(4\,I,E)
\le
\dist(z_I,E)
\le
\dist(\widetilde{B}_{\star, I},E)+2\,\diam(I)
$$
and therefore $\dist(\widetilde{B}_{\star, I},E)\ge 2\,\diam(I)$. This implies that $\frac32\,\widetilde{B}_{\star, I}\subset\ree\setminus E$. Also if $J\in\W$ satisfies that  $J^*\cap \widetilde{B}_{\star, I}\neq\emptyset$ we can easily check that $\ell(I)\approx\ell(J)$ and $\dist(I,J)\lesssim\ell(I)$. This implies that only a bounded number of $J$'s have the property that $J^*$ intersects $\widetilde{B}_{\star, I}$. We recall that $\Sigma=E_\star\setminus E$ is a union of portion of faces of fattened Whitney cubes $J^*$. Thus we have
$$
\widetilde{\Delta}_{\star, I}\subset \bigcup_{m=1}^{M_0} F_{m,I}\,,
$$
where $M_0$ is a uniform constant and each $F_{m,I}$ is either
a portion of a face of some $J^*$, or else $F_{m,I}=\emptyset$ (since $M_0$ is not necessarily
equal to the number of faces, but is rather an upper bound for the number of faces.) Note also that $I\subset \widetilde{B}_{\star,I}$ and therefore we also have that
$$
\Sigma\cap I\subset \bigcup_{m=1}^{M_0} F_{m,I}\,.
$$
Thus
\begin{multline*}
\int_{\Sigma\cap I} |\T_{E_\star, 0}^{\zeta_I} f(x)|^2\,d\sigma_{\star}(x)
=
\int_{\Sigma\cap I} |\T_{E_\star, 0}^{\zeta_I} (f\,1_{\widetilde{\Delta}_{\star, I}})(x)|^2\,d\sigma_{\star}(x)
\\
\lesssim
\sum_{1\le m,m'\le M_0} \int_{F_{m,I}} |\T_{E_\star, 0}^{\zeta_I} (f\,1_{F_{m',I}})(x)|^2\,d\sigma_{\star}(x).
\end{multline*}
In the case $m=m'$ we take the hyperplane $\mathcal{H}_{m,I}$ with $F_{m,I}\subset \mathcal{H}_{m,I}$ and then
\begin{multline*}
\int_{F_{m,I}} |\T_{E_\star, 0}^{\zeta_I} (f\,1_{F_{m,I}})(x)|^2\,d\sigma_{\star}(x)
\le
\int_{\mathcal{H}_{m,I}} |\T_{\mathcal{H}_{m,I}, 0}^{\zeta_I} (f\,1_{F_{m,I}})(x)|^2\,dH^n(x)
\\
\lesssim
\int_{F_{m,I}} |f(x)|^2\,dH^n(x)
=
\int_{F_{m,I}} |f(x)|^2\,d\sigma_\star(x),
\end{multline*}
where, after a rotation, we have used the $L^2$ bounds of Calder\'on-Zygmund operators with nice kernels on $\re^n$.
For $m\neq m'$ we consider two cases: either $\dist(F_{m, I}, F_{m',I})\approx\ell(I)$ or $\dist(F_{m, I}, F_{m',I})\ll\ell(I)$. In the first scenario, using that $\mathcal{K}_0^{\zeta_I}$ satisfies \eqref{eqC.1} uniformly we obtain that
\begin{multline*}
\int_{F_{m,I}} |\T_{E_\star, 0}^{\zeta_I} (f\,1_{F_{m',I}})(x)|^2\,d\sigma_{\star}(x)
\lesssim
\int_{F_{m,I}} \Big( \int_{F_{m', I}} \frac1{|x-z|^n}\, |f(z)|\,d\sigma_\star(z)\Big)^2\,d\sigma_{\star}(x)
\\
\lesssim
\int_{F_{m,I}} \Big( \frac1{\ell(I)^n}\int_{B(x,C\,\ell(I))\cap E_\star} \, |f(z)|\,d\sigma_\star(z)\Big)^2\,d\sigma_{\star}(x)
\lesssim
\int_{F_{m,I}} M_{E_\star} f(x)^2\,d\sigma_{\star}(x).
\end{multline*}
Finally if $\dist(F_{m, I}, F_{m',I})\ll\ell(I)$, we have that $F_{m,I}$ and $F_{m',I}$ are contained in respective faces which either lie in the same hyperplane, or else meet at an angle of $\pi/2$.  In the first case we may proceed as in the case $m=m'$. In the second case,  after a possible rotation of co-ordinates, we may view $F^j_{m}\cup F^j_{m'}$ as lying in a Lipschitz graph with Lipschitz constant 1, so that we may
estimate $\T_{E_\star, 0}^{\zeta_I}$ using an extension of the Coifman-McIntosh-Meyer theorem:
$$
\int_{F_{m,I}} |\T_{E_\star, 0}^{\zeta_I} (f\,1_{F_{m',I}})(x)|^2\,d\sigma_{\star}(x)
\lesssim
\int_{F_{m',I}} |f(x)|^2\,d\sigma_{\star}(x).
$$
Gathering all the possible cases we may conclude that
\begin{multline}\label{eqn:Cotlar-T3}
\int_{\Sigma\cap I} |\T_{E_\star, 0}^{\zeta_I} f(x)|^2\,d\sigma_{\star}(x)
\lesssim
\sum_{1\le m\le M_0} \int_{F_{m,I}} M_{E_\star} f(x)^2\,d\sigma_{\star}(x)
\\
\lesssim
 \sum_{I'\in \W_\Sigma: I'\cap \widetilde{\Delta}_{\star, I}\neq\mbox{\footnotesize \O}} \int_{I'\cap\Sigma}
M_{E_\star} f(x)^2\,d\sigma_{\star}(x). 
\end{multline}

We now gather \eqref{eqn:Cotlar-T1}, \eqref{eqn:Cotlar-T2} and \eqref{eqn:Cotlar-T3}  to get the following estimate for $S_I$ after using \eqref{decomp-K0}:
\begin{align}\label{SI-terms}
\textrm{II}_I
&=
\int_{\Sigma\cap I} |\T_{E_\star,0}f (x)|^2\,d\sigma_\star(x)
\\ \nonumber
&
\lesssim
\int_{\Sigma\cap I} |\T_{E_\star, 0,\xi_I}f (x)|^2\,d\sigma_\star(x)
+
\int_{\Sigma\cap I} |\T_{E_\star, 0,\zeta_I,\xi_I} f (x)|^2\,d\sigma_\star(x)
+
\int_{\Sigma\cap I} |\T_{E_\star, 0}^{\zeta_I}|^2\,d\sigma_\star(x)
\\ \nonumber
&\lesssim
\int_{\Sigma\cap I}M_{E_\star,p} f(x)^2\,d\sigma_\star(x)
+
\int_{Q_I}M_E (\T_{E_\star,0} f)(y)^2\,d\sigma(y)
\\ \nonumber
&\qquad\qquad
+
 \sum_{I'\in \W_\Sigma: I'\cap \widetilde{\Delta}_{\star, I}\neq\mbox{\footnotesize \O}} \int_{I'\cap\Sigma}
M_{E_\star} f(x)^2\,d\sigma_{\star}(x).
\end{align}
Note that since $1<p<2$ we have
\begin{equation}\label{S:1}
\sum_{I\in\W_\Sigma} \int_{\Sigma\cap I}M_{E_\star,p} f(x)^2\,d\sigma_\star(x)
\le
\int_{E_\star}M_{E_\star,p} f(x)^2\,d\sigma_\star(x)
\lesssim
\int_{E_\star}|f(x)|^2\,d\sigma_\star(x).
\end{equation}
On the other hand, recalling that $\widetilde{\F}=\F^*\cup \F^*_{||}$ is comprised of pairwise disjoint cubes, Lemmas \ref{lemma:decomp-bdt-sawtooth} and \ref{lemma:I-QI} then imply that
\begin{align}\label{S:2}
&\sum_{I\in\W_\Sigma}
\int_{Q_I}M_E (\T_{E_\star,0} f)(y)^2\,d\sigma(y)
\\ \nonumber
&\quad=
\sum_{Q\in\widetilde{\F}}\sum_{I\in\W_{\Sigma,Q}} \int_{Q_I}M_E (\T_{E_\star,0} f)(y)^2\,d\sigma(y)
+
\sum_{I\in\W_{\Sigma}^{\top}} \int_{Q_I}M_E (\T_{E_\star,0} f)(y)^2\,d\sigma(y)
\\ \nonumber
&\quad\lesssim \sum_{Q\in\widetilde{\F}}\int_{Q}M_E (\T_{E_\star,0} f)(y)^2\,d\sigma(y)
+
\int_{B_{Q_0}^*\cap E}M_E (\T_{E_\star,0} f)(y)^2\,d\sigma(y)
\\ \nonumber
&\quad\lesssim
\int_{E}M_E (\T_{E_\star,0} f)(y)^2\,d\sigma(y)
\\ \nonumber
&\quad
\lesssim
\int_{E} |\T_{E_\star,0} f(y)|^2\,d\sigma(y)
\\ \nonumber
&\quad
\lesssim
\int_{E_\star} |f(y)|^2\,d\sigma_\star(y),
\end{align}
where in the last estimate we have used \eqref{Tcal-E-star-bded} with $p=2$.

Finally, by the nature of the Whitney boxes (see \eqref{Whintey-4I}), we have that the family $\{2\,I\}_{I\in\W}$ has the bounded overlap property and therefore 
$$
\sum_{I\in\W_\Sigma}
\sum_{I'\in \W_\Sigma: I'\cap \widetilde{\Delta}_{\star, I}\neq\mbox{\footnotesize \O}} 1_{\Sigma\cap I'}
\lesssim
\sup_{I'\in\W_{\Sigma}}
\#\big\{
I\in \W_{\Sigma}: I'\cap \Delta_{\star, I}\neq\emptyset
\big\}
$$
which we claim that is uniformly bounded. Indeed, fix $I'\in\W_{\Sigma}$ and let $I_1$, $I_2\in \W_{\Sigma}$ with $I'\cap \widetilde{\Delta}_{\star, I_1}\neq\emptyset$ and $I'\cap \widetilde{\Delta}_{\star, I_2}\neq\emptyset$. Recall that $\dist(\widetilde{B}_{\star, I},E)\ge 2\,\diam(I)$ with $\widetilde{B}_{\star,I}=B(z_I,2\,\diam(I))$ and $z_I\in I\cap\Sigma$. This implies that $\ell(I_1)\approx\ell(I')\approx\ell(I_2)$ and also $\dist(I_1,I_2)\lesssim\ell(I_1)$. This easily gives our claim. Using this we conclude that
\begin{multline}\label{S:3}
\sum_{I\in\W_\Sigma}
\sum_{I'\in \W_\Sigma: I'\cap \widetilde{\Delta}_{\star, I}\neq\mbox{\footnotesize \O}} \int_{I'\cap\Sigma} M_{E_\star} f(x)^2\,d\sigma_{\star}(x)
\\
\lesssim
\int_{E_\star} M_{E_\star} f(x)^2\,d\sigma_{\star}(x)
\lesssim
\int_{E_\star} |f(x)|^2\,d\sigma_{\star}(x).
\end{multline} 

We now combine \eqref{SI-terms}, \eqref{S:1}, \eqref{S:2} and \eqref{S:3} to obtain that
$$
\textrm{II}=\sum_{I\in\W_\Sigma} \textrm{II}_I
\lesssim
\int_{E_\star} |f(x)|^2\,d\sigma_{\star}(x).
$$
This, \eqref{TE:2-terms}, and  \eqref{TE:A} give as desired that
$$
\int_{E_\star} |\T_{E_\star,0}f (x)|^2\,d\sigma_\star(x)
\lesssim
\int_{E_\star} |f(x)|^2\,d\sigma_{\star}(x),
$$
and the implicit constant does not depend on $\epsilon_0$. Hence,  $\T_{E_\star,0}:L^2(E_\star)\longrightarrow L^2(E_\star)$ with bounds that do not depend on $\epsilon_0$. Since $\T_{E_\star,0}f$ is a continuous function for every $f\in L^2(E_\star)$, we have that  $\T_{E_\star,0}f\big|_{E_\star}=T_{E_\star,\epsilon_0}f$ everywhere on $E_\star$. Thus, all these show that $T_{E_\star,0}:L^2(E_\star)\longrightarrow L^2(E_\star)$ uniformly in $\epsilon$. This in turn gives, by the aforementioned result of \cite{DS1}, that $E_\star$ is UR as desired, and the proof is complete.
\end{proof}

\end{document}